\newcommand{\abs}[1]{\vert#1\vert}
\newcommand{\jump}[1]{\llbracket#1\rrbracket}
\newcommand{\mybf}[1]{\boldsymbol{#1}}
\newcommand{\dsp}{\displaystyle}
\newcommand{\bu}{{\mathbf u}}
\newcommand{\ovV}{\overline{V}}
\newcommand{\ovv}{\overline{v}}
\newcommand{\uU}{\underline{U}}
\newcommand{\unu}{\underline{\nu}}
\newcommand{\R}{{\mathbb R}}
\newcommand{\N}{{\mathbb N}}
\newcommand{\dt}{\partial_t}
\newcommand{\dz}{\partial_z}
\newcommand{\dx}{\partial_x}
\newcommand{\cS}{{\mathcal S}}
\newcommand{\eps}{\varepsilon}
\newcommand{\ux}{\underline{x}}
\newcommand{\uu}{\underline{u}}
\newcommand{\ubu}{\underline{{\boldsymbol{u}}}}
\newcommand{\cA}{\mathcal{A}}
\newcommand{\cU}{{\mathcal U}}
\newcommand{\bcU}{\boldsymbol{\mathcal U}}
\newcommand{\WW}{{\mathbb W}}
\newcommand{\vu}{u_{\vert_{x=0}}}
\newcommand{\vuin}{{u^{\rm in}}_{\vert_{x=0}}}
\newcommand{\vf}{f_{\vert_{x=0}}}
\newcommand{\bpi}{{\boldsymbol \pi}}
\newcommand{\bvarphi}{{\boldsymbol \varphi}}
\newcommand{\opnorm}{\@ifstar\@opnorms\@opnorm}
\newcommand{\@opnorms}[1]{%
  \left|\mkern-1.5mu\left|\mkern-1.5mu\left|
   #1
  \right|\mkern-1.5mu\right|\mkern-1.5mu\right|
}
\newcommand{\@opnorm}[2][]{%
  \mathopen{#1|\mkern-1.5mu#1|\mkern-1.5mu#1|}
  #2
  \mathclose{#1|\mkern-1.5mu#1|\mkern-1.5mu#1|}
}
\newtheorem{theorem}{Theorem}
\newtheorem{proposition}{Proposition}
\newtheorem{lemma}{Lemma}
\newtheorem{remark}{Remark}
\newtheorem{example}{Example}
\newtheorem{definition}{Definition}
\newtheorem{notation}{Notation}
\newtheorem{assumption}{Assumption}
\begin{document}

\title[Hyperbolic free boundary problems and applications]{
Hyperbolic free boundary problems and applications \\ to wave-structure interactions}
\author{Tatsuo Iguchi and David Lannes}
\address{Tatsuo Iguchi, Department of Mathematics, Faculty of Science and Technology, Keio University, 
3-14-1 Hiyoshi, Kohoku-ku, Yokohama 223-8522, Japan}
\address{David Lannes, Institut de Math\'ematiques de Bordeaux, Universit\'e de Bordeaux et CNRS UMR 5251, 
351 Cours de la Lib\'eration, 33405 Talence Cedex, France}
\maketitle

\begin{abstract}
Motivated by a new kind of initial boundary value problem (IBVP) with a free boundary arising 
in wave-structure interaction, we propose here a general approach to one-dimensional IBVP 
as well as transmission problems. 
For general strictly hyperbolic $2\times 2$ quasilinear hyperbolic systems, 
we derive new sharp linear estimates with refined dependence on the source term and control 
on the traces of the solution at the boundary. 
These new estimates are used to obtain sharp results for quasilinear IBVP and transmission problems, 
and for fixed, moving, and free boundaries. In the latter case, two kinds of evolution equations are considered. 
The first one is of ``kinematic type'' in the sense that the velocity of the interface has the same regularity
 as the trace of the solution. 
Several applications that fall into this category are considered: 
the interaction of waves with a lateral piston, and a new version of the well-known stability of shocks 
(classical and undercompressive) that improves the results of the general theory 
by taking advantage of the specificities of the one-dimensional case. 
We also consider ``fully nonlinear'' evolution equations characterized by the fact that the velocity 
of the interface is one derivative more singular than the trace of the solution. 
This configuration is the most challenging; it is motivated by a free boundary problem arising in wave-structure 
interaction, namely, the evolution of the contact line between a floating object and the water. 
This problem is solved as an application of the general theory developed here.
\end{abstract}

\section{Introduction}
\subsection{General setting}

This article is devoted to a general analysis of free boundary and free transmission hyperbolic problems 
in the one dimensional case. 
It is mainly motivated by a new kind of free boundary problem arising in the study of wave-structure 
interactions and for which the evolution of the free boundary is governed by a singular equation.

\medbreak
In order to explain the singular  structure of this problem, let us recall some results on hyperbolic 
initial boundary value problems (a good reference on this subject is the book \cite{benzoniserre2007}). 
Let us for instance consider a general quasilinear equation of the form 
\[
\dt U+ A(U)\dx U=0
\]
for $t>0$ and $x\in \R$. It is well known that if the system is Friedrichs symmetrizable, i.e., 
if there exists a positive definite matrix $S(u)$ such that $S(u)A(u)$ is symmetric, 
then the associated initial value problem is well-posed in $C([0,T];H^s(\R))$ if $s>d+1/2$ 
(with $d=1$ is the space dimension). 
The proof is based on the study of the linearized system and an iterative scheme. 
If we consider the same equation on $\R_+$, and impose a boundary condition on $U$ at $x=0$, 
then the corresponding initial boundary value problem might not be well-posed, 
even if the system is Friedrichs symmetrizable. 
Well-posedness is however ensured if there exists a Kreiss symmetrizer which, as the Friedrichs symmetrizer, 
transforms the system into a symmetric system, but with the additional property that the boundary condition 
for this symmetric system is striclty dissipative (roughly speaking, this means that the trace of the solution 
at the boundary is controled by the natural energy estimate). 
The construction of such a Kreiss symmetrizer in extremely delicate and is usually done under the so-called 
uniform Lopatinski\u{\i} condition which can formally be derived as a stability condition 
for the normal mode solutions of the linearized equations with frozen coefficients. 
Under such a condition (and additional compatibility conditions between the boundary and initial data), 
a unique solution can again be constructed (though with many more technical issues) via estimates 
on the linearized system and an iterative scheme. 
The typical result for quasilinear initial boundary value problems satisfying the aforementioned condition, 
as announced in \cite{RauchMassey} and proved in \cite{mokrane1987}, is that the equations are well-posed 
but with higher regularity requirements, and more importantly, with a loss of half a derivative 
with respect to the initial and boundary data.

\medbreak
In some situation, the boundary of the domain on which the equations are cast depends on time. 
In dimension $d=1$ for instance, this means that instead of working on $\R_+$, one works on $(\ux(t),+\infty)$, 
where the function $\ux$ is either a known function (boundary in forced motion) or an unknown function 
determined by an equation involving the solution $U$ of the hyperbolic system, typically, 
\[
\dot \ux(t)=\chi(U_{\vert_{x=\ux(t)}})
\]
for some smooth function $\chi$ (we shall say that this kind of boundary evolution of ``kinematic type'' 
because, as for kinematic boundary conditions, the regularity of $\dot\ux$ is the same as the regularity 
of the solution at the boundary). 
Such problems are called free boundary hyperbolic problems.

\medbreak
It is noteworthy that, up to a doubling of the dimension of the system of equations under consideration, 
the considerations above can be extended to transmission problems, 
where two possibly different hyperbolic systems are considered on the two different sides of an interface, 
and where the boundary condition is replaced by a condition involving the traces of the solution on both sides. 
One of the most famous transmission problems with a free boundary is the stability of shocks. 
The problem consists in finding  solutions to a quasilinear hyperbolic system that are smooth on both sides 
of a moving interface and whose traces on the interface satisfy the Rankine--Hugoniot condition. 
In dimension $d=1$, this latter condition provides an evolution equation for the interface of the same form as above.

Showing the well-posedness of free boundary hyperbolic problems requires new ingredients and in particular, 
\begin{itemize}
\item
A diffeomorphism must be used to transform the problem into a boundary value problem with a fixed boundary. 
\item
A change of unknown must be introduced to study the linearized equation. 
Indeed, with the standard linearization procedure, a derivative loss occurs due to the dependence 
of the transformed problem on the diffeomorphism. This loss is removed by working with 
so-called Alinhac's good unknown. 
\end{itemize}
The proof of the stability of multidimensional shocks is a celebrated achievement of Majda 
\cite{majda1,majda2,majda3}, with improvements in \cite{metivier2001}. 
Since the proof relies on the theory of initial boundary value problems, the same loss of half a derivative 
with respect to the initial and boundary data is observed.

\medbreak
The free boundary problem that motivates this work is the evolution of the contact line between 
a floating object and the water, in the situation where the motion of the waves is assumed to be 
governed by the (hyperbolic) nonlinear shallow water equations, and in horizontal dimension $d=1$. 
In a simplified version, this problem can be reduced to a free boundary hyperbolic problem, 
but with a more singular evolution equation for the free boundary, which is of the form 
\[
U(t,\ux(t))=U_{\rm i}(t,\ux(t)),
\]
where $U_{\rm i}$ is a known function (for the contact line problem, this condition expresses the fact 
that the surface elevation and the horizontal flux of the water are continuous across the contact point). 
Time differentiating this condition yields an evolution equation for $\ux$ of the form 
\[
\dot \ux(t) = \chi\bigl( (\dt U)_{\vert_{x=\ux(t)}}, (\dx U)_{\vert_{x=\ux(t)}}, 
 (\dt U_{\rm i})_{\vert_{x=\ux(t)}}, (\dx U_{\rm i})_{\vert_{x=\ux(t)}} \bigr). 
\]
The standard procedure for free boundary hyperbolic problems descrived above does not work with such a 
boundary equation, because there is obviously a loss of one derivative in the estimates: 
the boundary condition is fully nonlinear. 
In order to handle this new difficulty without using a Nash--Moser type scheme, 
we propose to work with a second order linearization and introduce a second order Alinhac's good unknown 
in order to cancel out the terms responsible for the derivative losses.

\medbreak
Proving the well-posedness of this fully nonlinear free boundary hyperbolic problem also requires sharp and 
new estimates for one-dimensional hyperbolic initial boundary values problems that are of independent interest. 
One-dimensional hyperbolic boundary value problems are generally dealt with using the method of 
characteristics \cite{li1985boundary}. 
In the Sobolev setting, there is no specific work dealing with the one-dimensional setting, and the general 
multidimensional results are used, with their drawbacks: 
high regularity requirements and derivative loss with respect to the boundary and initial data. 
These drawbacks however can  easily be bypassed by taking advantage of the specificities of the one-dimensional case, 
and in particular of the explicit construction of the Kreiss symmetrizers. 
For this reason, we propose in this article a general study of initial boundary value problems 
(as well as transmission problems) for fixed, moving, and free boundaries. 
This study is based on the new sharp estimates developed to solve the fully nonlinear free boundary problem 
mentioned above and fully exploits the specificities of the one-dimensional case. 
In particular, the high regularity requirements and the derivative loss of the general theory are removed. 
This is for instance of interest to solve the problem of transparent conditions for hyperbolic systems. 
We use this general approach to solve several problems coming from wave-structure interactions, 
as well as other problems such as conservation laws with a discontinuous flux and the stability 
of one-dimensional standards and nonstandards shocks. 
Another advantage of our approach is that it is much more elementary than the general results, 
and does not require refined paradifferential calculus for instance.

\subsection{Organization of the paper}
Section \ref{sect2} is devoted to the study of several kinds of free boundary problems for 
$2\times2$ quasilinear (strictly) hyperbolic systems. 
The case of non homogeneous linear initial boundary value problems with variable coefficients and 
a fix boundary is considered first in \S \ref{sect2VC}. 
The main focus is the derivation of a sharp estimate, given in Theorem \ref{theoIBVP1}, which requires only 
a weak control in time of the source term (weaker than $L^1(0,T)$, which is itself weaker than the standard 
$L^2(0,T)$ that can be found in the literature \cite{benzoniserre2007}) and which provides a better control 
of the trace of the solution at the boundary. 
We first assume the existence of a Kreiss symmertrizer and derive a priori weighted $L^2$-estimates in 
\S \ref{sectapL2}, and higher order estimates in \S \ref{sect2HO}. 
In order to complete the proof of Theorem \ref{theoIBVP1}, the main step, 
performed in \S \ref{secttheoIBVP1} is the explicit construction of a Kreiss symmetrizer under an explicit 
Lopatinski\u{\i} condition. 
In \S \ref{sectapplQL}, these linear estimates are used to prove the well-posedness of quasilinear systems; 
Theorem \ref{theoIBVP2} provides a sharp result for such systems, which takes advantage of the specifities 
of the one-dimensional case and improves the results provided by the general (multi-dimensional) theorems. 
It can for instance be used to improve the existing results concerning transparent boundary conditions 
for the nonlinear shallow water equations. 
In \S \ref{sectVCm} we go back to the analysis of linear initial boundary value problems, 
but this time on a moving domain, i.e., in the case where the domain on which the equations are cast is 
$(\ux(t),\infty)$, with $\ux$ assumed here to be a known function. Using a diffeomorphism that maps 
$\R_+$ to $(\ux(t),\infty)$ for all times, this problem is transformed into an initial boundary value problem 
with fix boundary, but whose coefficients depend on the diffeomorphism. 
One could apply Theorem \ref{theoIBVP1} to this problem, but would lose an unecessary derivative 
in the dependence on the diffeomorphism. 
This loss is avoided in Theorem \ref{theoIBVP3} by applying Theorem \ref{theoIBVP1} to the system satisfied 
by Alinhac's good unknown; 
in order to get a sharp result in terms of regularity requirements on the initial data, the sharp dependence 
on the source terms proved in Theorem \ref{theoIBVP1} is necessary at this point. 
These linear estimates are then used in \S \ref{sectFB1} to study quasilinear initial boundary value problems 
with free boundary, i.e., where the function $\ux(t)$ is no longer assumed to be known, 
but satisfies an evolution equation. 
The case of an evolution equation of ``kinematic'' type is considered first, so that a diffeomorphism of 
``Lagrangian'' type can be used and a solution constructed by an iterative scheme based on the linear estimates 
of Theorem \ref{theoIBVP3}. 
The more complicated case of fully nonlinear boundary conditions of the type mentioned above is addressed 
in \S \ref{sectVCm2}. 
To handle this problem, another kind of diffeomorphism must be used and a generalization of Alinhac's good 
unknown to the second order must be introduced to remove the loss of derivative induced by the fully nonlinear 
boundary condition. 
A more general type of fully nonlinear condition is also considered in \S \ref{sectext}, 
where a coupling with a system of ODEs is allowed.

\medbreak
As an illustration of the fact that the theory developed above for $2\times 2$ initial boundary value problems 
can be generalized to systems involving a higher number of equations, we propose in Section \ref{secttransmission} 
a rather detailed study of transmission problems. 
More precisely, we consider two $2\times 2$ hyperbolic systems cast on both sides of an interface, 
and coupled through transmission conditions at the interface. 
Such transmission problems can be transformed into $4\times 4$ initial boundary value problems to which 
the above theory can be adapted. 
Linear transmission problems are first considered in \S \ref{sectVCtransm}, the main step being the construction 
of a Kreiss symmetrizer whose nature depends on the number of characteristics pointing towards the interface; 
the nonlinear case is then considered in \S \ref{sectappltransmQL}. 
Moving interfaces are then treated in \S \ref{secttransmmov} for linear systems and an application to 
free boundary transmission problems with ``kinematic'' boundary condition is given in \S \ref{secttransmkin}.

\medbreak
A first application of the general theory described above to wave-structure interactions is given in 
Section \ref{sectlatpis}. 
The problem consists in studying the interaction of waves in shallow water with a lateral piston. 
The nonlinear shallow water equations are a quasilinear hyperbolic problem that falls into the class studied above. 
The domain is a half-line delimited by a piston which can move under the pressure force exerted by the wave. 
Its motion (and therefore the position of the boundary) is given by the resolution of a second order ODE in time 
(Newton's equation) coupled with the nonlinear shallow water equations. 
The key step is to show that this evolution equation is essentially of ``kinematic'' type so that 
the results of \S \ref{sectFB1} can be applied.

\medbreak
In Section \ref{sectfloat} we present the problem that motivated this work, namely, the description of the 
evolution of the contact line between a floating body and the surface of the water in the shallow water regime. 
We recall in \S \ref{sectpresfloat} the derivation of the equations proposed in \cite{Lannes2017} to describe 
this problem and investigate first, in \S \ref{sectfixfloat}, the case of a fixed floating body. 
We show that the problem can be reduced to an initial boundary value problem with free boundary governed 
by a fully nonlinear equation, which allows us to use the results of \S \ref{sectVCm2}. 
The extension to the case of a floating object with a prescribed motion is then presented in 
\S \ref{sectprescfloat} and the more complicated case of a freely floating object is studied in 
\S \ref{sectfreefloat}. 
For this latter case, the evolution of the contact point is more complicated because it is coupled with 
the three dimensional Newton equation for the solid (on the vertical and horizontal coordinates of 
the center of mass and on the rotation angle). 
Technical computations are postponed to Appendix \ref{appreform}.

\medbreak
We finally present in Section \ref{sectsev} several applications of our results on transmission problems. 
The first one, considered in \S \ref{sectdiscf} is a general $2\times 2$ system of conservation laws 
with a discontinuous flux (a typical example is provided by the nonlinear shallow water equations 
over a discontinuous topography). 
We then investigate in \S \ref{sectshocks} the stability of one-dimensional shocks 
(both classical and undercompressive); using our sharp one-dimensional results, 
we are able to improve the results one would obtain by considering the one-dimensional case 
in the general multi-dimensional theory of \cite{majda1,majda2,majda3,metivier2001} for classical shocks 
and \cite{coulombel2003} for undercompressive shocks.

\subsection{General notations}\label{sectnot}
- We write $\Omega_T = (0,T)\times \R_+$.\\
- The notation $\partial$ stands for either $\dx$ or $\dt$, so that $\partial f\in L^\infty(\Omega_T)$ for instance, 
means $\dx f\in L^\infty(\Omega_T)$ and $\dt f\in L^\infty(\Omega_T)$.\\
- We denote by $\cdot$ the $\R^2$ scalar product and by $(\cdot,\cdot)_{L^2}$ the $L^2(\R_+)$ scalar product. \\
- If $A$ is a vector or matrix, and $X$ a functional space, we simply write $A\in X$ to express the fact that 
all the elements of $A$ belong to $X$. \\
- In order to define smooth solutions of hyperbolic systems in $\Omega_T=(0,T)\times \R_+$, 
it is convenient to introduce the space $\WW^m(T)$ as 
\[
\WW^m(T)=\bigcap_{j=0}^l C^j([0,T];H^{m-j}(\R_+)),
\]
with associated norm 
\[
\| u \|_{\WW^m(T)} = \sup_{t\in[0,T]} \opnorm{ u(t) }_{m}
\quad \mbox{ with }\quad 
\opnorm{ u(t) }_m = \sum_{j=0}^m \| \dt^j u(t) \|_{H^m(\R_+)}.
\]
We have in particular $H^{m+1}(\Omega_T)\subset \WW^m(T) \subset H^m(\Omega_T)$. \\
- In order to control the boundary regularity of the solution, it is convenient to use the norm 
\[
|\vu|_{m,t} = \biggl( \sum_{j=0}^m|(\dx^ju)_{\vert_{x=0}}|_{H^{m-j}(0,t)}^2 \biggr)^\frac12
 = \biggl( \sum_{|\alpha| \leq m}|(\partial^\alpha u)_{\vert_{x=0}}|_{L^2(0,t)}^2 \biggr)^\frac12.
\]
- We also use weighted norms with an exponential function $e^{-\gamma t}$ for $\gamma>0$ defined by 
\begin{align*}
& |g|_{L_\gamma^2(0,t)} = \biggl( \int_0^t e^{-2\gamma t'}|g(t')|^2{\rm d}t' \biggr)^\frac12, \qquad
  |g|_{H_\gamma^m(0,t)} = \biggl( \sum_{j=0}^m |\dt^jg|_{L_\gamma^2(0,t)}^2 \biggr)^\frac12, \\
& \opnorm{ u(t) }_{m,\gamma} = e^{-\gamma t}\opnorm{ u(t) }_m, \qquad
 \| u \|_{\WW_\gamma^m(T)} = \sup_{t\in[0,T]} \opnorm{ u(t) }_{m,\gamma}, \\
& |\vu|_{m,\gamma,t} = \biggl( \sum_{j=0}^m|(\dx^ju)_{\vert_{x=0}}|_{H_\gamma^{m-j}(0,t)}^2 \biggr)^\frac12. \\
\end{align*}

\medskip
\noindent
{\bf Acknowledgement} \ 
This work was carried out when T. I. was visiting Universit\'e de Bordeaux 
on his sabbatical leave during the 2017 academic year. 
He is very grateful to the members of Institut de Math\'ematiques de Bordeaux 
for their kind hospitality and for fruitful discussions. 
T. I. was partially supported by JSPS KAKENHI Grant Number JP17K18742 and JP17H02856. 
D. L. is partially supported by the Del Duca Fondation, the Conseil R\'egional d'Aquitaine 
and the ANR-17-CE40-0025 NABUCO.

\section{Hyperbolic initial boundary value problems with a free boundary}\label{sect2}
This section is devoted to the analysis of a general class of initial boundary value problems, 
with a boundary that can be either fixed, in prescribed motion, or freely moving. 
We refer to \S \ref{sectnot} for the notations used, and in particular for the definition of the functional spaces.

\subsection{Variable coefficients linear $2\times 2$ initial boundary value problems}\label{sect2VC}
The aim of this section is to provide an existence theorem with sharp estimates for 
a general linear initial boundary value problem with variable coefficients of the following form, 
\begin{equation}\label{systVC}
\begin{cases}
 \dt u + A(t,x)\dx u + B(t,x) u = f(t,x) &\mbox{in}\quad \Omega_T, \\
 u_{\vert_{t=0}} = u^{\rm in}(x) & \mbox{on}\quad \R_+, \\
 \nu(t) \cdot u_{\vert_{x=0}} = g(t)& \mbox{on}\quad (0,T), 
\end{cases}
\end{equation}
where $u$, $u^{\rm in}$, $f$, and $\nu$ are $\R^2$-valued functions and $g$ is real-valued function, 
while $A$ and $B$ take their values in the space of $2\times2$ real-valued matrices. 
We also make the following assumption on the hyperbolicity of the system and on the boundary condition. 

\begin{assumption}\label{asshyp}
There exists $c_0>0$ such that the following assertions hold. 
\begin{enumerate}
\item[{\bf i.}]
$A\in W^{1,\infty}(\Omega_T), \; B\in L^\infty(\Omega_T), \; \nu \in C([0,T])$.
\item[{\bf ii.}]
For any $(t,x) \in \Omega_T$, the matrix $A(t,x)$ has eigenvalues $\lambda_+(t,x)$ and $-\lambda_-(t,x)$ satisfying 
\[
\lambda_\pm(t,x)\geq c_0. 
\]
\item[{\bf iii.}]
(The uniform Kreiss--Lopatinski\u{\i} condition.)
Denoting by ${\bf e}_+(t,x)$ a unit eigenvector associated to the eigenvalue $\lambda_+(t,x)$ of $A(t,x)$, 
for any $t\in[0,T]$ we have 
\[
|\nu(t,0) \cdot \mathbf{e}_{+}(t,0)| \geq c_0.
\]
\end{enumerate}
\end{assumption}

\begin{example}\label{ex1}
A typical example of application is to consider the linearized shallow water equations 
with a boundary condition on the horizontal water flux $q$. 
This system has the form 
\[
\begin{cases}
\dt \zeta + \dx q = 0, \\
\dt q +2 \frac{\underline{q}}{\underline{h}}\dx q
 + \Bigl( \mathtt{g}\underline{h}- \frac{\underline{q}^2}{\underline{h}^2} \Bigr)\dx \zeta = 0
\end{cases}
\]
with initial and boundary conditions 
\[
(\zeta,q)_{\vert_{t=0}} = (\zeta^{\rm in},q^{\rm in})
 \quad\mbox{ and }\quad q_{\vert_{x=0}} = g,
\]
where $\mathtt{g}$ is the gravitational constant. 
This problem is of the form \eqref{systVC} with $u = (\zeta,q)^{\rm T}$, $B=0$, $f=0$, $\nu=(0,1)^{\rm T}$, and 
\begin{equation}\label{eqASW}
A(t,x)=A(\underline{u})=
\begin{pmatrix}
 0 & 1 \\ 
 \mathtt{g}\underline{h}-\frac{\underline{q}^2}{\underline{h}^2} & 2\frac{\underline{q}}{\underline{h}}
\end{pmatrix}.
\end{equation}
The eigenvalues $\pm\lambda_{\pm}$ and the corresponding unit eigenvectors $\mathbf{e}_{\pm}$ of $A$ 
are given by $\lambda_{\pm} = \sqrt{\mathtt{g}\underline{h}}\pm \frac{\underline{q}}{\underline{h}}$ 
and ${\bf e}_{\pm} = \frac{1}{\sqrt{1+\lambda_{\pm}^2}}(1,\pm\lambda_{\pm})^{\rm T}$, so that 
Assumption \ref{asshyp} is satisfied provided that $\underline{h},\underline{q} \in W^{1,\infty}(\Omega_T)$, and 
\[
\underline{h}(t,x) \geq c_0, \qquad 
\sqrt{\mathtt{g}\underline{h}(t,x)} \pm \frac{\underline{q}(t,x)}{\underline{h}(t,x)}\geq c_0
\]
with some positive constant $c_0$ independent of $(t,x)\in\Omega_T$. 
\end{example}

\begin{notation}\label{dualnorm}
In order to define an appropriate norm to the source term $f(t,x)$ in \eqref{systVC}, 
it is convenient to use the following norm to functions of $t$ 
\[
S_{\gamma,T}^*(f) = \sup_{\varphi} \biggl\{ \biggl| \int_0^T e^{-2\gamma t}f(t)\varphi(t){\rm d}t \biggr|
 \,;\, \sup_{t\in[0,T]}e^{-\gamma t}|\varphi(t)|
  + \biggl( \gamma \int_0^T e^{-2\gamma t}|\varphi(t)|^2{\rm d}t\biggr)^\frac12 \leq 1 \biggr\},
\]
which is the norm of the dual space to $L_\gamma^{\infty}(0,T) \cap L_\gamma^2(0,T)$ equipped with the norm 
\[
\sup_{t\in[0,T]}e^{-\gamma t}|\varphi(t)|
  + \biggl( \gamma \int_0^T e^{-2\gamma t}|\varphi(t)|^2{\rm d}t\biggr)^\frac12
\]
associated to the inner product of $L_\gamma^2(0,T)$. 
\end{notation}

It is easy to check that $S_{\gamma,t}^*(f)$ is a nondecreasing function of $t\geq0$ for each fixed $f$ 
and that $S_{\gamma,t}^*(f)$ is monotone with respect to $f$ in the sense that if $0\leq f_1(t) \leq f_2(t)$ 
for $t\in[0,T]$, then we have $S_{\gamma,t}^*(f_1) \leq S_{\gamma,t}^*(f_2)$ for $t\in[0,T]$. 
Moreover, we have 
\[
S_{\gamma,T}^*(f) \leq \int_0^T e^{-\gamma t}|f(t)| {\rm d}t \quad\mbox{and}\quad
S_{\gamma,T}^*(f) \leq \biggl( \frac{1}{\gamma} \int_0^T e^{-2\gamma t}|f(t)|^2{\rm d}t\biggr)^\frac12.
\]

\begin{remark}\label{rememb}
The first of these two inequalities implies an $L^2$-type control through the Cauchy--Schwarz inequality, 
\[
\int_0^T e^{-\gamma t}|f(t)| {\rm d}t \leq \sqrt{T}\biggl(  \int_0^T e^{-2\gamma t}|f(t)|^2{\rm d}t\biggr)^\frac12,
\]
but with a right-hand side involving a factor ${\sqrt{T}}$. This is not the case for the $L^2$-type control 
(with respect to time) deduced from $S_{\gamma,T}^*(f) $ and this improvement allows to derive energy estimates 
with an exponential growth in Theorems \ref{theoIBVP1}, \ref{theoIBVP3}, and \ref{theoIBVP1transm} for instance. 
\end{remark}

The main result of this section is the following theorem (see \S \ref{sectnot} for the definition of .$ \WW^{m-1}(T)$ and of the various weighted norms used in the statement).

\begin{theorem}\label{theoIBVP1}
Let $m\geq1$ be an integer, $T>0$, and assume that Assumption \ref{asshyp} is satisfied for some $c_0>0$. 
Assume moreover that there are constants $0<K_0\leq K$ such that 
\[
\begin{cases}
\frac{1}{c_0}, \Vert A \Vert_{L^\infty(\Omega_T)}, \abs{\nu}_{L^\infty(0,T)} \leq K_0, \\
\Vert A\Vert_{W^{1,\infty}(\Omega_T)}, \Vert B \Vert_{L^\infty(\Omega_T)}, 
 \Vert(\partial A,\partial B)\Vert_{ \WW^{m-1}(T)}, \abs{\nu}_{W^{m,\infty}(0,T)} \leq K. 
\end{cases}
\]
Then, for any data $u^{\rm in} \in H^m(\R_+)$, $g\in H^m(0,T)$, and $f\in H^m(\Omega_T)$ satisfying 
the compatibility conditions up to order $m-1$ in the sense of Definition \ref{defcompVC} below, 
there exists a unique solution $u \in \WW^m(T)$ to the initial boundary value problem \eqref{systVC}. 
Moreover, the following estimate holds for any $t\in[0,T]$ and any $\gamma \geq C(K)$:
\begin{align*}
& \opnorm{ u(t) }_{m,\gamma} + \biggl( \gamma\int_0^t\opnorm{ u(t') }_{m,\gamma}^2{\rm d}t' \biggr)^\frac12
 + \vert \vu \vert_{m,\gamma,t} \\
&\leq C(K_0)\bigl( \opnorm{ u(0) }_{m} + \vert g \vert_{H_\gamma^m(0,t)} 
 + |\vf|_{m-1,\gamma,t} + S_{\gamma,t}^*(\opnorm{ \dt f(\cdot) }_{m-1}) \bigr). 
\end{align*}
Particularly, we have 
\begin{align*}
& \opnorm{ u(t) }_{m} + \vert \vu \vert_{m,t} \\
&\leq C(K_0)e^{C(K)t} \biggl( \opnorm{ u(0) }_{m} + \vert g \vert_{H^m(0,t)} 
 + |\vf|_{m-1,t} + \int_0^t \opnorm{ \dt f(t') }_{m-1}{\rm d}t' \biggr). 
\end{align*}
\end{theorem}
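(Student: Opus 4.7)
\emph{Proof plan.} The plan is to follow the classical three-step pattern — weighted $L^{2}$ a priori estimate via a Kreiss symmetrizer, higher-order estimates via tangential differentiation, and existence by regularization — but with special care devoted to the source term in order to obtain the sharp $S_{\gamma,t}^{*}$-norm instead of the usual $L^{2}$-norm. The construction of the $2\times 2$ Kreiss symmetrizer $\Sigma(t,x)$ is deferred to \S \ref{secttheoIBVP1}; I take for granted here a smooth symmetric positive-definite $\Sigma$ making $\Sigma A$ symmetric and producing, under the uniform Kreiss--Lopatinski\u{\i} assumption combined with the boundary condition $\nu\cdot\vu=g$, a strictly dissipative boundary form $-(\Sigma A\vu)\cdot\vu\geq c|\vu|^{2}-C|g|^{2}$.

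\emph{Step 1: basic $L^{2}$ estimate.} I would first test the equation against $e^{-2\gamma s}\Sigma u$ on $(0,t)\times\R_{+}$. Symmetry of $\Sigma A$ transforms the $A\dx u$-term into a boundary quadratic form controlled by strict dissipativity; the exponential weight produces a $\gamma$-gain; and the commutators coming from $\dt\Sigma$, $B$, and $\dx(\Sigma A)$ are absorbed for $\gamma\geq C(K)$. The critical non-standard step is the treatment of the source integral $\int_{0}^{t}e^{-2\gamma s}(\Sigma u,f)_{L^{2}}\,ds$: instead of Cauchy--Schwarz, I would observe that $s\mapsto\|\Sigma u(s)\|_{L^{2}}$ is an admissible test function for the duality defining $S_{\gamma,t}^{*}$ in Notation \ref{dualnorm}, since its norm $\sup_{s}e^{-\gamma s}\|\Sigma u(s)\|_{L^{2}}+(\gamma\int_{0}^{t}e^{-2\gamma s}\|\Sigma u\|_{L^{2}}^{2}\,ds)^{1/2}$ is controlled by the first two quantities on the LHS of the estimate being proven. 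This yields a bound of the form $(\mathrm{LHS})^{1/2}\cdot S_{\gamma,t}^{*}(\|f\|_{L^{2}})$ which, after absorption, closes the $m=0$ case with the sharp source norm.

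\emph{Step 2: higher order.} Next I would differentiate the equation $k$ times in $t$ for $0\leq k\leq m$. Each $\dt^{k}u$ satisfies a system of the same form with source $\dt^{k}f+[\dt^{k},A]\dx u+[\dt^{k},B]u$ and boundary datum $\dt^{k}g-[\dt^{k},\nu]\cdot\vu$; the commutators are lower order and their coefficients are controlled by the $\WW^{m-1}(T)$-bounds on $\partial A,\partial B,\nu$, so they can be absorbed inductively on $k$. Applying Step 1 to each $\dt^{k}u$ controls the tangential derivatives, with the source contributions aggregating precisely into $S_{\gamma,t}^{*}(\opnorm{\dt f}_{m-1})$ plus the initial and boundary terms. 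Normal derivatives $\dx^{j}u$ are then recovered from $\dx u=A^{-1}(f-\dt u-Bu)$ iterated $j-1$ times; the boundary norm $|\vf|_{m-1,\gamma,t}$ supplies the control of $(\dx^{j-1}f)_{\vert_{x=0}}$ needed to close the trace $|\vu|_{m,\gamma,t}$, and the interior $\dx$-derivatives of $f$ at the top order are reconstructed algebraically from tangential quantities already estimated. Summing the contributions gives the weighted estimate; the exponential form then follows by taking $\gamma=C(K)$.

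\emph{Step 3: existence, and main obstacle.} Uniqueness will follow from Step 1 applied to the difference of two solutions. For existence I would regularize $A,B,\nu$ and the data $(u^{\rm in},f,g)$ while preserving the compatibility conditions of Definition \ref{defcompVC}, solve the smoothed problem by the classical theory of linear hyperbolic mixed problems, and pass to the limit in $\WW^{m}(T)$ using the uniform a priori estimate. The hard part will be the dualization in Step 1: the test-function factor pairing against $f$ must carry simultaneously the sup-in-$t$, $\gamma$-weighted $L^{2}$-in-$t$, and weighted boundary-trace quantities appearing on the LHS, with constants that match. This is exactly what dictates the precise form of $S_{\gamma,t}^{*}$ in Notation \ref{dualnorm} and forces the commutator analysis in Step 2 to respect the same functional frame. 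Once that bookkeeping is in place, the explicit construction of $\Sigma$ for the $2\times 2$ case and the approximation scheme for existence are technical but essentially standard.
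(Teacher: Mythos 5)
Your proposal follows the same route as the paper's own proof: a weighted $L^{2}$ estimate using a Kreiss symmetrizer, with the source integral handled by duality against the $S^{*}_{\gamma,t}$-defining norm (this is precisely Proposition \ref{propNRJ1}); higher-order control by time-differentiation plus algebraic recovery of normal derivatives through $A^{-1}$ (Proposition \ref{propVC2} and Lemma \ref{ineq4}); and existence by classical approximation. The one place where you leave a genuine hole is the symmetrizer construction, which you take for granted; in the paper it is carried out explicitly in Proposition \ref{propBC} via the formula $S=\pi_{+}^{\rm T}\pi_{+}+M\pi_{-}^{\rm T}\pi_{-}$, and this explicit form is not dispensable, because it is what delivers the quantitative bounds $\mathfrak{c}_{0},\mathfrak{c}_{1}\leq C(K_{0})$ of Lemma \ref{lemsymmetrizer}; citing an off-the-shelf symmetrizer would not by itself separate the $K_{0}$- and $K$-dependences and would produce a final constant $C(K)$ rather than the sharper $C(K_{0})$ claimed in the theorem. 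A smaller bookkeeping difference: in Step 2 the paper applies Proposition \ref{propNRJ1} only to $\partial_t^m u$ and then bootstraps downward with Lemma \ref{ineq4}, whereas you propose applying it to every $\partial_t^k u$; your variant also closes (for $k=0$ one must reconvert $S^{*}_{\gamma,t}(\|f\|_{L^{2}})$ into $\|f(0)\|_{L^{2}}+S^{*}_{\gamma,t}(\|\partial_t f\|_{L^{2}})$ via Lemma \ref{ineq4}), but is marginally less economical.
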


\begin{remark}
The estimates provided by the theorem are a refinement of classical estimates that can be found 
in the extensive literature on initial boundary value problems 
(see for instance \cite{schochet1986, metivier2001, benzoniserre2007, metivier2012}). 

{\bf i.} \ 
With the exception of \cite{metivier2001}, these references provide a control of the source term 
in $L^2$-norm with respect to time; 
it turns out that such a control is not enough to handle ``fully nonlinear'' boundary conditions 
as in \S \ref{sectVCm2} below. 
In \cite{metivier2001}, a more precise upper bound involving only the $L^1$-norm in time of $f$ is provided, 
but only for constant coefficient symmetric systems. 
The above theorem extends this result to variable coefficients systems and also refines it 
since it provides a control in terms of $S^*_{\gamma,t}$ instead of $L^1$. 
This latter refinement is important for instance to get low regularity results 
-- $\WW^2(T)$ instead of $\WW^3(T)$ -- in Theorems \ref{theoIBVP2}, \ref{theoIBVP4}, \ref{theoIBVP5}, 
\ref{theoIBVP6}, and \ref{theoIBVP2transm}. 

{\bf ii.} \ 
The estimates of the theorem provide a control of $\abs{\vu}_{m,t}$ and not only of $\abs{\vu}_{H^m(0,t)}$. 

{\bf iii.} \ 
In addition to the classical $L^\infty(0,T)$ upper bound on $t\mapsto \opnorm{ u(t) }_m$, 
our estimates provide a control of its $L^1(0,T)$-norm which is uniform with respect to $t$ 
(see the comments in Remark \ref{rememb} above) which is typical of weghted estimates 
\cite{metivier2012,benzoniserre2007}. 
This term is essential in the derivation of the higher-order estimates 
(see the proof of Proposition \ref{propVC2}). 
\end{remark}

\begin{remark}
The assumption $\vert \nu \vert_{W^{m,\infty}(0,T)}\leq K$ can be weakened into 
$\vert \nu \vert_{W^{1,\infty} \cap W^{m-1,\infty}(0,T)} \leq K$ and $\vert \dt^m \nu \vert_{L^2(0,T)}\leq K$ 
(this is a particular case of Theorem \ref{theoIBVP3} below with $\ux\equiv 0$).
\end{remark}

\subsubsection{Compatibility conditions}
From the interior equations, denoting $u_k=\dt^k u$, we have 
\[
u_1=-A\dx u -Bu +f.
\]
More generally, differentiating the equation $k$-times with respect to $t$, we have 
a recursion relation 
\[
u_{k+1} = -\sum_{j=0}^k \begin{pmatrix} k \\ j \end{pmatrix}\{
 (\dt^{k-j}A)\dx u_j + (\dt^{k-j}B)u_j \} + \dt^kf.
\]
For a smooth solution $u$, $u_k^{\rm in} = {u_k}_{\vert_{t=0}}$ is therefore given inductively by 
$u_0^{\rm in} = u^{\rm in}$ and 
\begin{equation}\label{defu0k}
u_{k+1}^{\rm in} =-\sum_{j=0}^k \begin{pmatrix} k \\ j \end{pmatrix}\{
 (\dt^{k-j}A)_{\vert_{t=0}}\dx u_j^{\rm in} + (\dt^{k-j}B)_{\vert_{t=0}}u_j^{\rm in} \} + (\dt^k f)_{\vert_{t=0}}. 
\end{equation}
The boundary condition $\nu(t)\cdot u_{\vert_{x=0}}=g$ also implies that 
\[
\dt^k \big(\nu(t)\cdot {u}_{\vert_{x=0}}\big)=\dt^k g.
\]
On the edge $\{t=0,x=0\}$, smooth enough solutions must therefore satisfy 
\begin{equation}\label{compkVC}
\sum_{j=0}^k \binom{k}{j} (\dt^j \nu)_{\vert_{t=0}}\cdot {u^{\rm in}_{k-j}}_{\vert_{x=0}} = (\dt^k g)_{\vert_{t=0}}.
\end{equation}

\begin{definition}\label{defcompVC}
Let $m\geq1$ be an integer. 
We say that the data $u^{\rm in}\in H^m(\R_+)$, $f\in H^m(\Omega_T)$, and $g \in H^m(0,T)$ 
for the initial boundary value problem \eqref{systVC} satisfy the compatibility condition at order $k$ 
if the $\{u_j^{\rm in}\}_{j=0}^{m}$ defined in \eqref{defu0k} satisfy \eqref{compkVC}. 
We also say that the data satisfy the compatibility conditions up to order $m-1$ if they satisfy the 
compatibility conditions at order $k$ for $k=0,1,\ldots,m-1$. 
\end{definition}

\subsubsection{A priori $L^2$-estimate}\label{sectapL2}
We prove here an $L^2$ a priori estimate using the following assumption, 
which will be verified later as a consequence of Assumption \ref{asshyp}.

\begin{assumption}\label{assVC}
There exists a symmetric matrix $S(t,x) \in {\mathcal M}_2(\R)$ such that 
for any $(t,x)\in\Omega_T$ $S(t,x)A(t,x)$ is symmetric and the following conditions hold. 
\begin{enumerate}
\item[{\bf i.}]
There exist constants $\alpha_0,\beta_0>0$ such that for any 
$(v,t,x)\in \R^2\times \Omega_T$ we have 
\[
\alpha_0 |v|^2 \leq v^{\rm T} S(t,x) v \leq \beta_0 |v|^2.
\]
\item[{\bf ii.}]
There exist constants $\alpha_1,\beta_1>0$ such that for any 
$(v,t)\in \R^2\times (0,T)$ we have 
\[
v^{\rm T} S(t,0)A(t,0) v \leq -\alpha_1 |v|^2 + \beta_1 |\nu(t) \cdot v|^2.
\]
\item[{\bf iii.}]
There exists a constant $\beta_2$ such that 
\[
\| \dt S + \dx (SA) - 2SB \|_{L^2(\Omega_T)\to L^2(\Omega_T)} \leq \beta_2. 
\]
\end{enumerate}
\end{assumption}

\begin{notation}\label{notin}
We denote by $\beta_0^{\rm in}\leq \beta_0$ any constant such that 
the inequality in {\bf i} of the assumption is satisfied at $t=0$. 
\end{notation}

In the $L^2$ a priori estimate provided by the proposition, the control of the source term by 
$S_{\gamma,t}^*( \|f(\cdot)\|_{L^2} )$ is crucial to get the refined higher order estimates of 
Theorem \ref{theoIBVP1}.

\begin{proposition}\label{propNRJ1}
Under Assumption \ref{assVC}, there are constants  
\[
\mathfrak{c}_0 = C\Bigl( \frac{\beta_0^{\rm in}}{\alpha_0},\frac{\beta_0^{\rm in}}{\alpha_1} \Bigr)
 \quad\mbox{ and }\quad
\mathfrak{c}_1 = C\Big( \frac{\beta_0}{\alpha_0},\frac{\beta_1}{\alpha_0},\frac{\alpha_0}{\alpha_1} \Big)
\]
such that for any $u \in H^1(\Omega_T)$ solving \eqref{systVC}, any $t\in [0,T]$, and any 
$\gamma\geq\frac{\beta_2}{\alpha_0}$, 
the following inequality holds. 
\begin{align*}
&\opnorm{u(t)}_{0,\gamma} + \biggl(\gamma \int_0^t \opnorm{u(t')}_{0,\gamma}^2 {\rm d}t' \biggr)^\frac12
 + |u_{\vert_{x=0}}|_{L_\gamma^2(0,t)} \\
&\leq \mathfrak{c}_0\|u^{\rm in}\|_{L^2}
 + \mathfrak{c}_1\bigl( |g|_{L_\gamma^2(0,t)} + S_{\gamma,t}^*( \|f(\cdot)\|_{L^2} ) \bigr),
\end{align*}
where we recall that $S_{\gamma,t}^*( \|f(\cdot)\|_{L^2} )$ is defined in Notation \ref{dualnorm}. 
\end{proposition}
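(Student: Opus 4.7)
The plan is a weighted $L^2$ energy estimate built from the Friedrichs-type symmetrizer $S$ provided by Assumption \ref{assVC}.

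\textbf{Step 1 (Energy identity).} Take the $L^2(\R_+)$-inner product of \eqref{systVC} with $Su$. Using symmetry of $S$ and of $SA$, integration by parts in $x$ produces a boundary term and yields
\[
\frac{1}{2}\frac{d}{dt}(u,Su)_{L^2}-\frac{1}{2}\big(u,(\dt S+\dx(SA)-2SB)u\big)_{L^2}-\frac{1}{2}u(t,0)\cdot S(t,0)A(t,0)u(t,0)=(Su,f)_{L^2}.
\]
Multiplying by $2e^{-2\gamma t}$ and integrating over $(0,t)$ gives a weighted identity whose left-hand side contains $e^{-2\gamma t}(u,Su)(t)+2\gamma\int_0^t e^{-2\gamma t'}(u,Su)\,dt'-\int_0^t e^{-2\gamma t'}u(t',0)\cdot S(t',0)A(t',0)u(t',0)\,dt'$.

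\textbf{Step 2 (Applying the assumptions).} Use Assumption \ref{assVC}\,i to bound $(u,Su)$ and $(u^{\rm in},S(0)u^{\rm in})$ from below by $\alpha_0\|u\|_{L^2}^2$ and from above by $\beta_0^{\rm in}\|u^{\rm in}\|_{L^2}^2$. Use Assumption \ref{assVC}\,ii together with the boundary condition $\nu\cdot u_{\vert_{x=0}}=g$ to control the boundary term by $\alpha_1|u(t',0)|^2-\beta_1|g(t')|^2$. Use Assumption \ref{assVC}\,iii to bound the interior remainder by $\beta_2\|u\|_{L^2}^2$; this is absorbed into the $2\gamma\alpha_0$ coefficient of $\int_0^t e^{-2\gamma t'}\|u\|_{L^2}^2\,dt'$ as soon as $\gamma\geq\beta_2/\alpha_0$, leaving a positive fraction (say $\gamma\alpha_0$). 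Taking the supremum of the resulting inequality over $s\in[0,t]$, one obtains
\[
M(t)^2\leq \beta_0^{\rm in}\|u^{\rm in}\|_{L^2}^2+\beta_1|g|_{L_\gamma^2(0,t)}^2+2\Bigl|\int_0^t e^{-2\gamma t'}(Su,f)_{L^2}\,dt'\Bigr|,
\]
where $M(t)^2:=\alpha_0\sup_{s\leq t}e^{-2\gamma s}\|u(s)\|_{L^2}^2+\gamma\alpha_0\int_0^t e^{-2\gamma t'}\|u\|_{L^2}^2\,dt'+\alpha_1\int_0^t e^{-2\gamma t'}|u(t',0)|^2\,dt'$.

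\textbf{Step 3 (Source term via the dual norm --- the delicate point).} The heart of the argument is to avoid a crude Cauchy--Schwarz on the source term, which would only yield a control by $|f|_{L^2_\gamma(0,t;L^2)}$ and destroy the refinement needed for Theorem \ref{theoIBVP1}. Instead, bound $|(Su,f)_{L^2}|\leq\beta_0\|u\|_{L^2}\|f\|_{L^2}$ and apply the very definition of $S_{\gamma,t}^*$ (Notation \ref{dualnorm}) with test function $\varphi(t')=\|u(t')\|_{L^2}$, which gives
\[
2\int_0^t e^{-2\gamma t'}|(Su,f)_{L^2}|\,dt'\leq 2\beta_0\,S_{\gamma,t}^*(\|f(\cdot)\|_{L^2})\Bigl(\sup_{s\leq t}e^{-\gamma s}\|u(s)\|_{L^2}+\Bigl(\gamma\int_0^t e^{-2\gamma s}\|u\|_{L^2}^2\,ds\Bigr)^{1/2}\Bigr).
\]
The bracketed factor is precisely $\leq C\alpha_0^{-1/2}M(t)$. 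A Young inequality $2ab\leq \tfrac12 a^2+2b^2$ therefore absorbs half of $M(t)^2$ into the left-hand side and yields
\[
M(t)\leq C\bigl(\sqrt{\beta_0^{\rm in}}\|u^{\rm in}\|_{L^2}+\sqrt{\beta_1}|g|_{L_\gamma^2(0,t)}+(\beta_0/\sqrt{\alpha_0})S_{\gamma,t}^*(\|f(\cdot)\|_{L^2})\bigr).
\]

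\textbf{Step 4 (Conclusion).} Dividing by the appropriate powers of $\alpha_0$ and $\alpha_1$ produces the three terms on the left-hand side of the proposition with constants of the form $\mathfrak c_0=C(\beta_0^{\rm in}/\alpha_0,\beta_0^{\rm in}/\alpha_1)$ in front of $\|u^{\rm in}\|_{L^2}$ and $\mathfrak c_1=C(\beta_0/\alpha_0,\beta_1/\alpha_0,\alpha_0/\alpha_1)$ in front of the data $g$ and $f$, exactly as stated. The main obstacle I anticipate is Step~3: ordinary Cauchy--Schwarz would be catastrophically wasteful here, and the whole point of introducing the dual norm $S_{\gamma,t}^*$ is that its defining seminorm exactly matches the structure of $M(t)$, so that the pairing closes without loss. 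Everything else is a careful bookkeeping of constants and verification that $\gamma\geq\beta_2/\alpha_0$ suffices to dominate the interior commutator term.
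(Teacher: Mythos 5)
Your proposal is correct and follows essentially the same route as the paper: the energy identity with the symmetrizer $S$, the use of Assumption \ref{assVC} to produce the three positive quadratic terms (absorbing the interior commutator via $\gamma\geq\beta_2/\alpha_0$), and — crucially — the pairing of the source-term integral with the dual norm $S_{\gamma,t}^*$ against the test function $\|u(\cdot)\|_{L^2}$, followed by Young's inequality to absorb the resulting factor into the energy. The only organizational difference is that the paper keeps $\|u\|_{\WW_\gamma^0(t)}^2=\sup_{s\le t}e^{-2\gamma s}\|u(s)\|_{L^2}^2$ as an auxiliary quantity, derives a closed bound on it separately, and then re-inserts it, whereas you fold the supremum into $M(t)^2$ from the start; when doing so, note that the pointwise energy inequality first gives $\alpha_0\sup_{s\le t}e^{-2\gamma s}\|u(s)\|_{L^2}^2\leq\text{RHS}(t)$ and, separately, $\gamma\alpha_0\int_0^t(\cdot)+\alpha_1\int_0^t(\cdot)\leq\text{RHS}(t)$, so combining them to get $M(t)^2\leq 2\,\text{RHS}(t)$ costs a harmless factor of $2$ that your sketch glosses over — but this does not affect the shape of the constants $\mathfrak c_0,\mathfrak c_1$.
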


\begin{proof}
Multiplying the first equation of \eqref{systVC} by $S$ and taking the $L^2(\Omega_t)$ scalar product with 
$e^{-2\gamma t}u$, 
we get after integration by parts, 
\begin{align*}
& e^{-2\gamma t}(Su(t),u(t))_{L^2} + 2\gamma\int_0^t e^{-2\gamma t'}(Su,u)_{L^2}{\rm d}t'
 - \int_0^t e^{-2\gamma t'}(SA u\cdot u )_{\vert_{x=0}}{\rm d}t' \\
&= (S_{\vert_{t=0}}u^{\rm in},u^{\rm in})_{L^2}
 + \int_0^t e^{-2\gamma t'}((\dt S+\dx (SA)-2 SB)u + 2Sf,u)_{L^2}{\rm d}t'.
\end{align*}
Using Assumption \ref{assVC} with Notation \ref{notin}, this yields 
\begin{align*}
&\alpha_0 \opnorm{u(t)}_{0,\gamma}^2 + (2\alpha_0\gamma - \beta_2)\int_0^t \opnorm{u(t')}_{0,\gamma}^2{\rm d}t'
 + \alpha_1 |u_{\vert_{x=0}}|^2_{L_\gamma^2(0,t)} \\
&\leq \beta_0^{\rm in} \|u^{\rm in}\|_{L^2}^2 + \beta_1 |g|^2_{L_\gamma^2(0,t)} 
 + 2\beta_0 \int_0^t e^{-2\gamma t'}\|f(t')\|_{L^2} \|u(t')\|_{L^2} {\rm d}t'.
\end{align*}
We evaluate the last term as 
\begin{align*}
& \int_0^t e^{-2\gamma t'}\|f(t')\|_{L^2} \|u(t')\|_{L^2} {\rm d}t' \\
&\leq S_{\gamma,t}^*(\|f(\cdot)\|_{L^2}) \biggl\{ \|u\|_{\WW_\gamma^0(t)}
 + \biggl(\gamma\int_0^t \opnorm{u(t')}_{0,\gamma}^2{\rm d}t' \biggr)^\frac12 \biggr\} \\
&\leq S_{\gamma,t}^*(\|f(\cdot)\|_{L^2}) \|u\|_{\WW_\gamma^0(t)}
 + \frac{\beta_0}{\alpha_0}S_{\gamma,t}^*(\|f(\cdot)\|_{L^2})^2 
 + \frac14\frac{\alpha_0}{\beta_0}\gamma \int_0^t\opnorm{u(t')}_{0,\gamma}^2{\rm d}t'
\end{align*}
and we deduce that 
\begin{align}\label{estuuu}
&\opnorm{u(t)}_{0,\gamma}^2
 + \frac{\gamma}{2}\int_0^t \opnorm{u(t')}_{0,\gamma}^2{\rm d}t'
 + \frac{\alpha_1}{\alpha_0} |u_{\vert_{x=0}}|^2_{L_\gamma^2(0,t)} \\
&\leq \frac{\beta_0^{\rm in}}{\alpha_0} \|u^{\rm in}\|_{L^2}^2 + \frac{\beta_1}{\alpha_0} |g|^2_{L_\gamma^2(0,t)} 
 + 2\frac{\beta_0}{\alpha_0}S_{\gamma,t}^*(\|f(\cdot)\|_{L^2}) \|u\|_{\WW_\gamma^0(t)}
 + 2\biggl( \frac{\beta_0}{\alpha_0}S_{\gamma,t}^*(\|f(\cdot)\|_{L^2}) \biggr)^2 \nonumber \\
&\leq \frac{\beta_0^{\rm in}}{\alpha_0} \|u^{\rm in}\|_{L^2}^2 + \frac{\beta_1}{\alpha_0} |g|^2_{L_\gamma^2(0,t)} 
 + \frac12\|u\|_{\WW_\gamma^0(t)}^2
 + 4\biggl( \frac{\beta_0}{\alpha_0}S_{\gamma,t}^*(\|f(\cdot)\|_{L^2}) \biggr)^2 \nonumber
\end{align}
for $\gamma\geq\frac{\beta_2}{\alpha_0}$. 
Particularly, we have 
\[
\frac12\|u\|_{\WW_\gamma^0(t)}^2
\leq \frac{\beta_0^{\rm in}}{\alpha_0} \|u^{\rm in}\|_{L^2}^2 + \frac{\beta_1}{\alpha_0} |g|^2_{L_\gamma^2(0,t)} 
 + 4\biggl( \frac{\beta_0}{\alpha_0}S_{\gamma,t}^*(\|f(\cdot)\|_{L^2}) \biggr)^2.
\]
Plugging this into \eqref{estuuu}, we obtain the desired estimate. 
\end{proof}

\subsubsection{Product and commutator estimates}
To obtain higher order a priori estimates, we need to use calculus inequalities. 
By the standard Sobolev imbedding theorem $H^1(\R_+) \subseteq L^\infty(\R_+)$, 
we can easily obtain the following lemma.

\begin{lemma}\label{ineq1}
Let $m\geq1$ be an integer. 
There exists a constant $C$ such that the following inequalities hold:
\begin{enumerate}
\setlength{\itemsep}{3pt}
\item[{\bf i.}]
$\opnorm{ u(t)v(t) }_m \leq C(\|u(t)\|_{L^\infty(\R_+)} + \opnorm{ \partial u(t) }_{m-1}) \opnorm{ v(t)}_m$,
\item[{\bf ii.}]
$\|[\partial^\alpha,u(t)]v(t)\|_{L^2(\R_+)}
 \leq C(\|\partial u(t)\|_{L^\infty(\R_+)} + \opnorm{ \partial u(t) }_{m-1}) \opnorm{ v(t) }_{m-1}$ 
 if \ $|\alpha| \leq m$,
\item[{\bf iii.}]
$\|\partial[\partial^\alpha,u(t)]v(t)\|_{L^2(\R_+)}
 \leq C(\|\partial u(t)\|_{L^\infty(\R_+)} + \opnorm{ \partial u(t) }_{m-1}) \opnorm{ v(t) }_{m-1}$ 
 if \ $|\alpha| \leq m-1$,
\item[{\bf iv.}]
$\|\partial[\partial^\alpha;u(t),v(t)]\|_{L^2(\R_+)}
 \leq C\opnorm{ \partial u(t) }_{m-2} \opnorm{ \partial v(t) }_{m-2}$ if \ $2\leq |\alpha| \leq m-1$,
\end{enumerate}
where $[\partial^\alpha;u,v] = \partial^\alpha(uv)-(\partial^\alpha u)v-u(\partial^\alpha v)$ is a 
symmetric commutator. 
\end{lemma}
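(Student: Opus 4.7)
The plan is to treat all four inequalities as instances of Moser-type product and commutator estimates, adapted to the mixed time--space norm $\opnorm{\cdot}_m$ on the half-line $\R_+$. Since in dimension one the Sobolev embedding $H^1(\R_+)\hookrightarrow L^\infty(\R_+)$ holds, the workhorse is the tame product estimate
\[
\|(\partial^\beta F)(\partial^\gamma G)\|_{L^2(\R_+)}
 \leq C\bigl(\|F\|_{L^\infty}\opnorm{G}_k + \|G\|_{L^\infty}\opnorm{F}_k\bigr)
\qquad \text{whenever } |\beta|+|\gamma|\leq k,
\]
obtained by Leibniz expansion combined with Gagliardo--Nirenberg interpolation between $L^\infty$ and $L^2$, itself a consequence of the embedding just mentioned. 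Every bound of the lemma will reduce to this inequality after a careful application of Leibniz and a redistribution of derivatives.

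For \textbf{(i)}, fix a multi-index $\alpha$ with $|\alpha|\leq m$ and expand $\partial^\alpha(uv)$ by Leibniz. I isolate the $\beta=0$ term, which is bounded in $L^2$ by $\|u\|_{L^\infty}\opnorm{v}_m$. For every remaining term (with $|\beta|\geq 1$) I rewrite $\partial^\beta u = \partial^{\beta'}(\partial u)$ with $|\beta'|=|\beta|-1$, so that $|\beta'|+|\alpha-\beta|\leq m-1$; the tame product estimate then controls the $L^2$-norm by
\[
C\bigl(\|\partial u\|_{L^\infty}\opnorm{v}_{m-1} + \|v\|_{L^\infty}\opnorm{\partial u}_{m-1}\bigr).
\]
Since $\|v\|_{L^\infty}\leq C\opnorm{v}_m$ by Sobolev embedding, summing over $\alpha$ yields (i). Estimates \textbf{(ii)} and \textbf{(iii)} are then immediate corollaries: the commutator $[\partial^\alpha,u]v$ suppresses exactly the $\beta=0$ term, so the reasoning above already produces the required bound; applying one additional $\partial$ in (iii) only shifts the total derivative count by one and keeps every summand with at least one factor of $\partial u$.

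Estimate \textbf{(iv)} is slightly more delicate because both a symmetrization and an outer $\partial$ are in play. The symmetric commutator expands as a sum over $0<\beta<\alpha$ only, so every summand already carries at least one derivative on each of $u$ and $v$; applying $\partial$ gives terms of the form $(\partial^{\beta'}\partial u)(\partial^{\gamma'}\partial v)$ with $|\beta'|+|\gamma'|\leq |\alpha|-1\leq m-2$. The tame product estimate with $F=\partial u$, $G=\partial v$, and $k=m-2$ then provides the $L^2$-bound
\[
C\bigl(\|\partial u\|_{L^\infty}\opnorm{\partial v}_{m-2} + \|\partial v\|_{L^\infty}\opnorm{\partial u}_{m-2}\bigr)
 \leq C\opnorm{\partial u}_{m-2}\opnorm{\partial v}_{m-2},
\]
where the last step invokes $\|\partial w\|_{L^\infty}\leq C\opnorm{\partial w}_{m-2}$, valid as soon as $m-2\geq 1$. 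The only real obstacle is bookkeeping: one must verify that the restriction $0<\beta<\alpha$ together with $|\alpha|\geq 2$ genuinely guarantees a derivative on each factor, preventing any $\|u\|_{L^\infty}$ or $\|v\|_{L^\infty}$ from appearing on the right-hand side, and explaining the hypothesis $|\alpha|\geq 2$ in the statement.
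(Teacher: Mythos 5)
Your overall strategy (Leibniz expansion, peeling off one derivative to land on $\partial u$, then an interpolation argument plus $H^1(\R_+)\hookrightarrow L^\infty(\R_+)$) is the right one and is what the paper has in mind when it says the lemma follows from the Sobolev embedding. However, the auxiliary ``tame product estimate'' you state at the outset,
\[
\|(\partial^\beta F)(\partial^\gamma G)\|_{L^2(\R_+)}
 \leq C\bigl(\|F\|_{L^\infty}\opnorm{G}_k + \|G\|_{L^\infty}\opnorm{F}_k\bigr)
\quad\mbox{for }|\beta|+|\gamma|\leq k,
\]
is \emph{false} once $\partial$ is allowed to include the time derivative, which it must here since $\opnorm{\cdot}_k$ is the mixed space--time norm. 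Take $F=G$ with $F(t,x)=t\,\varphi(\varepsilon x)$ for a fixed bump $\varphi\in C^\infty_0(\R_+)$, evaluate at $t=0$, and choose $\beta=\gamma=(1,0)$ (one time derivative each), $k=2$. Then $F(0,\cdot)=0$, so $\|F\|_{L^\infty}=\|G\|_{L^\infty}=0$ and your right-hand side vanishes; yet
\[
\|(\dt F)(\dt G)\|_{L^2(\R_+)}=\|\varphi(\varepsilon\cdot)^2\|_{L^2(\R_+)}\sim\varepsilon^{-1/2}\neq 0.
\]
The reason the usual argument does not transfer is that Gagliardo--Nirenberg interpolation is available only in the spatial variable at fixed $t$; it cannot trade time derivatives for $\|F\|_{L^\infty}$. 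What one actually controls is $\|\dt^{\beta_1}F\|_{L^\infty(\R_+)}$, and for $\beta_1\geq 1$ this must be supplied by the spatial Sobolev embedding,
\[
\|\dt^{\beta_1}F\|_{L^\infty(\R_+)}\leq C\|\dt^{\beta_1}F\|_{H^1(\R_+)}\leq C\opnorm{F}_{\beta_1+1},
\]
not by $\|F\|_{L^\infty}$. This is precisely why the right-hand sides of the lemma carry $\opnorm{\partial u}_{m-1}$ \emph{in addition to} $\|\partial u\|_{L^\infty}$: the $L^\infty$ factor accounts for the pure-space terms ($\beta_1=0$ after you peel off one derivative), while $\opnorm{\partial u}_{m-1}$ is needed to absorb the $L^\infty$ norms of the time-differentiated coefficients ($1\leq\beta_1\leq m-2$).

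To repair the argument, decompose each multi-index $\alpha=(\alpha_1,\alpha_2)$ into its time and space parts, and for every Leibniz term $(\dt^{\beta_1}\dx^{\beta_2}u)(\dt^{\gamma_1}\dx^{\gamma_2}v)$ apply the one-dimensional spatial Moser estimate at fixed $t$ to the pair of functions $\dt^{\beta_1-?}\partial u$ and $\dt^{\gamma_1}v$ (after peeling one derivative from $u$), replacing every $L^\infty$ norm of a time derivative by the corresponding $\opnorm{\cdot}$-norm via the spatial embedding, and treating the endpoint terms ($\beta_2=0$, or $\gamma=0$) by a direct H\"older pairing. The bookkeeping is elementary but not automatic, and as it stands the blanket tame estimate short-circuits exactly the step where the $\opnorm{\partial u}_{m-1}$ term on the right-hand side is generated. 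You should also verify the $m=1$ case of \textbf{(i)} and \textbf{(ii)} directly, since for that endpoint the term $\|\partial u\|_{L^\infty}\opnorm{v}_{m-1}$ produced by your two-sided estimate cannot be absorbed, whereas the correct H\"older pairing (putting $L^\infty$ on $v$) closes the estimate.
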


The following Moser-type inequality is a direct consequence of the above lemma.

\begin{lemma}\label{ineq2}
Let $\mathcal{U}$ be an open set in $\R^N$, $F \in C^\infty(\mathcal{U})$, and $F(0)=0$. 
If $m \in \N$ and $u \in \WW^m(T)$ takes its value in a compact set 
$\mathcal{K} \subset \mathcal{U}$, then for any $t\in[0,T]$ we have 
\[
\opnorm{ (F(u))(t) }_m \leq C(\|u\|_{W^{[m/2],\infty}(\Omega_t)}) \opnorm{ u(t) }_m,
\]
where $[m/2]$ is the integer part of $m/2$. 
\end{lemma}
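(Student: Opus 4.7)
The plan is to prove the inequality by induction on $m$, leveraging Lemma \ref{ineq1}(i) at each step, as suggested by the remark that this is a ``direct consequence of the above lemma.'' For the base case $m=0$, I would exploit $F(0)=0$: writing
\[
F(u(t,x)) = \Bigl(\int_0^1 F'(\sigma u(t,x))\,{\rm d}\sigma\Bigr)\cdot u(t,x)
\]
and using that $u$ takes values in the compact set $\mathcal{K}\subset\mathcal{U}$, one gets the pointwise bound $|F(u)|\leq C(\mathcal{K})|u|$, and hence $\|F(u)(t)\|_{L^2(\R_+)}\leq C(\mathcal{K})\opnorm{u(t)}_0$, which matches the required estimate since $W^{0,\infty}=L^\infty$ and $\|u\|_{L^\infty(\Omega_t)}$ is automatically bounded by a constant depending on $\mathcal{K}$.

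For the inductive step, the definition of $\opnorm{\cdot}_m$ immediately yields
\[
\opnorm{v(t)}_m \leq \|v(t)\|_{L^2(\R_+)} + \opnorm{\dx v(t)}_{m-1} + \opnorm{\dt v(t)}_{m-1},
\]
so it suffices to bound $\|F(u)(t)\|_{L^2}$ (already handled by the base case) and $\opnorm{\partial F(u)(t)}_{m-1}$ for $\partial\in\{\dx,\dt\}$. The chain rule gives $\partial F(u)=F'(u)\,\partial u$, so Lemma \ref{ineq1}(i) applied at order $m-1$ produces
\[
\opnorm{F'(u)\,\partial u(t)}_{m-1} \leq C\bigl(\|F'(u)(t)\|_{L^\infty(\R_+)} + \opnorm{\partial F'(u)(t)}_{m-2}\bigr)\,\opnorm{\partial u(t)}_{m-1}.
\]
The $L^\infty$ factor is bounded by $C(\mathcal{K})$, and $\opnorm{\partial u(t)}_{m-1}\leq C\opnorm{u(t)}_m$ is immediate from the definitions. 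The remaining term satisfies $\opnorm{\partial F'(u)(t)}_{m-2}\leq C\opnorm{(F'(u)-F'(0))(t)}_{m-1}$, and is dispatched by applying the inductive hypothesis at order $m-1$ to the smooth map $G:=F'-F'(0)$, which also vanishes at the origin.

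The only piece of bookkeeping that needs attention is the observation $[(m-1)/2]\leq [m/2]$, which ensures that the norm controlling the nonlinear dependence on $u$ at each induction step can be absorbed into the single norm $\|u\|_{W^{[m/2],\infty}(\Omega_t)}$ appearing in the statement. I expect no substantial analytic obstacle: the main care required is to track the drop in order when moving from $\opnorm{\cdot}_{m-1}$ to $\opnorm{\cdot}_{m-2}$ in the factor produced by Lemma \ref{ineq1}(i), and to verify that the induction closes without generating any spurious loss of regularity on $u$ beyond $W^{[m/2],\infty}$.
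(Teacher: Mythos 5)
Your proposal has a genuine gap in the inductive step that prevents it from producing the stated linear-in-$\opnorm{u(t)}_m$ bound. When you invoke Lemma~\ref{ineq1}(i) on $\opnorm{F'(u)\,\partial u(t)}_{m-1}$, the constant it returns contains the Sobolev-type factor $\opnorm{\partial F'(u)(t)}_{m-2}$, and your plan is to dispatch this by the inductive hypothesis applied to $G=F'-F'(0)$, giving $\opnorm{\partial F'(u)(t)}_{m-2}\leq C(\|u\|_{W^{[(m-1)/2],\infty}(\Omega_t)})\opnorm{u(t)}_{m-1}$. But then the overall bound you obtain is of the form
\[
\opnorm{F(u)(t)}_m \leq C(\mathcal{K})\opnorm{u(t)}_m
 + C\bigl(\|u\|_{W^{[m/2],\infty}(\Omega_t)}\bigr)\opnorm{u(t)}_{m-1}\,\opnorm{u(t)}_m,
\]
which is \emph{quadratic} in the Sobolev norm of $u$. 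The extra factor $\opnorm{u(t)}_{m-1}$ cannot be absorbed into $C(\|u\|_{W^{[m/2],\infty}(\Omega_t)})$: for $m\geq 3$ one has $[m/2]\leq m-2<m-1$, so $\opnorm{u(t)}_{m-1}$ is a genuinely stronger quantity than anything measured by the $W^{[m/2],\infty}$ norm. The care you flag as ``bookkeeping'' at the end is precisely where the argument breaks, and the issue is structural: Lemma~\ref{ineq1}(i) is not a tame product estimate (of the form $\|u\|_{L^\infty}\opnorm{v}_m+\opnorm{u}_m\|v\|_{L^\infty}$), so iterating it cannot prevent the high-order Sobolev norm from appearing twice.

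The argument that does give the claimed estimate bypasses the induction and works directly with the Fa\`a di Bruno formula, exploiting the same Sobolev embedding $H^1(\R_+)\subset L^\infty(\R_+)$ that underlies Lemma~\ref{ineq1}. For a mixed multi-index $\alpha=(j,k)$ with $j+k\leq m$, expand
\[
\partial^\alpha F(u)=\sum F^{(l)}(u)\,\partial^{\beta_1}u\cdots\partial^{\beta_l}u,
\qquad \textstyle\sum_i|\beta_i|=|\alpha|,\quad |\beta_i|\geq 1.
\]
Since $\sum_i|\beta_i|\leq m$, at most one index $i^*$ can have $|\beta_{i^*}|>m/2$; estimate $\partial^{\beta_{i^*}}u(t)$ in $L^2(\R_+)$, which is controlled by $\opnorm{u(t)}_m$ because its time-derivative order $j^*$ and space-derivative order $k^*$ satisfy $j^*+k^*\leq m$, and estimate every other factor in $L^\infty$, each controlled by $\|u\|_{W^{[m/2],\infty}(\Omega_t)}$ since $|\beta_i|\leq[m/2]$. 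Combined with $\|F^{(l)}(u)\|_{L^\infty}\leq C(\mathcal{K})$ and your correct treatment of the order-zero term via $F(0)=0$, this yields $\|\partial^\alpha F(u)(t)\|_{L^2}\leq C(\|u\|_{W^{[m/2],\infty}(\Omega_t)})\opnorm{u(t)}_m$ with exactly one power of $\opnorm{u(t)}_m$. Your base case and the observation $[(m-1)/2]\leq[m/2]$ are fine, but the inductive scheme built on Lemma~\ref{ineq1}(i) needs to be replaced by this direct counting argument.
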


We also need Moser-type inequalities for the trace at the boundary of the nonlinear terms, 
as in the following lemma.

\begin{lemma}\label{ineq3}
Let $\mathcal{U}$ be an open set in $\R^N$, $F \in C^\infty(\mathcal{U})$, and $F(0)=0$. 
If $m \in \N$ and $u=u(t,x)$ takes its value in a compact set $\mathcal{K} \subset \mathcal{U}$, 
then we have 
\begin{enumerate}
\setlength{\itemsep}{3pt}
\item[{\bf i.}]
$|F(u)_{\vert_{x=0}}|_{m,t} 
 \leq C( \sum_{|\alpha| \leq [m/2]}|(\partial^\alpha u)_{\vert_{x=0}}|_{L^\infty(0,t)} )|\vu|_{m,t}$,
\item[{\bf ii.}]
$|F(u)_{\vert_{x=0}}|_{m,t} \leq C( \|u\|_{\WW^{[m/2]+1}(t)} )|\vu|_{m,t}$,
\item[{\bf iii.}]
$|\dt(F(u))_{\vert_{x=0}}|_{m,t}
 \leq C( \|u\|_{\WW^{m}(t)}, \|u\|_{L^\infty(\Omega_T)})
 (|(\dt u)_{\vert_{x=0}}|_{m,t} + \|\dt u\|_{\WW^{m}(t)}|\vu|_{m,t})$, 
\end{enumerate}
where $[m/2]$ is the integer part of $m/2$. 
\end{lemma}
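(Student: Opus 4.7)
The plan is to prove all three inequalities through a common scheme: apply the Faà di Bruno chain rule to $F(u)$ or $\dt F(u)$, restrict to $x=0$, and estimate each resulting monomial $F^{(k)}(u)\prod_i \partial^{\alpha_i}u$ by placing one ``high frequency'' factor in $L^2(0,t)$ and all the others in $L^\infty(0,t)$. Since $F\in C^\infty$ with $F(0)=0$ and $u$ takes values in a compact set $\mathcal{K}$, the scalar $F^{(k)}(u)_{\vert_{x=0}}$ is always pointwise bounded by a constant depending only on $\mathcal{K}$, so it plays no role beyond an absorbable multiplicative constant.

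For (i), fix a multi-index $\alpha$ with $|\alpha|\leq m$ and expand
\[
\partial^\alpha F(u)=\sum_{k=1}^{|\alpha|} F^{(k)}(u)\sum_{\substack{\alpha_1+\cdots+\alpha_k=\alpha\\ |\alpha_i|\geq 1}} c_{\alpha_1,\ldots,\alpha_k}\prod_{i=1}^k\partial^{\alpha_i}u.
\]
In each decomposition, at most one $|\alpha_{i_0}|$ can exceed $[m/2]$ (since $\sum|\alpha_i|\leq m$). Place $(\partial^{\alpha_{i_0}}u)_{\vert_{x=0}}$ in $L^2(0,t)$, bounded by $|\vu|_{m,t}$, and the remaining factors in $L^\infty(0,t)$, each bounded by the supremum appearing on the right-hand side of (i). Summing over decompositions and over $|\alpha|\leq m$ yields (i). Claim (ii) then follows from (i) by bounding the $L^\infty(0,t)$ norm of $(\partial^\alpha u)_{\vert_{x=0}}$ for $|\alpha|\leq[m/2]$: writing $\partial^\alpha=\dt^{\alpha_t}\dx^{\alpha_x}$, one has $\partial^\alpha u\in C^0([0,t];H^{[m/2]+1-|\alpha|}(\R_+))$ with $[m/2]+1-|\alpha|\geq 1$, and the one-dimensional embedding $H^1(\R_+)\hookrightarrow L^\infty(\R_+)$ then gives $(\partial^\alpha u)_{\vert_{x=0}}\in C^0([0,t])$ with norm controlled by $\|u\|_{\WW^{[m/2]+1}(t)}$.

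For (iii), I apply Faà di Bruno to $\partial^\alpha\dt F(u)=\partial^{\alpha'}F(u)$ with $\alpha'=\alpha+(1,0)$, so $|\alpha'|\leq m+1$ and the time component of $\alpha'$ is $\geq 1$. In every resulting decomposition $\alpha'_1+\cdots+\alpha'_k=\alpha'$, at least one index $i$ has a nonzero time component; for such an index, $\partial^{\alpha'_i}u=\partial^{\beta_i}\dt u$ with $|\beta_i|\leq m$, and I call it a $\dt$-factor. I select the index $i_0$ to be placed in $L^2(0,t)$ as follows: if some index $j$ satisfies $|\alpha'_j|>[m/2]$ (at most one such $j$ exists, by the same counting as in (i)), take $i_0=j$; otherwise, take $i_0$ to be any $\dt$-factor. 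Two cases arise. \emph{First}, if $i_0$ is a $\dt$-factor, then $|(\partial^{\alpha'_{i_0}}u)_{\vert_{x=0}}|_{L^2(0,t)}\leq|(\dt u)_{\vert_{x=0}}|_{m,t}$, and the remaining (small) factors have $L^\infty(0,t)$ norms controlled by $C(\|u\|_{\WW^m(t)})$ via the argument of (ii) applied either to $u\in\WW^{[m/2]+1}(t)\subset\WW^m(t)$ or to $\dt u\in\WW^{[m/2]}(t)\subset\WW^{m-1}(t)$. \emph{Second}, if $i_0$ is not a $\dt$-factor, then $\alpha'_{i_0}$ is purely spatial with $|\alpha'_{i_0}|\leq m$, so its $L^2$ norm is bounded by $|\vu|_{m,t}$; among the remaining small factors, at least one must be a $\dt$-factor, and its $L^\infty(0,t)$ norm is dominated by $\|\dt u\|_{\WW^m(t)}$ through the same trace/embedding argument, producing the contribution $C(\|u\|_{\WW^m(t)})\|\dt u\|_{\WW^m(t)}|\vu|_{m,t}$. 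Summing over decompositions and over $|\alpha|\leq m$ yields (iii).

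The principal obstacle is the bookkeeping in (iii): one must keep track of where the extra time derivative ends up in the Faà di Bruno expansion, verify that small $\dt$-factors can always be placed in $L^\infty(0,t)$ with norms controlled by $\|\dt u\|_{\WW^m(t)}$ (and not by an unavailable stronger norm), and confirm that the $\|\dt u\|_{\WW^m(t)}\,|\vu|_{m,t}$ contribution appears precisely when the large factor is purely spatial. Once this case analysis is fixed, the summation over decompositions is routine.
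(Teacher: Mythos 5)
Your arguments for (i) and (ii) match the paper's: Fa\`a di Bruno, the observation that at most one part of the partition can exceed $[m/2]$ (because the parts sum to $|\alpha|\le m$), and then the one-dimensional Sobolev embedding $H^1(\R_+)\hookrightarrow L^\infty(\R_+)$ for the trace. For (iii), however, you take a genuinely different route. The paper's proof first applies the chain rule once and writes $\partial^\alpha\dt(F(u))=F'(u)\partial^\alpha\dt u+[\partial^\alpha,F'(u)]\dt u$; in the commutator, $\dt u$ carries at most $m-1$ derivatives, hence is controlled in $W^{m-1,\infty}(\Omega_t)$ by $\|\dt u\|_{\WW^m(t)}$, while the derivatives landing on $F'(u)$ are placed in $L^2(0,t)$ and estimated by (ii). You instead expand $\partial^{\alpha'}F(u)$ with $\alpha'=\alpha+(1,0)$ directly by Fa\`a di Bruno and choose factor by factor. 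This is feasible, but your bookkeeping has a gap.

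The parenthetical claim ``at most one $j$ satisfies $|\alpha'_j|>[m/2]$, by the same counting as in (i)'' is false here: the parts of $\alpha'$ sum to $|\alpha'|\le m+1$, not to $m$. For $m$ odd with $|\alpha|=m$, the decomposition $k=2$, $|\alpha'_1|=|\alpha'_2|=(m+1)/2$ produces \emph{two} parts strictly above $[m/2]=(m-1)/2$. This is not cosmetic. Take $m=1$, $\alpha=(1,0)$: the term $F''(u)(\dt u)^2$ has both factors large and both are $\dt$-factors. Your First-case rule places one $\dt u$ in $L^2(0,t)$, giving $|(\dt u)_{\vert_{x=0}}|_{m,t}$; the other must be placed in $L^\infty(0,t)$, which for $m=1$ requires $\|\dt u\|_{\WW^m(t)}$ --- the weaker control $\dt u\in\WW^{m-1}(t)=C^0([0,t];L^2(\R_+))$ gives no pointwise trace at all, so the claim that the remaining factors are controlled by $C(\|u\|_{\WW^m(t)})$ fails. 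The product $\|\dt u\|_{\WW^m(t)}\,|(\dt u)_{\vert_{x=0}}|_{m,t}$ is not of the form appearing on the right-hand side of (iii). The term $F''(u)\,\dt u\,\dx u$ from $\alpha'=(1,1)$ raises the same difficulty if $\dt u$ is the factor chosen for $L^2$: your selection rule is then ambiguous between two large parts, and the wrong choice leaves $\dx u$ in $L^\infty$ with no available bound.

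The fix is to reserve the estimate $|(\dt u)_{\vert_{x=0}}|_{m,t}$ for the $k=1$ term $F'(u)\partial^\alpha\dt u$ only, where a part may genuinely reach order $m+1$. For $k\ge2$ every part satisfies $|\alpha'_i|\le m$, so the $L^2$-factor is always dominated by $|\vu|_{m,t}$ regardless of whether it carries a time derivative; among the remaining factors one is a $\dt$-factor $\partial^\beta\dt u$ with $|\beta|\le m-1$, contributing $\|\dt u\|_{W^{m-1,\infty}(\Omega_t)}\le C\|\dt u\|_{\WW^m(t)}$ in $L^\infty(0,t)$, and the others contribute $C(\|u\|_{\WW^m(t)})$. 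This yields $C|(\dt u)_{\vert_{x=0}}|_{m,t}$ from $k=1$ and $C(\|u\|_{\WW^m(t)})\|\dt u\|_{\WW^m(t)}|\vu|_{m,t}$ from $k\ge2$, as required. The paper's commutator identity enforces exactly this split automatically --- no term in $[\partial^\alpha,F'(u)]\dt u$ carries all $|\alpha|$ derivatives on $\dt u$, so $\dt u$ is never placed in $L^2$ except in the principal term --- which is why it is the cleaner route.
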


\begin{proof}
The proof of {\bf i} is straightforward and {\bf i} together with the Sobolev imbedding theorem 
$H^1(\R_+) \subseteq L^\infty(\R_+)$ yields {\bf ii}. 
We will prove {\bf iii}. 
The case $m=0$ is obvious so that we assume $m\geq1$. 
In view of $\partial^\alpha\dt(F(u)) = F'(u)\partial^\alpha\dt u + [\partial^\alpha,F'(u)]\dt u$, we have 
\begin{align*}
|\dt(F(u))_{\vert_{x=0}}|_{m,t}
&\leq C|(\dt u)_{\vert_{x=0}}|_{m,t}
 + C\|\dt u\|_{W^{m-1,\infty}(\Omega_t)}\sum_{1 \leq |\alpha| \leq m}|\partial^\alpha F'(u)|_{L^2(0,t)} \\
&\leq C|(\dt u)_{\vert_{x=0}}|_{m,t}
 + C(\|u\|_{\WW^{[m/2]+1}(t)})\|\dt u\|_{\WW^m(t)}|\vu|_{m,t}.
\end{align*}
Since $[m/2]+1 \leq m$, we obtain the desired inequality. 
\end{proof}

\begin{lemma}\label{ineq4}
There exists an absolute constant $C$ such that for any $\gamma>0$ and any integer $m\geq1$ we have 
\begin{align}
\label{ineq5}
& e^{-\gamma t}|u(t)| + \biggl( \gamma\int_0^t e^{-2\gamma t'}|u(t')|^2{\rm d}t' \biggr)^\frac12
 \leq C\bigl( |u(0)| + S_{\gamma,t}^*(|\dt u|) \bigr), \\
\label{ineq6}
& |u_{\vert_{x=0}}|_{m-1,\gamma,t}
 \leq C( \gamma^{-\frac12}\opnorm{u(0)}_m + \gamma^{-1}|u_{\vert_{x=0}}|_{m,\gamma,t} ), \\
\label{ineq7}
& \opnorm{u(t)}_{m-1,\gamma} + \biggl( \gamma\int_0^t \opnorm{u(t')}_{m-1,\gamma}^2{\rm d}t' \biggr)^\frac12
 \leq C\bigl( \opnorm{u(0)}_{m-1} + S_{\gamma,t}^*(\opnorm{\dt u(\cdot)}_{m-1}) \bigr).
\end{align}
\end{lemma}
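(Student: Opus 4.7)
\medbreak
\emph{Plan of proof.} \
The three inequalities share one mechanism: a weighted $L^2$-energy identity in $t$ combined with the defining property of $S^*_{\gamma,t}$ as the dual norm of $L^\infty_\gamma(0,t)\cap L^2_\gamma(0,t)$. I would prove \eqref{ineq5} first, deduce \eqref{ineq6} from a scalar consequence of it combined with the trace theorem, and obtain \eqref{ineq7} by adapting the proof of \eqref{ineq5} to functions valued in a Hilbert space.

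\smallbreak
\emph{Step 1: proof of \eqref{ineq5}.} \
I would integrate the identity
\[
\frac{d}{dt}\bigl(e^{-2\gamma t}|u(t)|^2\bigr) + 2\gamma e^{-2\gamma t}|u(t)|^2 = 2 e^{-2\gamma t} u(t)\, \dt u(t)
\]
over $[0,s]$ for each $s \in [0,t]$, then take the supremum in $s$ for the pointwise term (the integral term is monotone). Writing $N := \|u\|_{\WW_\gamma^0(t)}$ and $M := (\gamma\int_0^t e^{-2\gamma t'}|u(t')|^2\,dt')^{1/2}$, this gives
\[
N^2 + 2 M^2 \leq |u(0)|^2 + 2\Babs{\int_0^t e^{-2\gamma t'}\, \dt u(t')\, u(t')\,dt'}.
\]
The last integral is exactly the pairing defining $S^*_{\gamma,t}$ (with $f = \dt u$ and test function $\varphi = u$; since the set of admissible $\varphi$'s is stable under multiplication by $\mathrm{sgn}(f)$, we have $S^*_{\gamma,t}(f) = S^*_{\gamma,t}(|f|)$), so it is bounded by $S^*_{\gamma,t}(|\dt u|)(N + M)$. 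Young's inequality then absorbs $\tfrac12 N^2 + M^2$ into the left-hand side, yielding $N + M \leq C(|u(0)| + S^*_{\gamma,t}(|\dt u|))$; since $e^{-\gamma t}|u(t)| \leq N$, \eqref{ineq5} follows.

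\smallbreak
\emph{Step 2: proof of \eqref{ineq6}.} \
From \eqref{ineq5} and the second $S^*_{\gamma,t}$-estimate recalled before Remark \ref{rememb}, namely $S^*_{\gamma,t}(|f|) \leq \gamma^{-1/2}|f|_{L^2_\gamma(0,t)}$, one extracts the scalar consequence
\[
|v|_{L^2_\gamma(0,t)} \leq C\bigl(\gamma^{-1/2}|v(0)| + \gamma^{-1}|\dt v|_{L^2_\gamma(0,t)}\bigr).
\]
I would apply this to $v(t) := (\dt^k \dx^j u)(t,0)$ for each $(j,k)$ with $j+k \leq m-1$. Since $(k{+}1) + j \leq m$, $|\dt v|_{L^2_\gamma(0,t)}$ is one of the summands in $|u_{|x=0}|_{m,\gamma,t}$; and since $\dt^k \dx^j u(0) \in H^{m-j-k}(\R_+) \subset H^1(\R_+)$, the trace inequality $H^1(\R_+) \hookrightarrow L^\infty_{\vert x=0}$ gives $|v(0)| \leq C\|\dt^k u(0)\|_{H^{j+1}(\R_+)}$, which is controlled by $\opnorm{u(0)}_m$ because $j+1 \leq m$. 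Squaring and summing over $j+k \leq m-1$ yields \eqref{ineq6}.

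\smallbreak
\emph{Step 3: proof of \eqref{ineq7}.} \
The argument of Step 1 extends verbatim to functions $v : [0,T] \to H$ valued in a Hilbert space: replace $|u|^2$ by $\|v\|_H^2$, use $\tfrac{d}{dt}\|v\|_H^2 = 2(\dt v, v)_H$ bounded by $2\|\dt v\|_H \|v\|_H$ via Cauchy--Schwarz, and apply the $S^*_{\gamma,t}$-pairing to the scalar functions $\|\dt v(\cdot)\|_H$ and $\|v(\cdot)\|_H$. This produces
\[
e^{-\gamma t}\|v(t)\|_H + \Bigl(\gamma\int_0^t e^{-2\gamma t'}\|v(t')\|_H^2\,dt'\Bigr)^{\!1/2} \leq C\bigl(\|v(0)\|_H + S^*_{\gamma,t}(\|\dt v(\cdot)\|_H)\bigr).
\]
I would apply this with $H = H^{m-1}(\R_+)$ and $v = \dt^j u$ for each $j = 0,\ldots,m-1$; since $\|\dt^{j+1}u\|_{H^{m-1}} \leq \opnorm{\dt u}_{m-1}$, the monotonicity of $S^*_{\gamma,t}$ replaces the last factor by $S^*_{\gamma,t}(\opnorm{\dt u(\cdot)}_{m-1})$. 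Summing in $j$, the pointwise terms add up to $\opnorm{u(t)}_{m-1,\gamma}$, and from $\sum_j a_j^2 \geq m^{-1}(\sum_j a_j)^2$ the integral terms control $(\gamma\int_0^t \opnorm{u(t')}_{m-1,\gamma}^2\,dt')^{1/2}$, giving \eqref{ineq7}. The only nontrivial bookkeeping is precisely this last equivalence between the $\ell^1$-style norm $\opnorm{\cdot}_{m-1}$ and the $\ell^2$-style sum naturally produced by the Hilbert argument; the loss of a factor $\sqrt{m}$ is absorbed in $C$.
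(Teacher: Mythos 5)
Your proposal is correct and follows the same weighted-Gronwall-plus-duality mechanism the paper uses: Step~1 reproduces the paper's argument for \eqref{ineq5} (making explicit the absorption of the supremum term that the paper leaves implicit), Step~2 is exactly the paper's ``corollary of \eqref{ineq5} plus trace theorem'' argument for \eqref{ineq6}, and Step~3 spells out the Hilbert-space-valued version that the paper dismisses with ``similarly''. The only blemish is the $m$-dependent factor you acknowledge at the end of Step~3, which sits slightly at odds with the lemma's claim of an \emph{absolute} constant; it can be removed by running the weighted identity directly on $v(t)=(u(t),\dt u(t),\dots,\dt^{m-1}u(t))$ viewed in the $\ell^1$-product Banach space $(H^{m-1}(\R_+))^m$ (so that $\|v(t)\|=\opnorm{u(t)}_{m-1}$ and $\|\dt v(t)\|=\opnorm{\dt u(t)}_{m-1}$), using the a.e.\ inequality $\frac{d}{dt}\|v(t)\|\le\|\dt v(t)\|$ in place of the Hilbert-space pairing.
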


\begin{proof}
Integrating the identity 
\[
\frac{\rm d}{{\rm d}t}(e^{-2\gamma t}|u(t)|^2) + 2\gamma e^{-2\gamma t}|u(t)|^2
= 2e^{-2\gamma t}u(t)\cdot\dt u(t),
\]
we have 
\[
e^{-2\gamma t}|u(t)|^2 + 2\gamma \int_0^t e^{-2\gamma t'}|u(t')|^2{\rm d}t' 
= |u(0)|^2 + 2\int_0^t e^{-2\gamma t'}u(t')\cdot\dt u(t'){\rm d}t'.
\]
The last term is evaluated as 
\begin{align*}
2\int_0^t e^{-2\gamma t'}u(t')\cdot\dt u(t'){\rm d}t'
&\leq 2\int_0^t e^{-2\gamma t'}|u(t')||\dt u(t')|{\rm d}t' \\
&\leq 2S_{\gamma,t}^*(|\dt u|)\biggl\{ \sup_{t'\in[0,t]}e^{-\gamma t'}|u(t')|
 + \biggl(\gamma \int_0^t e^{-2\gamma t'}|u(t')|^2{\rm d}t' \biggr)^\frac12 \biggr\} \\
&\leq \frac12\sup_{t'\in[0,t]}e^{-2\gamma t'}|u(t')|^2
 + \gamma \int_0^t e^{-2\gamma t'}|u(t')|^2{\rm d}t'
 +3S_{\gamma,t}^*(|\dt u|)^2,
\end{align*}
so that we obtain \eqref{ineq5}. 
Similarly, we can show \eqref{ineq7}. 
As a corollary of \eqref{ineq5}, we have 
\[
|u|_{L_\gamma^2(0,t)} \leq C( \gamma^{-\frac12}|u(0)| + \gamma^{-1}|\dt u|_{L_\gamma^2(0,t)} ).
\]
Applying this inequality to $(\partial^\alpha u)_{\vert_{x=0}}$, summing the resulting inequality over 
$|\alpha| \leq m-1$, and using the Sobolev imbedding theorem $H^1(\R_+) \subseteq L^\infty(\R_+)$, 
we obtain \eqref{ineq6}. 
\end{proof}

\subsubsection{Higher order a priori estimate}\label{sect2HO}
We can now state the generalization of Proposition \ref{propNRJ1} to higher order Sobolev spaces.

\begin{proposition}\label{propVC2}
Let $m\geq1$ be an integer, $T>0$, and assume that Assumption \ref{assVC} is satisfied. 
Assume moreover that there are two constants $0<K_0\leq K$ such that 
\[
\begin{cases}
\mathfrak{c}_0, \mathfrak{c}_1, \|A\|_{L^\infty(\Omega_T)}, 
 \|A^{-1}\|_{L^\infty(\Omega_T)}, |\nu|_{L^\infty(0,T)} \leq K_0, \\
\frac{\beta_2}{\alpha_0}, \|A\|_{W^{1,\infty}(\Omega_T)}, \|B\|_{L^\infty(\Omega_T)}, 
 \|(\partial A,\partial B)\|_{\WW^{m-1}(T)}, |\nu|_{W^{m,\infty}(0,T)} \leq K,
\end{cases}
\]
where $\mathfrak{c}_0$ and $\mathfrak{c}_1$ are as in Proposition \ref{propNRJ1}. 
Then, every solution $u\in H^{m+1}(\Omega_{T})$ to the initial boundary value problem \eqref{systVC} satisfies, 
for any $t\in[0,T]$ and any $\gamma \geq C(K)$, 
\begin{align*}
&\opnorm{ u(t) }_{m,\gamma} + \biggl( \gamma\int_0^t \opnorm{ u(t') }_{m,\gamma}^2{\rm d}t' \biggr)^\frac12
 + |\vu|_{m,\gamma,t} \\
&\leq C(K_0)\bigl( \opnorm{ u(0) }_{m} + \abs{g}_{H_\gamma^m(0,t)} 
 + \abs{\vf}_{m-1,\gamma,t} + S_{\gamma,t}^*(\opnorm{ \dt f(t') }_{m-1}) \bigr).
\end{align*}
\end{proposition}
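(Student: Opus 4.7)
\medbreak
\noindent\textbf{Proof plan.} The plan is to argue by induction on $m\geq 1$, using Proposition \ref{propNRJ1} together with the fact that $A$ is uniformly invertible (guaranteed by item \textbf{ii.} of Assumption \ref{asshyp}, since $\|A^{-1}\|_{L^\infty(\Omega_T)}\le c_0^{-1}\le K_0$) in order to trade one spatial derivative against one time derivative. In the base case $m=1$, Proposition \ref{propNRJ1} is applied directly to both $u$ and $w:=\dt u$; the inductive step uses the estimate at order $m-1$ applied to $u$ and to $w$, and the argument is identical in structure.

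Differentiating \eqref{systVC} once in $t$, the function $w:=\dt u$ solves
\[
\dt w+A\dx w+Bw=\dt f-(\dt A)\dx u-(\dt B)u, \qquad \nu\cdot w_{\vert_{x=0}}=\dt g-(\dt\nu)\cdot u_{\vert_{x=0}},
\]
with initial datum $w(0)=u_1^{\rm in}$ given by \eqref{defu0k}. Applying the estimate at order $m-1$ to both $u$ and $w$, the main task is to absorb the commutator-type contributions $(\dt A)\dx u$, $(\dt B)u$, and $(\dt\nu)\cdot u_{\vert_{x=0}}$ appearing in the right-hand side for $w$. Using Lemmas \ref{ineq1}--\ref{ineq3}, they are bounded (for instance) by $\opnorm{\dt((\dt A)\dx u)(t')}_{m-2}\le C(K)\opnorm{u(t')}_m$, so that
\[
S_{\gamma,t}^*(\opnorm{\dt((\dt A)\dx u)(\cdot)}_{m-2})\le C(K)\gamma^{-1/2}\Bigl(\gamma\int_0^t\opnorm{u(t')}_{m,\gamma}^2{\rm d}t'\Bigr)^{1/2}
\]
by the second elementary inequality following Notation \ref{dualnorm}. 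The factor $\gamma^{-1/2}$ allows these terms to be absorbed into the left-hand side of the target estimate provided $\gamma\ge C(K)$; analogous bounds dispose of the other commutators and of the boundary correction $(\dt\nu)\cdot u_{\vert_{x=0}}$.

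Combining these estimates controls $\opnorm{u(t)}_{m-1,\gamma}$, $\opnorm{\dt u(t)}_{m-1,\gamma}$, their weighted time integrals, and the traces $|\vu|_{m-1,\gamma,t}$ and $|(\dt u)_{\vert_{x=0}}|_{m-1,\gamma,t}$. What remains is the pure spatial component $\|\dx^m u(t)\|_{L^2}$ and the top-order trace $|(\dx^m u)_{\vert_{x=0}}|_{L^2_\gamma}$. Both are recovered from the identity $\dx u=A^{-1}(f-\dt u-Bu)$, iterated: for any $(j,k)$ with $j+k\le m$ and $k\ge 1$, $\dt^j\dx^k u$ is a polynomial in $\dt^{j'}\dx^{k'}u$ with $k'<k$ and in derivatives of $A^{-1},B,f$. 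Lemma \ref{ineq1} then closes the bulk estimate, with a source contribution bounded by $\opnorm{f(t)}_{m-1,\gamma}$, which is itself controlled by $\opnorm{f(0)}_{m-1}+S_{\gamma,t}^*(\opnorm{\dt f(\cdot)}_{m-1})$ via Lemma \ref{ineq4}. Restricting the same identity to $x=0$ and summing produces $|\vu|_{m,\gamma,t}$, the new source contribution being exactly $|\vf|_{m-1,\gamma,t}$.

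The principal obstacle is the careful bookkeeping of the $S_{\gamma,t}^*$ norms: every source-type contribution arising from a commutator or from the trade $\dx\leftrightarrow\dt$ must be controlled in this finer dual norm rather than in $L^2_\gamma$ in time, otherwise one would pick up an unwanted $\sqrt T$ factor (cf.\ Remark \ref{rememb}) and lose both the exponential-growth structure of the final estimate and the low-regularity applications of the theorem. This is precisely why the weighted integral $\bigl(\gamma\int_0^t\opnorm{u}_{m-1,\gamma}^2\bigr)^{1/2}$ must be retained on the left-hand side throughout the induction: combined with $S_{\gamma,t}^*(|\varphi|)\le\bigl(\gamma^{-1}\int_0^t e^{-2\gamma t'}|\varphi(t')|^2{\rm d}t'\bigr)^{1/2}$, it delivers exactly the $S^*$-type control of lower-order terms required at the next inductive step.
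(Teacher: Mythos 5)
Your plan follows the same core strategy as the paper's proof: Proposition \ref{propNRJ1} at the $L^2$ level, the commutator estimates of Lemma \ref{ineq1}, the $\dx\leftrightarrow\dt$ trade through the equation, the $S^*$-bookkeeping of Lemma \ref{ineq4}, and a $\gamma$-large absorption at the end. The organization is slightly different — you induct on $m$, applying the order-$(m-1)$ estimate to $u$ and $\dt u$ and then recovering only $\dx^m u$ from the equation, whereas the paper applies Proposition \ref{propNRJ1} once to $u_m=\dt^m u$ and then recovers $\dt^k\dx^{l+1}u$ for $k+l\leq m-1$ by iterating the relation $\dt^{k+1}\dx^l u+A\dt^k\dx^{l+1}u=f_{k,l}$ — but both decompositions control the same family of derivatives and either can be made to work.

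There is, however, a genuine gap in the treatment of the source term in your $\dx^m u$-recovery step. You bound it by $\opnorm{f(t)}_{m-1,\gamma}$ and then by $\opnorm{f(0)}_{m-1}+S^*_{\gamma,t}(\opnorm{\dt f(\cdot)}_{m-1})$ via Lemma \ref{ineq4}. But $\opnorm{f(0)}_{m-1}$ is not one of the admissible right-hand-side quantities of the proposition; converting it to $\opnorm{u(0)}_m$ through $f=\dt u+A\dx u+Bu$ at $t=0$ brings in $\|B\|_{L^\infty}\leq K$ and degrades the constant in front of $\opnorm{u(0)}_m$ to $C(K)$, which is strictly weaker than the $C(K_0)$ asserted and relied on later (e.g.\ in the proof of Theorem \ref{theoIBVP3}). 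The paper circumvents this by carrying through the commutator-adjusted quantity $f_{k,l}:=\dt^k\dx^l(f-Bu)-[\dt^k\dx^l,A]\dx u$ and using its two equivalent expressions: the identity $f_{k,l}=\dt^{k+1}\dx^l u+A\,\dt^k\dx^{l+1}u$ involves only $u$ and $A$, so $\|f_{k,l}(0)\|_{L^2}\leq(1+\|A\|_{L^\infty})\opnorm{u(0)}_m\leq C(K_0)\opnorm{u(0)}_m$, while the first expression is used for $\|\dt f_{k,l}\|_{L^2}$ and $|{f_{k,l}}_{\vert_{x=0}}|_{L^2_\gamma(0,t)}$, whose $C(K)$-contributions land on $\gamma$-absorbable terms. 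To close your argument with the stated constant, run the recovery of $\dx^m u$ through $h:=\dx^{m-1}(f-Bu)-[\dx^{m-1},A]\dx u=\dt\dx^{m-1}u+A\dx^m u$ and bound $\|h(0)\|_{L^2}$ by the second form, rather than appealing to $\opnorm{f(0)}_{m-1}$.
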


\begin{proof}
Let $u_m = \dt^m u$. 
Then, $u_m$ solves 
\[
\begin{cases}
\dt u_m + A(t,x)\dx u_m + B(t,x) u_m = f_m & \mbox{in}\quad \Omega_T, \\
{u_m}_{\vert_{t=0}} = (\dt^m u)_{\vert_{t=0}} & \mbox{on}\quad \R_+, \\
\nu(t) \cdot {u_m}_{\vert_{x=0}}= g_m(t) & \mbox{on}\quad (0,T),
\end{cases}
\]
where 
\[
\begin{cases}
f_m = \dt^m(f-Bu) - [\dt^m,A]\dx u, \\
g_m = \dt^m g- [\dt^m,\nu] \cdot u_{\vert_{x=0}}.
\end{cases}
\]
Applying Proposition \ref{propNRJ1} we obtain 
\begin{align*}
&\opnorm{u_m(t)}_{0,\gamma} + \biggl(\gamma \int_0^t \opnorm{u_m(t')}_{0,\gamma}^2 {\rm d}t' \biggr)^\frac12
 + |{u_m}_{\vert_{x=0}}|_{L_\gamma^2(0,t)} \\
&\leq \mathfrak{c}_0\opnorm{u(0)}_m 
 + \mathfrak{c}_1\bigl( |g_m|_{L_\gamma^2(0,t)} + S_{\gamma,t}^*( \|f_m(\cdot)\|_{L^2} ) \bigr).
\end{align*}
On the other hand, it follows from Lemma \ref{ineq1} that 
\[
\begin{cases}
\|f_m(t)\|_{L^2} \leq \opnorm{ \dt f(t) }_{m-1} + C(K)\opnorm{ u(t) }_m, \\
|g_m|_{L_\gamma^2(0,t)} \leq |g|_{H_\gamma^m(0,t)} + C(K)|\vu|_{m-1,\gamma,t}.
\end{cases}
\]
Therefore, we obtain 
\begin{align}\label{hestpre1}
&\opnorm{u_m(t)}_{0,\gamma} + \biggl(\gamma \int_0^t \opnorm{u_m(t')}_{0,\gamma}^2 {\rm d}t' \biggr)^\frac12
 + |{u_m}_{\vert_{x=0}}|_{L_\gamma^2(0,t)} \\
&\leq  C(K_0)\bigl( \opnorm{ u(0) }_m + |g|_{H_\gamma^m(0,t)}
 + S_{\gamma,t}^*( \opnorm{\dt f(\cdot)}_{m-1} ) \bigr) \nonumber \\
&\quad
 + C(K)\bigl( |\vu|_{m-1,\gamma,t} + S_{\gamma,t}^*(\opnorm{ u(t') }_m) \bigr). \nonumber
\end{align}
We proceed to control the other derivatives. 
Let $k$ and $l$ be nonnegative integers satisfying $k+l \leq m-1$. 
Applying $\dt^k\dx^l$ to the equation, we get 
\[
\dt^{k+1}\dx^l u + A \dt^k \dx^{l+1} u = \dt^k\dx^l(f-Bu)-[\dt^k\dx^l,A]\dx u=:f_{k,l}.
\]
By using these two expressions of $f_{k,l}$ together with Lemma \ref{ineq1} we see that 
\[
\begin{cases}
\|f_{k,l}(0)\|_{L^2} \leq C(K_0)\opnorm{u(0)}_m, \\
\|\dt f_{k,l}(t)\|_{L^2} \leq \opnorm{\dt f(t)}_{m-1} + C(K)\opnorm{u(t)}_m, \\
|f_{k,l \vert_{x=0}}|_{L_\gamma^2(0,t)} \leq |\vf|_{m-1,\gamma,t} + C(K)|\vu|_{m-1,\gamma,t}.
\end{cases}
\]
We have now the relation $\dt^k \dx^{l+1} u = A^{-1}(f_{k,l}-\dt^{k+1}\dx^l u)$ so that 
\[
\begin{cases}
\|\dt^k \dx^{l+1} u(t)\|_{L^2} \leq C(K_0)( \|\dt^{k+1} \dx^l u(t)\|_{L^2} + \|f_{k,l}(t)\|_{L^2}), \\
|(\dt^k \dx^{l+1} u)_{\vert_{x=0}}|_{L_\gamma^2(0,t)} \leq C(K_0)( 
 |(\dt^{k+1} \dx^l u)_{\vert_{x=0}}|_{L_\gamma^2(0,t)} + |f_{k,l \vert_{x=0}}|_{L_\gamma^2(0,t)} ).
\end{cases}
\]
Therefore, 
\begin{align*}
& \opnorm{ \dt^k \dx^{l+1} u(t) }_{0,\gamma}
 + \biggl(\gamma\int_0^t\opnorm{ \dt^k \dx^{l+1} u(t') }_{0,\gamma}^2{\rm d}t'\biggr)^\frac12
 + |(\dt^k \dx^{l+1} u)_{\vert_{x=0}}|_{L_\gamma^2(0,t)} \\
&\leq C(K_0)\biggl\{
 \opnorm{ \dt^{k+1} \dx^l u(t) }_{0,\gamma}
 + \biggl(\gamma\int_0^t\opnorm{ \dt^{k+1} \dx^l u(t') }_{0,\gamma}^2{\rm d}t'\biggr)^\frac12
 + |(\dt^{k+1} \dx^l u)_{\vert_{x=0}}|_{L_\gamma^2(0,t)} \\
&\phantom{ \leq C(K_0)\biggl\{ }
 + \opnorm{ f_{k,l}(t) }_{0,\gamma}
 + \biggl(\gamma\int_0^t\opnorm{ f_{k,l}(t') }_{0,\gamma}^2{\rm d}t'\biggr)^\frac12
 + |f_{k,l \vert_{x=0}}|_{L_\gamma^2(0,t)} \biggr\}.
\end{align*}
Here, by Lemma \ref{ineq4} we have 
\begin{align*}
& \opnorm{ f_{k,l}(t) }_{0,\gamma}
 + \biggl(\gamma\int_0^t\opnorm{ f_{k,l}(t') }_{0,\gamma}^2{\rm d}t'\biggr)^\frac12 \\
&\leq C\bigl( \|f_{k,l}(0)\|_{L^2} + S_{\gamma,t}^*(\|\dt f_{k,l}(\cdot)\|_{L^2}) \bigr) \\
&\leq C(K_0)\bigl( \opnorm{u(0)}_m + S_{\gamma,t}^*(\opnorm{\dt f(\cdot)}_{m-1}) \bigr)
 + C(K)S_{\gamma,t}^*(\opnorm{u(\cdot)}_m).
\end{align*}
By using the above inequality inductively, we obtain 
\begin{align*}
& \opnorm{ u(t) }_{m,\gamma}
 + \biggl(\gamma\int_0^t\opnorm{ u(t') }_{m,\gamma}^2{\rm d}t'\biggr)^\frac12
 + |\vu|_{m,\gamma,t} \\
&\leq C(K_0)\biggl\{
 \opnorm{u(0)}_m + S_{\gamma,t}^*(\opnorm{\dt f(\cdot)}_{m-1}) + |\vf|_{m-1,\gamma,t} \\
&\phantom{ \leq C(K_0)\biggl\{ }
 + \opnorm{ u_m(t) }_{0,\gamma}
 + \biggl(\gamma\int_0^t\opnorm{ u_m(t') }_{0,\gamma}^2{\rm d}t'\biggr)^\frac12
 + |u_{m \vert_{x=0}}|_{L_\gamma^2(0,t)} \\
&\phantom{ \leq C(K_0)\biggl\{ }
 + \opnorm{ u(t) }_{m-1,\gamma}
 + \biggl(\gamma\int_0^t\opnorm{ u(t') }_{m-1,\gamma}^2{\rm d}t'\biggr)^\frac12 \biggr\} \\
&\quad
 + C(K)\bigl( |\vu|_{m-1,\gamma,t} + S_{\gamma,t}^*(\opnorm{u(\cdot)}_m) \bigr).
\end{align*}
This together with \eqref{hestpre1} and Lemma \ref{ineq4} implies 
\begin{align*}
& \opnorm{ u(t) }_{m,\gamma}
 + \biggl(\gamma\int_0^t\opnorm{ u(t') }_{m,\gamma}^2{\rm d}t'\biggr)^\frac12
 + |\vu|_{m,\gamma,t} \\
&\leq C(K_0)\bigl( \opnorm{ u(0) }_m + |g|_{H_\gamma^m(0,t)} + |\vf|_{m-1,\gamma,t}
 + S_{\gamma,t}^*( \opnorm{\dt f(\cdot)}_{m-1} ) \bigr) \nonumber \\
&\quad
 + C(K)\bigl( |\vu|_{m-1,\gamma,t} + S_{\gamma,t}^*(\opnorm{ u(t') }_m) \bigr) \\
&\leq C(K_0)\bigl( \opnorm{ u(0) }_m + |g|_{H_\gamma^m(0,t)} + |\vf|_{m-1,\gamma,t}
 + S_{\gamma,t}^*( \opnorm{\dt f(\cdot)}_{m-1} ) \bigr) \nonumber \\
&\quad
 + C(K)\biggl\{ \gamma^{-\frac12}\opnorm{ u(0) }_m 
  + \gamma^{-1}\biggl( \gamma\int_0^t \opnorm{u(t')}_{m,\gamma}^2{\rm d}t'\biggr)^\frac12 
  + \gamma^{-1}|\vu|_{m,\gamma,t} \biggr\}.
\end{align*}
Therefore, by taking $\gamma$ sufficiently large compared to $C(K)$, we obtain the desired estimate 
(note that this would not be possible without the second term of the left-hand side). 
\end{proof}

\subsubsection{Proof of Theorem \ref{theoIBVP1}}\label{secttheoIBVP1}

Under Assumption \ref{assVC}, the existence and uniqueness of a solution $u\in \WW^m(T)$ to \eqref{systVC} 
can be deduced from Proposition \ref{propVC2} and the compatibility condition along classical lines 
(see for instance \cite{metivier2001,metivier2012,benzoniserre2007}). 
We still have to prove that the assumptions made in the statement of Theorem \ref{theoIBVP1} imply that 
Assumption \ref{assVC} is satisfied. 
This is given by the following lemma.

\begin{lemma}\label{lemsymmetrizer}
Let $c_0>0$ be such that Assumption \ref{asshyp} is satisfied. 
There exist a symmetrizer $S\in W^{1,\infty}(\Omega_T)$ and constants 
$\alpha_0,\alpha_1$ and $\beta_0,\beta_1,\beta_2$ such that Assumption \ref{assVC} is satisfied.
Moreover, we have 
\[
\mathfrak{c}_0 \leq C\Bigl( \frac{1}{c_0}, \| A_{\vert_{t=0}} \|_{L^\infty(\R_+)} \Bigr)
\quad\mbox{and}\quad 
\mathfrak{c}_1 \leq C\Bigl( \frac{1}{c_0},\|A\|_{L^{\infty}(\Omega_T)} \Bigr),
\]
where $\mathfrak{c}_0$ and $\mathfrak{c}_1$ are as defined in Proposition \ref{propNRJ1}, and we also have 
\[
\frac{\beta_2}{\beta_0} \leq C\Bigl( \frac{1}{c_0},\Vert A\Vert_{W^{1,\infty}(\Omega_T)},
 \Vert B\Vert_{L^\infty(\Omega_T)}\Bigr).
\]
\end{lemma}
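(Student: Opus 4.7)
The strategy I would follow is to build $S$ explicitly from the spectral decomposition of $A$. Under Assumption \ref{asshyp}, the system is strictly hyperbolic with spectral gap $\lambda_+ + \lambda_- \geq 2c_0$, so the eigenvectors $\mathbf{e}_+(t,x)$ and $\mathbf{e}_-(t,x)$ are uniformly linearly independent, and the matrix $P(t,x) = [\,\mathbf{e}_+(t,x)\;|\;\mathbf{e}_-(t,x)\,]$ is invertible with $\|P\|, \|P^{-1}\|$ controlled by $1/c_0$ and $\|A\|_{L^\infty(\Omega_T)}$. I would then set
\[
S = (P^{-1})^{\rm T} D\, P^{-1}, \qquad D = \mathrm{diag}(d_+, d_-),
\]
with positive weights $d_\pm$ to be chosen. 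By construction $S$ is symmetric, and $SA = (P^{-1})^{\rm T} D\Lambda P^{-1}$ with $\Lambda = \mathrm{diag}(\lambda_+, -\lambda_-)$ is symmetric as well.

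Next I would verify the three items of Assumption \ref{assVC} in turn. Item (i) reduces to the equivalence $|D^{1/2} P^{-1} v|^2 \asymp |v|^2$, which yields constants $\alpha_0, \beta_0$ (and $\beta_0^{\rm in}$) depending only on $d_\pm$, $1/c_0$ and $\|A\|_{L^\infty}$. For (ii), at $x=0$ I would expand $v = w_+ \mathbf{e}_+ + w_- \mathbf{e}_-$, so that
\[
v^{\rm T} S A v = d_+ \lambda_+ w_+^2 - d_- \lambda_- w_-^2.
\]
The uniform Kreiss--Lopatinski\u{\i} bound $|\nu(t)\cdot \mathbf{e}_+(t,0)|\geq c_0$ lets me solve $w_+ = (\nu \cdot \mathbf{e}_+)^{-1}\big(\nu \cdot v - w_-(\nu \cdot \mathbf{e}_-)\big)$, giving $w_+^2 \lesssim c_0^{-2}(|\nu\cdot v|^2 + w_-^2)$. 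Fixing $d_+/d_-$ sufficiently small in terms of $c_0$ and $\|A\|_{L^\infty}$ then allows the parasitic $w_-^2$ contribution to be absorbed into $-d_- \lambda_- w_-^2$, producing the desired bound $v^{\rm T} S A v \leq -\alpha_1(w_+^2+w_-^2) + \beta_1|\nu\cdot v|^2$, which transfers to $|v|^2$ through the equivalence from (i). Tracking constants in this step is precisely what produces $\mathfrak{c}_0$ depending on $(1/c_0, \|A_{\vert_{t=0}}\|_{L^\infty})$ and $\mathfrak{c}_1$ depending on $(1/c_0, \|A\|_{L^\infty})$, as claimed.

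For item (iii), the uniform spectral gap allows standard perturbation theory for simple eigenvalues to express $\lambda_\pm$ and $\mathbf{e}_\pm$ as smooth functions of the entries of $A$, with explicit bounds involving $1/c_0$. Hence $P$, $P^{-1}$, and therefore $S$, inherit the $W^{1,\infty}$ regularity of $A$, and $\dt S + \dx(SA) - 2SB$ is bounded in $L^\infty(\Omega_T)$ by a constant depending only on $1/c_0$, $\|A\|_{W^{1,\infty}(\Omega_T)}$ and $\|B\|_{L^\infty(\Omega_T)}$; dividing by $\alpha_0 \sim \beta_0$ then yields the stated estimate on $\beta_2/\beta_0$. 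The main obstacle in my view is not any single calculation but the careful tuning of the two parameters $d_+$ and $d_-$ in step (ii), so that the dissipation inequality holds with constants that, after the equivalence from (i), give exactly the claimed dependence of $\mathfrak{c}_0, \mathfrak{c}_1$ on $c_0$ and $\|A\|_{L^\infty}$; by comparison, (i) and (iii) are essentially bookkeeping once $P$ is controlled quantitatively.
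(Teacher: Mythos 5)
Your ansatz $S = (P^{-1})^{\rm T} D P^{-1}$ with $D=\mathrm{diag}(d_+,d_-)$ is, for unit eigenvectors, literally the same matrix as the paper's choice $S=\pi_+^{\rm T}\pi_+ + M\pi_-^{\rm T}\pi_-$ (take $d_+=1$, $d_-=M$; one checks $\pi_\pm^{\rm T}\pi_\pm=(P^{-1})^{\rm T}E_{\pm\pm}P^{-1}$ because $|\mathbf{e}_\pm|=1$), so you are building exactly the Kreiss symmetrizer of Proposition \ref{propBC}. The only genuine variation is in verifying item {\bf ii} of Assumption \ref{assVC}: the paper expands $v$ in the $(\nu,\nu^\perp)$ basis and juggles the projectors, while you expand $v=w_+\mathbf{e}_++w_-\mathbf{e}_-$ and solve for $w_+$ from the Lopatinski\u{\i} bound $|\nu\cdot\mathbf{e}_+|\geq c_0$, which is a somewhat more direct computation; both routes hinge on choosing the weight ratio $d_-/d_+$ large enough, and both produce the same dependence of $\mathfrak{c}_0,\mathfrak{c}_1,\beta_2/\beta_0$ on $c_0$, $\|A\|$, $\|B\|$. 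One small point worth making explicit in your write-up: after absorbing the parasitic $w_-^2$ term, you still need to recover the $-\alpha_1 w_+^2$ part of $-\alpha_1|v|^2$; this comes from converting a portion of the surviving $-w_-^2$ back into $-w_+^2$ (modulo $|\nu\cdot v|^2$) using the same inequality $w_+^2\lesssim c_0^{-2}(|\nu\cdot v|^2+w_-^2)$, which your phrase ``transfers to $|v|^2$'' glosses over but is a one-line fix.
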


This lemma is a simple consequence of the following proposition and its proof, 
which characterizes the uniform Kreiss--Lopatinski\u{\i} condition {\bf iii} in Assumption \ref{asshyp}.

\begin{proposition}\label{propBC}
Suppose that the condition {\bf ii} in Assumption \ref{asshyp}, $\vert \nu(t) \vert \geq c_0$, 
and $\vert A(t,x) \vert \leq 1/c_0$ hold for some positive constant $c_0$. 
Then, the following four statements are all equivalent. 
\begin{enumerate}
\setlength{\itemsep}{3pt}
\item[{\bf i.}]
There exist a symmetrizer $S\in W^{1,\infty}(\Omega_T)$ and positive constants $\alpha_0$ and $\beta_0$ 
such that $\alpha_0{\rm Id} \leq S(t,x) \leq \beta_0{\rm Id}$ and that for any $v \in \R^2$ satisfying 
$\nu(t)\cdot v = 0$ we have 
\[
v^{\rm T}S(t,0)A(t,0)v \leq 0.
\]

\item[{\bf ii.}]
There exist a symmetrizer $S\in W^{1,\infty}(\Omega_T)$ and positive constants $\alpha_0$, $\beta_0$, 
$\alpha_1$, and $\beta_1$ such that $\alpha_0{\rm Id} \leq S(t,x) \leq \beta_0{\rm Id}$ and that for any 
$v \in \R^2$ we have 
\[
v^{\rm T}S(t,0)A(t,0)v \leq -\alpha_1|v|^2 + \beta_1|\nu(t)\cdot v|^2.
\]

\item[{\bf iii.}]
There exists a positive constant $\alpha_0$ such that 
\[
|\pi_{-}(t,0)\nu(t)^\perp| \geq \alpha_0,
\]
where $\pi_{\pm}(t,x)$ is the eigenprojector associated to the eigenvalue $\pm\lambda_{\pm}(t,x)$ of $A(t,x)$. 

\item[{\bf iv.}]
There exists a positive constant $\alpha_0$ such that 
\[
|\nu(t)\cdot\mathbf{e}_{+}(t,0)| \geq \alpha_0,
\]
where $\mathbf{e}_{\pm}(t,x)$ is the unit eigenvector associated to the eigenvalue $\pm\lambda_{\pm}(t,x)$ of $A(t,x)$. 
\end{enumerate}
\end{proposition}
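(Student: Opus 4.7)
The plan is to establish the chain (ii) $\Rightarrow$ (i) $\Rightarrow$ (iii) $\Leftrightarrow$ (iv) $\Rightarrow$ (ii). The first implication is immediate upon restricting (ii) to $v$ with $\nu(t)\cdot v=0$. The backbone of the other three implications is the following key observation: any symmetrizer $S$ satisfying (i) or (ii) must render $\mathbf{e}_+(t,0)$ and $\mathbf{e}_-(t,0)$ $S$-orthogonal. Indeed, from the symmetry of $SA$ and $A\mathbf{e}_\pm=\pm\lambda_\pm\mathbf{e}_\pm$ one obtains $(\lambda_++\lambda_-)\mathbf{e}_-^{\rm T}S\mathbf{e}_+=0$, and the prefactor is strictly positive by the hyperbolicity assumption. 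In particular, decomposing $v=\alpha\mathbf{e}_++\beta\mathbf{e}_-$ yields the clean diagonal identity $v^{\rm T}SAv=\alpha^2\lambda_+s_+-\beta^2\lambda_-s_-$ with $s_\pm:=\mathbf{e}_\pm^{\rm T}S\mathbf{e}_\pm\in[\alpha_0,\beta_0]$.

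For (iii) $\Leftrightarrow$ (iv), I would use the dual basis of $(\mathbf{e}_+,\mathbf{e}_-)$ to derive the explicit identity $\pi_-(t,0)\nu(t)^\perp=\kappa(t)(\nu(t)\cdot\mathbf{e}_+(t,0))\mathbf{e}_-(t,0)$ with $\kappa=\pm(\mathbf{e}_+^\perp\cdot\mathbf{e}_-)^{-1}$. The factor $|\kappa|$ is bounded above and below as soon as the basis $(\mathbf{e}_+,\mathbf{e}_-)$ is uniformly nondegenerate, which follows from the spectral separation $\lambda_++\lambda_-\geq 2c_0$ combined with $\|A\|_{L^\infty}\leq 1/c_0$. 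For (i) $\Rightarrow$ (iii), I would apply (i) to $\nu^\perp=a\mathbf{e}_++b\mathbf{e}_-$: the diagonal identity reduces (i) to $a^2\lambda_+s_+\leq b^2\lambda_-s_-$, so $|a|\leq C|b|$ quantitatively, and the lower bound $|\nu^\perp|=|\nu|\geq c_0$ together with the uniform nondegeneracy of the basis then forces $|\pi_-\nu^\perp|=|b|$ to be bounded uniformly away from zero.

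For the constructive implication (iv) $\Rightarrow$ (ii), I would define $S(t,x)$ pointwise by declaring $(\mathbf{e}_+(t,x),\mathbf{e}_-(t,x))$ to be an $S$-orthogonal basis with weights $s_+=\varepsilon$ and $s_-=1$, for a small parameter $\varepsilon>0$ to be chosen. Expanding $v=\alpha\mathbf{e}_++\beta\mathbf{e}_-$ and using (iv) in the form $|\alpha|\leq C(|\nu\cdot v|+|\beta|)$, the diagonal identity becomes
\[
v^{\rm T}SAv=\varepsilon\alpha^2\lambda_+-\beta^2\lambda_-\leq C\varepsilon\lambda_+|\nu\cdot v|^2+\beta^2(C\varepsilon\lambda_+-\lambda_-).
\]
Choosing $\varepsilon$ small enough (uniformly in $(t,x)$) so that $C\varepsilon\lambda_+\leq\lambda_-/2$ and combining with $|v|^2\leq C'(|\nu\cdot v|^2+\beta^2)$ yields (ii). The regularity $S\in W^{1,\infty}(\Omega_T)$ follows because, under the uniform spectral gap, $\mathbf{e}_\pm$ are $W^{1,\infty}$ functions of the entries of $A$.

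The main obstacle is purely quantitative: all constants must be tracked to depend only on $c_0$ and $\|A\|_{L^\infty(\Omega_T)}$, as demanded by Lemma \ref{lemsymmetrizer}. The delicate point is extracting a uniform lower bound on $|\det(\mathbf{e}_+,\mathbf{e}_-)|$ from the spectral data alone, since this determinant simultaneously controls the basis change, the dual basis factor $\kappa$, and the equivalence between $\alpha^2+\beta^2$ and $|v|^2$ used in the final estimate. Once this bookkeeping is in place, everything reduces to clean linear algebra in the $S$-orthogonal frame.
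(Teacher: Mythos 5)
Your proof is correct and follows the same implication cycle $\textbf{ii}\Rightarrow\textbf{i}\Rightarrow\textbf{iii}\Leftrightarrow\textbf{iv}\Rightarrow\textbf{ii}$, and your constructed symmetrizer is, up to a global scaling by $\varepsilon=1/M$, the same as the paper's $S=\pi_+^{\rm T}\pi_+ + M\pi_-^{\rm T}\pi_-$ (note $\pi_\pm^{\rm T}\pi_\pm = \mathbf{f}_\pm\otimes\mathbf{f}_\pm$ for the dual basis $\mathbf{f}_\pm$, so weighting the eigenbasis is exactly weighting the projectors). The genuine difference is your organizing observation that symmetry of $SA$ forces $\mathbf{e}_+^{\rm T}S\mathbf{e}_-=0$ automatically for \emph{any} symmetrizer of $A$, so $v^{\rm T}SAv$ diagonalizes in the eigenbasis. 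The paper does not exploit this: in its proof of $\textbf{i}\Rightarrow\textbf{iii}$ it keeps the cross term $(\pi_+\nu^\perp)^{\rm T}S\pi_-\nu^\perp$, bounds it by Cauchy--Schwarz, and then completes a square, whereas your route collapses the inequality directly to $a^2\lambda_+s_+\leq b^2\lambda_-s_-$. That is a real algebraic simplification of the paper's computation, even though the final estimates and constants coincide. Two things you correctly flag as the delicate quantitative points and which should be made explicit: the uniform lower bound $|\det(\mathbf{e}_+,\mathbf{e}_-)|\geq c_0^2$ follows from the projector bound $|\pi_\pm|\leq 1/c_0^2$ (via the explicit formula $\pi_\pm=\pm(A\mp\lambda_\mp\mathrm{Id})/(\lambda_++\lambda_-)$, exactly the paper's first observation), and in $\textbf{iv}\Rightarrow\textbf{ii}$ one should normalize $|\nu|=1$, which is harmless since all four statements are invariant under rescaling $\nu$, before using the bound $|\alpha|\leq\tfrac{1}{\alpha_0}(|\nu\cdot v|+|\beta|)$.
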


\begin{proof}
We note that the eigenprojector $\pi_{\pm}(t,x)$ is given explicitly by 
\[
\pi_{+}(t,x) = \frac{A(t,x)+\lambda_{-}(t,x){\rm Id}}{\lambda_{+}(t,x)+\lambda_{-}(t,x)}, \qquad
\pi_{-}(t,x) = -\frac{A(t,x)-\lambda_{+}(t,x){\rm Id}}{\lambda_{+}(t,x)+\lambda_{-}(t,x)}
\]
and that under the assumption $\lambda_{\pm}(t,x)$ and $|\pi_{\pm}(t,x)|$ are bounded from above by a constant 
depending on $c_0$. 
We see that 
\begin{align*}
|\nu(t) \cdot \mathbf{e}_{+}(t,0)|
&= |\nu(t)^\perp \cdot \mathbf{e}_{+}(t,0)^\perp|
 = |(\pi_-(t,0)\nu(t)^\perp) \cdot \mathbf{e}_{+}(t,0)^\perp|
 \leq |\pi_-(t,0)\nu(t)^\perp|
\end{align*}
and that 
\begin{align*}
|\pi_-(t,0)\nu(t)^\perp|
&= |(\nu(t)^\perp \cdot \mathbf{e}_{+}(t,0)^\perp)\pi_-(t,0)\mathbf{e}_{+}(t,0)^\perp|
 \leq |\pi_-(t,0)||\nu(t) \cdot \mathbf{e}_{+}(t,0)|.
\end{align*}
These imply the equivalence of {\bf iii} and {\bf iv}. 
Obviously, {\bf ii} implies {\bf i}. 

We proceed to show that {\bf i} implies {\bf iii}. 
By the assumption we have 
\[
(\nu(t)^\perp)^{\rm T} S(t,0)A(t,0)\nu(t)^\perp \leq 0,
\]
which together with the spectral decomposition 
\[
A(t,x) = \lambda_{+}(t,x)\pi_{+}(t,x) - \lambda_{-}(t,x)\pi_{-}(t,x)
\]
implies 
\begin{align*}
c_0\alpha_0 |\pi_{+}(t,0)\nu(t)^\perp|^2
\leq & \lambda_{+}(t,0)(\pi_{+}(t,0)\nu(t)^\perp)^{\rm T}S(t,0)\pi_{+}(t,0)\nu(t)^\perp \\
\leq & (\lambda_{-}(t,0)-\lambda_{+}(t,0))(\pi_{+}(t,0)\nu(t)^\perp)^{\rm T}S(t,0)\pi_{-}(t,0)\nu(t)^\perp \\
 & +\lambda_{-}(t,0)(\pi_{-}(t,0)\nu(t)^\perp)^{\rm T}S(t,0)\pi_{-}(t,0)\nu(t)^\perp \\
\leq & \beta_0|\lambda_{-}(t,0)-\lambda_{+}(t,0)| |\pi_{+}(t,0)\nu(t)^\perp| |\pi_{-}(t,0)\nu(t)^\perp| \\
 & + \beta_0\lambda_{-}(t,0)|\pi_{-}(t,0)\nu(t)^\perp|^2.
\end{align*}
Particularly, we have 
\[
c_0\alpha_0 |\pi_{+}(t,0)\nu(t)^\perp|^2
\leq \biggl( \frac{\beta_0^2|\lambda_{-}(t,0)-\lambda_{+}(t,0)|^2}{c_0\alpha_0}
 + 2\beta_0\lambda_{-}(t,0) \biggr)|\pi_{-}(t,0)\nu(t)^\perp|^2.
\]
Therefore, in view of $c_0 \leq |\nu(t)| \leq |\pi_{-}(t,0)\nu(t)^\perp| + |\pi_{+}(t,0)\nu(t)^\perp|$ 
we obtain the desired inequality in the statement {\bf iii}. 

Finally, we will show that {\bf iii} implies {\bf ii}. 
This is the most important part of this proposition. 
We want to show that for a suitably large $M>1$, a symmetrizer $S(t,x)$ satisfying the conditions 
in the statement {\bf ii} is provided by the formula 
\[
S(t,x) = \pi_+(t,x)^{\rm T}\pi_+(t,x) + M\pi_-(t,x)^{\rm T}\pi_-(t,x),
\]
so that the first point of {\bf ii} is satisfied with $\alpha_0=1$ and $\beta_0=M$. 
By the definition of $\pi_\pm$, we compute indeed that 
\[
SA = \lambda_+ \pi_+^{\rm T}\pi_+ - M \lambda_- \pi_-^{\rm T}\pi_-,
\]
which is obviously symmetric. 
For the second point of {\bf ii}, just remark that 
\[
v^{\rm T}SAv =\lambda_+ |\pi_+ v|^2 - M\lambda_- |\pi_- v|^2.
\]
We need to show that this quantity is negative on the kernel $\R \nu^\perp$ of the boundary condition. 
Under the hypothesis we can assume that $|\nu(t)|=1$ without loss of generality. 
Then, we see that 
\begin{align*}
-|\pi_{-}v|^2
&= -|(\nu^\perp \cdot v)\pi_{-}\nu^\perp + (\nu \cdot v)\pi_{-}\nu|^2 \\
&\leq -\frac12|\nu^\perp \cdot v|^2|\pi_{-}\nu^\perp|^2 + |\nu \cdot v|^2|\pi_{-}\nu|^2 \\
&\leq -\frac12|\pi_{-}\nu^\perp|^2|v|^2 + (|\pi_{-}\nu|^2+|\pi_{-}\nu^\perp|^2)|\nu \cdot v|^2
\end{align*}
and that 
\begin{align*}
|\pi_{+}v|^2
&= |(\nu^\perp \cdot v)\pi_{+}\nu^\perp + (\nu \cdot v)\pi_{+}\nu|^2 \\
&\leq 2|\pi_{+}\nu^\perp|^2|\nu^\perp \cdot v|^2 + 2|\pi_{+}\nu|^2|\nu \cdot v|^2 \\
&\leq 4|\pi_{+}\nu^\perp|^2|v|^2 + 4(|\pi_{+}\nu^\perp|^2+|\pi_{+}\nu|^2)|\nu \cdot v|^2.
\end{align*}
Therefore, we obtain 
\begin{align*}
v^{\rm T}SAv
\leq & -\lambda_{-}|\pi_{-}\nu^\perp|^2\biggl(
 \frac{M}{2} - 4\frac{\lambda_{+}}{\lambda_{-}}\frac{|\pi_{+}\nu^\perp|^2}{|\pi_{-}\nu^\perp|^2} \biggr)|v|^2 \\
 & + \bigl\{ \lambda_{-}M(|\pi_{-}\nu|^2+|\pi_{-}\nu^\perp|^2) + 4\lambda_{+}(|\pi_{+}\nu^\perp|^2+|\pi_{+}\nu|^2)
  \bigr\} |\nu \cdot v|^2
\end{align*}
Taking for instance 
$M = 2+8\sup_{\Omega_T}\frac{\lambda_+}{\lambda_-}\frac{|\pi_+ \nu^\perp|^2}{|\pi_- \nu^\perp|^2}$, 
we easily obtain the desired inequality in the statement {\bf ii}. 
\end{proof}

\subsection{Application to quasilinear $2\times 2$ initial boundary value problems}\label{sectapplQL}
The aim of this section is to use the results of the previous section to handle general quasilinear 
boundary value problems of the form 
\begin{equation}\label{systQL}
\begin{cases}
\dt u + A(u)\dx u + B(t,x)u = f(t,x) & \mbox{in}\quad \Omega_T, \\
u_{\vert_{t=0}} = u^{\rm in}(x) & \mbox{on}\quad \R_+,\\
\Phi(t,\vu)= g(t) & \mbox{on}\quad (0,T),
\end{cases}
\end{equation}
where $u$, $u^{\rm in}$, and $f$ are $\R^2$-valued functions, $g$ and $\Phi$ are  real-valued functions, 
while $A$ and $B$ take their values in the space of $2\times2$ real-valued matrices. 
We also make the following assumption on the hyperbolicity of the system and on the boundary condition.

\begin{assumption}\label{asshypQL}
Let $\mathcal{U}$ be an open set in $\R^2$, which represents a phase space of $u$. 
The following conditions hold. 
\begin{enumerate}
\setlength{\itemsep}{3pt}
\item[{\bf i.}]
$A \in C^\infty(\cU)$.
\item[{\bf ii.}]
For any $u \in \cU$, the matrix $A(u)$ has eigenvalues $\lambda_+(u)$ and $-\lambda_-(u)$ satisfying 
\[
\lambda_{\pm}(u) > 0. 
\]
\item[{\bf iii.}]
There exist a diffeomorphism $\Theta: \cU\to \Theta(\cU)\subset \R^2$ 
and $\nu\in C([0,T])$ such that for any $t\in [0,T]$ and any $u\in \cU$ we have 
\[
\Phi(t,u) = \nu(t) \cdot \Theta(u)
\quad\mbox{and}\quad
\abs{ \nabla_u \Phi(t,u) \cdot {\bf e}_+(u) } > 0,
\]
where  ${\bf e}_+ (u)$ is a unit eigenvector associated to the eigenvalue $\lambda_+(u)$ of $A(u)$.
\end{enumerate}
\end{assumption}

\begin{remark}\label{remphi}
In the case of a linear boundary condition as the we considered for Theorem \ref{theoIBVP1}, 
we have $\Phi(t,u) = \nu(t)\cdot u$ so that by taking $\Theta(u)=u$, the third point of the assumption reduces to 
\[
|\nu(t) \cdot \mathbf{e}_{+}(u)| > 0.
\]
\end{remark}

\begin{remark}
If $\Phi(t,u)=\Phi(u)$ is independent of $t$ and if for some $u^0$ we have 
$\abs{\nabla_{u} \Phi(t,u^0)\cdot {\bf e}_+(u^0)}>0$, 
then by the inverse function theorem and up to shrinking $\cU$ to a sufficiently small neighborhood of $u^0$, 
the existence of a diffeomorphism $\Theta$ satisfying the properties of point {\bf iii} is automatic.
\end{remark}

\begin{example}\label{ex2}
For the nonlinear shallow water equations 
\[
\dt u +A(u)\dx u=0
\]
with $u=(\zeta,q)^{\rm T}$ and $A(u)$ as given by \eqref{eqASW}, 
whose linear version has been considered in Example \ref{ex1}, 
the first two points of the assumption are equivalent to 
\[
h>0, \qquad 
\sqrt{\mathtt{g}{h}} \pm \frac{{q}}{{h}}>0 \qquad (\mbox{with } h=h_0+\zeta).
\]
The condition {\bf iii} of the assumption depends of course on the boundary condition under consideration. 
Let us consider here two important examples:
\begin{itemize}
\item
Boundary condition on the horizontal water flux, that is, $q_{\vert_{x=0}}=g$. 
As seen in Example \ref{ex1} and Remark \ref{remphi}, this corresponds to $\Phi(t,u) = \nu \cdot u$ with 
$\nu=(0,1)^{\rm T}$, and the condition {\bf iii} of the assumption is satisfied. 
\item
Boundary condition on the outgoing Riemann invariant, that is, 
$2(\sqrt{{\mathtt g}h}-\sqrt{{\mathtt g}h_0})+q/h=g$. 
We then have $\Phi(t,u) = \Phi(u) = 2(\sqrt{{\mathtt g}h}-\sqrt{{\mathtt g}h_0})+q/h$ 
and we can take the diffeomorphism defined on $\cU=\{(h,q)\in \R^2\,;\, h>0\}$ by 
\[
\Theta(h,q) = \big( 2(\sqrt{{\mathtt g}h}-\sqrt{{\mathtt g}h_0})+q/h,
 2(\sqrt{{\mathtt g}h}-\sqrt{{\mathtt g}h_0})-q/h\big)^{\rm T},
\]
where $2(\sqrt{{\mathtt g}h}-\sqrt{{\mathtt g}h_0})-q/h$ is the incoming Riemann invariant. 
Then, $\Phi(u) = \nu\cdot \Theta(u)$ with $\nu = (1,0)^{\rm T}$; 
moreover, we compute $\nabla_u \Phi = (1/h)(\lambda^-, 1)^{\rm T}$ so that all the conditions of the third point 
of the assumption are satisfied.
\end{itemize}
\end{example}

The main result is the following.

\begin{theorem}\label{theoIBVP2}
Let $m\geq 2$ be an integer, $B\in L^\infty(\Omega_T)$, $\partial B\in \WW^{m-1}(T)$, and 
assume that Assumption \ref{asshypQL} is satisfied with $\Theta \in C^\infty(\cU)$ and 
$\nu\in W^{m,\infty}(0,T)$. 
If $u^{\rm in }\in H^m(\R_+)$ takes its values in a compact and convex set $\mathcal{K}_0 \subset \cU$ 
and if the data $u^{\rm in}$, $f\in H^m(\Omega_T)$, and $g\in H^m(0,T)$ satisfy the compatibility conditions 
up to order $m-1$ in the sense of Definition \ref{defcompQL} below, 
then there exist $T_1 \in (0,T]$ and a unique solution $u\in \WW^m(T_1)$ 
to the initial boundary value problem \eqref{systQL}. 
Moreover, the trace of $u$ at the boundary $x=0$ belongs to $H^m(0,T_1)$ and $\abs{\vu}_{m,T_1}$ is finite.
\end{theorem}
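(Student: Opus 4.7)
My proposed approach reduces Theorem \ref{theoIBVP2} to the linear framework of Theorem \ref{theoIBVP1} via a change of variable that linearises the boundary condition, followed by a Picard iteration. Setting $v = \Theta(u)$, so $u = \Theta^{-1}(v)$, the chain rule transforms \eqref{systQL} into a quasilinear system of the form
\[
\dt v + \tilde A(v)\dx v + \tilde C(t,x,v) = \tilde F(t,x,v), \qquad \nu(t)\cdot v_{\vert_{x=0}} = g(t),
\]
with $\tilde A(v) = \nabla_u\Theta(u)A(u)\nabla_u\Theta(u)^{-1}$ evaluated at $u=\Theta^{-1}(v)$. The matrix $\tilde A(v)$ has the same eigenvalues $\pm\lambda_\pm$ as $A(u)$, and a $\lambda_+$-eigenvector is proportional to $\nabla_u\Theta(u){\bf e}_+(u)$; since $\nu\cdot \nabla_u\Theta(u){\bf e}_+(u) = \nabla_u\Phi(t,u)\cdot {\bf e}_+(u)$, point \textbf{iii} of Assumption \ref{asshypQL} translates exactly into the uniform Kreiss--Lopatinski\u{\i} condition of Assumption \ref{asshyp}, uniformly as long as $v$ remains in a compact subset of $\Theta(\cU)$.

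The iteration defines $v^{n+1}\in\WW^m(T_1)$ as the unique solution, provided by Theorem \ref{theoIBVP1}, of the linear IBVP with coefficients $\tilde A(v^n)$ and lower-order terms frozen at $v^n$, with data $v^{\rm in}=\Theta(u^{\rm in})$, the compatible time derivatives computed from the nonlinear analogue of \eqref{defu0k}, and the linear boundary condition $\nu\cdot v^{n+1}_{\vert_{x=0}}=g$. The check that the compatibility conditions of Definition \ref{defcompQL} induce those of Definition \ref{defcompVC} at each step is routine bookkeeping via the conjugation by $\Theta$. Fixing a convex compact neighbourhood $\tilde{\mathcal K}\subset\Theta(\cU)$ of $\Theta(\mathcal K_0)$ and a constant $M$ with $\Vert \Theta(u^{\rm in}) \Vert_{H^m}\leq M/2$, the Moser-type Lemmas \ref{ineq1}--\ref{ineq3} bound all the quantities that enter $K_0$ and $K$ in Theorem \ref{theoIBVP1} by constants depending only on $M$ and $\tilde{\mathcal K}$. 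The linear estimate then gives $\Vert v^{n+1}\Vert_{\WW^m(T_1)}\leq M$ for $T_1$ small, since each contribution of $v^n$ to the source is dominated by $C(M)\sqrt{T_1}\,\Vert v^n\Vert_{\WW^m(T_1)}$ via the bound $S^*_{\gamma,T_1}(\opnorm{\dt f(\cdot)}_{m-1})\leq \sqrt{T_1}\,\bigl(\int_0^{T_1}\opnorm{\dt f(t')}_{m-1}^2 {\rm d}t'\bigr)^{1/2}$, and a standard open-closed argument ensures $v^{n+1}(\Omega_{T_1})\subset \tilde{\mathcal K}$.

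Contraction in a weaker topology is obtained from $w^n := v^{n+1}-v^n$, which solves a linear IBVP with zero initial and boundary data and source controlled in $L^2$ by differences of the coefficients. Writing $\tilde A(v^n)-\tilde A(v^{n-1})$ and the analogous differences as integrals over the straight segment joining $v^{n-1}$ and $v^n$ (which lies in $\tilde{\mathcal K}$ by convexity), and using the hypothesis $m\geq 2$ together with the embedding $\WW^2(T_1)\hookrightarrow W^{1,\infty}(\Omega_{T_1})$, the Lipschitz factors are bounded in terms of $M$. Proposition \ref{propNRJ1} then yields $\Vert w^n\Vert_{\WW^0(T_1)}\leq C(M)\sqrt{T_1}\,\Vert w^{n-1}\Vert_{\WW^0(T_1)}$, giving contraction for $T_1$ small. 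Weak-$\ast$ compactness and interpolation provide a limit $v\in\WW^m(T_1)$ solving the $v$-system, with trace $v_{\vert_{x=0}}\in H^m(0,T_1)$ inherited from the trace term of Theorem \ref{theoIBVP1}; setting $u=\Theta^{-1}(v)$ yields the desired solution of \eqref{systQL} with the claimed regularity, and uniqueness follows from the same contraction argument.

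The main obstacle I anticipate lies in running the iteration at the low regularity level $m\geq 2$ rather than the $m\geq 3$ that the classical estimates would force. To absorb the source contribution $\Vert \dt\tilde F(\cdot, v^n)\Vert_{\WW^{m-1}}$ without demanding an extra half-derivative on $v^n$, it is essential to exploit the refined $S^*_{\gamma,t}$-control of the source provided by Theorem \ref{theoIBVP1} (strictly weaker than the $L^2(0,T)$ control that appears in the classical literature). Without this refinement the uniform bound step breaks at $m=2$; the whole theorem hinges on this improvement, which is precisely why the linear estimate was formulated with the dual norm $S^*_{\gamma,t}$ in the first place.
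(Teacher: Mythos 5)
Your proposal follows essentially the same route as the paper: linearise the boundary condition via $v=\Theta(u)$, check that the conjugated matrix $A^\sharp(v)=\Theta'(u)A(u)\Theta'(u)^{-1}$ has eigenvector $\Theta'(u)\mathbf{e}_+(u)$ so that $\nu\cdot\mathbf{e}^\sharp_+=\nabla_u\Phi\cdot\mathbf{e}_+$, iterate linearly with Theorem~\ref{theoIBVP1}, and close by a uniform $\WW^m$ bound plus contraction in a weaker norm. One small caveat on your final paragraph: in this particular iteration the source $f^n=J(v^n)^{-1}f-J(v^n)^{-1}B\Theta^{-1}(v^n)$ contains no derivatives of $v^n$, so the unweighted $L^1$-in-time control of $\opnorm{\dt f^n(\cdot)}_{m-1}$ already suffices to close the uniform bound; the $S^*_{\gamma,t}$ refinement is what makes Theorem~\ref{theoIBVP1} itself hold (via the absorption in Proposition~\ref{propVC2}) and becomes indispensable later for the Alinhac good-unknown argument (Theorems~\ref{theoIBVP3} and~\ref{theoIBVP5}), rather than in the iteration for Theorem~\ref{theoIBVP2} directly, and the bound $S^*_{\gamma,T_1}(\cdot)\le\sqrt{T_1}\bigl(\int_0^{T_1}|\cdot|^2\bigr)^{1/2}$ you invoke is precisely the crude Cauchy--Schwarz estimate the dual norm is designed to improve upon.
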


\begin{remark}
There is a wide literature devoted to the analysis of quasilinear hyperbolic initial boundary value problems. 
For the general multi-dimensional case, assuming that the uniform Kreiss--Lopatinski\u{\i} condition holds, 
the existence is obtained for $m>(d+1)/2+1$, with a loss of $1/2$ derivative with respect to the boundary 
and initial data \cite{RauchMassey, mokrane1987} (see also \cite{benzoniserre2007}). 
Existence for $m>d/2+1$ without loss of derivative is obtained under the additional assumption that 
the system is Friedrichs symmetrizable \cite{schochet1986, metivier2012} but one cannot expect in general 
an $H^m(0,T_1)$ estimate for the trace of the solution at the boundary. 
In the particular one-dimensional case, a $C^1$ solution is constructed in \cite{li1985boundary} 
using the method of characteristics; 
more recently, in the Sobolev setting, it is shown in \cite{petcu2013one} that the general procedure of 
\cite{RauchMassey, mokrane1987} can be implemented in the particular case of the shallow water equations 
with transparent boundary conditions, that is, a boundary data on the outgoing Riemann invariant (see Example \ref{ex2} above): 
for data in $H^{7/2}$, a solution is constructed in $\WW^3(T)$. 
As said in Example \ref{ex2}, our result covers this situation and, by taking advantage of the specificities 
of the one-dimensional case proves existence in $\WW^m(T)$, with $m\geq 2$ and without loss of derivative, 
and provides an $H^m(0,T_1)$ trace estimate.
\end{remark}

\subsubsection{Compatibility conditions}
From the interior equations, denoting $u_k=\dt^k u$, we have 
\[
u_1 = -A(u)\dx u - Bu + f.
\]
More generally, by induction, we have 
\[
u_{k}=c_k(u,B,f),
\]
where $c_k(u,B,f)$ is a smooth function of $u$ and of its space derivatives of order at most $k$, 
and of the time and space derivatives of order lower than $k-1$ of $B$ and $f$. 
For a smooth solution $u$ to \eqref{systQL}, $u_k^{\rm in} = {u_k}_{\vert_{t=0}}$ is therefore given by 
\begin{equation}\label{defu0kter}
u_{k}^{\rm in}=c^{\rm in}_k(u,B,f),
\end{equation}
where $c^{\rm in}_k(u,B,f)=c_k(u,B,f)_{\vert_{t=0}}$. 
The boundary condition $\Phi( t,u_{\vert_{x=0}})=g$ also implies that
\[
\dt^k\Phi( t,u_{\vert_{x=0}}) = \dt^k g.
\]
On the edge $\{t=0,x=0\}$, smooth enough solutions must therefore satisfy 
\[
\begin{cases}
\Phi(0,\vuin)=g_{\vert_{t=0}} & k=0, \\
{u_1^{\rm in}}_{\vert_{x=0}}\cdot \nabla_u \Phi(0,\vuin)+\dt \Phi (0,\vuin)
 = (\dt g)_{\vert_{t=0}} & k=1,
\end{cases}
\]
and more generally, for any $k\geq 1$, 
\begin{equation}\label{compkQL}
{u_k^{\rm in}}_{\vert_{x=0}} \cdot \nabla_u \Phi(0,\vuin) 
 + F_k({u^{\rm in}_{0\leq j\leq k-1}}_{\vert_{x=0}})=(\dt^k g)_{\vert_{t=0}},
\end{equation}
where $F_k({u^{\rm in}_{1\leq j\leq k}}_{\vert_{x=0}})$ is a smooth function of its arguments 
that can be computed explicitly by induction. 

\begin{definition}\label{defcompQL}
Let $m\geq1$ be an integer. 
We say that the data $u^{\rm in}\in H^m(\R_+)$, $f\in H^m(\Omega_T)$, and $g \in H^m(0,T)$ 
for the initial boundary value problem \eqref{systQL} satisfy the compatibility condition at order $k$ 
if the $\{u_j^{\rm in}\}_{j=0}^m$ defined in \eqref{defu0kter} satisfy \eqref{compkQL}. 
We also say that the data satisfy the compatibility conditions up to order $m-1$ if they satisfy 
the compatibility conditions at order $k$ for $k=0,1,\ldots,m-1$. 
\end{definition}

\subsubsection{Proof of Theorem \ref{theoIBVP2}}
Without loss of generality, we can assume that $\Theta(0)=0$.
The first step is to linearize the boundary condition. 
Under Assumption \ref{asshypQL}, this is possible by introducing 
\[
v=\Theta(u),\qquad J(v)=d_{v} (\Theta^{-1}(v)), 
 \quad\mbox{ and }\quad {A}^\sharp(v)=J(v)^{-1} A(\Theta^{-1}(v)) J(v).
\]
Then, $u$ is a classical solution to \eqref{systQL} if and only if $v$ is a classical solution of 
\begin{equation}\label{systQLred}
\begin{cases}
\dt {v} + {A}^\sharp(v)\dx v + J(v)^{-1}B(t,x) \Theta^{-1}(v) = J(v)^{-1}f(t,x) & \mbox{in}\quad \Omega_T, \\
v_{\vert_{t=0}} = \Theta(u^{\rm in}(x)) & \mbox{on}\quad \R_+,\\
\nu(t)\cdot v_{\vert_{x=0}}= g(t) & \mbox{on}\quad (0,T)
\end{cases}
\end{equation}
with $\nu(t)$ as in Assumption \ref{asshypQL}. 
Let $\mathcal{K}_1$ be a compact and convex set in $\R^2$ satisfying 
$\mathcal{K}_0 \Subset \mathcal{K}_1 \Subset \mathcal{U}$. 
Then, there exists a constant $c_0 > 0$ such that for any $u \in \mathcal{K}_1$ and any $t\in[0,T]$ 
we have 
\begin{align*}
\lambda_{\pm}(u) \geq c_0 , \qquad |\nabla_u\Phi(t,u)\cdot \mathbf{e}_{+}(u)| \geq c_0. 
\end{align*}
%
Note that there exists a constant $\delta_0>0$ such that 
$\| v - \Theta(u^{\rm in})\|_{L^\infty} \leq \delta_0$ implies that 
$u = \Theta^{-1}(v)$ takes its values in $ \mathcal{K}_1$. 
We therefore construct a solution $v$ to \eqref{systQLred} satisfying 
$\| v(t)-\Theta(u^{\rm in})\|_{L^\infty} \leq \delta_0$ for $0\leq t\leq T_1$. 
The solution is classically constructed using the iterative scheme 
\begin{equation}\label{systiter}
\begin{cases}
\dt v^{n+1} + {A}^\sharp(v^n)\dx v^{n+1}  = f^n& \mbox{in}\quad \Omega_T, \\
{v^{n+1}}_{\vert_{t=0}} = \Theta(u^{\rm in}(x)) & \mbox{on}\quad \R_+, \\
 \nu(t)\cdot {v^{n+1}}_{\vert_{x=0}} = g(t) & \mbox{on}\quad (0,T), 
\end{cases}
\end{equation}
for all $n\in \N$ and with
$$
f^n(t,x)=J(v^n)^{-1}f(t,x) - J(v^n)^{-1}B(t,x)\Theta^{-1}(v^{n}).
$$
For the first iterate $u^0$, we choose a function $u^0\in H^{m+1/2}(\R\times \R_+)$ such that 
\[
(\dt^k u^0)_{\vert_{t=0}} = u_k^{\rm in} \quad\mbox{for}\quad k=0,1,\ldots,m 
\]
with $u_k^{\rm in}$ as defined in \eqref{defu0kter}. 
Such a choice ensures along a classical procedure \cite{metivier2001,metivier2012} that 
the data $(\Theta(u^{\rm in}), f^n, g)$ are compatible for the linear initial boundary value problem 
\eqref{systiter} in the sense of Definition \ref{defcompVC}. 
Moreover, $\opnorm{ v^n (0) }_m$ is independent of $n$, and there exists therefore $K_0$ such that
\[
\frac{1}{c_0}, \opnorm{ v^n(0) }_m, \| {A}^\sharp(v^n)\|_{L^\infty(\Omega_{T_1})}, 
 \| {A}^\sharp(v^n)^{-1}\|_{L^\infty(\Omega_{T_1})} \leq K_0,
\]
as long as $v^n$ satisfies $\|v^n(t) - \Theta(u^{\rm in})\|_{L^\infty} \leq \delta_0$ for $0\leq t\leq T_1$. 
We prove now that for $M$ large enough and $T_1$ small enough, for any $n \in \N$ we have 
\begin{equation}\label{assert}
\begin{cases}
\| v^n\|_{\WW^m(T_1)} + |{ v^n}_{\vert_{x=0}}|_{m,T_1} \leq M, \\
\| v^n(t)-\Theta(u^{\rm in})\|_{L^\infty} \leq \delta_0 \quad\mbox{for}\quad 0 \leq t\leq T_1.
\end{cases}
\end{equation}
The main tool to prove this assertion is to apply Theorem \ref{theoIBVP1} to \eqref{systiter}. 
In order to do so, we first need to check that Assumption \ref{asshyp} is satisfied. 
The only non trivial point to check is the third condition of this assumption. 
The fact that this is a consequence of Assumption \ref{asshypQL} for the original system \eqref{systQL} 
is proved in the following lemma.

\begin{lemma}
For any $ v \in \Theta(\cU)$, the matrix ${A}^\sharp( v)$ has two eigenvalues $\pm {\lambda}^\sharp_\pm(v)$ 
and associated eigenvectors ${{\bf e}}^\sharp_\pm(v)$ given by 
\[
{\lambda}^\sharp_\pm(v) = \lambda_\pm(\Theta^{-1}(v))\quad \mbox{ and }\quad
{{\bf e}}^\sharp_\pm(v) = J(v)^{-1}{\bf e}_\pm(\Theta^{-1}(v)).
\]
Moreover, denoting $u=\Theta^{-1}(v)$ we have 
\[
\nu(t)\cdot {\bf e}^\sharp_+(v) = \nabla_u\Phi(t,u)\cdot {\bf e}_+(u).
\]
\end{lemma}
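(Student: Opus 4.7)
The proof is essentially computational, based on two elementary facts: that the spectrum is preserved by conjugation, and that the differential of $\Theta^{-1}$ is the inverse of the differential of $\Theta$.

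The plan is to first handle the spectral statement. Since $A^\sharp(v)=J(v)^{-1}A(\Theta^{-1}(v))J(v)$ is a similarity transformation of $A(u)$ with $u=\Theta^{-1}(v)$, the two matrices share the same eigenvalues, namely $\pm\lambda_\pm(u)$. Applying $J(v)^{-1}$ to the eigenvalue equation $A(u)\mathbf{e}_\pm(u)=\pm\lambda_\pm(u)\mathbf{e}_\pm(u)$ gives
\[
A^\sharp(v)\bigl(J(v)^{-1}\mathbf{e}_\pm(u)\bigr)
=J(v)^{-1}A(u)\mathbf{e}_\pm(u)
=\pm\lambda_\pm(u)\bigl(J(v)^{-1}\mathbf{e}_\pm(u)\bigr),
\]
so the identification of $\mathbf{e}^\sharp_\pm(v)$ follows immediately.

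For the third identity, I would differentiate the factorization $\Phi(t,u)=\nu(t)\cdot\Theta(u)$ from Assumption \ref{asshypQL}{\bf iii} with respect to $u$, yielding
\[
\nabla_u\Phi(t,u)=\bigl(d_u\Theta(u)\bigr)^{\rm T}\nu(t).
\]
The key step is then to invoke the inverse function theorem: since $v=\Theta(u)$ and $J(v)=d_v(\Theta^{-1}(v))$, one has $J(v)=\bigl(d_u\Theta(u)\bigr)^{-1}$, so that $d_u\Theta(u)=J(v)^{-1}$ and therefore $\nabla_u\Phi(t,u)=(J(v)^{-1})^{\rm T}\nu(t)$.

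It remains to combine this with the formula for $\mathbf{e}^\sharp_+(v)$: transposing across the Euclidean inner product gives
\[
\nu(t)\cdot\mathbf{e}^\sharp_+(v)
=\nu(t)\cdot\bigl(J(v)^{-1}\mathbf{e}_+(u)\bigr)
=\bigl((J(v)^{-1})^{\rm T}\nu(t)\bigr)\cdot\mathbf{e}_+(u)
=\nabla_u\Phi(t,u)\cdot\mathbf{e}_+(u),
\]
which is the claimed identity. There is no real obstacle here; the only point to be careful about is keeping the inverse function theorem relation $d_u\Theta(u)=J(v)^{-1}$ straight, so that the transpose lands on the correct factor.
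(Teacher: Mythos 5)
Your proof is correct and follows essentially the same route as the paper: the spectral claim by similarity, and the boundary identity by differentiating $\Phi(t,u)=\nu(t)\cdot\Theta(u)$ and using the inverse function theorem to identify $d_u\Theta(u)=J(v)^{-1}$, then transposing across the inner product. The only difference is that the paper dismisses the spectral part as "straightforward" while you spell it out, which is fine.
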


\begin{proof}[Proof of the lemma]
The first part of the lemma is straightforward. 
For the second point, just notice that by definition of $\Theta$, one has 
$\nabla_u\Phi(t,u) = (\Theta'(u))^{\rm T}\nu(t)$. 
Since moreover $\Theta'(u) = (d_v (\Theta^{-1}(v)) )^{-1} = J(v)^{-1}$, we have 
\[
\nabla_u\Phi(t,u) \cdot {\bf e}_+(u) = \nu(t) \cdot J(v)^{-1} {\bf e}_+(\Theta^{-1}(v))
\]
and the result follows from the first point.
\end{proof}

We can therefore use Theorem \ref{theoIBVP1} to prove \eqref{assert} by induction. 
Since it is satisfied for $n=0$ for a suitable $M$ and $T_1$, 
we just need to prove that it holds at rank $n+1$ if it holds at rank $n$. 
There is $K=K(M)$ such that 
\[
\|{A}^\sharp(v^n)\|_{W^{1,\infty}(\Omega_{T_1})}, \|\partial ({A}^\sharp(v^n)) \|_{\WW^{m-1}(T_1)} \leq K.
\]
Taking a greater $K$ if necessary, we can assume also that 
$\|B\|_{L^\infty(\Omega_T)}$ and $\|\partial B\|_{\WW^{m-1}(T)} \leq K$ and therefore that 
\[
\opnorm{f^n(t)}_m\leq C(K)(1+\opnorm{f(t)}_m).
\]
It follows therefore from Theorem \ref{theoIBVP1} that 
\begin{align*}
& \|v^{n+1} \|_{\WW^m(T_1)} + |{v^{n+1}}_{\vert_{x=0}}|_{m,T_1} \\
&\leq C(K_0)e^{C(K)T_1} \Big( 1 + |g|_{H^m(0,T_1)}
 + |\vf|_{m-1,T_1} + C(K)\int_0^{T_1} (1+\opnorm{ f(t) }_m) {\rm d}t \Big). 
\end{align*}
We also have 
\[
\|v^{n+1}(t)-\Theta(u^{\rm in})\|_{L^\infty} \leq \|\dt v^{n+1}\|_{L^\infty(\Omega_{T_1})}T_1
 \leq C\|v^{n+1}\|_{\WW^2(T_1)}T_1.
\]
Therefore, by choosing $M$ large enough and $T_1$ small enough the claim is proved. 
The convergence is classically obtained by proving that $\{v^n\}_n$ is a Cauchy sequence 
and, therefore, convergent in $L^2$, and that the limit is actually in $\WW^m(T)$. 
We omit the details.

\subsection{Variable coefficients $2\times 2$ boundary value problems on moving domains}\label{sectVCm}
We now turn to consider initial boundary value problems that are still cast on a half-line, 
but instead of $\R_+$, we now consider $(\ux(t),+\infty)$, 
where the left boundary $\ux(t)$ is a time dependent function. 
We consider first linear problems with variable coefficients. 
For the sake of simplicity and to prepare the ground for applications to quasilinear systems, 
we consider a slightly less general system of equations than in \eqref{systVC}: 
the variable coefficient matrix $A(t,x)$ is of the form $A(\uU(t,x))$. 
More precisely, 
\begin{equation}\label{IBVPm}
\begin{cases}
\dt U + A(\uU)\dx U + {\mathtt B} U = F & \mbox{in}\quad (\ux(t),\infty) \quad\mbox{for}\quad t\in(0,T), \\
U_{\vert_{t=0}} = u^{\rm in}(x) & \mbox{on}\quad (0,\infty), \\
\nu(t)\cdot U_{\vert_{x=\ux(t)}} = g(t) & \mbox{on}\quad (0,T),
\end{cases}
\end{equation}
where without loss of generality we assumed $\ux(0)=0$. 
The first thing to do is of course to transform this initial boundary value problem on a moving domain 
into another one cast on a fix domain, say, $\R_+$. 
This is done through a diffeomorphism $\varphi(t,\cdot)$ that maps at all times $\R_+$ onto $(\ux(t),\infty)$  
and such that for any $t$, we have $\varphi(t,0)=\ux(t)$. 
Several choices are possible for $\varphi$ and shall be discussed later. 
At this point, we just assume that $\varphi\in C^1(\Omega_T)$ and that $\varphi(0,x)=x$. 
Composing the interior equation in \eqref{IBVPm} with the diffeomorphism $\varphi$ 
to work on the fix domain $(0,\infty)$, introducing the notations 
\[
u = U\circ \varphi, \qquad \uu = \uU\circ \varphi, \qquad \dt^\varphi u = (\dt U)\circ\varphi, 
 \qquad \dx^\varphi u = (\dx U)\circ\varphi,
\]
so that, in particular, 
\begin{equation}\label{dtphi}
\dx^\varphi=\frac{1}{\dx \varphi}\dx, \qquad \dt^\varphi= \dt - \frac{\dt \varphi}{\dx \varphi}\dx,
\end{equation}
and writing $B = {\mathtt B}\circ \varphi$ and $f=F\circ\varphi$, we obtain the following equation for $u$ 
\begin{equation}\label{equ}
\dt^\varphi u + A(\uu)\dx^\varphi u + B(t,x) u = f(t,x).
\end{equation}
The initial boundary value problem on a moving domain \eqref{IBVPm} can therefore be recast as 
an initial boundary value problem on a fix domain 
\begin{equation}\label{IBVPmT}
\begin{cases}
\dt u + \cA(\uu,\partial\varphi)\dx u + B(t,x)u  = f(t,x) & \mbox{in}\quad \Omega_T, \\
u_{\vert_{t=0}} = u^{\rm in}(x) & \mbox{on}\quad \R_+, \\
\nu(t)\cdot u_{\vert_{x=0}} = g(t) & \mbox{on}\quad (0,T),
\end{cases}
\end{equation}
with 
$$
\cA(\uu,\partial\varphi)=\frac{1}{\dx \varphi}\big( A(\uu) - (\dt\varphi) \mbox{Id}\big).
$$

If we want to apply Theorem \ref{theoIBVP1} to construct solutions to \eqref{IBVPmT}, 
it is necessary to get some information on the regularity of $\varphi$, 
which is of course related to the properties of the boundary coordinate $\ux(t)$. 
A direct application of Theorem \ref{theoIBVP1} requires that $\partial\varphi$ be in $\WW^m(T)$ 
in order to get solutions $u$ in $\WW^m(T)$. 
Using Alinhac's good unknown \cite{alinhac1989}, it is however possible to obtain refined regularity estimates, 
as shown in the following theorem which requires only the following assumption.

\begin{assumption}\label{asshypm}
We have $\uu \in W^{1,\infty}(\Omega_T)$, $\ux\in C^1([0,T])$, $\ux(0)=0$, 
and the diffeomorphism $\varphi$ is in $C^1(\Omega_T)$. 
Moreover, there exists a constant $c_0>0$ such that the following three conditions hold.
\begin{enumerate}
\setlength{\itemsep}{3pt}
\item[{\bf i.}]
There exists an open set $\cU\subset \R^2$ such that $A \in C^\infty(\cU)$ and that for any $u\in\cU$, 
the matrix $A(u)$ has eigenvalues $\lambda_+(u)$ and $-\lambda_-(u)$. 
Moreover, $\uu$ takes its values in a compact set $\mathcal{K}_0 \subset \cU$ and for any $(t,x)\in\Omega_T$ we have 
\[
\lambda_\pm(\uu(t,x))\mp \dt \varphi (t,x)\geq c_0
 \quad\mbox{ and }\quad \lambda_\pm(\uu(t,x))\geq c_0.
\]

\item[{\bf ii.}]
Denoting by ${\bf e}_+ (u)$ a unit eigenvector associated to the eigenvalue $\lambda_+(u)$ of $A(u)$, 
for any $t \in [0,T]$ we have 
\[
|\nu(t) \cdot {\bf e}_+(\uu(t,0))| \geq c_0.
\]

\item[{\bf iii.}]
The Jacobian of the diffeomorphism is uniformly bounded from below and from above, that is, 
for any $(t,x)\in\Omega_T$ we have 
\[
c_0 \leq \dx \varphi (t,x) \leq \frac{1}{c_0}.
\]
\end{enumerate}
\end{assumption}

\begin{example}\label{ex3}
Considering as in Example \ref{ex1} the linearized shallow water equations, 
but this time on a moving domain, Assumption \ref{asshypm} reduces to the conditions 
$\underline{h},\underline{q} \in W^{1,\infty}(\Omega_T)$ and 
\[
\underline{h}(t,x) \geq c_0, \quad
\sqrt{\mathtt{g}\underline{h}(t,x)} \pm \Bigl( 
 \frac{\underline{q}(t,x)}{\underline{h}(t,x)} - \dt \varphi(t,x) \Bigr) \geq c_0, \quad
\sqrt{\mathtt{g}\underline{h}(t,x)} \pm 
 \frac{\underline{q}(t,x)}{\underline{h}(t,x)} \geq c_0 
\]
with some positive constant $c_0$ independent of $(t,x) \in \Omega_T$. 
\end{example}

\begin{theorem}\label{theoIBVP3}
Let $m\geq1$ be an integer, $T>0$, and assume that Assumption \ref{asshypm} is satisfied for some $c_0>0$. 
Assume moreover that there are two constants $0<K_0\leq K$ such that 
\[
\begin{cases}
\frac{1}{c_0}, \opnorm{ \partial\widetilde{\varphi}(0) }_{m-1}, |\nu|_{L^\infty(0,T)}, 
 \|\partial\varphi\|_{L^\infty(\Omega_T)}, \|A\|_{L^\infty(\mathcal{K}_0)} \leq K_0, \\
\|\partial \widetilde\varphi \|_{\WW^{m-1}(T)}, \|\dt \varphi \|_{H^m(\Omega_T)}, 
 |(\partial^m\varphi)_{\vert_{x=0}}|_{L^\infty(0,T)} \leq K, \\
\|\uu \|_{W^{1,\infty}(\Omega_T) \cap \WW^{m}(T)}, \|B\|_{W^{1,\infty}(\Omega_T)}, 
 \|\partial B\|_{\WW^{m-1}(T)}, |\nu|_{W^{1,\infty} \cap W^{m-1,\infty}(0,T)},
 |\dt^m\nu|_{L^2(0,T)} \leq K,
\end{cases}
\]
where $\widetilde \varphi(t,x) = \varphi(t,x)-x$. 
Then, for any data $u^{\rm in }\in H^m(\R_+)$, $f\in H^m(\Omega_T)$, and $g\in H^m(0,T)$ satisfying the 
compatibility conditions up to order $m-1$ in the sense of Definition \ref{defcompVC}, 
there exists a unique solution $u\in \WW^m(T)$ to \eqref{IBVPmT}. 
Moreover, the following estimate holds for any $t\in[0,T]$ and any $\gamma \geq C(K)$: 
\begin{align*}
& \opnorm{ u(t) }_{m,\gamma} + \biggl(\gamma\int_0^t\opnorm{ u(t') }_{m,\gamma}^2{\rm d}t'\biggr)^\frac12
 + |\vu|_{m,\gamma,t} \\
&\leq  C(K_0)\bigl( 
 (1+|\dt^m\nu|_{L^2(0,t)})\opnorm{ u(0) }_m + |g|_{H_\gamma^m(0,t)} + |\vf|_{m-1,\gamma,t}
  + S_{\gamma,t}^*(\opnorm{ f(\cdot) }_m) \bigr).
\end{align*}
Particularly, we have 
\begin{align*}
& \opnorm{ u(t) }_m + |\vu|_{m,t} \\
&\leq  C(K_0)e^{C(K)t}\biggl( 
 (1+|\dt^m\nu|_{L^2(0,t)})\opnorm{ u(0) }_m + |g|_{H^m(0,t)} + |\vf|_{m-1,t}
  + \int_0^t \opnorm{ f(t') }_m{\rm d}t' \biggr).
\end{align*}
\end{theorem}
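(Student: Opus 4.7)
Problem \eqref{IBVPmT} is already of the form \eqref{systVC} with coefficient matrix $\cA(\uu,\partial\varphi)$, and Assumption \ref{asshyp} is satisfied thanks to Assumption \ref{asshypm}. A direct application of Theorem \ref{theoIBVP1} would however require $\partial\cA\in\WW^{m-1}(T)$, and hence $\partial\varphi\in\WW^{m}(T)$, a full derivative more than what the hypothesis $\partial\widetilde\varphi\in\WW^{m-1}(T)$ grants. As announced in the paragraph preceding the theorem, I would remove this loss by applying the estimates of Theorem \ref{theoIBVP1} not to $\dt^k u$ but to Alinhac's good unknown
\[
\tilde u_k := (\dt^k U)\circ\varphi,\qquad k=0,\ldots,m,
\]
where $U=u\circ\varphi^{-1}$ denotes the unknown on the moving domain.

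\textbf{Equation for $\tilde u_k$ and $L^2$ estimates.} Using $\dt=\dt^\varphi+(\dt\varphi)\dx^\varphi$, see \eqref{dtphi}, an induction shows that $\dt^k u$ equals $\tilde u_k$ plus a linear combination of the $\dx^j u$ ($1\le j\le k$) with coefficients polynomial in the derivatives of $\dt\varphi/\dx\varphi$ of order at most $k-1$, all of which are controlled by $\partial\widetilde\varphi\in\WW^{m-1}(T)$. Differentiating the equation for $U$ in time, composing with $\varphi$, and rewriting in the variables $\tilde u_k$ yields
\[
\dt \tilde u_k + \cA(\uu,\partial\varphi)\dx\tilde u_k + B\,\tilde u_k = \tilde f_k \quad\text{in }\Omega_T,
\qquad \nu(t)\cdot {\tilde u_k}_{\vert_{x=0}} = \dt^k g-([\dt^k,\nu]\cdot u)_{\vert_{x=0}},
\]
in which the source $\tilde f_k$ collects $\dt^k f$ together with commutators with $A(\uu)$ and $B$; crucially, no more than $m-1$ derivatives of $\partial\varphi$ enter $\tilde f_k$, while the left-hand side only depends on $\partial\varphi$ itself. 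I would then apply Proposition \ref{propNRJ1} (the Kreiss symmetrizer being built from $\cA(\uu,\partial\varphi)$ exactly as in Lemma \ref{lemsymmetrizer}) to each $\tilde u_k$, and control $\tilde f_k$ in the dual norm $S^*_{\gamma,t}$ of Notation \ref{dualnorm}; this refinement is what allows $f$ to appear on the right-hand side only through $S^*_{\gamma,t}(\opnorm{f(\cdot)}_m)$ rather than an $L^2$-in-time norm, and is also what permits the boundary commutator involving $\dt^m\nu$ (under the weaker control $|\dt^m\nu|_{L^2(0,T)}\le K$) to appear only as the multiplicative prefactor $(1+|\dt^m\nu|_{L^2(0,t)})$ of $\opnorm{u(0)}_m$ in the conclusion.

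\textbf{From $\tilde u_k$ to $u$ and existence.} Inverting the linear relation between the $\tilde u_k$ and the $\dt^k u$, and then trading time derivatives for space derivatives via $\cA(\uu,\partial\varphi)\dx u=f-Bu-\dt u$ as in the proof of Proposition \ref{propVC2}, one recovers the announced bound on $\opnorm{u(t)}_{m,\gamma}$, on its weighted $L^2_\gamma$-norm in time, and on $|\vu|_{m,\gamma,t}$. For existence, I would smooth the data and the diffeomorphism so that all the regularity requirements of Theorem \ref{theoIBVP1} hold (the derivative loss then being harmless) and a smooth solution exists by that theorem; after checking that the compatibility conditions of Definition \ref{defcompVC} are preserved by the regularization, passing to the limit is justified by the uniform estimate established above. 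Uniqueness is immediate from the $L^2$ part of the estimate applied to the difference of two solutions.

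\textbf{Main obstacle.} The heart of the matter is the bookkeeping for $\tilde f_m$: every commutator term has to be controlled without ever asking more regularity than $\partial\widetilde\varphi\in\WW^{m-1}(T)$, $\dt\varphi\in H^m(\Omega_T)$, and $(\partial^m\varphi)_{\vert_{x=0}}\in L^\infty(0,T)$; no single one of these hypotheses suffices alone, and the calculus inequalities of Lemmas \ref{ineq1}--\ref{ineq3} must be used in concert at top order. Accessing the source term through $S^*_{\gamma,t}$ rather than through an $L^1$- or $L^2$-in-time norm is equally crucial, both to avoid the parasitic $\sqrt{T}$ factors flagged in Remark \ref{rememb} and to accommodate the $\dt^m\nu$ boundary commutator.
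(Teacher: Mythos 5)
Your high-level strategy --- work with Alinhac's good unknown instead of $\dt^k u$, exploit the dual norm $S^*_{\gamma,t}$ for the source, and convert time derivatives to space derivatives using the equation --- is the strategy of the paper. But two concrete steps do not survive scrutiny as written.

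\textbf{The boundary condition for $\tilde u_k$ is misstated.} You write $\nu(t)\cdot{\tilde u_k}_{\vert_{x=0}}=\dt^k g-([\dt^k,\nu]\cdot u)_{\vert_{x=0}}$, whose right-hand side equals $\nu(t)\cdot(\dt^k u)_{\vert_{x=0}}$. But ${\tilde u_k}_{\vert_{x=0}}(t)=(\dt^k U)(t,\ux(t))$ and $(\dt^k u)_{\vert_{x=0}}(t)=\dt^k\bigl(U(t,\ux(t))\bigr)$ differ, for $k=1$ already, by $\dot\ux\,(\dx U)_{\vert_{x=\ux}}=\dot\ux\,\dx^\varphi u_{\vert_{x=0}}$; for higher $k$ the discrepancy involves higher derivatives of $\ux$. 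So the boundary operator acting on $\tilde u_k$ is not $\nu$ but a modified vector. The paper does this bookkeeping explicitly for $k=1$: after substituting $\dt u=\dot u^\varphi+\dot\ux\,\dx^\varphi u$ at the boundary and eliminating $\dx^\varphi u$ via the interior equation, the boundary condition becomes $\nu_{(1)}\cdot\dot u^\varphi_{\vert_{x=0}}=g_{(1)}$ with $\nu_{(1)}=(\mathrm{Id}-\dot\ux A(\uu_{\vert_{x=0}})^{-1})^{\mathrm T}\nu$ (equation \eqref{nu1}), and the Kreiss--Lopatinski\u{\i} condition must then be \emph{re-verified} for $\nu_{(1)}$ (Lemma \ref{mdBC}), which uses Assumption \ref{asshypm} {\bf i} in an essential way. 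Without this step your Proposition \ref{propNRJ1} estimate on $\tilde u_1$ (let alone $\tilde u_k$, $k\ge 2$) does not even apply, because you have not exhibited a maximally dissipative boundary condition for the good unknown. The paper also avoids going to $k\ge 2$: it gets $\WW^{m-1}$ estimates on $u$ and on $\dt^\varphi u$ only, then uses Lemma \ref{lemequivn} to bootstrap to $\WW^m$; repeating the good-unknown argument for all $k\le m$ would force you to track $\nu_{(k)}$ for all $k$ and is strictly harder.

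\textbf{Invoking Lemma \ref{lemsymmetrizer} directly is not enough.} Applied to $\cA(\uu,\partial\varphi)$, Lemma \ref{lemsymmetrizer} gives $\beta_2/\beta_0\le C(1/c_0,\|\cA\|_{W^{1,\infty}},\|B\|_{L^\infty})$, and $\|\cA\|_{W^{1,\infty}}$ needs $\partial^2\varphi\in L^\infty$. The hypotheses of Theorem \ref{theoIBVP3} control $\partial\varphi\in L^\infty$ by $K_0$ and $\partial\widetilde\varphi\in\WW^{m-1}(T)$ by $K$, which gives $\partial^2\varphi\in L^\infty$ only for $m\ge 3$ --- the theorem is stated for $m\ge 1$. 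The paper's Lemma \ref{lemsymmetrizerbis} resolves this by taking the symmetrizer $\cS=(\dx\varphi)\bigl(\pi_+^{\rm T}\pi_++M\pi_-^{\rm T}\pi_-\bigr)$ with $\pi_\pm$ depending only on $A(\uu)$; the affine dependence of $\cA$ on $\dt\varphi$ and the prefactor $\dx\varphi$ in $\cS$ cancel the $\partial(\dt\varphi)$ contribution in $\dt\cS+\dx(\cS\cA)-2\cS B$, so $\beta_2/\beta_0$ ends up depending only on $\|\dt\varphi\|_{L^\infty}$ and $\|A(\uu)\|_{W^{1,\infty}}$. You need this structure-aware refinement, not the generic lemma, to get the constants to depend on $K_0$ and $K$ as stated.
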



\subsubsection{Proof of Theorem \ref{theoIBVP3}}
A direct estimate in $\WW^m(T)$ for the solution of \eqref{IBVPmT} through Theorem \ref{theoIBVP1} 
is not possible because it would require that $\partial^2 \varphi\in \WW^{m-1}(T)$ while, 
under the assumptions made in the statement of the theorem, we only have $\partial^2 \varphi\in \WW^{m-2}(T)$. 
The key step is to derive a $\WW^{m-1}(T)$ estimate on $u$ as well as on 
$\dt^\varphi u = \dt u - (\dt\varphi) \dx^\varphi u$.

\begin{proposition}\label{propAl}
Under the assumptions of Theorem \ref{theoIBVP3}, there is a unique solution $u\in \WW^{m-1}(T)$ to 
\eqref{IBVPmT} satisfying 
\begin{align}\label{eqmm1}
\opnorm{ u(t) }_{0} + |\vu|_{0,t}
&\leq C(K_0)e^{C(K)t}\biggl( \opnorm{ u(0) }_{0} + |g|_{H^0(0,t)} 
 + \int_0^t \opnorm{ f(t') }_{0} {\rm d}t' \biggr)
\end{align}
in the case $m=1$ and 
\begin{align}\label{eqmm1.5}
&\opnorm{ u(t) }_{m-1} + |\vu|_{m-1,t} \\
&\leq C(K_0)e^{C(K)t}\biggl( \opnorm{ u(0) }_{m-1} + |g|_{H^{m-1}(0,t)} + |\vf|_{m-2,t}
 + \int_0^t \opnorm{ \dt f(t') }_{m-2} {\rm d}t' \biggr) \nonumber
\end{align}
in the case $m \geq 2$. 
Moreover, $\dt^\varphi u\in \WW^{m-1}(T)$ and we have 
\begin{align}\label{eqmm2}
& \opnorm{ \dt^\varphi u(t) }_{m-1,\gamma} 
 + \biggl( \gamma\int_0^t\opnorm{ \dt^\varphi u(t') }_{m-1,\gamma}^2{\rm d}t' \biggr)^\frac12
+ |(\dt^\varphi u)_{\vert_{x=0}}|_{m-1,\gamma,t} \\
&\leq C(K_0)\bigl( 
 (1+|\dt^m\nu|_{L^2(0,t)})\opnorm{ u(0) }_m + |g|_{H_\gamma^m(0,t)} + |\vf|_{m-1,\gamma,t}
  + S_{\gamma,t}^*(\opnorm{ f(\cdot) }_m) \bigr) \nonumber \\
&\quad + C(K)\bigl( S_{\gamma,t}^*(\opnorm{ u(\cdot) }_m) + |\vu|_{m-1,\gamma,t} \bigr). \nonumber
\end{align}
\end{proposition}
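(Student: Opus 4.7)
\emph{Strategy.} A direct application of Theorem \ref{theoIBVP1} to \eqref{IBVPmT} at order $m$ is not feasible because the coefficient matrix $\cA(\uu,\partial\varphi) = (\dx\varphi)^{-1}(A(\uu) - (\dt\varphi)\mathrm{Id})$ would require $\partial\cA \in \WW^{m-1}(T)$, hence $\partial^2\varphi \in \WW^{m-1}(T)$, which is one derivative more than available under the hypotheses of Theorem \ref{theoIBVP3}. Following the Alinhac philosophy, the plan is to split the argument in two: obtain the $\WW^{m-1}$ regularity of $u$ by applying Theorem \ref{theoIBVP1} one order lower, then recover the extra time regularity through a second application of the same theorem to the equation satisfied by the good unknown $\dot u := \dt^\varphi u$, which has the virtue of satisfying a system with the same coefficient matrix $\cA$.

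\emph{Step 1: $\WW^{m-1}$ estimate for $u$.} First I verify Assumption \ref{asshyp} for the fixed-domain system \eqref{IBVPmT}: the eigenvalues of $\cA(\uu,\partial\varphi)$ are $(\pm\lambda_\pm(\uu) - \dt\varphi)/\dx\varphi$, which are bounded below (in absolute value) by a positive constant depending on $c_0$ by Assumption \ref{asshypm} {\bf i} and {\bf iii}; since the eigenvectors of $\cA$ coincide with those of $A(\uu)$, the uniform Kreiss--Lopatinski\u{\i} condition reduces to Assumption \ref{asshypm} {\bf ii}. The bounds required by Theorem \ref{theoIBVP1} at order $m-1$ on $\cA, B, \nu$ follow from the hypotheses via the Moser estimates of Lemmas \ref{ineq1}--\ref{ineq2}: in particular $\partial\cA \in \WW^{m-2}(T)$ follows from $\partial\widetilde\varphi\in\WW^{m-1}(T)$, $\dt\varphi \in H^m(\Omega_T)\subset \WW^{m-1}(T)$, and $\uu\in\WW^{m-1}(T)$. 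Applying Theorem \ref{theoIBVP1} at order $m-1$ (for $m\geq 2$) or Proposition \ref{propNRJ1} (for $m=1$) produces the unique solution $u\in\WW^{m-1}(T)$ together with the estimates \eqref{eqmm1}--\eqref{eqmm1.5}.

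\emph{Step 2: $\WW^{m-1}$ estimate for $\dot u$.} Exploiting the fact that $\dt^\varphi$ and $\dx^\varphi$ commute (they are the pull-backs by $\varphi$ of the commuting $\dt$ and $\dx$), applying $\dt^\varphi$ to \eqref{equ} yields
\begin{equation*}
\dt^\varphi \dot u + A(\uu)\dx^\varphi \dot u + B\dot u = \tilde f, \qquad \tilde f := \dt^\varphi f - (\dt^\varphi A(\uu))\,\dx^\varphi u - (\dt^\varphi B)\, u,
\end{equation*}
which, written on the fixed half-line, is again an IBVP of the form \eqref{IBVPmT} with coefficient matrix $\cA$, interior source $\tilde f$, and boundary datum obtained by differentiating $\nu\cdot u_{\vert x=0}=g$ in $t$ and substituting $\dt u_{\vert x=0} = \dot u_{\vert x=0} + (\dt\varphi(t,0)/\dx\varphi(t,0))(\dx u)_{\vert x=0}$, namely
\begin{equation*}
\nu\cdot \dot u_{\vert x=0} = \dot g := \dt g - (\dt\nu)\cdot u_{\vert x=0} - \frac{\dt\varphi(t,0)}{\dx\varphi(t,0)}\,\nu\cdot(\dx u)_{\vert x=0}.
\end{equation*}
The initial datum $\dot u(0)$ is computed algebraically from the equation at $t=0$ and controlled by $\opnorm{u(0)}_m$, and the compatibility conditions at order $m-2$ for the $\dot u$-problem follow from those assumed on $(u^{\rm in},f,g)$ at order $m-1$. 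Applying Theorem \ref{theoIBVP1} at order $m-1$ to this IBVP and bounding the source norms $|\tilde f_{\vert x=0}|_{m-2,\gamma,t}$, $S^*_{\gamma,t}(\opnorm{\dt\tilde f}_{m-2})$ and $|\dot g|_{H^{m-1}_\gamma(0,t)}$ using the Step-1 controls on $u$ and $\vu$, together with the product and trace Moser estimates of Lemmas \ref{ineq1}--\ref{ineq3}, yields \eqref{eqmm2}.

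\emph{Main obstacle.} The delicate point is that $\tilde f$ contains $\dx^\varphi u$ and $\dt^\varphi A(\uu)$, so a crude source bound would give at best $\opnorm{\dt\tilde f}_{m-2}\lesssim \opnorm{u(\cdot)}_m$, i.e.\ a loss of one full derivative on $u$. This is precisely where the refined dependence of Theorem \ref{theoIBVP1} on the source through the dual norm $S^*_{\gamma,t}$ rather than $L^2_\gamma$ pays off: the resulting term $S^*_{\gamma,t}(\opnorm{u(\cdot)}_m)$ appearing on the right-hand side of \eqref{eqmm2} is weak enough to be later absorbed or coupled with other estimates when the full $\WW^m$ bound of Theorem \ref{theoIBVP3} is assembled. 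A parallel subtlety arises for $\dot g$: in the Leibniz expansion of $\dt^{m-1}((\dt\nu)\cdot u_{\vert x=0})$ the single term $(\dt^m\nu)\cdot u_{\vert x=0}$ can be controlled by $|\dt^m\nu|_{L^2(0,t)}\,\opnorm{u(0)}_m$ (after using Sobolev embedding in time for $u_{\vert x=0}$), which explains the factor $1+|\dt^m\nu|_{L^2(0,t)}$ multiplying $\opnorm{u(0)}_m$ in \eqref{eqmm2} and the weakened assumption on $\nu$ in the hypotheses of Theorem \ref{theoIBVP3}.
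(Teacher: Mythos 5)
Your overall strategy --- apply the linear theorem at order $m-1$ to $u$ and then once more to the good unknown $\dot u = \dt^\varphi u$ --- is exactly the paper's, and your identification of the source-term obstacle and the role of $\dt^m\nu$ is correct. Nevertheless two steps would fail as written.

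In Step 1 you verify $\partial\cA\in\WW^{m-2}(T)$ but overlook that Theorem~\ref{theoIBVP1} at order $m-1$ also requires $\|\cA\|_{W^{1,\infty}(\Omega_T)}\leq K$. Since $\cA=(\dx\varphi)^{-1}\bigl(A(\uu)-(\dt\varphi)\mathrm{Id}\bigr)$, this demands a uniform bound on $\partial^2\varphi$ in $L^\infty(\Omega_T)$. The hypotheses of Theorem~\ref{theoIBVP3} only give $\partial\widetilde\varphi\in\WW^{m-1}(T)$ and $\|\partial\varphi\|_{L^\infty}\leq K_0$, which controls $\partial^2\varphi$ in $L^\infty$ only when $m\geq 3$; for $m=1,2$ --- the cases the paper cares most about --- the hypothesis is simply not available. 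The paper avoids this by \emph{not} applying Lemma~\ref{lemsymmetrizer} generically to $\cA$: instead, Lemma~\ref{lemsymmetrizerbis} constructs the specific symmetrizer $\cS=(\dx\varphi)\bigl(\pi_+^{\rm T}\pi_+ + M\pi_-^{\rm T}\pi_-\bigr)$, for which the combination $\dt\cS+\dx(\cS\cA)-2\cS B$ contains no second derivative of $\varphi$, so that $\beta_2/\beta_0$ (hence the exponential growth rate in \eqref{eqmm1}--\eqref{eqmm1.5}) depends only on $\|\partial\varphi\|_{L^\infty}$ and $\|A(\uu)\|_{W^{1,\infty}}$. This lemma is essential both in Step~1 and when you re-invoke the $\WW^{m-1}$ estimate in Step~2.

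In Step 2 your boundary datum $\dot g=\dt g-(\dt\nu)\cdot\vu-(\dt\varphi/\dx\varphi)_{\vert_{x=0}}\,\nu\cdot(\dx u)_{\vert_{x=0}}$ contains the trace of $\dx u$. Bounding $|\dot g|_{H^{m-1}_\gamma(0,t)}$ then requires $|(\dx u)_{\vert_{x=0}}|_{H^{m-1}_\gamma(0,t)}$, an order-$m$ trace which Step~1 does \emph{not} control (you only have $|\vu|_{m-1,\gamma,t}$). Expressing $\dx u$ via the PDE, $\dx^\varphi u=A(\uu)^{-1}(f-Bu-\dt^\varphi u)$, trades it for $(\dt^\varphi u)_{\vert_{x=0}}$, i.e.\ the trace of the unknown, multiplied by $\dot\ux$ which is $O(1)$ and cannot be absorbed. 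The only way to close the estimate is to move that contribution to the left-hand side, which changes the boundary operator to $\nu_{(1)}=(\mathrm{Id}-\dot\ux A(\uu_{\vert_{x=0}})^{-1})^{\rm T}\nu$; this then forces you to re-verify the uniform Kreiss--Lopatinski\u{\i} condition for $\nu_{(1)}$, which is nontrivial (it is Lemma~\ref{mdBC}, and it uses $\lambda_+(\uu)-\dt\varphi\geq c_0$ in an essential way). Neither the substitution, nor the change of boundary operator, nor its Lopatinski\u{\i} verification appears in your proposal, and without all three the bound on the boundary source does not close.
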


\begin{proof}[Proof of the proposition]
{\bf Step 1.}
We first show that there exists a solution $u\in \WW^{m-1}(T)$ to \eqref{IBVPmT} satisfying 
\eqref{eqmm1}--\eqref{eqmm1.5}. 
A direct application of Theorem \ref{theoIBVP1} almost yields the result, but with a constant $C(K')$ 
bigger than $C(K)$ in the sense that it depends on $\|\partial\varphi\|_{W^{1,\infty}(\Omega_T)}$ 
instead of $\|\partial\varphi\|_{L^{\infty}(\Omega_T)}$. 
The improved estimate claimed in \eqref{eqmm1}--\eqref{eqmm1.5} is made possible by the particular structure of 
the matrix $\cA(\uu,\partial\varphi)$, as shown in the following lemma which improves Lemma \ref{lemsymmetrizer}.

\begin{lemma}\label{lemsymmetrizerbis}
Suppose that Assumption \ref{asshypm} is satisfied. 
Then, there exist a symmetrizer $\cS\in W^{1,\infty}(\Omega_T)$ and constants 
$\alpha_0,\alpha_1$ and $\beta_0,\beta_1,\beta_2$ such that Assumption \ref{assVC} 
is satisfied for the initial boundary value problem \eqref{IBVPmT}. 
Moreover, we have 
\begin{align*}
& \mathfrak{c}_0 \leq C\Bigl(\frac{1}{c_0}, \|A(\uu^{\rm in})\|_{L^\infty(\R_+)}, 
 \|(\dt \varphi)_{\vert_{t=0}}\|_{L^\infty(\R_+)} \Bigr), \\
& \mathfrak{c}_1 \leq C\Bigl(\frac{1}{c_0}, \|A(\uu)\|_{L^\infty(\Omega_T)},
 \|\dt\varphi\|_{L^\infty(\Omega_T)}\Bigr),
\end{align*}
where $\uu^{\rm in}=\uu_{\vert_{t=0}}$ and ${\mathfrak c}_0$ and ${\mathfrak c}_1$ are as defined in 
Proposition \ref{propNRJ1}, and 
\[
\frac{\beta_2}{\beta_0} \leq C\Bigl( \frac{1}{c_0}, \|A(\uu)\|_{W^{1,\infty}(\Omega_T)},
 \|\dt\varphi\|_{L^{\infty}(\Omega_T)}, \|B\|_{L^\infty(\Omega_T)} \Bigr).
\]
\end{lemma}

\begin{proof}[Proof of the lemma]
The proof is an adaptation of the proof of Lemma \ref{lemsymmetrizer}. 
We still denote by $\pi_\pm$ the eigenprojector associated to the eigenvalues $\pm\lambda_{\pm}$ of $A(\uu)$. 
As a symmetrizer for $\cA(\uu,\varphi)$, we choose 
\[
\cS = (\dx \varphi) \bigl( \pi_+^{\rm T}\pi_+ + M \pi_-^{\rm T} \pi_- \bigr)
\]
with sufficiently large $M$. 
Since we have 
\begin{align*}
\beta_2
&= \|\dt \cS + \dx (\cS\cA) - 2\cS B\|_{L^\infty(\Omega_T)} \\
&= \|(\dx \varphi)\dt S + \dx (SA) - (\dt \varphi) \dx S - 2(\dx \varphi)SB\|_{L^\infty(\Omega_T)},
\end{align*}
where we denoted $S= \pi_+^{\rm T}\pi_+ + M \pi_-^{\rm T}\pi_-$, and since $\pi_\pm$ depends only on $A(\uu)$, 
we deduce the desired results. 
\end{proof}

Using  Lemma \ref{lemsymmetrizerbis} instead of Lemma \ref{lemsymmetrizer} in the proof of 
Theorem \ref{theoIBVP1} in the particular case of the initial boundary value problem \eqref{IBVPmT}, 
we get \eqref{eqmm1}--\eqref{eqmm1.5}.

\medskip
\noindent
{\bf Step 2.}
We prove here an extra regularity on $\dt^\varphi u $ that implies the inequality stated in the theorem. 
The main tool to get this extra regularity is Alinhac's good unknown \cite{alinhac1989}, 
which removes the loss of derivative due to the dependence on $\varphi$ in the coefficients 
of the initial boundary value problem \eqref{IBVPmT}. 
Differentiating with respect to time the interior equation in \eqref{IBVPmT}, 
and writing $\dot u=\dt u$, $\dot f=\dt f$, etc., we get 
\begin{equation}\label{eqbad}
\dt\dot{u} + \cA(\uu,\partial \varphi)\dx\dot{u} + A'(\uu)[\dot{\uu}]\dx^\varphi u
 + \mathcal{M}(\uu,\partial\varphi,\dx u)\partial\dot{\varphi} + B\dot{u} = \dot{f} - \dot{B}u
\end{equation}
with 
\[
\mathcal{M}(u,\partial\varphi,\dx u)\partial \dot \varphi
= - \bigl( (\dx\dot{\varphi}) \mathcal{A}(\uu,\partial \varphi)
 + (\dt\dot{\varphi}) \mbox{Id} \bigr) \dx^\varphi u.
\]
Obviously, the term $\mathcal{M}(\uu,\partial\varphi,\dx u)\partial\dot{\varphi}$ is responsible 
for the loss of one derivative, in the sense that a control of $\varphi $ in $ \WW^{m+1}(T)$ 
is required to control the $\WW^m(T)$ norm of $u$. 
This singular dependence is removed by working with Alinhac's good unknown 
$\dot{u}^\varphi = \dot{u} - \dot{\varphi} \dx^\varphi u$ instead of $\dot{u}$. 
The notations $\dot{f}^\varphi$ and $\dot{B}^\varphi$ are defined similarly. The following lemma is due to Alinhac \cite{alinhac1989} and can be checked by simple computations.
\begin{lemma}\label{lemeq}
With $\dot{u}^\varphi = \dot{u} - \dot{\varphi}\dx^\varphi u$, 
the equation \eqref{eqbad} can be rewritten under the form 
\[
\dt\dot{u}^\varphi + \cA(\uu,\partial \varphi)\dx\dot{u}^\varphi + A'(\uu)[\dot{\uu}^\varphi] \dx^\varphi u
 + B\dot{u}^\varphi = \dot{f}^\varphi - \dot{B}^\varphi u.
\]
\end{lemma}

\begin{remark}
We use the notations $\dot{u} = \dt u$ and $\dot{u}^\varphi = \dt^\varphi u$ to underline the fact that 
this is a general procedure that works for any linearization operator, not only time differentiation. 
\end{remark}

We can use \eqref{equ} to write 
\[
\dx^\varphi u = A(\uu)^{-1}(f-Bu-\dot u^\varphi),
\]
so that the lemma yields
\[
\dt\dot{u}^\varphi + \cA(\uu,\partial \varphi)\dx\dot{u}^\varphi + B_{(1)}\dot{u}^\varphi = f_{(1)},
\]
where
\begin{equation}\label{equx}
\begin{cases}
B_{(1)} = B - A'(\uu)[\dot{\uu}^\varphi]A(\uu)^{-1}, \\
f_{(1)} = \dot{f}^\varphi - A'(\uu)[\dot{\uu}^\varphi] A(\uu)^{-1}f - 
 (\dot{B}^\varphi - A'(\uu)[\dot{\uu}^\varphi] A(\uu)^{-1}B)u.
\end{cases}
\end{equation}
Therefore, $\dot{u}^\varphi = \dt^\varphi u$ solves an interior equation similar to those considered 
in Theorem \ref{theoIBVP1}. 
Let us now consider the initial and boundary conditions for $\dot{u}^\varphi$. 
For the initial condition, we have 
\[
(\dot u^\varphi)_{\vert_{t=0}} = u^{\rm in}_{(1)}
 \quad\mbox{with}\quad 
{u^{\rm in}_{(1)}} = (\dt u)_{\vert_{t=0}} - (\dt\varphi)_{\vert_{t=0}}\dx u^{\rm in}.
\]
For the boundary condition, let us differentiate with respect to time the boundary condition in \eqref{IBVPmT} 
to obtain $\nu(t) \cdot \dt u_{\vert_{x=0}} = \dt g - \nu'(t)\cdot u_{\vert_{x=0}}$ or equivalently 
\[
\nu(t) \cdot (\dot u^\varphi +\dot \ux \dx^\varphi u)_{\vert_{x=0}} = \dt g - \nu'(t) \cdot u_{\vert_{x=0}}.
\]
Using \eqref{equ}, this yields 
\[
\nu(t) \cdot \bigl((\mbox{Id}-\dot{x} A(\uu)^{-1})\dot u^\varphi\bigr)_{\vert_{x=0}}
 = \dt g - \nu'(t)\cdot u_{\vert_{x=0}}-\dot{x} \nu(t) \cdot A(\uu)^{-1} (f-Bu)_{\vert_{x=0}}.
\]
It follows that $\dot u^\varphi$ satisfies an initial boundary value problem of the form \eqref{systVC}, namely, 
\begin{equation}\label{systVCm}
\begin{cases}
\dt\dot{u}^\varphi + \cA(\uu,\partial\varphi)\dx\dot{u}^\varphi + B_{(1)} \dot{u}^\varphi = f_{(1)}
 & \mbox{in}\quad \Omega_T, \\
\dot{u}^\varphi_{\vert_{t=0}} = u^{\rm in}_{(1)} & \mbox{on}\quad \R_+, \\
\nu_{(1)}(t) \cdot \dot u^\varphi_{\vert_{x=0}} = g_{(1)} & \mbox{on}\quad (0,T),
\end{cases}
\end{equation}
where $f_{(1)}$ and $B_{(1)}$ are as in \eqref{equx} and
\begin{equation}\label{nu1}
\begin{cases}
g_{(1)} = \dt g - (\dt \nu) \cdot u_{\vert_{x=0}} - \dot \ux \nu \cdot A(\uu)^{-1} (f-Bu)_{\vert_{x=0}}, \\
\nu_{(1)} = (\mbox{Id}-\dot \ux A(\uu_{\vert_{x=0}})^{-1})^{\rm T}\nu. 
\end{cases}
\end{equation}
Concerning the boundary condition, we have the following lemma which shows that 
the initial boundary value problem \eqref{systVCm} satisfies condition {\bf iii} in Assumption \ref{asshyp}.

\begin{lemma}\label{mdBC}
Under Assumption \ref{asshypm}, for any $t\in[0,T]$ we have 
\[
|\nu_{(1)}(t) \cdot \mathbf{e}_+(\uu(t,0))| \geq \frac{c_0^2}{\lambda_{+}(\uu(t,0))}.
\]
\end{lemma}

\begin{proof}
We see that 
\begin{align*}
\nu_{(1)}(t) \cdot \mathbf{e}_+(\uu(t,0))
&= \nu(t) \cdot (\mbox{Id}-\dot{x}(t) A(\uu(t,0))^{-1})\mathbf{e}_+(\uu(t,0)) \\
&= \Bigl( 1-\frac{\dot{x}(t)}{\lambda_{+}(\uu(t,0))} \Bigr) \nu(t) \cdot \mathbf{e}_+(\uu(t,0)).
\end{align*}
Since $\dot{x}(t)=(\dt\varphi)(t,0)$, 
this gives the desired inequality. 
\end{proof}

Here, we see that 
\[
|\nu_{(1)}|_{L^\infty(0,T)} \leq C(K_0), \qquad
\|B_{(1)}\|_{L^\infty(\Omega_T)} \leq C(K)
\]
and that in the case $m\geq2$ 
\[
\|\partial B_{(1)}\|_{\WW^{m-2}(T)}, |\nu_{(1)}|_{W^{m-1,\infty}(0,T)} \leq C(K).
\]
Therefore, we can apply the result in Step 1 to obtain 
\begin{align}\label{eqmm3}
& \opnorm{ \dot{u}^\varphi(t) }_{m-1,\gamma}
 + \biggl( \gamma\int_0^t \opnorm{ \dot{u}^\varphi(t') }_{m-1,\gamma}^2{\rm d}t' \biggr)^\frac12
 + |{\dot{u}^\varphi}_{\;\,\vert_{x=0}}|_{m-1,t} \\
&\leq  C(K_0)\bigl( \opnorm{ \dot{u}^\varphi(0) }_{m-1} + |g_{(1)}|_{H_\gamma^{m-1}(0,t)}
 + |f_{(1) \vert_{x=0}}|_{m-2,\gamma,t} + S_{\gamma,t}^*(\opnorm{ f_{(1)}(\cdot) }_{m-1}) \bigr), \nonumber
\end{align}
where the term $|f_{(1) \vert_{x=0}}|_{m-2,\gamma,t}$ is dropped in the case $m=1$. 
Here, we have 
\[
\begin{cases}
\opnorm{ \dot{u}^\varphi(0) }_{m-1} \leq C(K_0) \opnorm{ u(0) }_m, \\
\opnorm{ f_{(1)}(t) }_{m-1} \leq C(K)( \opnorm{ f(t) }_m + \opnorm{ u(t) }_{m-1} ), \\
|f_{(1) \vert_{x=0}}|_{m-2,\gamma,t} \leq C(K)( |\vf|_{m-1,\gamma,t} + |\vu|_{m-1,\gamma,t} ).
\end{cases}
\]
Concerning the term $|g_{(1)}|_{H^{m-1}(0,t)}$, especially, the term $(\dt\nu) \cdot u_{\vert_{x=0}}$ 
we need to estimate it carefully, because we do not assume $\nu \in W^{m,\infty}(0,T)$. 
In the case $m=1$, we estimate it directly as 
\[
|(\dt\nu) \cdot u_{\vert_{x=0}}|_{L_\gamma^2(0,t)} \leq C(K)|\vu|_{L_\gamma^2(0,t)}.
\]
In the case $m\geq2$, we see that 
\begin{align*}
|(\dt\nu) \cdot u_{\vert_{x=0}}|_{H_\gamma^{m-1}(0,t)}
&\leq |\nu|_{W^{m-1,\infty}(0,t)}|\vu|_{m-1,\gamma,t}
 + |\dt^m\nu|_{L^2(0,t)} \sup_{t'\in[0,t]}e^{-\gamma t'}|u(t',0)| \\
&\leq C(K)|\vu|_{m-1,\gamma,t} + C|\dt^m\nu|_{L^2(0,t)} \opnorm{ u(0) }_{m-1},
\end{align*}
where we used $\sup_{t'\in[0,t]}e^{-\gamma t'}|u(t',0)| \leq C( \|u(0)\|_{H^1} + \gamma^{-\frac12}|\vu|_{1,\gamma,t})$, 
which is a simple consequence of \eqref{ineq5} in Lemma \ref{estuuu}. 
In any case, we have 
\begin{align*}
|g_{(1)}|_{H_\gamma^{m-1}(0,t)} \leq 
& |g|_{H_\gamma^m(0,t)}  + C|\dt^m\nu|_{L^2(0,t)} \opnorm{ u(0) }_{m-1}  + C(K)(|\vu|_{m-1,t}+|\vf|_{m-1,t}).
\end{align*}
Therefore, by \eqref{eqmm3} we obtain 
\begin{align*}
&  \opnorm{ \dot{u}^\varphi(t) }_{m-1,\gamma}
 + \biggl( \gamma\int_0^t \opnorm{ \dot{u}^\varphi(t') }_{m-1,\gamma}^2{\rm d}t' \biggr)^\frac12
 + |{\dot{u}^\varphi}_{\;\,\vert_{x=0}}|_{m-1,t} \\
& \leq C(K_0)\bigl( (1+|\dt^m\nu|_{L^2(0,t)})\opnorm{ u(0) }_m + |g|_{H^m(0,t)} \bigr) \\
&\quad
  + C(K)\bigl( |\vf|_{m-1,t} + |\vu|_{m-1,t}
   + S_{\gamma,t}^*( \opnorm{ f(\cdot) }_m) + S_{\gamma,t}^*(\opnorm{ u(\cdot) }_{m-1})  \bigr), 
\end{align*}
which shows $\dt^\varphi u \in \WW^{m-1}(T)$.

\medskip
\noindent
{\bf Step 3.}
Finally, we improve the above inequality to show \eqref{eqmm2}. 
It follows directly from Lemma \ref{lemeq} that we have also the equation for $\dot{u}^\varphi$ of the form 
\[
\dt\dot{u}^\varphi + \cA(\uu,\partial\varphi)\dx\dot{u}^\varphi = \widetilde{f}_{(1)}
\]
with 
\[
\widetilde{f}_{(1)} = \dt^\varphi f - A'(\uu)[\dt^\varphi\uu]\dx^\varphi u - \dt^\varphi(Bu).
\]
Moreover, we have \eqref{eqmm3} with $f_{(1)}$ replaced by $\widetilde{f}_{(1)}$. 
In order to give modified estimates for $\widetilde{f}_{(1)}$ and $g_{(1)}$, in the case of $m\geq2$ 
we use the following expressions 
\begin{align*}
\partial^\alpha \widetilde{f}_{(1)}
&= \dt^\varphi\partial^\alpha f
 + [\partial^\alpha,\dt^\varphi](\dt^\varphi u + A(\uu)\dx^\varphi u + Bu) \\
&\quad
 - \partial^\alpha(A'(\uu)[\dt^\varphi\uu]\dx^\varphi u + \dt^\varphi(Bu)), \\
\dt^k g_{(1)}
&= \dt^k(\dt g - (\dt \nu) \cdot u_{\vert_{x=0}})
 - \dot \ux \nu \cdot A(\uu)^{-1} \dt^k (f-Bu)_{\vert_{x=0}} \\
&\quad
 - [\dt^k, \dot \ux \nu \cdot A(\uu)^{-1}](\dt^\varphi u + A(\uu)\dx^\varphi u)_{\vert_{x=0}},
\end{align*}
where we used \eqref{equ}. 
These expressions together with Lemma \ref{ineq1} give 
\begin{align*}
& \opnorm{ \widetilde{f}_{(1)}(t) }_{m-1} \leq C(K_0)\opnorm{ f(t) }_m + C(K)\opnorm{ u(t) }_m, \\
& |g_{(1)}|_{H_\gamma^{m-1}(0,t)} + |\widetilde{f}_{(1) \vert_{x=0}}|_{m-2,\gamma,t} \\
& \leq C(K_0)( |\dt^m\nu|_{L^2(0,t)}\opnorm{ u(0) }_{m-1} + |g|_{H^m(0,t)} + |\vf|_{m-1,t}) 
 + C(K)|\vu|_{m-1,t},
\end{align*}
which yields \eqref{eqmm2}. 
The proof of Proposition \ref{propAl} is complete. 
\end{proof}

In order to  conclude the proof of Theorem \ref{theoIBVP3}, we need to show that Proposition \ref{propAl} 
provides a control of $u$ in $\WW^{m}(T)$.

\begin{lemma}\label{lemequivn}
Under the assumptions of Theorem \ref{theoIBVP3}, if $u$ solves \eqref{IBVPmT}, then we have 
\begin{align*}
&\opnorm{ \partial u(t) }_{m-1,\gamma}
 + \biggl( \gamma\int_0^t \opnorm{ \partial u(t') }_{m-1,\gamma}^2{\rm d}t' \biggr)^\frac12
 + |(\partial u)_{\vert_{x=0}}|_{m-1,t} \\
&\leq C(K_0)\biggl\{ \opnorm{ u(0) }_m + |\vf|_{m-1,\gamma,t} + S_{\gamma,t}^*(\opnorm{ \dt f(\cdot) }_{m-1}) \\
&\phantom{ \leq C(K_0)\biggl\{ }
 + \opnorm{ \dt^\varphi u(t) }_{m-1,\gamma}
 + \biggl( \gamma\int_0^t \opnorm{ \dt^\varphi u(t') }_{m-1,\gamma}^2{\rm d}t' \biggr)^\frac12
 + |(\dt^\varphi u)_{\vert_{x=0}}|_{m-1,t} \biggr\} \\
&\quad + C(K)\biggl\{ \biggl( \int_0^t \opnorm{ u(t') }_{m,\gamma}^2{\rm d}t' \biggr)^\frac12
 + |\vu|_{m-1,t} \biggr\}.
\end{align*}
\end{lemma}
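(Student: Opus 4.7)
The plan is to use the interior equation to express the derivatives $\dx u$ and $\dt u$ algebraically in terms of Alinhac's good unknown $\dt^\varphi u$, the source $f$, and $u$ itself, and then transfer the $\WW^{m-1}(T)$ estimates on $\dt^\varphi u$ provided by Proposition \ref{propAl} to $\WW^{m-1}(T)$ estimates on $\partial u$ (which is tantamount to a $\WW^m(T)$ estimate on $u$). Concretely, since the matrix $A(\uu)$ is invertible under Assumption \ref{asshypm} (its eigenvalues $\pm\lambda_\pm(\uu)$ are bounded away from zero), the interior equation \eqref{equ} can be solved for $\dx^\varphi u$:
\[
\dx^\varphi u = A(\uu)^{-1}\bigl(f - Bu - \dt^\varphi u\bigr),
\]
so that, using $\dx u = (\dx\varphi)\dx^\varphi u$ and $\dt u = \dt^\varphi u + (\dt\varphi)\dx^\varphi u$, both $\dx u$ and $\dt u$ become explicit multiplicative expressions in $\dt^\varphi u$, $u$, and $f$, with coefficients built out of $\partial\varphi$, $A(\uu)^{-1}$, and $B$.

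The main computation is then to apply $\opnorm{\cdot}_{m-1,\gamma}$ to these two identities and invoke the product and Moser-type inequalities of Lemmas \ref{ineq1} and \ref{ineq2}. Because $\|(\dx\varphi)A(\uu)^{-1}\|_{L^\infty(\Omega_T)}$ and $\|(\dt\varphi)A(\uu)^{-1}\|_{L^\infty(\Omega_T)}$ are controlled by $K_0$ while their higher derivatives are controlled by $K$ (through $\partial\widetilde\varphi\in\WW^{m-1}(T)$, $\uu\in\WW^m(T)$, and $\partial B\in\WW^{m-1}(T)$), one finds pointwise
\[
\opnorm{\partial u(t)}_{m-1,\gamma} \leq C(K_0)\bigl(\opnorm{\dt^\varphi u(t)}_{m-1,\gamma} + \opnorm{f(t)}_{m-1,\gamma}\bigr) + C(K)\opnorm{u(t)}_{m-1,\gamma},
\]
and squaring and integrating with the $\gamma$-weight produces the corresponding integrated bound. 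The pointwise and integrated norms of $f$ appearing here are then rewritten via \eqref{ineq7} of Lemma \ref{ineq4} as a combination of $\opnorm{f(0)}_{m-1}$ and $S_{\gamma,t}^*(\opnorm{\dt f(\cdot)}_{m-1})$, and $\opnorm{f(0)}_{m-1}$ is absorbed into $\opnorm{u(0)}_m$ by evaluating the equation \eqref{equ} and its first $m-1$ time derivatives at $t=0$.

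For the boundary trace $|(\partial u)_{\vert_{x=0}}|_{m-1,t}$, the same algebraic identities for $\dx u$ and $\dt u$ are restricted to $\{x=0\}$ and the trace Moser inequalities of Lemma \ref{ineq3} are invoked. The assumption $|(\partial^m\varphi)_{\vert_{x=0}}|_{L^\infty(0,T)} \leq K$ in Theorem \ref{theoIBVP3} is precisely what allows the top-order boundary coefficients to be estimated at this step, and one obtains
\[
|(\partial u)_{\vert_{x=0}}|_{m-1,t} \leq C(K_0)\bigl(|(\dt^\varphi u)_{\vert_{x=0}}|_{m-1,t} + |\vf|_{m-1,t}\bigr) + C(K)|\vu|_{m-1,t}.
\]

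The key technical delicacy to monitor is the top-order bookkeeping. Since $\partial\widetilde\varphi$ lies only in $\WW^{m-1}(T)$ (so $\partial^2\varphi$ only in $\WW^{m-2}(T)$), the commutators $[\partial^\alpha,(\dx\varphi)A(\uu)^{-1}]$ with $|\alpha|=m-1$ acting on $f - Bu - \dt^\varphi u$ involve derivatives of the coefficients that cannot be bounded without invoking the full $\WW^m(T)$ norm of $u$; via Lemma \ref{ineq1}(iii)--(iv) and Lemma \ref{ineq3} these produce exactly the remainder terms $C(K)(\int_0^t\opnorm{u(t')}_{m,\gamma}^2\,{\rm d}t')^{1/2}$ and $C(K)|\vu|_{m-1,t}$ appearing in the statement. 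The point of accepting these remainders in this form---rather than an unintegrated $C(K)\opnorm{u(t)}_{m,\gamma}$---is that they are tame enough to later be absorbed in the derivation of the full $\WW^m(T)$ estimate of Theorem \ref{theoIBVP3} by choosing $\gamma$ large.
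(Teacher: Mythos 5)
The overall strategy is the same as the paper's: use the interior equation to solve for the spatial derivative of $u$ in terms of the good unknown $\dt^\varphi u$, the source $f$, and $u$, then invoke the product/commutator lemmas and Lemma~\ref{ineq4}. However, there are two places where the proposal glosses over, or gets wrong, the crucial bookkeeping that makes the stated constants $C(K_0)$ and the remainder $C(K)(\int_0^t\opnorm{u}_{m,\gamma}^2\,{\rm d}t')^{1/2}$ achievable.

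First, your claimed pointwise bound
\[
\opnorm{\partial u(t)}_{m-1,\gamma} \leq C(K_0)\bigl(\opnorm{\dt^\varphi u(t)}_{m-1,\gamma} + \opnorm{f(t)}_{m-1,\gamma}\bigr) + C(K)\opnorm{u(t)}_{m-1,\gamma}
\]
does not follow from Lemma~\ref{ineq1}(i) applied to $(\dx\varphi)A(\uu)^{-1}\bigl(f-Bu-\dt^\varphi u\bigr)$: that product estimate produces a $C(K)$ constant in front of the full factor, because it requires $\opnorm{\partial\bigl((\dx\varphi)A(\uu)^{-1}\bigr)}_{m-2}\leq C(K)$. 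To isolate the $C(K_0)$ constant in front of $\opnorm{\dt^\varphi u}_{m-1,\gamma}$ one must first apply $\partial^\alpha$ to the equation and only \emph{then} invert the pointwise matrix coefficient. This is exactly what the paper does: using the commutator identity~\eqref{id1} one writes $A(\uu)\dx^\varphi\partial^\alpha u + \partial^\alpha\dot u^\varphi = f_{1,\alpha}$, so that $\partial^\alpha\dx u = (\dx\varphi)A(\uu)^{-1}(f_{1,\alpha}-\partial^\alpha\dot u^\varphi)$ and the constant multiplying the highest-order good unknown comes only from $\|(\dx\varphi)A(\uu)^{-1}\|_{L^\infty}\leq K_0$.

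Second, and more seriously, you do not explain the mechanism by which the $C(K)$ remainder acquires the integrated form $C(K)(\int_0^t\opnorm{u}_{m,\gamma}^2\,{\rm d}t')^{1/2}$. Lemmas~\ref{ineq1}(iii)--(iv) and~\ref{ineq3}, which you cite, are spatial commutator and trace Moser estimates; they do not perform any time integration. In the paper, this integral arises very specifically: Lemma~\ref{ineq4}/\eqref{ineq7} is applied not to $f$ or to $u$ directly, but to the commutator sources $f_{j,\alpha}$; the time derivative $\dt f_{j,\alpha}$ contains the key term $C(K)(1+\opnorm{\dt\varphi(t)}_m)\opnorm{u(t)}_m$, and $S_{\gamma,t}^*$ of that product is controlled via Cauchy--Schwarz using $\|\dt\varphi\|_{H^m(\Omega_t)}\leq K$, yielding $(\int_0^t\opnorm{u}_{m,\gamma}^2\,{\rm d}t')^{1/2}$ \emph{without} a $\gamma$-weight. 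This is the one place where the hypothesis $\|\dt\varphi\|_{H^m(\Omega_T)}\leq K$ (rather than $\opnorm{\dt\varphi(t)}_m\leq K$, which is not assumed) is used, and it is the heart of the lemma. If instead you estimate $\opnorm{\partial u(t)}_{m-1,\gamma}$ pointwise and then try to convert the remainder $C(K)\opnorm{u(t)}_{m-1,\gamma}$ via~\eqref{ineq7}, you produce $C(K)\opnorm{u(0)}_{m-1} + C(K)(\int_0^t\opnorm{u}_{m,\gamma}^2\,{\rm d}t')^{1/2}$; the $C(K)\opnorm{u(0)}_{m-1}$ term has no compensating $\gamma^{-1/2}$ factor and is not present in the statement, while the paper's route returns $C(K_0)\opnorm{u(0)}_m$ because it is the initial evaluation $\|f_{j,\alpha}(0)\|_{L^2}$ that gets estimated, and this is governed by $\opnorm{\partial\widetilde\varphi(0)}_{m-1}\leq K_0$.

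In short: the algebraic reduction you start from is the same as the paper's, but the subtle point of this lemma is precisely the $K_0$-versus-$K$ bookkeeping and the role of $\|\dt\varphi\|_{H^m(\Omega_T)}$ in generating the integrated remainder, and your proposal skips over both.
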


\begin{proof}
We will use the same notation $\dot{u}^\varphi=\dt^\varphi u$ in the proof of Proposition \ref{propAl}. 
Then, \eqref{equ} can be written as 
\begin{equation}\label{eqn2}
\dot{u}^\varphi + A(\uu)\dx^\varphi u = f-Bu =: f_0.
\end{equation}
We first consider the case $m=1$. 
Here, it holds that 
\[
\begin{cases}
\|f_0(0)\|_{L^2} \leq C(K_0)\opnorm{ u(0) }_1, \\
\|\dt f_0(t)\|_{L^2} \leq \|\dt f(t)\|_{L^2} + C(K)\opnorm{ u(t) }_1, \\
|{f_0}_{\vert_{x=0}}|_{L_\gamma^2(0,t)} \leq |\vf|_{L_\gamma^2(0,t)} + C(K)|\vu|_{L_\gamma^2(0,t)}.
\end{cases}
\]
It follows from \eqref{eqn2} that 
\[
\dx u = (\dx \varphi)A(\uu)^{-1}(f_0 - \dot{u}^\varphi).
\]
We also have 
\[
\dt u = \dot{u}^\varphi - \frac{\dt\varphi}{\dx\varphi}\dx u.
\]
Therefore, we obtain 
\[
|\partial u(t,x)| \leq C(K_0)( |\dot{u}^\varphi(t,x)| + |f_0(t,x)|).
\]
By Lemma \ref{estuuu} we have 
\begin{align*}
& \opnorm{ f_0(t) }_{0,\gamma}
 + \biggl( \gamma\int_0^t \opnorm{ f_0(t') }_{0,\gamma}^2{\rm d}t' \biggr)^\frac12 \\
&\leq C\bigl( \|f_0(0)\|_{L^2} + S_{\gamma,t}^*(\|\dt f_0(\cdot)\|_{L^2}) \bigr) \\
&\leq C(K_0)\bigl( \opnorm{ u(0) }_1 + S_{\gamma,t}^*(\|\dt f(\cdot)\|_{L^2}) \bigr)
 + C(K)S_{\gamma,t}^*(\opnorm{ u(\cdot) }_1).
\end{align*}
Using the above inequalities, we get the desired estimate in the case $m=1$.

We proceed to consider the case $m\geq2$. 
Applying $\partial^\alpha$ with a multi-index $\alpha$ satisfying $|\alpha| \leq m-1$ to \eqref{eqn2} 
and using the identity 
\begin{equation}\label{id1}
\dx^\varphi\partial^\alpha u
 = \partial^\alpha\dx^\varphi u + (\dx^\varphi\partial^\alpha\varphi)\dx^\varphi u
  + (\dx\varphi)^{-1}[\partial^\alpha; \dx\varphi, \dx^\varphi u]
\end{equation}
with a symmetric commutator 
$[\partial^\alpha; v, w] = \partial^\alpha(vw)-(\partial^\alpha v)w-v(\partial^\alpha w)$, 
we obtain 
\begin{align*}
A(\uu)\dx^\varphi\partial^\alpha u + \partial^\alpha\dot{u}^\varphi
&= \partial^\alpha(f-Bu)-[\partial^\alpha,A(\uu)]\dx^\varphi u \\
&\quad
 +A(\uu)( (\dx^\varphi\partial^\alpha\varphi)\dx^\varphi u
  + (\dx\varphi)^{-1}[\partial^\alpha; \dx\varphi, \dx^\varphi u] ) \\
&=: f_{1,\alpha}.
\end{align*}
Here, by Lemma \ref{ineq1} it holds that 
\[
\begin{cases}
\|f_{1,\alpha}(0)\|_{L^2} \leq C(K_0)\opnorm{ u(0) }_m, \\
\|\dt f_{1,\alpha}(t)\|_{L^2} \leq C(K_0)\opnorm { \dt f(t) }_{m-1}
 + C(K)(1+\opnorm{ \dt\varphi(t) }_m)\opnorm{ u(t) }_m, \\
|{f_{1,\alpha}}_{\vert_{x=0}}|_{L_\gamma^2(0,t)}
 \leq |\vf|_{m-1,\gamma,t} + C(K)|\vu|_{m-1,\gamma,t}.
\end{cases}
\]
We also have
\[
\partial^\alpha\dx u = (\dx\varphi)A(\uu)^{-1}(f_{1,\alpha}-\partial^\alpha\dot{u}^\varphi),
\]
which will be used to evaluate $\dx u$. 
Applying $\partial^\alpha$ to the identity 
$\dt u = \dot{u}^\varphi + (\dt\varphi)\dx^\varphi u$ 
and using \eqref{id1} we obtain 
\begin{align*}
& \partial^\alpha\dt u - \partial^\alpha\dot{u}^\varphi - (\dt\varphi)(\dx\varphi)^{-1}\partial^\alpha\dx u \\
&= (\partial^\alpha\dt\varphi)\dx^\varphi u + [\partial^\alpha; \dt\varphi, \dx^\varphi u]
 -(\dt\varphi)(\dx\varphi)^{-1}( (\partial^\alpha\dx\varphi)\dx^\varphi u
  + [\partial^\alpha; \dx\varphi, \dx^\varphi u] ) \\
&=: f_{2,\alpha}.
\end{align*}
Here, by Lemma \ref{ineq1} it holds that 
\[
\begin{cases}
\|f_{2,\alpha}(0)\|_{L^2} \leq C(K_0)\opnorm{ u(0) }_m, \\
\|\dt f_{2,\alpha}(t)\|_{L^2} \leq C(K)(1+\opnorm{ \dt\varphi(t) }_m)\opnorm{ u(t) }_m, \\
|{f_{2,\alpha}}_{\vert_{x=0}}|_{L_\gamma^2(0,t)} \leq C(K)|\vu|_{m-1,\gamma,t}.
\end{cases}
\]
We also have 
\[
\partial^\alpha\dt u
= \partial^\alpha\dot{u}^\varphi + (\dt\varphi)(\dx\varphi)^{-1}\partial^\alpha\dx u + f_{2,\alpha},
\]
which will be used to evaluate $\dt u$. 
Therefore, we obtain 
\[
|\partial^\alpha\partial u(t,x)|
 \leq C(K_0)( |\partial^\alpha\dot{u}^\varphi(t,x)| +  |f_{1,\alpha}(t,x)| + |f_{2,\alpha}(t,x)| ),
\]
so that 
\begin{align*}
&\opnorm{ \partial u(t) }_{m-1,\gamma}
 + \biggl( \gamma\int_0^t \opnorm{ \partial u(t') }_{m-1,\gamma}^2{\rm d}t' \biggr)^\frac12
 + |(\partial u)_{\vert_{x=0}}|_{m-1,t} \\
&\leq C(K_0)\biggl\{ \opnorm{ \dot{u}^\varphi(t) }_{m-1,\gamma}
 + \biggl( \gamma\int_0^t \opnorm{ \dot{u}^\varphi(t') }_{m-1,\gamma}^2{\rm d}t' \biggr)^\frac12
 + |{\dot{u}^\varphi}_{\;\,\vert_{x=0}}|_{m-1,t} \\
&\quad
 + \sum_{|\alpha| \leq m-1,j=1,2}\biggl( \opnorm{ f_{j,\alpha}(t) }_{0,\gamma}
 + \biggl( \gamma\int_0^t \opnorm{ f_{j,\alpha}(t') }_{0,\gamma}^2{\rm d}t' \biggr)^\frac12
 + |f_{j,\alpha \vert_{x=0}}|_{L_\gamma^2(0,t)} \biggr) \biggr\}.
\end{align*}
Here, by Lemma \ref{estuuu} we see that 
\begin{align*}
& \opnorm{ f_{j,\alpha}(t) }_{0,\gamma}
 + \biggl( \gamma\int_0^t \opnorm{ f_{j,\alpha}(t') }_{0,\gamma}^2{\rm d}t' \biggr)^\frac12 \\
&\leq C\bigl( \|f_{j,\alpha}(0)\|_{L^2} + S_{\gamma,t}^*(\|\dt f_{j,\alpha}(\cdot)\|_{L^2}) \bigr) \\
&\leq C(K_0)\bigl( \opnorm{ u(0) }_m + S_{\gamma,t}^*(\opnorm{ \dt f(\cdot) }_{m-1}) \bigr) 
 + C(K)S_{\gamma,t}^*( (1+\opnorm{ \dt\varphi(\cdot) }_m)\opnorm{ u(\cdot) }_m )
\end{align*}
and that 
\begin{align*}
& S_{\gamma,t}^*( (1+\opnorm{ \dt\varphi(\cdot) }_m)\opnorm{ u(\cdot) }_m ) \\
&\leq \biggl(\frac{1}{\gamma}\int_0^t\opnorm{ u(t') }_{m,\gamma}^2{\rm d}t'\biggr)^\frac12
 + \int_0^te^{-\gamma t'}\opnorm{ \dt\varphi(t') }_m \opnorm{ u(t') }_m{\rm d}t' \\
&\leq \biggl(\frac{1}{\gamma}\int_0^t\opnorm{ u(t') }_{m,\gamma}^2{\rm d}t'\biggr)^\frac12
 + \|\dt\varphi\|_{H^m(\Omega_t)} \biggl(\int_0^t\opnorm{ u(t') }_{m,\gamma}^2{\rm d}t'\biggr)^\frac12.
\end{align*}
Summarizing the above inequalities, we obtain the desired estimate. 
\end{proof}

Now, it follows from the estimates in Proposition \ref{propAl} and Lemma \ref{lemequivn} together with 
Lemma \ref{ineq4} that 
\begin{align*}
&\opnorm{ u(t) }_{m,\gamma}
 + \biggl( \gamma\int_0^t \opnorm{ u(t') }_{m,\gamma}^2{\rm d}t' \biggr)^\frac12 + |\vu|_{m,t} \\
&\leq \opnorm{ \partial u(t) }_{m-1,\gamma}
 + \biggl( \gamma\int_0^t \opnorm{ \partial u(t') }_{m-1,\gamma}^2{\rm d}t' \biggr)^\frac12
 + |(\partial u)_{\vert_{x=0}}|_{m-1,t} \\
&\quad
 + \opnorm{ u(t) }_{m-1,\gamma}
 + \biggl( \gamma\int_0^t \opnorm{ u(t') }_{m-1,\gamma}^2{\rm d}t' \biggr)^\frac12 + |\vu|_{m-1,t} \\
&\leq C(K_0)\bigl( (1+|\dt^m\nu|_{L^2(0,t)})\opnorm{ u(0) }_m + |g|_{H_\gamma^m(0,t)}
 + |\vf|_{m-1,\gamma,t} + S_{\gamma,t}^*(\opnorm{ \dt f(\cdot) }_{m-1}) \bigr) \\
&\quad
 + C(K)\biggl\{ \gamma^{-\frac12}\biggl( \gamma\int_0^t \opnorm{ u(t') }_{m,\gamma}^2{\rm d}t' \biggr)^\frac12
  + \gamma^{-\frac12}\opnorm{ u(0) }_m + \gamma^{-1}|\vu|_{m,\gamma,t} \biggr\}. 
\end{align*}
Therefore, by taking $\gamma$ sufficiently large compared to $C(K)$, 
we obtain the desired estimate in Theorem \ref{theoIBVP3}. 
The proof of Theorem \ref{theoIBVP3} is complete.

\subsection{Application to free boundary problems with a boundary equation of ``kinematic'' type}\label{sectFB1}
We investigate here a general class of free boundary problems. 
We consider a quasilinear hyperbolic system cast on a moving domain $(\ux(t),\infty)$, 
\begin{equation}\label{IBVPfb}
\begin{cases}
\dt U + A(U)\dx U = 0 & \mbox{in}\quad (\ux(t),\infty) \quad\mbox{for}\quad t\in(0,T), \\
U_{\vert_{t=0}} = u^{\rm in}(x) & \mbox{on}\quad (\ux(0),\infty), \\
\unu\cdot U_{\vert_{x=\ux(t)}} = g(t) & \mbox{on}\quad (0,T),
\end{cases}
\end{equation}
and assume that the evolution of the boundary is governed by a nonlinear equation of the form 
\begin{equation}\label{eqFB}
\dot{\ux} = \mathcal{X}(U_{\vert_{x=\ux(t)}})
\end{equation}
for some smooth function ${\mathcal X}$.
The set of equations \eqref{IBVPfb}--\eqref{eqFB} is a free boundary problem. 
In the following, without loss of generality we assume $\ux(0)=0$. 
Using as in \S \ref{sectVCm} a diffeomorphism $\varphi(t,\cdot) : \R_+ \to (\ux(t),\infty)$, 
and recalling the notations 
\[
u = U\circ \varphi, \qquad \dx^\varphi = \frac{1}{\dx \varphi}\dx, \qquad 
 \dt^\varphi = \dt - \frac{\dt \varphi}{\dx \varphi}\dx, 
\]
the free boundary problem \eqref{IBVPfb}--\eqref{eqFB} can therefore be recast as 
an initial boundary value problem on a fixed domain, 
\begin{equation}\label{IBVPfbbis}
\begin{cases}
\dt u + \cA(u,\partial\varphi)\dx u = 0 & \mbox{in}\quad \Omega_T, \\
u_{\vert_{t=0}} = u^{\rm in}(x) & \mbox{on}\quad \R_+, \\
\unu\cdot u_{\vert_{x=0}} = g(t) & \mbox{on}\quad (0,T),
\end{cases}
\end{equation}
where $\unu\in \R^2$ is a constant vector and 
\[
\cA(u,\partial\varphi) = \frac{1}{\dx \varphi}\bigl( A(u) - (\dt\varphi) \mbox{Id}\bigr),
\]
complemented by the evolution equation 
\begin{equation}\label{eqFBbis}
\dot{\ux} = {\mathcal X}(u_{\vert_{x=0}}),\qquad \ux(0) = 0.
\end{equation}
As shown in \S \ref{sectVCm}, the regularity of $\varphi$ plays an important role in the analysis 
of the initial boundary value problem \eqref{IBVPfbbis}. 
It is therefore important to make an appropriate choice for the diffeomorphism. 
For a boundary equation of the form \eqref{eqFBbis} which is of ``kinematic'' type, 
a ``Lagrangian'' diffeomorphism is appropriate. 
In particular, in the second point of the lemma, the structure of $\varphi$ allows the control of 
$\dt \varphi$ in $\WW^m(T)$ (which involves $m+1$ derivatives of $\varphi$) by $u$ in $\WW^m(T)$ 
(which involves only $m$ derivative of $u$).

\begin{lemma}\label{lemdiffeo}
Let $\mathcal{U}$ be an open set in $\R^2$ and $\mathcal{X} \in C^\infty(\mathcal{U})$. 
Suppose that $u\in W^{1,\infty}(\Omega_T)$ takes its values in a compact and convex set 
$\mathcal{K}_1 \subset \mathcal{U}$ and that 
\[
\|u\|_{W^{1,\infty}(\Omega_T)}, \|\mathcal{X}\|_{W^{1,\infty}(\mathcal{K}_1)} \leq K.
\]
Then, $\ux \in C^1([0,T])$ can be defined by the ODE 
\[
\begin{cases}
\dot{\ux}(t) = {\mathcal X}(u_{\vert_{x=0}}(t)) \quad\mbox{for}\quad t\in(0,T), \\
\ux(0) = 0.
\end{cases}
\]
Moreover, there exists $T_1 \in (0,T]$ depending on $K$ such that the mapping 
$\varphi:\overline{\Omega_T}\to \R$ defined by 
\begin{equation}\label{diffeo}
\varphi(t,x)=x+\int_0^t {\mathcal X}(u(t',x)){\rm d}t'
\end{equation}
satisfies the following properties:
\begin{enumerate}
\setlength{\itemsep}{3pt}
\item[{\bf i.}]
We have $\varphi(t,0) = \ux(t)$ and that for any $t\in[0,T_1]$, 
$\varphi(t,\cdot)$ is a diffeomorphism mapping $\R_+$ onto $(\ux(t),\infty)$ and satisfying 
$\frac12 \leq \dx\varphi(t,x) \leq 2$. 

\item[{\bf ii.}]
If moreover $m\geq2$, $u\in \WW^m(T_1)$, and $\mathcal{X}(0)=0$, then we have, 
with $\widetilde{\varphi}(t,x)=\varphi(t,x)-x$,
\begin{align*}
& \opnorm{ \partial\widetilde{\varphi}(0) }_{m-1}, \|\partial\varphi\|_{L^\infty(\Omega_{T_1})}
 \leq C( \opnorm{ u(0) }_m ), \\
& \|\widetilde{\varphi}\|_{\WW^m(T_1)}, \|\dt\varphi\|_{\WW^m(T_1)}, 
 |(\partial^m\varphi)_{\vert_{x=0}}|_{L^\infty(0,T_1)} 
\leq C\bigl( \|u\|_{\WW^m(T_1)}, |\vu|_{m,T_1} \bigr).
\end{align*}
\end{enumerate}
\end{lemma}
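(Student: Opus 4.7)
The plan is to build $\ux$ by Cauchy-Lipschitz, verify the diffeomorphism property of $\varphi$ directly from its integral definition, and then obtain the higher-order bounds of part (ii) by exploiting the fact that $\widetilde{\varphi}$ is essentially a time-antiderivative of $\mathcal{X}(u)$, so that the Moser-type inequalities of Lemmas \ref{ineq2} and \ref{ineq3} apply cleanly. The main obstacle will be the pointwise-in-time control $|(\partial^m \varphi)_{\vert_{x=0}}|_{L^\infty(0,T_1)}$, which requires a Sobolev embedding in time on top of the boundary Moser estimates.

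For part (i): since $u \in W^{1,\infty}(\Omega_T)$ admits a Lipschitz trace $u_{\vert_{x=0}} \in W^{1,\infty}(0,T)$ and $\mathcal{X}$ is smooth on the compact convex set $\mathcal{K}_1 \supset u(\Omega_T)$, the map $t \mapsto \mathcal{X}(u_{\vert_{x=0}}(t))$ is Lipschitz, so Cauchy-Lipschitz provides a unique $C^1$ solution $\ux$ on $[0,T]$. The identity $\varphi(t,0) = \int_0^t \dot{\ux}(t')\,{\rm d}t' = \ux(t)$ follows directly from \eqref{diffeo}. Differentiating under the integral gives
\[
\dx \varphi(t,x) = 1 + \int_0^t \mathcal{X}'(u(t',x))\dx u(t',x)\,{\rm d}t',
\]
so $|\dx\varphi - 1| \leq K^2 t$. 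Setting $T_1 := \min(T, 1/(2K^2))$ forces $\dx\varphi \in [1/2, 3/2]$ throughout $\Omega_{T_1}$, and since $\varphi(t,\cdot)$ is then strictly increasing with $\varphi(t,0)=\ux(t)$ and $\varphi(t,x) \to \infty$ as $x \to \infty$, it is a diffeomorphism from $\R_+$ onto $(\ux(t),\infty)$.

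For part (ii), the hypothesis $\mathcal{X}(0)=0$ enables the Moser inequality of Lemma \ref{ineq2} to give $\opnorm{\mathcal{X}(u)(t)}_{m-1} \leq C(\|u\|_{\WW^m(T_1)})$; combined with $\dt\widetilde{\varphi} = \mathcal{X}(u)$ and one time integration, this controls $\|\widetilde{\varphi}\|_{\WW^m(T_1)}$ and $\|\dt\varphi\|_{\WW^m(T_1)}$. At $t=0$, $\dx\widetilde{\varphi}(0)\equiv 0$ while $\dt^j\widetilde{\varphi}(0,\cdot) = \dt^{j-1}\mathcal{X}(u)(0,\cdot)$ is a polynomial in $\{\dt^k u(0)\}_{k<j}$, to which Lemma \ref{ineq2} again applies to yield $\opnorm{\partial\widetilde{\varphi}(0)}_{m-1}\leq C(\opnorm{u(0)}_m)$; the $L^\infty$ bound on $\partial\varphi$ follows from the already-established $\dx\varphi\in[1/2,3/2]$ together with $|\dt\varphi| = |\mathcal{X}(u)| \leq \|\mathcal{X}\|_{L^\infty(\mathcal{K}_1)}$. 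The final and most delicate estimate is $|(\partial^m \varphi)_{\vert_{x=0}}|_{L^\infty(0,T_1)}$: a direct computation from \eqref{diffeo} shows that each $(\dt^k\dx^l\varphi)_{\vert_{x=0}}$ with $k+l=m$ is a polynomial in the traces $\{(\dt^i\dx^j u)_{\vert_{x=0}}\}_{i+j\leq m}$ (after at most one harmless time integration, which occurs in the $k=0$ case), so the trace Moser estimate of Lemma \ref{ineq3} controls it in $L^2(0,T_1)$ by $|\vu|_{m,T_1}$ together with $\|u\|_{\WW^m(T_1)}$. Since one further $\dt$ produces an analogous polynomial of total order still at most $m$, the same bound applies to $\dt(\partial^m\varphi)_{\vert_{x=0}}$ in $L^2(0,T_1)$, and the one-dimensional Sobolev embedding $H^1(0,T_1)\hookrightarrow L^\infty(0,T_1)$ then delivers the required $L^\infty$-in-time control.
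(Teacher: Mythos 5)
Your proposal is correct. The paper states Lemma \ref{lemdiffeo} without proof, so there is no argument to compare against directly, but your route (differentiate the explicit formula, use the Moser estimates of Lemmas \ref{ineq2}--\ref{ineq3}, and close the boundary $L^\infty$-in-time estimate via the one-dimensional embedding $H^1(0,T_1)\hookrightarrow L^\infty(0,T_1)$) is precisely the one the structure of $\varphi$ is designed for, as the paper itself hints when it remarks that the Lagrangian choice makes $\dt\varphi=\mathcal{X}(u)$ controllable in $\WW^m$ by $u$ in $\WW^m$.

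Two minor points worth tidying. First, invoking Cauchy--Lipschitz for $\ux$ is harmless overkill: since $\dot\ux(t)=\mathcal{X}(u_{\vert_{x=0}}(t))$ has no $\ux$-dependence on the right, $\ux$ is simply the primitive of a continuous function. Second, to bound $\|\dt\varphi\|_{\WW^m(T_1)}$ you need $\opnorm{\mathcal{X}(u)(t)}_m$ (not $\opnorm{\mathcal{X}(u)(t)}_{m-1}$, which suffices only for $\|\widetilde\varphi\|_{\WW^m(T_1)}$ after the time integration); this is a typo with no consequence since Lemma \ref{ineq2} applies equally at order $m$. The handling of $|(\partial^m\varphi)_{\vert_{x=0}}|_{L^\infty(0,T_1)}$, which is indeed the only delicate term, is correct: the pure-space case $\dx^m\varphi_{\vert_{x=0}}$ gains from the time antiderivative, and the cases with at least one $\dt$ reduce to traces of derivatives of $u$ of total order at most $m-1$, whose $L^\infty$-in-time control comes from $H^1(\R_+)\hookrightarrow L^\infty(\R_+)$; folding both into the uniform $H^1(0,T_1)\hookrightarrow L^\infty(0,T_1)$ argument, using the trace Moser estimate of Lemma \ref{ineq3} both for the $L^2(0,T_1)$ bound and for its time derivative, works and is clean.
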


We can now state the main result of this section, which holds under the following assumption.

\begin{assumption}\label{asshypQLFB}
Let $\mathcal{U}$ be an open set in $\R^2$, which represents a phase space of $u$. 
The following conditions hold.
\begin{enumerate}
\setlength{\itemsep}{3pt}
\item[{\bf i.}]
$A,\mathcal{X} \in C^\infty(\mathcal{U})$, $\mathcal{X}(0)=0$. 

\item[{\bf ii.}]
For any $u\in\cU$, the matrix $A(u)$ has eigenvalues $\lambda_+(u)$ and $-\lambda_-(u)$ satisfying 
\[
\lambda_\pm(u) > 0 \quad\mbox{and}\quad \lambda_\pm(u)\mp {\mathcal X}(u) > 0.
\]

\item[{\bf iii.}]
Denoting by ${\bf e}_+ (u)$ a unit eigenvector associated to the eigenvalue $\lambda_+(u)$ of $A(u)$, 
for any $u\in\mathcal{U}$ we have 
\[
|\unu\cdot {\bf e}_+(u)| > 0.
\]
\end{enumerate}
\end{assumption}

\begin{theorem}\label{theoIBVP4}
Let $m\geq 2$ be an integer. 
Suppose that Assumption \ref{asshypQLFB} is satisfied. 
If $u^{\rm in}\in H^m(\R_+)$ takes its values in a compact and convex set ${\mathcal K}_0\subset \cU$ and 
if the data $u^{\rm in}$ and $g \in H^m(0,T)$ satisfy the compatibility conditions up to order $m-1$ 
in the sense of Definition \ref{defcompfbp} below, then there exist $T_1 \in (0,T]$ and a unique solution 
$(u,\ux)$ to \eqref{IBVPfbbis}--\eqref{eqFBbis} with $u\in \WW^m(T_1)$, $\ux\in H^{m+1}(0,T_1)$, and 
$\varphi$ given by Lemma \ref{lemdiffeo}. 
\end{theorem}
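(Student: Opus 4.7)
The plan is to build the solution by a classical Picard-type scheme, essentially paralleling the proof of Theorem \ref{theoIBVP2} but now crucially decoupling the boundary evolution from the interior PDE through the Lagrangian diffeomorphism \eqref{diffeo}. First, I pick a smooth extension $u^0\in H^{m+1/2}(\R\times\R_+)$ of the initial datum whose time derivatives at $t=0$ match the $u_k^{\rm in}$ obtained inductively from $\dt u = -A(u)\dx u$; since $\unu$ is constant and the interior source vanishes, this automatically ensures that the data $(u^{\rm in},0,g)$ are compatible at every order in the sense of Definition \ref{defcompVC} for the linear IBVPs solved at each step. Then I define the iterates by
\begin{equation*}
\varphi^{n+1}(t,x)=x+\int_0^t \mathcal{X}(u^n(t',x))\,{\rm d}t',
\end{equation*}
and let $u^{n+1}\in \WW^m(T_1)$ be the solution given by Theorem \ref{theoIBVP3} of
\begin{equation*}
\dt u^{n+1}+\cA(u^n,\partial\varphi^{n+1})\dx u^{n+1}=0,\quad u^{n+1}_{\vert_{t=0}}=u^{\rm in},\quad \unu\cdot u^{n+1}_{\vert_{x=0}}=g.
\end{equation*}

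To invoke Theorem \ref{theoIBVP3} I must verify Assumption \ref{asshypm} uniformly in $n$. Hyperbolicity follows directly from Assumption \ref{asshypQLFB}(ii) because the Lagrangian choice gives $\dt\varphi^{n+1}=\mathcal{X}(u^n)$, turning the requirement $\lambda_\pm(u^n)\mp \dt\varphi^{n+1}\geq c_0$ into the structural hypothesis $\lambda_\pm(u^n)\mp \mathcal{X}(u^n)>0$. The Kreiss--Lopatinski\u\i\ condition comes from Assumption \ref{asshypQLFB}(iii) and is independent of $n$, and the bound $\tfrac12\leq \dx\varphi^{n+1}\leq 2$ is provided by Lemma \ref{lemdiffeo}(i) once $T_1$ is small. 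Under the induction hypothesis $\|u^n\|_{\WW^m(T_1)}+|u^n_{\vert_{x=0}}|_{m,T_1}\leq M$ and $u^n(\overline{\Omega_{T_1}})\subset \mathcal{K}_1$ for some compact convex $\mathcal{K}_0\Subset \mathcal{K}_1\Subset \cU$, Lemma \ref{lemdiffeo}(ii) yields constants $K_0,K=K(M)$ controlling every norm of $\varphi^{n+1}$ appearing in Theorem \ref{theoIBVP3}. Since $f=0$ and $\dt^m\unu=0$, that theorem gives
\begin{equation*}
\|u^{n+1}\|_{\WW^m(T_1)}+|u^{n+1}_{\vert_{x=0}}|_{m,T_1}\leq C(K_0)\,e^{C(K)T_1}\bigl(\opnorm{u^{\rm in}}_m+|g|_{H^m(0,T_1)}\bigr),
\end{equation*}
and choosing $M$ large depending only on the data, then $T_1$ small, closes the induction; the embedding $\|u^{n+1}(t)-u^{\rm in}\|_{L^\infty}\lesssim T_1\|u^{n+1}\|_{\WW^2(T_1)}$ keeps the iterates in $\mathcal{K}_1$.

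Convergence is then obtained in a weaker topology. The differences $v^n=u^{n+1}-u^n$ and $\psi^n=\varphi^{n+1}-\varphi^n=\int_0^t(\mathcal{X}(u^n)-\mathcal{X}(u^{n-1}))\,{\rm d}t'$ satisfy a linear IBVP of the form \eqref{systVC} with homogeneous boundary data and a source built from Lipschitz differences of $A$ and $\mathcal{X}$, controlled in $\WW^0(T_1)$ by $\|v^{n-1}\|_{\WW^0(T_1)}$ thanks to the uniform $\WW^m$ bounds. The basic $L^2$ estimate of Proposition \ref{propNRJ1} then yields $\|v^n\|_{\WW^0(T_1)}\leq C(M)T_1\|v^{n-1}\|_{\WW^0(T_1)}$, a strict contraction after shrinking $T_1$ once more. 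The limit $u$ in $\WW^0(T_1)$ inherits the uniform $\WW^m(T_1)$ bound by weak-$*$ compactness, interpolation with the strong $L^2$ convergence upgrades convergence to every $\WW^s(T_1)$ with $s<m$, and passage to the limit in the equation identifies $u$ as the solution. The boundary $\ux$ and the diffeomorphism $\varphi$ are then reconstructed from Lemma \ref{lemdiffeo}, with $\ux\in H^{m+1}(0,T_1)$ inherited from $u_{\vert_{x=0}}\in H^m(0,T_1)$ and the ODE \eqref{eqFBbis}. Uniqueness is proved by the same contraction argument applied to two solutions.

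The main technical difficulty is not the iterative scheme itself but guaranteeing that it closes without losing derivatives, despite the nonlinear coupling between $u$ and $\varphi$. The essential balancing act is provided by the Lagrangian formula: $\dt\varphi=\mathcal{X}(u)$ has the same $\WW^m$ regularity as $u$, so $\partial\widetilde\varphi$ only belongs to $\WW^{m-1}$, one derivative short of what a naive application of Theorem \ref{theoIBVP1} to the system \eqref{IBVPmT} would require. This is precisely the gap that Theorem \ref{theoIBVP3} closes via Alinhac's good unknown, whose sharp hypothesis $\partial\widetilde\varphi\in\WW^{m-1}(T)$ matches exactly the regularity the Lagrangian choice provides; without it, the scheme would lose half a derivative at every step and diverge.
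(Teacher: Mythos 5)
Your iterative scheme, the choice of Lagrangian diffeomorphism, the verification of Assumption \ref{asshypm}, and the derivation of uniform $\WW^m(T_1)$ bounds via Theorem \ref{theoIBVP3} all match the paper's proof and are correct. The diagnosis in your final paragraph — that the Lagrangian choice, giving $\dt\varphi=\mathcal{X}(u)$, is precisely what Alinhac's good unknown needs to close without losing derivatives — is also the right idea.

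The gap is in the convergence step, and it is genuine. You claim a contraction in $\WW^0(T_1)$, asserting that the source in the equation for $v^n=u^{n+1}-u^n$ is ``controlled in $\WW^0(T_1)$ by $\|v^{n-1}\|_{\WW^0(T_1)}$''. This is false. The source has the form
\begin{equation*}
f^n = -\bigl(\cA(u^{n+1},\partial\varphi^{n+1})-\cA(u^n,\partial\varphi^n)\bigr)\dx u^{n+1},
\end{equation*}
and since $\cA(u,\partial\varphi)=(\dx\varphi)^{-1}(A(u)-(\dt\varphi)\mathrm{Id})$, the coefficient difference involves
\begin{equation*}
\dx\varphi^{n+1}-\dx\varphi^n=\int_0^t\dx\bigl(\mathcal{X}(u^{n+1})-\mathcal{X}(u^n)\bigr)\,{\rm d}t'
 = \int_0^t\bigl[(\mathcal{X}'(u^{n+1})-\mathcal{X}'(u^n))\dx u^{n+1}+\mathcal{X}'(u^n)\dx(u^{n+1}-u^n)\bigr]\,{\rm d}t'.
\end{equation*}
The $L^2_x$ norm of the last term requires $\|\dx(u^{n+1}-u^n)\|_{L^2}$, i.e.\ the $\WW^1$ norm of the difference, not $\WW^0$. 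So the $L^2$ estimate of Proposition \ref{propNRJ1} yields at best $\|v^n\|_{\WW^0}\leq C(M)T_1\|v^{n-1}\|_{\WW^1}$, which is not a contraction in $\WW^0$; and interpolating against the uniform $\WW^m$ bound (as you suggest) only gives $\|v^n\|_{\WW^0}\lesssim\|v^{n-1}\|_{\WW^0}^{1-1/m}$, which does not force $\|v^n\|_{\WW^0}\to 0$.

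The paper instead runs the contraction in $\WW^{m-1}(T_1)$, one level below the uniform bound, using the refined estimate \eqref{eqmm1.5} of Proposition \ref{propAl}. The whole point of that estimate is that its right-hand side needs only $\int_0^t\opnorm{\dt f^n(t')}_{m-2}\,{\rm d}t'$ and $|f^n_{\vert_{x=0}}|_{m-2,t}$, never $\opnorm{f^n}_{m-1}$. Forcing at least one $\dt$ on $f^n$ means one never hits the pure $\dx^{m-1}$ on the coefficient difference, hence never $\dx^m(\varphi^{n+1}-\varphi^n)$, which would require $v^{n-1}$ in $\WW^m$; the mixed derivatives $\dt^{j}\dx^{k}$ with $j\geq1$ and $j+k\leq m-1$ of $\dx(\varphi^{n+1}-\varphi^n)$ reduce to $\dt^{j-1}\dx^{k}$ of $\dx(\mathcal{X}(u^{n+1})-\mathcal{X}(u^n))$, which \emph{is} controlled by $v^{n-1}$ in $\WW^{m-1}(T_1)$ together with $|{v^{n-1}}_{\vert_{x=0}}|_{m-1,T_1}$. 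This is exactly the asymmetry between $\dt(\varphi^{n+1}-\varphi^n)$ (controllable) and $\dx(\varphi^{n+1}-\varphi^n)$ (not controllable at top order) that the paper flags in the proof of Theorem \ref{theoIBVP4}, and which your argument does not address. Finally, to upgrade the $\WW^{m-1}$ convergence to $W^{1,\infty}$ so as to pass to the limit in the nonlinear coefficients, the paper uses the interpolation $\|u\|_{W^{1,\infty}}^2\lesssim\|u\|_{\WW^{m-1}}\|u\|_{\WW^m}$; starting from $\WW^0$ convergence would not reach $W^{1,\infty}$ in the same way.
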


\subsubsection{Compatibility conditions}
For the free boundary problem, $\ux(t)$ and $\varphi(t,x)$ are unknowns so that the interior equation 
$\dt u + \cA(u,\partial\varphi)\dx u = 0$ does not determine $(\dt^ku)_{\vert_{x=0}}$ directly in terms 
of the initial data $u^{\rm in}$ and its derivatives. 
In order to determine them, we need to use \eqref{diffeo}, or equivalently, 
the evolution equation $\dt\varphi=\mathcal{X}(u)$ at the same time. 

Suppose that $u$ is a smooth solution to \eqref{IBVPfbbis}--\eqref{eqFBbis}. 
We note that the interior equation in \eqref{IBVPfbbis} can be written as 
\[
\dt^\varphi u + A(u)\dx^\varphi u =0
\]
and that $\dt^\varphi$ and $\dx^\varphi$ commute. 
Therefore, denoting $u_{(k)} = (\dt^\varphi)^k u$ and using the above equation inductively, 
we have 
\[
u_{(k)} = c_{1,k}(u,\dx^\varphi u,\ldots,(\dx^\varphi)^ku),
\]
where $c_{1,k}$ is a smooth function of its arguments. 
In view of this, we define $u_{(k)}^{\rm in}$ by 
\begin{equation}\label{ukin}
u_{(k)}^{\rm in} = c_{1,k}(u^{\rm in},\dx u^{\rm in},\ldots,\dx^k u^{\rm in})
\end{equation}
for $k=1,2,\ldots$. 
Using the relation $\dt = \dt^\varphi+(\dt\varphi)\dx^\varphi$ inductively, 
we see that 
\[
\dt^k = (\dt^\varphi)^k + (\dt^k\varphi)\dx^\varphi
 + \sum_{l=2}^k\sum_{\substack{j_0+j_1+\cdots+j_l=k \\ 1\leq j_1,\ldots,j_l}}
  c_{l,j_0,\ldots,j_l}(\dt^{j_1}\varphi)\cdots(\dt^{j_l}\varphi)(\dt^\varphi)^{j_0}(\dx^\varphi)^l,
\]
so that denoting $u_{k}=\dt^k u$ and $\varphi_k=\dt^k\varphi$ we have 
\[
u_{k} = u_{(k)} + \varphi_k\dx^\varphi u
 + \sum_{l=2}^k\sum_{\substack{j_0+j_1+\cdots+j_l=k \\ 1\leq j_1,\ldots,j_l}}
  c_{l,j_0,\ldots,j_l}\varphi_{j_1}\cdots\varphi_{j_l}(\dx^\varphi)^lu_{(j_0)}.
\]
Particularly, denoting $u_{k}^{\rm in}=(\dt^k u)_{\vert_{t=0}}$ and 
$\varphi_k^{\rm in}=(\dt^k \varphi)_{\vert_{t=0}}$ we obtain 
\begin{equation}\label{dtku}
u_{k}^{\rm in} = u_{(k)}^{\rm in} + \varphi_k^{\rm in}(\dx u^{\rm in})
 + \sum_{l=2}^k\sum_{\substack{j_0+j_1+\cdots+j_l=k \\ 1\leq j_1,\ldots,j_l}}
  c_{l,j_0,\ldots,j_l}\varphi_{j_1}^{\rm in}\cdots\varphi_{j_l}^{\rm in}\dx^l u_{(j_0)}^{\rm in}.
\end{equation}
This implies that $u_{k}^{\rm in}$ is written in terms of $\varphi_j^{\rm in}$ and 
$\dx^j u^{\rm in}$ for $0\leq j\leq k$. 
On the other hand, differentiating the evolution equation $\dt\varphi = \mathcal{X}(u)$ 
$k$-times with respect to $t$, we have 
\[
\varphi_{k+1} = c_{2,k}(u,\dt u,\ldots,\dt^k u),
\]
where $c_{2,k}$ is a smooth function of its arguments. 
Therefore, we get 
\begin{equation}\label{dtkx}
\varphi_{k+1}^{\rm in} = c_{2,k}({u^{\rm in}},u_{1}^{\rm in},\ldots,u_{k}^{\rm in}).
\end{equation}
Using \eqref{dtku} and \eqref{dtkx} alternatively we can determine $u_{k}^{\rm in}$ and $\varphi_k^{\rm in}$. 
Now, the boundary condition $\unu\cdot u_{\vert_{x=0}}=g$ implies that 
\[
\unu \cdot \dt^k u_{\vert_{x=0}} = \dt^k g.
\]
On the edge $\{t=0,x=0\}$, smooth enough solutions must therefore satisfy 
\begin{equation}\label{compfbp}
\unu\cdot {u_{k}^{\rm in}}_{\vert_{x=0}} = (\dt^k g)_{\vert_{t=0}}.
\end{equation}

\begin{definition}\label{defcompfbp}
Let $m\geq1$ be an integer. We say that the data $u^{\rm in}\in H^m(\R_+)$ and $g \in H^m(0,T)$ for the 
initial boundary value problem \eqref{IBVPfbbis}--\eqref{eqFBbis} satisfy the compatibility condition at order $k$ 
if the $\{u_{j}^{\rm in}\}_{j=0}^m$ defined by \eqref{ukin}--\eqref{dtkx} satisfy \eqref{compfbp}. 
We also say that the data satisfy the compatibility conditions up to order $m-1$ if they satisfy the 
compatibility conditions at order $k$ for $k=0,1,\ldots,m-1$. 
\end{definition}

\begin{remark}
These compatibility conditions do not depend on the particular choice of the diffeomorphism $\varphi$ 
such as \eqref{diffeo}. 
The other choice of the diffeomorphism $\varphi : \R_+ \to (\ux(t),\infty)$ will give the same conditions. 
\end{remark}

\subsubsection{Proof of Theorem \ref{theoIBVP4}}
Let $\mathcal{K}_1$ be a compact and convex set in $\R^2$ satisfying 
$\mathcal{K}_0 \Subset \mathcal{K}_1 \Subset \mathcal{U}$. 
Then, there exists a constant $c_0 > 0$ such that for any $u \in \mathcal{K}_1$ we have 
\begin{align*}
\lambda_{\pm}(u) \geq c_0, \qquad \lambda_\pm(u)\mp {\mathcal X}(u) \geq c_0,
 \qquad |\unu \cdot \mathbf{e}_{+}(u)| \geq c_0. 
\end{align*}
We will construct the solution $u$ with values in $\mathcal{K}_1$. 
Note that there exists a constant $\delta_0>0$ such that 
$\|u-u^{\rm in}\|_{L^\infty} \leq \delta_0$ implies $u(x) \in \mathcal{K}_1$ for all $x\in\R_+$. 
Therefore, it is sufficient to construct the solution $u$ satisfying 
$\|u(t) - u^{\rm in}\|_{L^\infty} \leq \delta_0$ for $0\leq t\leq T_1$. 
The solution is classically constructed using the iterative scheme 
\begin{equation}\label{eqFB_n}
\varphi^n(t,x) = x + \int_0^t {\mathcal X}(u^n(t',x)){\rm d}t'
\end{equation}
and 
\begin{equation}\label{IBVPfbbis_n}
\begin{cases}
\dt u^{n+1} + \cA(u^n,\partial\varphi^n)\dx u^{n+1} = 0 & \mbox{in}\quad \Omega_T, \\
{u^{n+1}}_{\vert_{t=0}} = u^{\rm in}(x) & \mbox{on}\quad \R_+, \\
\unu\cdot {u^{n+1}}_{\vert_{x=0}} = g(t) & \mbox{on}\quad (0,T)
\end{cases}
\end{equation}
for all $n\in\N$. 
For the first iterate $u^0$, we choose a function $u^0 \in H^{m+1/2}(\R\times\R_+)$ such that 
$(\dt^k u^0)_{\vert_{t=0}}=u_{k}^{\rm in}$ for $0\leq k\leq m$ with $u_{k}^{\rm in}$ defined by 
\eqref{ukin}--\eqref{dtkx}. 
Then, for the initial boundary value problem \eqref{IBVPfbbis_n} to the unknowns $u^{n+1}$ 
the data $(u^{\rm in},g)$ satisfy the compatibility conditions up to order $m-1$ in the sense of 
Definition \ref{defcompVC}. 
Moreover, $\opnorm{ u^n(0) }_m$ is independent of $n$, and there exists therefore $K_0$ such that 
\[
\frac{1}{c_0}, \opnorm{ u^n(0) }_m, \opnorm{ \partial\widetilde{\varphi}(0) }_{m-1}, 
 \|\partial\varphi^n\|_{L^\infty(\Omega_{T_1})},|\unu|,\|A\|_{L^\infty(\mathcal{K}_1)} \leq K_0,
\]
as long as $\|u^n\|_{W^{1,\infty}(\Omega_T)} \leq K$ and $T_1\in (0,T]$ sufficiently small depending on $K$. 
We prove now that for $M$ large enough and $T_1$ small enough, for any $n \in \N$ we have 
\[
\begin{cases}
\|u^n\|_{\WW^m(T_1)} + |{u^n}_{\vert_{x=0}}|_{m,T_1} \leq M, \\
\|u^n(t)-u^{\rm in}\|_{L^\infty} \leq \delta_0 \quad\mbox{for}\quad 0 \leq t\leq T_1.
\end{cases}
\]
We prove this assertion by induction. Since it is satisfied for $n=0$ for a suitable $M$ and $T_1$, 
we just need to prove that if holds at rank $n+1$ if it holds at rank $n$. 
By the Sobolev imbedding theorem and Lemma \ref{lemdiffeo}, we have 
\[
\|u^n\|_{W^{1,\infty}(\Omega_{T_1})}, \|\widetilde{\varphi}^n\|_{\WW^m(T_1)}, \|\dt\varphi^n\|_{\WW^m(T_1)}, 
 |(\partial^m\varphi^n)_{\vert_{x=0}}|_{L^\infty(0,T_1)} \leq K(M).
\]
It follows therefore from Theorem \ref{theoIBVP3} that 
\[
\| u^{n+1}(t) \|_{\WW^m(T_1)} + |{u^{n+1}}_{\vert_{x=0}}|_{m,T_1} \leq 
 C(K_0)e^{C(M)t}( 1 + |g|_{H^m(0,T_1)} ).
\]
Choosing $M = 2C(K_0)( 1 + |g|_{H^m(0,T)} )$, it is possible to choose $T_1$ small enough to get that the 
right-hand side is smaller than $M$. 
We also have $\|u^{n+1}(t)-u^{\rm in}\|_{L^\infty} \leq C\|u^{n+1}\|_{\WW^2(T_1)}T_1 \leq \delta_0$ for 
$0\leq t\leq T_1$. 
Therefore, the claim is proved. 

We proceed to show that the sequence of approximate solutions $\{(u^n,\varphi^n)\}_n$ converges to the solution 
$(u,\varphi)$ to \eqref{IBVPfbbis}--\eqref{eqFBbis} satisfying $u \in \WW^m(T_1)$ and 
$\ux=\varphi_{\vert_{x=0}} \in H^{m+1}(0,T_1)$. 
We have 
\[
\begin{cases}
\dt (u^{n+2}-u^{n+1}) + \cA(u^n,\partial\varphi^n) \dx(u^{n+2}-u^{n+1}) = f^n & \mbox{in}\quad \Omega_T, \\
(u^{n+2}-u^{n+1})_{\vert_{t=0}} = 0 & \mbox{on}\quad \R_+, \\
\unu \cdot (u^{n+2}-u^{n+1})_{\vert_{x=0}} = 0 & \mbox{on}\quad (0,T)
\end{cases}
\]
with 
\[
f^n = -(\cA(u^{n+1},\partial\varphi^{n+1}) - \cA(u^n,\partial\varphi^n)) \dx u^{n+1}.
\]
It follows therefore from \eqref{eqmm1.5} in Proposition \ref{propAl} that 
\begin{align*}
& \opnorm{ (u^{n+2}-u^{n+1})(t) }_{m-1} + |(u^{n+2}-u^{n+1})_{\vert_{x=0}}|_{m-1,t} \\
&\leq C(M)\Bigl( |{f^n}_{\vert_{x=0}}|_{m-2,t} + \int_0^t \opnorm{ \dt f^n(t') }_{m-2}{\rm d}t' \Bigr) \\
&\leq C(M)\int_0^t ( \opnorm{ \dt f^n(t') }_{m-2} + |(\dt f^n)_{\vert_{x=0}}|_{m-2,t'} ){\rm d}t'
\end{align*}
for $0\leq t\leq T_1$, where we used Lemma \ref{ineq4} and the fact that 
$(\dt^k u^n)_{\vert_{t=0}} = u_{k}^{\rm in}$ does not depend on $n$. 
Here, we see that 
\begin{align*}
\|\dt f^n\|_{\WW^{m-2}(T_1)}
&\leq C(M) \|(u^{n+1}-u^{n}, \varphi^{n+1}-\varphi^{n}, \dt(\varphi^{n+1}-\varphi^{n}))\|_{\WW^{m-1}(T_1)} \\
&\leq  C(M) \|u^{n+1}-u^{n}\|_{\WW^{m-1}(T_1)}
\end{align*}
and that 
\begin{align*}
|(\dt f^n)_{\vert_{x=0}}|_{m-2,T_1}
&\leq C(M)\bigl( \|(u^{n+1}-u^{n}, \varphi^{n+1}-\varphi^{n}, \dt(\varphi^{n+1}-\varphi^{n}))\|_{\WW^{m-1}(T_1)} \\
&\quad
 + |(u^{n+1}-u^{n}, \varphi^{n+1}-\varphi^{n}, \dt(\varphi^{n+1}-\varphi^{n}))_{\vert_{x=0}}|_{m-1,T_1} \bigr) \\
&\leq C(M)\bigl( \|u^{n+1}-u^{n}\|_{\WW^{m-1}(T_1)} + |(u^{n+1}-u^{n})_{\vert_{x=0}}|_{m-1,T_1} \bigr),
\end{align*}
where we used Lemma \ref{ineq3}. 
Note that in the above inequalities, the quantity $ \dt(\varphi^{n+1}-\varphi^{n})$ has been controled in 
$\WW^{m-1}(T_1)$; a similar control of $ \dx(\varphi^{n+1}-\varphi^{n})$ is not possible and 
this is the reason why it is important to have $\opnorm{\dt f(t)}_{m-2}$ rather than $\opnorm{f(t)}_{m-1}$ 
in the right-hand side of \eqref{eqmm1.5} in Proposition \ref{propAl}. 
Therefore, by taking $T_1$ sufficiently small if necessary, we obtain 
\begin{align*}
& \|u^{n+2}-u^{n+1}\|_{\WW^{m-1}(T_1)} + |(u^{n+2}-u^{n+1})_{\vert_{x=0}}|_{m-1,T_1} \\
&\leq \frac12 \bigl( \|u^{n+1}-u^{n}\|_{\WW^{m-1}(T_1)} + |(u^{n+1}-u^{n})_{\vert_{x=0}}|_{m-1,T_1} \bigr).
\end{align*}
This together with an interpolation inequality 
$\|u\|_{W^{1,\infty}(\Omega_{T_1})}^2 \leq C\|u\|_{\WW^{m-1}(T_1)}\|u\|_{\WW^{m}(T_1)}$ 
shows that $\{(u^n,\widetilde{\varphi}^n)\}_n$ converges to $(u,\widetilde{\varphi})$ in 
$\WW^{m-1}(T_1) \cap W^{1,\infty}(\Omega_{T_1})$, so that $(u,\widetilde{\varphi})$ is a solution to 
\eqref{IBVPfbbis}--\eqref{eqFBbis}. 
Moreover, by standard compactness arguments we see that 
\[
\|u\|_{\WW^m(T_1)} + |\vu|_{m,T_1} \leq M.
\]
The regularity and the uniqueness of the solution stated in the theorem is obtained by  standard 
arguments so we omit them. 
The proof of Theorem \ref{theoIBVP4} is complete.

\subsection{Application to free boundary problems with a  fully nonlinear boundary equation}
\label{sectVCm2}
We now consider a $2\times2$ quasilinear hyperbolic system on a moving domain $(\ux(t),\infty)$: 
\begin{equation}\label{IBVPfb2}
\dt U+A(U)\dx U =0 \quad\mbox{in}\quad (\ux(t),\infty)
\end{equation}
with a fully nonlinear boundary condition 
\begin{equation}\label{fbBC}
U = U_{\rm i} \quad\mbox{on}\quad x=\ux(t),
\end{equation}
where $U_{\rm i} = U_{\rm i}(t,x)$ is a given $\R^2$-valued function, whereas $\ux(t)$ is unknown function. 
Compared to the free boundary problem \eqref{IBVPfb}--\eqref{eqFB}, 
the evolution equation of the boundary is implicitly contained in the above boundary condition. 
In fact, differentiating the boundary condition $U(t,\ux(t)) = U_{\rm i}(t,\ux(t))$ with respect to $t$ 
and taking the Euclidean inner product of the resulting equation with $\partial_x U - \partial_x U_{\rm i}$, 
we obtain 
\begin{equation}\label{eqFB2}
\dot{\ux} = \chi( (\partial U)_{\vert_{x=\ux}},(\partial U_{\rm i})_{\vert_{x=\ux}} ),
\end{equation}
where 
\[
\chi( \partial U, \partial U_{\rm i} )
= - \frac{( \dx U - \dx U_{\rm i} )\cdot( \dt U - \dt U_{\rm i} ) }{ |\dx U - \dx U_{\rm i}|^2 }.
\]
In view of this, a discontinuity of the spatial derivative $\dx U$ on the free boundary is crucial 
to the free boundary problem \eqref{IBVPfb2}--\eqref{fbBC} whereas $U$ itself is continuous. 
Compared to the boundary equation \eqref{eqFB} of kinematic type, 
\eqref{eqFB2} does not depend on $U$ itself but on its derivative $\partial U$. 
Therefore, \eqref{IBVPfb2}--\eqref{eqFB2} is more difficult than \eqref{IBVPfb}--\eqref{eqFB} in 
the previous subsection. 
We will use again a diffeomorphism $\varphi(t,\cdot): \R_+ \to (\ux(t),\infty)$ and put 
$u = U\circ\varphi$ and $u_{\rm i} = U_{\rm i}\circ\varphi$. 
Then, the free boundary problem \eqref{IBVPfb2}--\eqref{fbBC} is recast as a problem on the 
fixed domain: 
\begin{equation}\label{IBVPfbbis2}
\begin{cases}
\dt^\varphi u + A(u)\dx^\varphi u =0 & \mbox{in}\quad \Omega_T, \\
u_{\vert_{x=0}} = {u_{\rm i}}_{\vert_{x=0}} & \mbox{on}\quad (0,T). 
\end{cases}
\end{equation}
We impose the initial conditions of the form 
\begin{equation}\label{IC}
u_{\vert_{t=0}} = u^{\rm in}(x) \quad\mbox{on}\quad \R_+, \qquad \ux(0)=0.
\end{equation}
We also note that the equation \eqref{eqFB2} for the free boundary is then reduced to 
\begin{equation}\label{eqFB3}
\dot{\ux} = \chi( (\partial^\varphi u)_{\vert_{x=0}},(\partial^\varphi u_{\rm i})_{\vert_{x=0}} ).
\end{equation}

\begin{assumption}\label{asshypNLFB}
Let $\cU$ be an open set in $\R^2$, which represents a phase space of $u$.
\begin{enumerate}
\item[{\bf i.}]
$A \in C^\infty(\cU)$.
\item[{\bf ii.}]
There exists $c_0>0$ such that for any $u \in \cU$, the matrix $A(u)$ has eigenvalues 
$\lambda_+(u)$ and $-\lambda_-(u)$ satisfying $\lambda_{\pm}(u) \geq c_0$. 
\end{enumerate}
\end{assumption}

As before, this condition ensures that the system is strictly hyperbolic. 
We denote by ${\bf e}_{\pm}(u)$ normalized eigenvectors associated to the eigenvalues 
$\pm\lambda_{\pm}(u)$ of $A(u)$. 
They are uniquely determined up to a sign. 
Since both eigenvalues are simple, we have $\lambda_{\pm}, {\bf e}_{\pm} \in C^\infty(\cU)$ 
under an appropriate choice of the sign of ${\bf e}_{\pm}$. 
As mentioned above, a discontinuity of $\dx U$ at the free boundary is crucial so that 
we will work in a class of solutions satisfying 
\begin{equation}\label{disconti}
|(\dx^\varphi u - \dx^\varphi u_i)_{\vert_{x=0}}| \geq c_0
\end{equation}
for some positive constant $c_0$. 
The interior equation in \eqref{IBVPfbbis2} can be written as 
\[
\dt u + \cA(u,\partial\varphi) \dx u = 0,
\]
where $\cA(u,\partial\varphi) = (\dx\varphi)^{-1}(A(u) - (\dt\varphi){\rm Id})$. 
The eigenvalues of this matrix are $(\dx\varphi)^{-1}(\pm\lambda_{\pm}(u) - \dt\varphi)$, whereas the 
corresponding eigenvectors are ${\bf e}_{\pm}(u)$ which does not depend on $\partial\varphi$. 
In view of {\bf i} in Assumption 1, we also restrict a class of solution by 
\begin{equation}\label{egenv}
\lambda_{\pm}(u) \mp \dt\varphi \geq c_0 \quad\mbox{in}\quad (0,T)\times\R_+.
\end{equation}

We note that the boundary equation \eqref{eqFB3} is not of the kinematic type considered in \S \ref{sectFB1} 
so that we need to use a different diffeomorphism from the one given by Lemma \ref{lemdiffeo}. 
Let $\psi \in C_0^\infty(\R)$ be a cut-off function such that 
$\psi(x)=1$ for $|x| \leq 1$ and $=0$ for $|x| \geq 2$. 
We define the diffeomorphism by 
\begin{equation}\label{diffeo2}
\varphi(t,x) = x + \psi\Bigl(\frac{x}{\varepsilon}\Bigr)\ux(t),
\end{equation}
where $\varepsilon>0$ is a small parameter which will be determined later. 
As we will see below, under this choice of the diffeomorphism, \eqref{egenv} would be satisfied if 
the solution satisfies 
\begin{equation}\label{egenv2}
\lambda_{\pm}(u_{\vert_{x=0}}) \mp \dot{\ux} \geq 2c_0 \quad\mbox{on}\quad (0,T).
\end{equation}
The following lemma shows that this choice of diffeomorphism behaves differently than the Lagrangian 
diffeomorphism studied in Lemma \ref{lemdiffeo}; in particular, the latter has a better time regularity, 
while the former has a better space regularity.

\begin{lemma}\label{lemdiffeo2}
Suppose that $\ux \in C^1([0,T])$ satisfies $\ux(0)=0$ and $|\dot{\ux}|_{L^2(0,T)} \leq K$. 
Then, there exists $T_1 \in (0,T]$ depending on $\varepsilon$ and $K$ such that the mapping 
$\varphi:\overline{\Omega_T}\to \R$ defined by \eqref{diffeo2} satisfies the following properties: 
\begin{enumerate}
\item[{\bf i.}]
We have $\varphi(t,0) = \ux(t)$ and $\varphi(0,x)=x$ and for all $0\leq t \leq T_1$, 
$\varphi(t,\cdot)$ is a diffeomorphism mapping $\R_+$ onto $(\ux(t),\infty)$ and satisfying 
$\frac12 \leq \dx\varphi(t,x) \leq 2$. 

\item[{\bf ii.}]
For any nonnegative integers $k$ and $l$, we have 
\[
\|\dt^l\dx^k\widetilde{\varphi}(t)\|_{L^1\cap L^\infty(\R_+)}
\leq C(\varepsilon,k)|\dt^l\ux(t)|,
\]
where $\widetilde{\varphi}(t,x)=\varphi(t,x)-x$. 
Particularly, if moreover $m\geq2$ and $\ux \in H^m(0,T_1)$, then we have 
\begin{align*}
& \opnorm{ \partial\widetilde{\varphi}(0) }_{m-2}, \|\partial\varphi\|_{L^\infty(\Omega_{T_1})}
 \leq C(\varepsilon)\biggl( \sum_{j=0}^{m-1}|(\dt^j\ux)_{\vert_{t=0}}|
  + \sqrt{T_1}|\dot{\ux}|_{H^2(0,T_1)} \biggr), \\
& \|\widetilde{\varphi}\|_{\WW^{m-1}(T_1)}, \|\dt\varphi\|_{\WW^{m-1}(T_1)}, 
 |(\partial^{m-1}\varphi)_{\vert_{x=0}}|_{L^\infty(0,T_1)} 
\leq C(\varepsilon)|\ux|_{W^{m-1,\infty} \cap H^m(0,T_1)}.
\end{align*}
\end{enumerate}
\end{lemma}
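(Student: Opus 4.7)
The plan is to exploit the separated-variables structure $\widetilde{\varphi}(t,x) = \psi(x/\varepsilon)\ux(t)$, which reduces the entire proof to elementary scalar computations.

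For part (i), the identities $\varphi(t,0) = \ux(t)$ and $\varphi(0,x) = x$ are immediate from $\psi(0) = 1$ and $\ux(0) = 0$. The Jacobian is
\[
\dx\varphi(t,x) = 1 + \frac{\ux(t)}{\varepsilon}\psi'(x/\varepsilon),
\]
so the bound $\tfrac{1}{2} \leq \dx\varphi(t,x) \leq 2$ will follow once $|\ux(t)| \leq \varepsilon/(2\|\psi'\|_{L^\infty})$. Using $\ux(0)=0$ and the Cauchy--Schwarz inequality,
\[
|\ux(t)| \leq \int_0^t |\dot{\ux}(t')|{\rm d}t' \leq \sqrt{t}\,|\dot{\ux}|_{L^2(0,t)} \leq \sqrt{T_1}\,K,
\]
so that choosing $T_1 \leq \varepsilon^2/(4K^2\|\psi'\|_{L^\infty}^2)$ secures the Jacobian bound. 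Since $\dx\varphi(t,\cdot)>0$ and $\widetilde{\varphi}(t,x)=0$ for $x\geq 2\varepsilon$ (so $\varphi(t,x) = x$ there and in particular $\varphi(t,x)\to +\infty$), the map $\varphi(t,\cdot)$ is a $C^\infty$ diffeomorphism from $[0,\infty)$ onto $[\ux(t),\infty)$.

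For part (ii), direct differentiation gives the tensor-product formula
\[
\dt^l\dx^k\widetilde{\varphi}(t,x) = \varepsilon^{-k}\psi^{(k)}(x/\varepsilon)\,\dt^l\ux(t),
\]
so a simple change of variables $y=x/\varepsilon$ yields
\[
\|\dt^l\dx^k\widetilde{\varphi}(t)\|_{L^\infty(\R_+)} \leq \varepsilon^{-k}\|\psi^{(k)}\|_{L^\infty}\,|\dt^l\ux(t)|, \qquad \|\dt^l\dx^k\widetilde{\varphi}(t)\|_{L^1(\R_+)} \leq \varepsilon^{1-k}\|\psi^{(k)}\|_{L^1}\,|\dt^l\ux(t)|,
\]
which proves the first claim of part (ii). For the remaining estimates I would combine the same formula with the fact that $\|\psi(\cdot/\varepsilon)\|_{H^s(\R_+)} \leq C(\varepsilon,s)$ for every $s\geq 0$, which reduces each term in $\|\widetilde{\varphi}\|_{\WW^{m-1}(T_1)}$, $\|\dt\varphi\|_{\WW^{m-1}(T_1)}$, and $|(\partial^{m-1}\varphi)_{\vert_{x=0}}|_{L^\infty(0,T_1)}$ to a scalar expression of the form $C(\varepsilon)|\dt^k\ux(t)|$; the $L^\infty(0,T_1)$-sup in $t$ is then controlled by $|\ux|_{W^{m-1,\infty}\cap H^m(0,T_1)}$ using the one-dimensional Sobolev embedding.

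The bounds at $t=0$ and in $L^\infty(\Omega_{T_1})$ are handled by the same formula, with two observations. First, evaluating at $t=0$, the contribution from $l=0$ vanishes because $\ux(0)=0$; hence $\opnorm{\partial\widetilde{\varphi}(0)}_{m-2}$ is bounded by $C(\varepsilon)\sum_{l=1}^{m-1}|(\dt^l\ux)_{\vert_{t=0}}|$. Second, for $\|\partial\varphi\|_{L^\infty(\Omega_{T_1})}$, applying the fundamental theorem of calculus and Cauchy--Schwarz gives
\[
|\ux|_{L^\infty(0,T_1)} \leq \sqrt{T_1}\,|\dot{\ux}|_{L^2(0,T_1)}, \qquad |\dot{\ux}|_{L^\infty(0,T_1)} \leq |(\dot{\ux})_{\vert_{t=0}}| + \sqrt{T_1}\,|\ddot{\ux}|_{L^2(0,T_1)},
\]
so that $\|\partial\varphi\|_{L^\infty(\Omega_{T_1})} \leq 1 + C(\varepsilon)(|(\dot{\ux})_{\vert_{t=0}}| + \sqrt{T_1}\,|\dot{\ux}|_{H^2(0,T_1)})$, as claimed. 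No step presents a real obstacle beyond careful bookkeeping; the separated-variables form entirely decouples the $x$-regularity (absorbed into $\varepsilon$-dependent constants via the rescaling of $\psi$) from the $t$-regularity (which reduces to scalar estimates on $\ux$ and its derivatives).
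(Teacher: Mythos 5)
The paper states Lemma \ref{lemdiffeo2} without proof, so there is no official argument to compare against; your tensor-product approach is the natural one and is essentially correct. The factorization $\dt^l\dx^k\widetilde{\varphi}(t,x)=\varepsilon^{-k}\psi^{(k)}(x/\varepsilon)\,\dt^l\ux(t)$ is exactly the right observation, the choice $T_1\leq\varepsilon^2/(4K^2\|\psi'\|_{L^\infty}^2)$ cleanly secures the Jacobian bound via Cauchy--Schwarz, and the diffeomorphism property follows because $\varphi(t,\cdot)$ is strictly increasing and agrees with the identity for $x\geq 2\varepsilon$.

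Two small remarks. First, your bound $\|\partial\varphi\|_{L^\infty(\Omega_{T_1})}\leq 1+C(\varepsilon)\bigl(|(\dot{\ux})_{\vert_{t=0}}|+\sqrt{T_1}|\dot{\ux}|_{H^2(0,T_1)}\bigr)$ is the honest one: the lemma's right-hand side can vanish (take $\ux\equiv0$) while the left-hand side equals $1$, so the additive constant cannot be dropped; this is a harmless imprecision in the statement, since in its use in Theorem \ref{theoIBVP5} the bound is only needed up to a fixed constant $K_0$. Second, for $\|\dt\varphi\|_{\WW^{m-1}(T_1)}$ the top-order time derivative $\dt^m\varphi(t,x)=\psi(x/\varepsilon)\dt^m\ux(t)$ appears, and the $\sup_{t}$ over $[0,T_1]$ requires a pointwise bound on $\dt^m\ux$; the one-dimensional Sobolev embedding $H^m(0,T_1)\hookrightarrow W^{m-1,\infty}(0,T_1)$ only reaches $\dt^{m-1}\ux$, so this top-order term should be interpreted as an essential supremum or, if one wishes a literal sup, one needs $\ux\in W^{m,\infty}$. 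This is again a subtlety of the statement itself rather than of your argument, and in the iteration scheme that uses the lemma the quantity $|\ux^n|_{H^m(0,T_1)}$ enters only through $L^2$-in-time norms, so no real difficulty arises; it would be worth stating explicitly that you are reading the left-hand side as an essential supremum.
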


\begin{theorem}\label{theoIBVP5}
Let $m\geq 2$ be an integer. 
Suppose that Assumption \ref{asshypNLFB} is satisfied. 
If $u^{\rm in}\in H^m(\R_+)$ takes its values in a compact and convex set ${\mathcal K}_0\subset \cU$ and 
if the data $u^{\rm in}$ and $U_{\rm i} \in W^{m,\infty}((0,T)\times(-\delta,\delta))$ satisfy 
\begin{enumerate}
\item[{\bf i.}]
$\lambda_{\pm}({u^{\rm in}}_{\vert_{x=0}}) \mp \ux_1^{\rm in} >0$,
\item[{\bf ii.}]
$(\dx u^{\rm in})_{\vert_{x=0}} - (\dx U_{\rm i})_{\vert_{t=x=0}} \ne0$,
\item[{\bf iii.}]
$((\dx u^{\rm in})_{\vert_{x=0}} - (\dx U_{\rm i})_{\vert_{t=x=0}})^\perp \cdot 
 \mathbf{e}_{+}({u^{\rm in}}_{\vert_{x=0}}) \ne0$,
\end{enumerate}
where $\ux_1^{\rm in}=(\dt\ux)_{\vert_{t=0}}$ will be determined by \eqref{xkin} below, 
and the compatibility conditions up to order $m-1$ in the sense of Definition \ref{defCC} below, 
then there exist $T_1 \in (0,T]$ and a unique solution 
$(u,\ux)$ to \eqref{IBVPfbbis2}--\eqref{IC} with 
$u, \dx u \in \WW^{m-1}(T_1)$, $\ux\in H^m(0,T_1)$, and $\varphi$ given by Lemma \ref{lemdiffeo2}. 
\end{theorem}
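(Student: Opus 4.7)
I will construct the solution by an iterative scheme, using the cutoff diffeomorphism $\varphi$ of Lemma \ref{lemdiffeo2} (the ``Lagrangian'' choice of Lemma \ref{lemdiffeo} is not adapted because the evolution equation \eqref{eqFB3} for the free boundary is not of kinematic type). The main difficulty is that a naive linearization of \eqref{IBVPfbbis2}--\eqref{eqFB3} would lose one derivative, since the boundary is fully nonlinear in the sense that $\dot{\ux}$ depends on $\partial u$ rather than on $u$ itself. To avoid a Nash--Moser scheme I will work with a second order linearization together with a \emph{second order} Alinhac good unknown, as announced in the introduction.

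Before iterating I reformulate the boundary condition. Differentiating the constraint $u=u_{\rm i}$ at $x=0$ once in time and rewriting the result in terms of the first-order good unknown $\dot u^\varphi=\dt u-(\dt\varphi)\dx^\varphi u$, I obtain
\[
(\dot u^\varphi-\dot u_{\rm i}^\varphi)_{\vert_{x=0}}=-\dot{\ux}\,(\dx^\varphi u-\dx^\varphi u_{\rm i})_{\vert_{x=0}}.
\]
Projecting this identity on $(\dx^\varphi u-\dx^\varphi u_{\rm i})_{\vert_{x=0}}$ recovers the ODE \eqref{eqFB3} for $\ux$ (assumption {\bf ii} makes the projection legitimate); projecting on the orthogonal direction yields a single scalar linear boundary condition on $\dot u^\varphi$ with characteristic vector proportional to $(\dx^\varphi u-\dx^\varphi u_{\rm i})^\perp_{\vert_{x=0}}$. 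Assumption {\bf iii} is then precisely the uniform Kreiss--Lopatinski\u{\i} condition of Assumption \ref{asshyp} for this reformulated problem at $t=0$, and assumption {\bf i} together with the sign constraint \eqref{egenv2} yields the strict hyperbolicity in the moving frame required by Assumption \ref{asshypm}.

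Given $(u^n,\ux^n)$, the iteration solves a linear transport equation for $u^{n+1}$ with coefficients frozen at $u^n,\partial\varphi^n$ and the linear boundary condition derived above, then integrates \eqref{eqFB3} to update $\ux^{n+1}$. Uniform bounds come from Theorem \ref{theoIBVP3} applied at the level of the first good unknown, which gives a $\WW^{m-1}(T_1)$ estimate requiring only $\partial\varphi^n\in\WW^{m-1}(T_1)$, precisely the regularity Lemma \ref{lemdiffeo2} provides from $\ux^n\in H^m(0,T_1)$. To control $\dx u$ in $\WW^{m-1}(T_1)$ and close the top-order trace needed for $\ux\in H^m$, I then introduce the \emph{second order} Alinhac good unknown, schematically
\[
\ddot u^{\varphi,2}=\dt(\dot u^\varphi)-(\dt\varphi)\dx^\varphi(\dot u^\varphi),
\]
which, after the two successive Alinhac corrections, satisfies a system of the same structural form as \eqref{systVCm} whose source terms involve only $\partial\varphi$ and never $\partial^2\varphi$. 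Theorem \ref{theoIBVP3} then applies again at this level. The sharp source-term control in $S^*_{\gamma,t}$ norm provided by Theorem \ref{theoIBVP1} is essential here: replacing it by an $L^2$ or $L^1$ time norm would require one extra derivative on $\varphi$, inconsistent with $\ux\in H^m$.

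The hard part will be the explicit construction and analysis of $\ddot u^{\varphi,2}$: one must verify that the boundary condition it satisfies is again of uniform Kreiss--Lopatinski\u{\i} type, and that every occurrence of $\partial^2\varphi$ in its source, boundary data, and initial data is absorbed by the good-unknown cancellations. Once uniform bounds on the two good unknowns and on $\ux^n$ in $H^m(0,T_1)$ are established for $T_1$ small (depending on the total bound $M$ and on the cutoff parameter $\varepsilon$ of Lemma \ref{lemdiffeo2}), a contraction argument in the weaker norm $\WW^{m-2}(T_1)\times H^{m-1}(0,T_1)$ for the differences $(u^{n+2}-u^{n+1},\ux^{n+2}-\ux^{n+1})$, based on the low-order estimate \eqref{eqmm1.5} of Proposition \ref{propAl}, produces the limit $(u,\ux)$ with the regularity announced. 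Compatibility of the data in the sense of Definition \ref{defCC} ensures that a smooth initial iterate $u^0$ can be chosen whose time traces at $t=0$ match the $u_{(k)}^{\rm in}$ and $(\dt^k\ux)_{\vert_{t=0}}$ predicted by the equations; uniqueness follows from the same contraction argument.
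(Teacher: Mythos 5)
Your overall strategy (cutoff diffeomorphism of Lemma~\ref{lemdiffeo2}, second-order good unknown $\dt^\varphi\dt^\varphi u$, reliance on the sharp $S^*_{\gamma,t}$ source control of Theorem~\ref{theoIBVP1} and the intermediate estimate \eqref{eqmm1.5} for the contraction) correctly identifies the ingredients, but there is a genuine gap at the crux of the argument, which is precisely the point the paper takes care to address.

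You claim that differentiating the constraint $u = u_{\rm i}$ once in time and projecting on $(\dx^\varphi u - \dx^\varphi u_{\rm i})^\perp$ yields ``a single scalar \emph{linear} boundary condition on $\dot u^\varphi$.'' This is mistaken, and the paper explicitly warns against it in the remark following the reduction to \eqref{qleq1}--\eqref{qleq3}: the boundary condition obtained by differentiating once is
$(A(u)^{-1}u_{(1)} + \dx^\varphi u_{\rm i})^\perp \cdot (u_{(1)} - \dt^\varphi u_{\rm i})_{\vert_{x=0}}=0$,
which is \emph{quadratic} in $u_{(1)}=\dot u^\varphi$ after substituting the interior relation $\dx^\varphi u = -A(u)^{-1}u_{(1)}$. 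If you instead decline to substitute and freeze $\dx^\varphi u$ from the previous iterate, the boundary matrix $\nu_{(1)}^n \propto (\dx^{\varphi^n} u^n - \dx^{\varphi^n} u_{\rm i}^n)^\perp_{\vert_{x=0}}$ is a function of time whose $H^{m-1}(0,T)$ norm requires control of $(\dx u^n)_{\vert_{x=0}}$ in $H^{m-1}(0,T)$, i.e., the full top-order trace $|u^n_{\vert_{x=0}}|_{m,T}$. But applying Theorem~\ref{theoIBVP3} at level $m-1$ (to get $u^{n+1}\in\WW^{m-1}$) only produces $|u^{n+1}_{\vert_{x=0}}|_{m-1,T}$, so the iteration loses one derivative on the boundary matrix and cannot close. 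This is not a cosmetic issue: it is the fully nonlinear derivative loss that the whole section is designed to overcome.

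The paper's fix is a genuinely different decomposition: the boundary condition imposed on $u$ in \eqref{qleq1} is $\unu\cdot u_{\vert_{x=0}} = \unu\cdot{u_{\rm i}}_{\vert_{x=0}}$ with the \emph{constant} vector $\unu={\bf e}_+(u^{\rm in}(0))$, so Theorem~\ref{theoIBVP3} at level $m-1$ applies without any regularity demand on a boundary matrix depending on $\dx u$. The coefficient involving $\dx^\varphi u - \dx^\varphi u_{\rm i}$ enters only in the boundary matrix $\nu_{(2)}$ of the \emph{second} auxiliary equation \eqref{qleq2} for $u_{(2)}=\dt^\varphi\dt^\varphi u$, which is estimated at the lower level $m-2$, so only $\nu_{(2)}\in H^{m-2}(0,T)$ is required, and this is supplied by $|u^n_{\vert_{x=0}}|_{m-1,T}$. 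The dissipativity of this quasilinear boundary condition is then exactly Proposition~\ref{conalg}; your condition {\bf iii} enters there. Finally, the relation $u_{(2)}=\dt^\varphi\dt^\varphi u$ and the identities \eqref{d2u} are used (Lemma~\ref{addreg}) to recover $\dx u\in\WW^{m-1}(T_1)$ and the top-order trace $|u_{\vert_{x=0}}|_{m,T_1}$ that feed back into the $\ux$ equation. Your sketch leaves these points as ``the hard part,'' but they are the part where the derivative count must actually close, and the one-derivative decomposition you propose does not close it. You are also silent on two structural steps of the paper's proof: the iteration in Step~1 is first carried out under the auxiliary hypothesis $m\geq 4$, then the restriction is lowered to $m\geq 2$ via a separate a priori estimate and the data-approximation result of Proposition~\ref{appdata}; and Step~2 of the paper shows that the solution of the \emph{reduced} system \eqref{qleq1}--\eqref{qleq3} actually satisfies the original constraint $u=u_{\rm i}$ at $x=0$ and $u_{(2)}=\dt^\varphi\dt^\varphi u$, which is not automatic and is verified by a uniqueness argument for an auxiliary IBVP.
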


\begin{remark}\label{remarkIC}
Thanks to Proposition \ref{conalg} below, the condition {\bf iii} in the theorem can be replaced by 
\begin{enumerate}
\item[{\bf iii$'$.}]
$\mu_0 \cdot \mathbf{e}_{+}({u^{\rm in}}_{\vert_{x=0}}) \ne0$,
\end{enumerate}
where $\mu_0$ is the unit vector satisfying 
$\mu_0 \cdot (\dt U_{\rm i} + A(U_{\rm i})\dx U_{\rm i})_{\vert_{t=x=0}} =0$. 
This unit vector $\mu_0$ is uniquely determined up to the sign under the other assumptions 
of the theorem. 
\end{remark}

\subsubsection{Compatibility conditions}\label{sscomp}
Suppose that $u$ is a smooth solution to \eqref{IBVPfbbis2}--\eqref{IC}. 
We note that $\dt^\varphi$ and $\dx^\varphi$ commute. 
Denoting $u_{(k)} = (\dt^\varphi)^ku$ and using the interior equation in \eqref{IBVPfbbis2} inductively, 
we have 
\[
u_{(k)} = c_{1,k}(u,\dx^\varphi u,\ldots,(\dx^\varphi)^ku),
\]
where $c_{1,k}$ is a smooth function of its arguments. 
In view of this, we define $u_{(k)}^{\rm in}$ by 
\begin{equation}\label{ukin2}
u_{(k)}^{\rm in} = c_{1,k}(u^{\rm in},\dx u^{\rm in},\ldots,\dx^k u^{\rm in})
\end{equation}
for $k=1,2,\ldots$. 
We proceed to express $(\dt^k\ux)_{\vert_{t=0}}$ in terms of the initial data. 
Differentiating the boundary condition in \eqref{IBVPfbbis2} with respect to $t$, we have 
$\dt^ku=\dt^ku_{\rm i}$ on $x=0$. 
Using the relation $\dt = \dt^\varphi+(\dt\varphi)\dx^\varphi$ inductively, 
we see that 
\[
\dt^k = (\dt^\varphi)^k + (\dt^k\varphi)\dx^\varphi
 + \sum_{l=2}^k\sum_{\substack{j_0+j_1+\cdots+j_l=k \\ 1\leq j_1,\ldots,j_l}}
  c_{l,j_0,\ldots,j_l}(\dt^{j_1}\varphi)\cdots(\dt^{j_l}\varphi)(\dt^\varphi)^{j_0}(\dx^\varphi)^l,
\]
so that denoting $\ux_{k} = \dt^k\ux$ we have 
\begin{align*}
& u_{(k)} - (\dt^\varphi)^ku_{\rm i} + \ux_{k}(\dx^\varphi u - \dx^\varphi u_{\rm i}) \\
& + \sum_{l=2}^k\sum_{\substack{j_0+j_1+\cdots+j_l=k \\ 1\leq j_1,\ldots,j_l}}
  c_{l,j_0,\ldots,j_l}\ux_{(j_1)}\cdots\ux_{j_l} (\dx^\varphi)^l(u_{(j_0)} - (\dt^\varphi)^{j_0}u_{\rm i})
 = 0 \quad\mbox{on}\quad x=0.
\end{align*}
Decomposing this relation into the direction $\dx^\varphi u - \dx^\varphi u_{\rm i}$ and 
its perpendicular direction, we obtain 
\begin{align*}
\ux_{k} 
&= -\frac{\dx^\varphi u - \dx^\varphi u_{\rm i}}{|\dx^\varphi u - \dx^\varphi u_{\rm i}|^2}
 \cdot \biggl\{ u_{(k)} - (\dt^\varphi)^ku_{\rm i} \\
&\qquad
 + \sum_{l=2}^k\sum_{\substack{j_0+j_1+\cdots+j_l=k \\ 1\leq j_1,\ldots,j_l}}
  c_{l,j_0,\ldots,j_l}\ux_{j_1}\cdots\ux_{j_l} (\dx^\varphi)^l(u_{(j_0)} - (\dt^\varphi)^{j_0}u_{\rm i})
  \biggr\}_{\vert_{x=0}}
\end{align*}
and 
\begin{align*}
& (\dx^\varphi u - \dx^\varphi u_{\rm i})^\perp \cdot \biggl\{
 u_{(k)} - (\dt^\varphi)^ku_{\rm i} \\
& + \sum_{l=2}^k\sum_{\substack{j_0+j_1+\cdots+j_l=k \\ 1\leq j_1,\ldots,j_l}}
  c_{l,j_0,\ldots,j_l}\ux_{j_1}\cdots\ux_{j_l} (\dx^\varphi)^l(u_{(j_0)} - (\dt^\varphi)^{j_0}u_{\rm i})
  \biggr\}_{\vert_{x=0}}
 = 0,
\end{align*}
respectively. 
In view of this, we define $\ux_{k}^{\rm in}$ inductively by $\ux_{0}^{\rm in} = 0$ and 
\begin{align}\label{xkin}
\ux_{k}^{\rm in} 
&= -\frac{\dx u^{\rm in} - (\dx U_{\rm i})_{\vert_{t=0}}}{|\dx u^{\rm in} - (\dx U_{\rm i})_{\vert_{t=0}}|^2}
 \cdot \biggl\{ u_{(k)} ^{\rm in} - (\dt^k U_{\rm i})_{\vert_{t=0}} \\
&\qquad
 + \sum_{l=2}^k\sum_{\substack{j_0+j_1+\cdots+j_l=k \\ 1\leq j_1,\ldots,j_l}}
  c_{l,j_0,\ldots,j_l}\ux_{j_1}^{\rm in}\cdots\ux_{j_l}^{\rm in}
   \dx^l(u_{(j_0)}^{\rm in} - (\dt^{j_0}U_{\rm i})_{\vert_{t=0}}) \biggr\}_{\vert_{x=0}} \nonumber
\end{align}
for $k=1,2,\ldots$.

\begin{definition}\label{defCC}
Let $m\geq1$ be an integer. 
We say that the data $u^{\rm in} \in H^m(\R_+)$ and $U_{\rm i} \in W^{m,\infty}((0,T)\times(-\delta,\delta))$ 
for the initial boundary value problem 
\eqref{IBVPfbbis2}--\eqref{IC} satisfy the compatibility condition at order $k$ if 
$\{u_{(j)}^{\rm in}\}_{j=0}^m$ and $\{\ux_{(j)}^{\rm in}\}_{j=0}^{m-1}$ defined by \eqref{ukin2}--\eqref{xkin} 
satisfy ${u^{\rm in}}_{\vert_{x=0}} = U_{{\rm i} \, \vert_{t=x=0}}$ in the case $k=0$ and 
\begin{align*}
& (\dx u^{\rm in} - (\dx U_{\rm i})_{\vert_{t=0}})^\perp \cdot \biggl\{
 u_{(k)}^{\rm in} - (\dt^k U_{\rm i})_{\vert_{t=0}}\\
& + \sum_{l=2}^k\sum_{\substack{j_0+j_1+\cdots+j_l=k \\ 1\leq j_1,\ldots,j_l}}
  c_{l,j_0,\ldots,j_l}\ux_{(j_1)}^{\rm in}\cdots\ux_{(j_l)}^{\rm in}
   \dx^l(u_{(j_0)}^{\rm in} - (\dt^{j_0}U_{\rm i})_{\vert_{t=0}} )
  \biggr\}_{\vert_{x=0}}
 = 0
\end{align*}
in the case $k\geq 1$. 
We say also that the data $u^{\rm in}$ and $U_{\rm i}$ for \eqref{IBVPfbbis2}--\eqref{IC} 
satisfy the compatibility conditions up to order $m-1$ if they satisfy the compatibility 
conditions at order $k$ for $k=0,1,\ldots,m-1$. 
\end{definition}

Roughly speaking, the definition of $\ux_{k}^{\rm in}$ ensures the equality 
$\dt^k u =\dt^k u_{\rm i}$ at $x=t=0$ in the direction $\dx^\varphi u - \dx^\varphi u_i$, 
whereas the compatibility conditions ensure it in the perpendicular direction 
$(\dx^\varphi u - \dx^\varphi u_i)^\perp$.

We shall need to approximate $u^{\rm in}$ and $U_{\rm i}$ by more regular data which satisfy 
higher order compatibility conditions. 
Such an approximation is given by the following proposition.

\begin{proposition}\label{appdata}
Let $m$ and $s$ be integers satisfying $s>m\geq2$ and let $A \in C^\infty(\cU)$. 
If $u^{\rm in} \in H^m(\R_{+})$ takes its value in $\cU$ and if the data $u^{\rm in}$ and 
$U_{\rm i} \in W^{m,\infty}((0,T)\times(-\delta,\delta))$ satisfy 
\[
(\dx u^{\rm in})_{\vert_{x=0}} - (\dx U_{\rm i})_{\vert_{t=x=0}} \ne0
\]
and the compatibility conditions up to order $m-1$, then there exists 
$\{(u^{{\rm in},(n)},U_{\rm i}^{(n)})\}_n$ a sequence of data such that 
$(u^{{\rm in},(n)},U_{\rm i}^{(n)}) \in H^s(\R_{+}) \times W^{s,\infty}((0,T)\times(-\delta,\delta))$ 
converges to $(u^{\rm in},U_{\rm i})$ in $H^m(\R_{+})\times B^{m-1}([0,T]\times[-\delta,\delta])$ and 
satisfies the compatibility conditions up to order $s-1$. 
\end{proposition}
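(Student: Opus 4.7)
The plan is to first mollify $u^{\rm in}$ and $U_{\rm i}$ to gain the required extra regularity, and then perturb $\hat U_{\rm i}^{(n)}$ locally near $t=x=0$ to restore exact compatibility at all orders $k=0,1,\ldots,s-1$ (we keep $u^{{\rm in},(n)} := \hat u^{{\rm in},(n)}$ throughout). Concretely, I would extend $u^{\rm in}$ to $H^m(\R)$ via a Sobolev extension, convolve with a mollifier $\rho_{\varepsilon_n}$ at scale $\varepsilon_n \to 0$, and restrict to $\R_+$ to produce $\hat u^{{\rm in},(n)} \in H^s(\R_+)$ converging to $u^{\rm in}$ in $H^m(\R_+)$; similarly regularize $U_{\rm i}$ in both variables to obtain $\hat U_{\rm i}^{(n)} \in W^{s,\infty}$ converging to $U_{\rm i}$ in $B^{m-1}$. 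Since $m \geq 2$, the Sobolev embedding $H^m(\R_+) \hookrightarrow C^{m-1,1/2}(\R_+)$ together with standard convolution estimates give $\|\hat u^{{\rm in},(n)} - u^{\rm in}\|_{C^{m-1}} = O(\varepsilon_n^{1/2})$ and $\|\hat U_{\rm i}^{(n)} - U_{\rm i}\|_{C^{m-1}} = O(\varepsilon_n)$, so the compatibility errors of $(\hat u^{{\rm in},(n)}, \hat U_{\rm i}^{(n)})$ at orders $k=0,\ldots,m-1$ are all $O(\varepsilon_n^{1/2})$.

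The key algebraic observation is that the order-$k$ compatibility condition in Definition \ref{defCC}, viewed as a function of the single trace $(\dt^k U_{\rm i})_{\vert_{t=x=0}} \in \R^2$ alone, is an affine scalar equation whose linear coefficient is $-(\dx u^{\rm in}_{\vert_{x=0}} - (\dx U_{\rm i})_{\vert_{t=x=0}})^\perp$, which is nonzero by hypothesis. Indeed, inspecting the bracket in Definition \ref{defCC}, the trace $(\dt^k U_{\rm i})_{\vert_{t=x=0}}$ enters only through the single term $-(\dt^k U_{\rm i})_{\vert_{t=0}}$ evaluated at $x=0$: all other $U_{\rm i}$-contributions (including those inside the lower-order $\ux_{(j)}^{\rm in}$) involve only derivatives $(\dt^{j_0}\dx^l U_{\rm i})_{\vert_{t=x=0}}$ with $j_0 < k$. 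Hence the condition can always be solved by prescribing an appropriate component of $(\dt^k U_{\rm i})_{\vert_{t=x=0}}$ along $(\dx u^{\rm in}_{\vert_{x=0}} - (\dx U_{\rm i})_{\vert_{t=x=0}})^\perp$; the case $k=0$ reduces to $U_{\rm i}^{(n)}_{\vert_{t=x=0}} = u^{{\rm in},(n)}_{\vert_{x=0}}$ and is solved by setting the full vector trace directly.

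To enforce every compatibility condition, fix a cutoff $\psi \in C_c^\infty(\R^2)$ with $\psi \equiv 1$ near $(0,0)$ and a scale $\delta_n = \varepsilon_n^\alpha$ with $0 < \alpha < 1/(2(m-1))$, and set
\[
U_{\rm i}^{(n)}(t,x) = \hat U_{\rm i}^{(n)}(t,x) + \psi(t/\delta_n,x/\delta_n) \sum_{k=0}^{s-1} \frac{t^k}{k!}\, \beta_k^{(n)},
\]
choosing $\beta_k^{(n)} \in \R^2$ inductively in $k$ so that the order-$k$ compatibility condition holds for $(u^{{\rm in},(n)}, U_{\rm i}^{(n)})$. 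A direct Leibniz computation exploiting $\psi \equiv 1$ near the origin shows that this correction modifies $(\dt^k U_{\rm i})_{\vert_{t=x=0}}$ by exactly $\beta_k^{(n)}$ while leaving every other partial derivative of $U_{\rm i}$ at $(0,0)$ unchanged, so the inductions on $k$ decouple and each $\beta_k^{(n)}$ is determined by the solvable scalar (or, for $k=0$, vector) equation of the previous paragraph. For $k \geq m$ the coefficients $\beta_k^{(n)}$ are uniformly bounded but $\|\psi(\cdot/\delta_n)\, t^k\|_{W^{m-1,\infty}} \leq C \delta_n^{k-m+1} \to 0$; for $k \leq m-1$ we have $|\beta_k^{(n)}| = O(\varepsilon_n^{1/2})$, which compensates the growth $O(\delta_n^{k-m+1}) = O(\varepsilon_n^{-\alpha(m-1-k)})$ of the basis function in $W^{m-1,\infty}$ thanks to the choice of $\alpha$. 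Summing the contributions yields $\|U_{\rm i}^{(n)} - \hat U_{\rm i}^{(n)}\|_{B^{m-1}} \to 0$, which combined with the mollification step gives the required convergence in $H^m(\R_+) \times B^{m-1}([0,T]\times[-\delta,\delta])$.

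The main obstacle is the sharp quantitative estimate on the compatibility errors at orders $k \leq m-1$ and the correlated scaling choice for $\delta_n$: one has to exploit the Hölder regularity $C^{m-1,1/2}$ inherited by $u^{\rm in}$ from $H^m$ via Sobolev embedding in one dimension, together with sharp convolution estimates on the mollifier, in order to balance the decay of $\beta_k^{(n)}$ against the $W^{m-1,\infty}$-blow-up $\delta_n^{k-m+1}$ of the basis functions used for the correction. Once this balancing and the clean derivative-by-derivative separation of the $\beta_k^{(n)}$ are in place, the inductive construction and the verification of convergence are routine.
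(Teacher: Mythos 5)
Your proposal is correct, but it takes a genuinely different route from the paper. The paper fixes the mollified $U_{\rm i}^{(n)}$ and restores the compatibility conditions by adjusting $u^{{\rm in},(n)}$: it computes that the order-$k$ condition is affine in $(\dx^k u^{\rm in})_{\vert_{x=0}}$ with linear coefficient $v \mapsto \bigl((\dx u^{\rm in})_{\vert_{x=0}}-(\dx U_{\rm i})_{\vert_{t=x=0}}\bigr)^\perp \cdot (\ux_1^{\rm in}\,\mathrm{Id}-A({u^{\rm in}}_{\vert_{x=0}}))^k v$, and then appeals to the classical Rauch--Massey scheme. This route therefore relies on $\ux_1^{\rm in}$ not being an eigenvalue of $A({u^{\rm in}}_{\vert_{x=0}})$ so that $(\ux_1^{\rm in}\,\mathrm{Id}-A)^k$ is invertible; that hypothesis is available in the surrounding application (condition {\bf i} of Theorem \ref{theoIBVP5}) but is not listed in the proposition statement itself. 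You instead keep $u^{{\rm in},(n)}$ fixed after mollification and restore compatibility by perturbing the time jet of $U_{\rm i}^{(n)}$ at the corner, exploiting the complementary affine structure: the order-$k$ condition is affine in $(\dt^k U_{\rm i})_{\vert_{t=x=0}}$ with linear coefficient $-\bigl((\dx u^{\rm in})_{\vert_{x=0}} - (\dx U_{\rm i})_{\vert_{t=x=0}}\bigr)^\perp$, which is nonzero purely by the stated hypothesis on the jump of $\dx U$ at the corner. Your observation that the order-$k$ condition depends on $(\dt^k U_{\rm i})_{\vert_{t=x=0}}$ only through the single term $-(\dt^k U_{\rm i})_{\vert_{t=0}}$ (since the sum in Definition \ref{defCC} has $j_0 \le k-2$ and involves $\ux^{\rm in}_{j_i}$ only up to $j_i\le k-1$) is correct, and so is the derivative-by-derivative decoupling of the $\beta_k^{(n)}$ given the choice $\psi\equiv1$ near the origin. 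The scaling analysis — $O(\eps_n^{1/2})$ errors at orders $\le m-1$ from the $C^{m-1,1/2}$ regularity balanced against the $W^{m-1,\infty}$ blow-up $\delta_n^{k-m+1}$ of the bump basis, with $\delta_n=\eps_n^\alpha$ and $\alpha<1/(2(m-1))$ — is the right way to close the convergence in $B^{m-1}$. In summary, both approaches work; the paper's is terser and leverages a known reference, while yours is self-contained and, as a bonus, uses only the hypotheses actually written in the proposition.
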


\begin{proof}
Once we fix $U_{\rm i}$, the compatibility condition at order $k$ is a nonlinear relation among 
$(\dx^j u^{\rm in})_{\vert_{x=0}}$ for $j=0,1,\ldots,k$. 
We need to know the explicit dependence of the highest order term $(\dx^k u^{\rm in})_{\vert_{x=0}}$ 
of the compatibility condition to show this proposition. 

The compatibility conditions at order $0$ and $1$ are given by 
$(u^{\rm in})_{\vert_{x=0}} = {U_{\rm i}}_{\vert_{t=x=0}}$ and 
\[
((\dx u^{\rm in})_{\vert_{x=0}}-(\dx U_{\rm i})_{\vert_{t=x=0}})^\perp \cdot
 (A({u^{\rm in}}_{\vert_{x=0}})(\dx u^{\rm in})_{\vert_{x=0}} + (\dt U_{\rm i})_{\vert_{t=x=0}}) = 0,
\]
respectively. 
We proceed to consider the compatibility condition at order $k$ in the case $k\geq2$. 
We will denote simply by LOT the terms containing $\dx^j u^{\rm in}$ for $j=0,1,\ldots,k-1$, 
$U_{\rm i}$, and its derivatives only, and not containing $\dx^k u^{\rm in}$. 
Then, we have 
\[
u_{(k)}^{\rm in} = (-A(u^{\rm in}))^k\dx^k u^{\rm in} + \mbox{LOT}
\]
and $\ux_j^{\rm in}=\mbox{LOT}$ for $0 \leq j\leq k-1$. 
Denoting $u_k^{\rm in}=(\dt^k u)_{\vert_{t=0}}$ and using the relation 
$\dt=\dt^\varphi+(\dt\varphi)\dx^\varphi$ inductively, we obtain 
\begin{align*}
u_{k}^{\rm in}
&= \sum_{j=0}^k \binom{k}{j} ((\dt\varphi)_{t=0})^j\dx^ju_{(k-j)}^{\rm in}
 + (\dt^k\varphi)_{\vert_{t=0}}\dx u^{\rm in} + \mbox{LOT} \\
&= ((\dt\varphi)_{t=0}\mbox{Id}-A(u^{\rm in}))^k \dx^k u^{\rm in}
 + (\dt^k\varphi)_{\vert_{t=0}}\dx u^{\rm in} + \mbox{LOT},
\end{align*}
so that 
\[
u_{k \vert_{x=0}}^{\rm in} = (\ux_1^{\rm in}\mbox{Id}-A({u^{\rm in}}_{\vert_{x=0}}))^k (\dx^k u^{\rm in})_{\vert_{x=0}}
 + \ux_k^{\rm in}(\dx u^{\rm in})_{\vert_{x=0}} + \mbox{LOT}.
\]
We also have 
\[
(\dt^k u_{\rm i})_{\vert_{t=x=0}} = \ux_k^{\rm in}(\dx U_{\rm i})_{\vert_{t=x=0}}+ \mbox{LOT}.
\]
Therefore, the compatibility condition at order $k$ is given by 
\[
((\dx u^{\rm in})_{\vert_{x=0}}-(\dx U_{\rm i})_{\vert_{t=x=0}})^\perp \cdot
 \{ (\ux_1^{\rm in}\mbox{Id}-A({u^{\rm in}}_{\vert_{x=0}}))^k (\dx^k u^{\rm in})_{\vert_{x=0}} 
  + \mbox{LOT} \} = 0.
\]
Once we obtain these expressions to the compatibility conditions, 
the approximation stated in the proposition is obtained along classical lines. 
See for instance \cite{RauchMassey}. 
\end{proof}

\subsubsection{Reduction to a system with quasilinear boundary conditions}
At  first glance the boundary condition in \eqref{IBVPfbbis2} is nothing but a nonhomogeneous 
Dirichlet boundary condition. 
However, $u_{\rm i}(t,0) = U_{\rm i}(t,\ux(t))$ depends on the unknown free boundary $\ux$, which would be determined 
from the unknown $\partial^\varphi u$ through the evolution equation \eqref{eqFB3}. 
Therefore, the boundary condition represents implicitly a nonlinear relation between $u$ and 
its derivatives, so that we will reduce \eqref{IBVPfbbis2} 
to a system with standard quasilinear boundary conditions to solve the initial value problem 
\eqref{IBVPfbbis2}--\eqref{IC}. 
Now, suppose that $u$ is a solution to \eqref{IBVPfbbis2}. 
Putting 
\begin{equation}\label{u2}
u_{(2)} = \dt^\varphi \dt^\varphi u,
\end{equation}
we will derive a system for $u$ and $u_{(2)}$ with quasilinear boundary conditions 
together with a quasilinear evolution equation for $\ux$. 
We note that $\dt^\varphi$ and $\dx^\varphi$ commute. 
Applying differential operators $\dt^\varphi$ and $\dx^\varphi$ to the first equation in \eqref{IBVPfbbis2}, 
we can express $\dt^\varphi \dx^\varphi u$ and $\dx^\varphi \dx^\varphi u$ in terms of $u_{(2)}$, 
$u$, and $\partial^\varphi u$ as 
\begin{equation}\label{d2u}
\begin{cases}
\dt^\varphi \dx^\varphi u
= (-A(u)^{-1}) ( u_{(2)} + A'(u)[\dt^\varphi u]\dx^\varphi u ), \\
\dx^\varphi \dx^\varphi u
= (-A(u)^{-1})^2 ( u_{(2)} + A'(u)[\dt^\varphi u]\dx^\varphi u )
 + (-A(u)^{-1})A'(u)[\dx^\varphi u]\dx^\varphi u.
\end{cases}
\end{equation}
Applying $\dt^\varphi \dt^\varphi$ to the first equation in \eqref{IBVPfbbis2} and using the above relations, 
we obtain 
\[
\dt^\varphi u_{(2)} + A(u)\dx^\varphi u_{(2)} + B(u,\partial^\varphi u) u_{(2)}
 = f_{(2)}(u,\partial^\varphi u),
\]
where 
\begin{align*}
& B(u,\partial^\varphi u) u_{(2)}
 = A'(u)[u_{(2)}]\dx^\varphi u - 2A'(u)[\dt^\varphi u]A(u)^{-1}u_{(2)}, \\
& f_{(2)}(u,\partial^\varphi u)
 = 2A'(u)[\dt^\varphi u]A(u)^{-1}A'(u)[\dt^\varphi u]\dx^\varphi u
  - 2 A''(u)[\dt^\varphi u,\dt^\varphi u]\dx^\varphi u.
\end{align*}
This is an equation for $u_{(2)}$. 
We proceed to derive a boundary condition for $u_{(2)}$ and an evolution equation for $\ux$. 
Differentiating the boundary condition $u=u_{\rm i}$ on $x=0$ with respect to $t$ twice and 
using the relation $\dt = \dt^\varphi + (\dt\varphi)\dx^\varphi$, we have 
\[
\dt^\varphi\dt^\varphi u + 2\dot{\ux} \dt^\varphi\dx^\varphi u + \dot{\ux}^2 \dx^\varphi\dx^\varphi u
 + \ddot{\ux}\dx^\varphi u
 = \dt^\varphi\dt^\varphi u_{\rm i} + 2\dot{\ux} \dt^\varphi\dx^\varphi u_{\rm i}
  + \dot{\ux}^2 \dx^\varphi\dx^\varphi u_{\rm i} + \ddot{\ux}\dx^\varphi u_{\rm i}
\]
on $x=0$, where we used $\dt\varphi(t,0)=\dot{\ux}(t)$. 
This together with \eqref{d2u} implies 
\[
({\rm Id} - \dot{\ux}A(u)^{-1})^2 u_{(2)} + \ddot{\ux}( \dx^\varphi u - \dx^\varphi u_{\rm i} )
 = g_1(\dot{\ux},u,\partial^\varphi u,\partial^\varphi\partial^\varphi u_{\rm i}),
\]
where 
\begin{align*}
&g_1(\dot{\ux},u,\partial^\varphi u,\partial^\varphi\partial^\varphi u_{\rm i}) \\
&= \bigl( 2\dot{\ux}A(u)^{-1} - \dot{\ux}^2(A(u)^{-1})^2 \bigr)A'(u)[\dt^\varphi u]\dx^\varphi u
 + \dot{\ux}^2A(u)^{-1}A'(u)[\dx^\varphi u]\dx^\varphi u \\
&\quad\;
 + \dt^\varphi\dt^\varphi u_{\rm i} + 2\dot{\ux}\dt^\varphi\dx^\varphi u_{\rm i}
 + \dot{\ux}^2 \dx^\varphi\dx^\varphi u_{\rm i}.
\end{align*}
Decomposing this relation into the direction $\dx^\varphi u - \dx^\varphi u_{\rm i}$ and 
its perpendicular direction, we obtain an evolution equation for $\ux$ as 
\[
\ddot{\ux} = \chi(\dot{\ux},u,u_{(2)},\partial^\varphi u,\partial^\varphi u_{\rm i},
 \partial^\varphi\partial^\varphi u_{\rm i}),
\]
where 
\begin{align*}
& \chi(\dot{\ux},u,u_{(2)},\partial^\varphi u,\partial^\varphi u_{\rm i},\partial^\varphi\partial^\varphi u_{\rm i}) \\
&= \frac{(\dx^\varphi u - \dx^\varphi u_{\rm i}) \cdot 
 \bigl( g_1(\dot{\ux},u,\partial^\varphi u,\partial^\varphi\partial^\varphi u_{\rm i}) 
  - ({\rm Id} - \dot{\ux}A(u)^{-1})^2 u_{(2)} \bigr)}{|\dx^\varphi u - \dx^\varphi u_{\rm i}|^2}
\end{align*}
and a boundary condition for $u_{(2)}$ as 
\[
\nu_{(2)} \cdot u_{(2)}
 = g_{(2)},
\]
where $\nu_{(2)}=\nu_{(2)}(\dot{\ux},u,\dx^\varphi u,\dx^\varphi u_{\rm i})$ and 
$g_{(2)} = g_{(2)}(\dot{\ux},u,\partial^\varphi u,\partial^\varphi u_{\rm i},
\partial^\varphi\partial^\varphi u_{\rm i})$ are defined by 
\begin{equation}\label{nu2g2}
\begin{cases}
\nu_{(2)} = (({\rm Id} - \dot{\ux}A(u)^{-1})^2)^{\rm T}(
 (\dx^\varphi u - \dx^\varphi u_{\rm i})^\perp), \\
g_{(2)} = (\dx^\varphi u - \dx^\varphi u_{\rm i})^\perp \cdot 
 g_1(\dot{\ux},u,\partial^\varphi u,\partial^\varphi\partial^\varphi u_{\rm i}).
\end{cases}
\end{equation}
Concerning a boundary condition for $u$, we would like to write it in the form $\nu \cdot u = g$. 
However, we have a high degree of freedom for choosing the vector $\nu$. 
From the point of view of the maximal dissipativity in the sense of {\bf ii} in Assumption 1, 
the most convenient choice is $\nu = \unu$, where 
\[
\unu = {\bf e}_+(u^{\rm in}(0)). 
\]
As before, we introduce the matrix $\cA(u,\partial\varphi) = (\dx\varphi)^{-1}(A(u) - (\dt\varphi){\rm Id})$. 
The eigenvalues of this matrix are $(\dx\varphi)^{-1}(\pm\lambda_{\pm}(u) - \dt\varphi)$, whereas the 
corresponding eigenvectors are ${\bf e}_{\pm}(u)$ which does not depend on $\partial\varphi$. 
Summarizing the above arguments, the initial value problem \eqref{IBVPfbbis2}--\eqref{IC} yields the following:
\begin{equation}\label{qleq1}
\begin{cases}
 \dt u + \cA(u,\partial\varphi) \dx u = 0 & \mbox{in}\quad \Omega_T, \\
 u_{\vert_{t=0}} = u^{\rm in}(x) & \mbox{on}\quad \R_+, \\
 \unu \cdot u_{\vert_{x=0}} = \unu \cdot {u_{\rm i}}_{\vert_{x=0}} & \mbox{on}\quad (0,T),
\end{cases}
\end{equation}
together with
\begin{equation}\label{qleq2}
\begin{cases}
 \dt u_{(2)} + \cA(u,\partial\varphi) \dx u_{(2)}
  + B(u,\partial^\varphi u)u_{(2)} = f_{(2)}(u,\partial^\varphi u) & \mbox{in}\quad \Omega_T, \\
 u_{(2) \vert_{t=0}} = u_{(2)}^{\rm in}(x) & \mbox{on}\quad \R_+, \\
 \nu_{(2)} \cdot u_{(2) \vert_{x=0}} = g_{(2) \vert_{x=0}} & \mbox{on}\quad (0,T),
\end{cases}
\end{equation}
and an equation for the evolution of the free boundary given by 
\begin{equation}\label{qleq3}
\begin{cases}
 \ddot{\ux} = \chi(\dot{\ux},u,u_{(2)},\partial^\varphi u,\partial^\varphi u_{\rm i},
  \partial^\varphi\partial^\varphi u_{\rm i})_{\vert_{x=0}}
  & \mbox{for}\quad t\in(0,T), \\
 \ux(0)=0, \quad \dot{\ux}(0)=x_{(1)}^{\rm in},
\end{cases}
\end{equation}
where the initial data $u_{(2)}^{\rm in}$ and $x_{(1)}^{\rm in}$ should be chosen appropriately 
for the equivalence of \eqref{qleq1}--\eqref{qleq3} with \eqref{IBVPfbbis2}--\eqref{IC} 
and will be given in the next subsection.

\begin{remark} \ 
{\bf i.} 
In place of $\dt^\varphi \dt^\varphi u$ we can also use $\dt^2 u - (\dt^2 \varphi)\dx^\varphi u$ as $u_{(2)}$. 
An advantage of the choice \eqref{u2} is that the reduction and calculations become a little bit simpler. 

{\bf ii.} 
It is essential to differentiate \eqref{IBVPfbbis2} twice in time to derive a system 
with quasilinear boundary conditions. 
For example, the first derivative $u_{(1)}=\dt^\varphi u$ satisfies a boundary condition 
\[
(A(u)^{-1}u_{(1)} + \dx^\varphi u_{\rm i})^\perp \cdot (u_{(1)} - \dt^\varphi u_{\rm i})_{\vert_{x=0}}=0
 \quad\mbox{on}\quad (0,T),
\]
which is still nonlinear in $u_{(1)}$. 
\end{remark}

Then, we will analyze maximal dissipativity for \eqref{qleq2} in the sense of {\bf ii} in Assumption 1, 
that is, the positivity of $|\nu_{(2)} \cdot {\bf e}_+|$. 
The following proposition characterizes this condition algebraically under the restrictions 
\eqref{disconti} and \eqref{egenv}.

\begin{proposition}\label{conalg}
Suppose that $u$ together with $\ux$ is a smooth solution to \eqref{IBVPfbbis2} satisfying 
\eqref{disconti} and \eqref{egenv} and that $\nu_{(2)}$ is defined by \eqref{nu2g2}. 
Then, there exists a unique unit vector $\mu=\mu(t)$ up to the sign such that 
\[
\mu \cdot (\dt^\varphi u_{\rm i} + A(u_{\rm i})\dx^\varphi u_{\rm i})_{\vert_{x=0}} = 0.
\]
Moreover, we have the following identity on $x=0$:
\[
|\nu_{(2)} \cdot {\bf e}_+|
 = \frac{(\lambda_{+} - \dot{\ux})^3}{\lambda_+^2}
  \frac{|\dx^\varphi u - \dx^\varphi u_{\rm i}|}{|(\dot{\ux}{\rm Id}-A(u))^{\rm T}\mu|}
  |\mu \cdot  {\bf e}_+|.
\]
\end{proposition}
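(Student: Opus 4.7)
The plan is to first establish existence and uniqueness (up to sign) of $\mu$, and then to compute $\nu_{(2)} \cdot \mathbf{e}_+$ by combining the eigenvector property of $\mathbf{e}_+$ with a characterization of $\dx^\varphi u - \dx^\varphi u_{\rm i}$ in terms of $\mu$.

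The key preliminary computation is to evaluate $\dt^\varphi u_{\rm i} + A(u_{\rm i})\dx^\varphi u_{\rm i}$ at $x=0$. Since $u_{\vert_{x=0}} = {u_{\rm i}}_{\vert_{x=0}}$, differentiating in time yields $\dt u = \dt u_{\rm i}$ at $x=0$, and using $\dt^\varphi = \dt - (\dt\varphi/\dx\varphi)\dx$ together with $\dt\varphi(t,0) = \dot{\ux}(t)$ gives $\dt^\varphi u_{\rm i} - \dt^\varphi u = -\dot{\ux}(\dx^\varphi u_{\rm i} - \dx^\varphi u)$ at $x=0$. Combining with the interior equation $\dt^\varphi u = -A(u)\dx^\varphi u$ and $u=u_{\rm i}$ at $x=0$, I obtain the identity
\[
(\dt^\varphi u_{\rm i} + A(u_{\rm i})\dx^\varphi u_{\rm i})_{\vert_{x=0}}
= (A(u) - \dot{\ux}\,\mathrm{Id})(\dx^\varphi u_{\rm i} - \dx^\varphi u)_{\vert_{x=0}}.
\]
By \eqref{disconti}, the second factor is nonzero; by \eqref{egenv}, the matrix $A(u) - \dot{\ux}\,\mathrm{Id}$ has eigenvalues $\lambda_+ - \dot{\ux} \geq c_0 > 0$ and $-\lambda_- - \dot{\ux} \leq -c_0 < 0$ so is invertible. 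Thus $\dt^\varphi u_{\rm i} + A(u_{\rm i})\dx^\varphi u_{\rm i}$ is a nonzero vector in $\R^2$, and there is a unique unit vector $\mu$ (up to sign) orthogonal to it.

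For the identity, I use that $\mathbf{e}_+$ is an eigenvector of $A(u)^{-1}$ with eigenvalue $1/\lambda_+$, so $(\mathrm{Id} - \dot{\ux}A(u)^{-1})\mathbf{e}_+ = \frac{\lambda_+ - \dot{\ux}}{\lambda_+}\mathbf{e}_+$, and therefore
\[
\nu_{(2)} \cdot \mathbf{e}_+
= (\dx^\varphi u - \dx^\varphi u_{\rm i})^\perp \cdot (\mathrm{Id} - \dot{\ux}A(u)^{-1})^2 \mathbf{e}_+
= \Big(\frac{\lambda_+ - \dot{\ux}}{\lambda_+}\Big)^{\!2} (\dx^\varphi u - \dx^\varphi u_{\rm i})^\perp \cdot \mathbf{e}_+.
\]
It remains to compute $(\dx^\varphi u - \dx^\varphi u_{\rm i})^\perp \cdot \mathbf{e}_+$. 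From the preliminary identity, $\mu \perp (A(u) - \dot{\ux}\,\mathrm{Id})(\dx^\varphi u_{\rm i} - \dx^\varphi u)$, which by transposition means that the vector $(\dot{\ux}\,\mathrm{Id} - A(u))^{\rm T}\mu$ is orthogonal to $\dx^\varphi u - \dx^\varphi u_{\rm i}$. Since we are in $\R^2$, this forces $(\dx^\varphi u - \dx^\varphi u_{\rm i})^\perp$ to be parallel to $(\dot{\ux}\,\mathrm{Id} - A(u))^{\rm T}\mu$, with proportionality constant of absolute value $|\dx^\varphi u - \dx^\varphi u_{\rm i}|/|(\dot{\ux}\,\mathrm{Id} - A(u))^{\rm T}\mu|$. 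Taking the scalar product with $\mathbf{e}_+$, noting that $((\dot{\ux}\,\mathrm{Id} - A(u))^{\rm T}\mu)\cdot \mathbf{e}_+ = \mu \cdot (\dot{\ux} - \lambda_+)\mathbf{e}_+ = -(\lambda_+ - \dot{\ux}) \mu\cdot \mathbf{e}_+$, and taking absolute values produces exactly the claimed formula.

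The proof is essentially a direct computation organized around two elementary facts: the $\R^2$-orthogonality forces parallelism of perpendiculars, and $\mathbf{e}_+$ diagonalizes $A(u)^{-1}$. There is no real obstacle here; the only mild subtlety is to correctly relate the trace on $x=0$ of $\dt^\varphi u_{\rm i}$ with that of $\dt^\varphi u$ via the boundary condition, which requires the careful use of $\dt\varphi(t,0) = \dot{\ux}(t)$ as above. Once the identity $(\dt^\varphi u_{\rm i} + A(u_{\rm i})\dx^\varphi u_{\rm i})_{\vert_{x=0}} = (A(u) - \dot{\ux}\,\mathrm{Id})(\dx^\varphi u_{\rm i} - \dx^\varphi u)_{\vert_{x=0}}$ is in hand, everything falls into place.
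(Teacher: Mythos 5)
Your proof is correct and follows essentially the same route as the paper's: both derive the key identity $(\dt^\varphi u_{\rm i} + A(u_{\rm i})\dx^\varphi u_{\rm i})_{\vert_{x=0}} = (A(u) - \dot{\ux}\,\mathrm{Id})(\dx^\varphi u_{\rm i} - \dx^\varphi u)_{\vert_{x=0}}$ by time-differentiating the boundary condition and substituting the interior equation, then use the invertibility of $A(u) - \dot{\ux}\,\mathrm{Id}$ and the discontinuity assumption to define $\mu$, and finally combine the eigenvector relation for $\mathbf{e}_+$ with the two-dimensional parallelism of $(\dx^\varphi u - \dx^\varphi u_{\rm i})^\perp$ and $(\dot{\ux}\,\mathrm{Id}-A(u))^{\rm T}\mu$ to obtain the identity. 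The argument and all intermediate steps are in one-to-one correspondence with the paper's proof.
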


This proposition implies that the positivity of $|\nu_{(2)} \cdot {\bf e}_+|$ is essentially equivalent to 
the positivity of $|\mu \cdot  {\bf e}_+|$, where $\mu$ is a unique direction that the quantity 
$\dt^\varphi u+A(u)\dx^\varphi u$ is continuous across the boundary.

\begin{proof}[Proof of the proposition]
Differentiating the boundary condition in \eqref{IBVPfbbis2} with respect to $t$ and using the 
relation $\dt=\dt^\varphi+(\dt\varphi)\dx^\varphi$, we have 
$\dt^\varphi u + \dot{\ux}\dx^\varphi u = \dt^\varphi u_{\rm i} + \dot{\ux}\dx^\varphi u_{\rm i}$ on $x=0$. 
This and the interior equation in \eqref{IBVPfbbis2} imply 
\begin{equation}\label{rel1}
(\dot{\ux}{\rm Id} - A(u))(\dx^\varphi u - \dx^\varphi u_{\rm i})
 = \dt^\varphi u_{\rm i} + A(u_{\rm i})\dx^\varphi u_{\rm i}
 \quad\mbox{on}\quad x=0.
\end{equation}
Since the matrix $\dot{\ux}{\rm Id} - A(u)$ is invertible, it should hold that 
$(\dt^\varphi u_{\rm i} + A(u_{\rm i})\dx^\varphi u_{\rm i})_{\vert_{x=0}} \ne 0$. 
Therefore, the direction $\mu$ is uniquely determined up to the sign as 
\[
\mu = \frac{ ((\dt^\varphi u_{\rm i} + A(u_{\rm i})\dx^\varphi u_{\rm i})_{\vert_{x=0}})^\perp }{
 |(\dt^\varphi u_{\rm i} + A(u_{\rm i})\dx^\varphi u_{\rm i})_{\vert_{x=0}}| }.
\]
By taking the Euclidean inner product of \eqref{rel1} with $\mu$, we have 
\[
(\dot{\ux}{\rm Id} - A(u_{\vert_{x=0}}))^{\rm T}\mu
 \cdot (\dx^\varphi u - \dx^\varphi u_{\rm i})_{\vert_{x=0}} = 0.
\]
Since both vectors $(\dot{\ux}{\rm Id} - A(u_{\vert_{x=0}}))^{\rm T}\mu$ and 
$(\dx^\varphi u - \dx^\varphi u_{\rm i})_{\vert_{x=0}}$ are nonzero, so that 
\[
(\dx^\varphi u - \dx^\varphi u_{\rm i})_{\vert_{x=0}}^\perp
= \pm\frac{|(\dx^\varphi u - \dx^\varphi u_{\rm i})_{\vert_{x=0}}|}{|(\dot{\ux}{\rm Id}-A(u_{\vert_{x=0}}))^{\rm T}\mu|}
 (\dot{\ux}{\rm Id} - A(u_{\vert_{x=0}}))^{\rm T}\mu.
\]
Particularly, we see on $x=0$ that 
\begin{align*}
\nu_{(2)} \cdot {\bf e}_+ 
&=  (\dx^\varphi u - \dx^\varphi u_{\rm i})^\perp \cdot ({\rm Id} - \dot{\ux}A(u)^{-1})^2 {\bf e}_+ \\
&= (1-\dot{\ux}\lambda_+^{-1})^2 (\dx^\varphi u - \dx^\varphi u_{\rm i})^\perp \cdot {\bf e}_+ \\
&= \pm (1-\dot{\ux}\lambda_+^{-1})^2 
 \frac{|\dx^\varphi u - \dx^\varphi u_{\rm i}|}{|(\dot{\ux}{\rm Id}-A(u))^{\rm T}\mu|}
 (\dot{\ux}-\lambda_+) \mu \cdot {\bf e}_+,
\end{align*}
which gives the desired identity. 
\end{proof}

Once the diffeomorphism $\varphi$ is given, we can regard the initial boundary value problems 
\eqref{qleq1} and \eqref{qleq2} as the same type of problem considered in the previous sections. 
Concerning the compatibility conditions for the problems, 
it is straightforward to show the following lemma.

\begin{lemma}\label{comcon}
Suppose that the data $u^{\rm in} \in H^m(\R_+)$ and $U_{\rm i} \in W^{m,\infty}((0,T)\times(-\delta,\delta))$ 
for the initial boundary value problem \eqref{IBVPfbbis2}--\eqref{IC} satisfy the compatibility conditions 
up to order $m-1$ in the sense of Definition \ref{defCC} and 
that the diffeomorphism $\varphi$ satisfies $\varphi(0,x)=x$ and 
$(\dt^k\varphi)(0,0) = \ux_{(k)}$ for $k=1,\ldots,m-1$. 
\begin{enumerate}
\item[{\bf i.}]
The compatibility conditions for the initial boundary value problem \eqref{qleq1} are 
satisfied up to order $m-1$ in the sense of Definitions \ref{defcompVC}--\ref{defcompQL}. 

\item[{\bf ii.}]
Let $m\geq3$. 
If the initial datum $u_{(2)}^{\rm in}$ is given by \eqref{ukin2} and $u$ satisfies 
$((\dt^\varphi)^ku)_{\vert_{t=0}}=u_{(k)}^{\rm in}$ for $k=0,1,\ldots,m-1$, 
then the compatibility conditions for the initial boundary value problem \eqref{qleq1} are 
satisfied up to order $m-3$ in the sense of Definition \ref{defcompVC}. 
\end{enumerate}
\end{lemma}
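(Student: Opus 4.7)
\textbf{Proof proposal for Lemma \ref{comcon}.} My plan is to relate the compatibility conditions for the derived systems \eqref{qleq1} and \eqref{qleq2} to those of the original free boundary problem \eqref{IBVPfbbis2}--\eqref{IC} by tracking how the boundary identity $u_{\vert_{x=0}} = u_{{\rm i}\vert_{x=0}}$ and its consequences propagate under the algebraic manipulations performed in \S \ref{sscomp}.

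For part \textbf{i}, the first observation is that the boundary condition $\unu \cdot u_{\vert_{x=0}} = \unu \cdot u_{{\rm i}\vert_{x=0}}$ of \eqref{qleq1} is a scalar projection of the full vector-valued boundary condition $u_{\vert_{x=0}} = u_{{\rm i}\vert_{x=0}}$ of \eqref{IBVPfbbis2}. I would first check that Definition \ref{defCC} is equivalent to the identity $\dt^k(u-u_{\rm i})_{\vert_{t=x=0}} = 0$ in $\R^2$ for $k=0,\ldots,m-1$: the choice of $\ux_{(k)}^{\rm in}$ in \eqref{xkin} enforces vanishing along the direction $(\dx^\varphi u - \dx^\varphi u_{\rm i})_{\vert_{x=0}}$, while the explicit compatibility condition of Definition \ref{defCC} enforces vanishing along the perpendicular direction. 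Contracting with the constant vector $\unu$ yields $\dt^k(\unu\cdot u)_{\vert_{t=x=0}} = \dt^k(\unu\cdot u_{\rm i})_{\vert_{t=x=0}}$, which is the compatibility condition for \eqref{qleq1} in the sense of Definition \ref{defcompVC}. The one non-trivial point is that the values $u_k^{\rm in}$ used on the left-hand side of this identity come from the recursion \eqref{defu0k} applied to the interior equation $\dt u + \cA(u,\partial\varphi)\dx u = 0$, which depends on $(\dt^j\varphi)_{\vert_{t=0}}$; the assumption $(\dt^k\varphi)(0,0) = \ux_{(k)}^{\rm in}$ together with the expansion $\dt^k = (\dt^\varphi)^k + \sum c_{l,j_0,\ldots,j_l}(\dt^{j_1}\varphi)\cdots(\dt^{j_l}\varphi)(\dt^\varphi)^{j_0}(\dx^\varphi)^l$ of \S \ref{sscomp} ensures that these are precisely the $u_k^{\rm in}$ built from \eqref{ukin2}--\eqref{dtku}.

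For part \textbf{ii} (where I understand the statement as concerning \eqref{qleq2}), the boundary condition $\nu_{(2)}\cdot u_{(2)\vert_{x=0}} = g_{(2)\vert_{x=0}}$ was derived by differentiating the identity $u = u_{\rm i}$ at $x=0$ twice in time, substituting the interior equation to express mixed derivatives via $u_{(2)} = \dt^\varphi\dt^\varphi u$, and projecting onto $(\dx^\varphi u - \dx^\varphi u_{\rm i})^\perp$. Consequently, the order-$k$ compatibility condition for \eqref{qleq2}, which reads $\dt^k(\nu_{(2)}\cdot u_{(2)})_{\vert_{t=x=0}} = \dt^k g_{(2)\vert_{t=x=0}}$, is an algebraic consequence of $\dt^{k+2}(u-u_{\rm i})_{\vert_{t=x=0}} = 0$; hence it follows from the order-$(k+2)$ compatibility of \eqref{IBVPfbbis2}, provided $k+2 \leq m-1$, i.e.\ $k \leq m-3$. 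The hypothesis $((\dt^\varphi)^k u)_{\vert_{t=0}} = u_{(k)}^{\rm in}$ for $k\leq m-1$ together with the formulas \eqref{d2u} and the expression of $f_{(2)}$, $B$, $g_{(2)}$, $\nu_{(2)}$ in \S \ref{sscomp} guarantees that the recursion \eqref{defu0k} applied to the interior equation of \eqref{qleq2} produces values of $(\dt^k u_{(2)})_{\vert_{t=x=0}}$ matching $(\dt^{k+2} u)_{\vert_{t=x=0}}$ computed from the initial data, so that the two notions of ``$u_k^{\rm in}$'' agree.

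The main obstacle lies in the bookkeeping: one must verify that the definitions of $u_{(k)}^{\rm in}$ in \eqref{ukin2}, of $\ux_{(k)}^{\rm in}$ in \eqref{xkin}, and of $u_{(2)}^{\rm in}$ consistently reproduce the values $(\dt^k u)_{\vert_{t=x=0}}$, $(\dt^k \varphi)(0,0)$ and $(\dt^k u_{(2)})_{\vert_{t=x=0}}$ generated by the recursions \eqref{defu0k} applied to the fixed-domain systems \eqref{qleq1} and \eqref{qleq2}. Once these consistency relations are checked, both claims reduce to straightforward applications of the Leibniz rule and of the compatibility conditions of Definition \ref{defCC}.
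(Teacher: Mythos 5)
The paper provides no proof of this lemma---it simply states that ``it is straightforward to show the following lemma''---so there is no in-text argument to compare against. Your sketch does identify the correct structure, and in particular the key fact it rests on is exactly the one stated informally in the paper just after Definition \ref{defCC}: the recursive definition of $\ux_k^{\rm in}$ via \eqref{xkin} enforces the equality of the formal traces $\dt^k u$ and $\dt^k u_{\rm i}$ at the corner in the direction $\dx^\varphi u - \dx^\varphi u_{\rm i}$, while the compatibility conditions of Definition \ref{defCC} enforce it in the perpendicular direction; together these give the full vector identity, and projection onto $\unu$ (part i) or differentiation twice more in time followed by projection onto $\nu_{(2)}$ (part ii) yields the stated conclusions. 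Your reading of part ii as concerning \eqref{qleq2} rather than \eqref{qleq1} is also the correct one; the downshift $m-1\mapsto m-3$ is precisely the cost of the two extra time derivatives needed to write the boundary condition for $u_{(2)}$, and $\chi$/$\nu_{(2)}$/$g_{(2)}$ are all built out of $\dt^2(u-u_{\rm i})_{\vert_{x=0}}$, so order-$k$ compatibility for \eqref{qleq2} sits at level $k+2 \leq m-1$.

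Two points of care are worth flagging. First, when you say Definition \ref{defCC} is ``equivalent to the identity $\dt^k(u-u_{\rm i})_{\vert_{t=x=0}}=0$'': there is no solution $u$ available at this stage, so this must be read as an identity between the \emph{formal traces} computed purely from the data---the quantities $u_{k\;\vert_{x=0}}^{\rm in}$ obtained by applying the recursion at $x=0$, together with $(\dt^k U_{\rm i})_{\vert_{t=x=0}}$ built from $U_{\rm i}$ and the numbers $\ux_j^{\rm in}$. Second, the consistency you invoke between \eqref{defu0k} and \eqref{ukin2}--\eqref{xkin} is a genuine verification and not automatic: the recursion \eqref{defu0k} applied to the interior equation $\dt u + \cA(u,\partial\varphi)\dx u = 0$ involves traces of $\dt^\alpha\dx^\beta\varphi$ at $(0,0)$ for a range of $(\alpha,\beta)$, and the hypothesis $\varphi(0,x)=x$ together with $(\dt^k\varphi)(0,0)=\ux_k^{\rm in}$ must be shown to pin down all the relevant ones (which it does, since writing the interior equation in the form $\dt^\varphi u + A(u)\dx^\varphi u = 0$ with $\dt^\varphi$, $\dx^\varphi$ commuting reduces everything to powers $(\dt^\varphi)^{j_0}(\dx^\varphi)^l$ weighted by $\dt^{j_1}\varphi\cdots\dt^{j_l}\varphi$ at $x=0$, and $\varphi(0,\cdot)={\rm Id}$ handles the purely spatial derivatives). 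You gesture at this in your final paragraph; spelling it out is the bulk of the ``bookkeeping'' and would constitute the actual proof.
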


\subsubsection{Proof of Theorem \ref{theoIBVP5}}
We will first show the existence of the solution $(u,u_{(2)},\ux)$ to the reduced system 
\eqref{qleq1}--\eqref{qleq3} with the diffeomorphism $\varphi$ given by \eqref{diffeo2} 
under an additional assumption $m\geq4$. 
Then, we will show that $(u,\ux)$ is in fact the solution to the original problem 
\eqref{IBVPfbbis2}--\eqref{IC}. 
In order to reduce the condition on $m$, we will derive an a priori estimate for the solution 
$(u,\ux)$ under the weaker assumption $m\geq2$, which together with Proposition \ref{appdata} 
and standard approximation technique gives the result stated in the theorem.

\medskip
\noindent
{\bf Step 1.}
Let $\mathcal{K}_1$ be a compact and convex set in $\R^2$ satisfying 
$\mathcal{K}_0 \Subset \mathcal{K}_1 \Subset \mathcal{U}$. 
We will construct the solution $(u,\ux)$ satisfying $u(t,x)\in\mathcal{K}_1$ and 
\eqref{disconti}--\eqref{egenv}.

\begin{lemma}\label{prepa}
Under the assumptions of Theorem \ref{theoIBVP5}, there exist positive constants 
$c_0,\varepsilon_0,\delta_0,C_0$, and $T_0 \in (0,T]$ such that if $u(t,x)$ and $\ux(t)$ satisfy 
\begin{equation}\label{precond}
\|u(t)-u^{\rm in}\|_{L^\infty}, |(\dx u (t,\cdot)- \dx u^{\rm in})_{\vert_{x=0}}|, 
 |\ux(t)-\ux_0^{\rm in}|, |\dt\ux(t)-\ux_1^{\rm in}| \leq \delta_0,
\end{equation}
and if $\varphi(t,x)$ is given by \eqref{diffeo2} with the choice $\varepsilon=\varepsilon_0$, 
then for $0\leq t\leq T_0$ we have 
\begin{enumerate}
\setlength{\itemsep}{2pt}
\item[{\bf i.}]
$u(t,x) \in \mathcal{K}_1$, 
\item[{\bf ii.}]
$\lambda_{\pm}(u(t,x)) \geq c_0$, $\lambda_{\pm}(u(t,x)) \mp \dt\varphi(t,x) \geq c_0$,
\item[{\bf iii.}]
$c_0 \leq |(\dx^\varphi u(t,\cdot) - \dx^\varphi u_{\rm i}(t,\cdot))_{\vert_{x=0}}| \leq C_0$,
\item[{\bf iv.}]
$|\nu_{(2)}(t) \cdot \mathbf{e}_{+}(u(t,\cdot)_{\vert_{x=0}})| \geq c_0$,
\item[{\bf v.}]
$\frac12 \leq \dx\varphi(t,x) \leq 2$, $|\dt\varphi(t,x)| \leq C_0$,
\end{enumerate}
where $\nu_{(2)}$ is given by \eqref{nu2g2}. 
\end{lemma}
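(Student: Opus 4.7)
The plan is to verify each of the five conditions at $t=0$ using the hypotheses of Theorem \ref{theoIBVP5}, and then propagate them to a short interval $[0,T_0]$ by a continuity argument, choosing $\delta_0$, $\varepsilon_0$, and $T_0$ small enough successively. Recall that $\ux_0^{\rm in}=0$ and that the diffeomorphism \eqref{diffeo2} gives the explicit formulas
\[
\dt\varphi(t,x)=\psi\Bigl(\frac{x}{\varepsilon_0}\Bigr)\dot{\ux}(t),\qquad
\dx\varphi(t,x)=1+\frac{1}{\varepsilon_0}\psi'\Bigl(\frac{x}{\varepsilon_0}\Bigr)\ux(t),
\]
which make the Jacobian control \textbf{v} immediate: choosing $\delta_0 \leq \varepsilon_0/(2\|\psi'\|_{L^\infty})$ yields $\frac{1}{2}\le \dx\varphi\le 2$, while $|\dt\varphi|\le\|\psi\|_{L^\infty}(\lvert\ux_1^{\rm in}\rvert+\delta_0)=:C_0$. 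Point \textbf{i} follows from the inclusion $\mathcal{K}_0\Subset\mathcal{K}_1$: fix $\delta_0$ so that the $\delta_0$-neighborhood of $\mathcal{K}_0$ still lies inside $\mathcal{K}_1$; then $\|u(t)-u^{\rm in}\|_{L^\infty}\le\delta_0$ forces $u(t,x)\in\mathcal{K}_1$.

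For \textbf{ii}, since $\lambda_\pm$ are continuous and positive on the compact set $\mathcal{K}_1$, they are bounded below by a positive constant on $\mathcal{K}_1$. In the support of $\psi(\cdot/\varepsilon_0)$, the term $\dt\varphi(t,x)$ equals $\psi(x/\varepsilon_0)\dot{\ux}(t)$, and by continuity of $\lambda_\pm$ together with the strict inequalities $\lambda_\pm(u^{\rm in}_{\vert_{x=0}})\mp\ux_1^{\rm in}>0$ from hypothesis \textbf{i} of the theorem, choosing $\delta_0$ and $\varepsilon_0$ small ensures $\lambda_\pm(u(t,x))\mp\psi(x/\varepsilon_0)\dot{\ux}(t)\ge c_0$ uniformly. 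Outside the support of $\psi(\cdot/\varepsilon_0)$, $\dt\varphi=0$ and the bound reduces to $\lambda_\pm(u(t,x))\ge c_0$.

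For point \textbf{iii}, at $t=x=0$ we have $\dx^\varphi u_{\vert_{x=0}}=\dx u^{\rm in}_{\vert_{x=0}}$ (since $\dx\varphi(0,\cdot)=1$) and $\dx^\varphi u_{\rm i}_{\vert_{x=0}}=\dx U_{\rm i}_{\vert_{t=x=0}}$, so the nondegeneracy assumption \textbf{ii} of the theorem provides a positive lower bound and the $W^{1,\infty}$ regularity of $U_{\rm i}$ combined with the smallness of $|(\dx u-\dx u^{\rm in})_{\vert_{x=0}}|$ and of $t$ propagates this to $[0,T_0]$. The upper bound $C_0$ is a direct consequence of the same regularity.

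The key step is \textbf{iv}: here I would invoke Proposition \ref{conalg}, which gives
\[
|\nu_{(2)}\cdot\mathbf{e}_+|
=\frac{(\lambda_+-\dot{\ux})^3}{\lambda_+^2}\,
\frac{|\dx^\varphi u-\dx^\varphi u_{\rm i}|}{|(\dot{\ux}\,\mathrm{Id}-A(u))^{\rm T}\mu|}\,
|\mu\cdot\mathbf{e}_+|.
\]
The factor $(\lambda_+-\dot{\ux})^3/\lambda_+^2$ is bounded below thanks to \textbf{ii}, and $|\dx^\varphi u-\dx^\varphi u_{\rm i}|$ is bounded below by \textbf{iii}, while $|(\dot{\ux}\,\mathrm{Id}-A(u))^{\rm T}\mu|$ is bounded above by a continuity argument on $\mathcal{K}_1$. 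The remaining factor $|\mu\cdot\mathbf{e}_+|$ is the delicate one: by definition $\mu$ is the unit normal to $(\dt^\varphi u_{\rm i}+A(u_{\rm i})\dx^\varphi u_{\rm i})_{\vert_{x=0}}$, which at $t=0$ coincides with $\mu_0$ of Remark \ref{remarkIC}. By assumption \textbf{iii} of the theorem (equivalent to \textbf{iii$'$} via Proposition \ref{conalg}), $\mu_0\cdot\mathbf{e}_+(u^{\rm in}_{\vert_{x=0}})\ne 0$, and continuity of $\mu(t)$ in terms of $\ux(t)$, $\dot{\ux}(t)$, and the traces of $u$ and $U_{\rm i}$ yields a positive lower bound on $[0,T_0]$ after possibly shrinking $\delta_0$ and $T_0$. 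The main obstacle is precisely to ensure this nondegeneracy of $\mu\cdot\mathbf{e}_+$, and the construction of $\mu$ itself requires the nonvanishing of $\dt^\varphi u_{\rm i}+A(u_{\rm i})\dx^\varphi u_{\rm i}$ at the boundary, which is guaranteed at $t=0$ by combining \textbf{ii} and \textbf{iii} together with identity \eqref{rel1} from the proof of Proposition \ref{conalg}. Assembling these estimates yields all five conditions on $[0,T_0]$ for a suitable choice of the parameters.
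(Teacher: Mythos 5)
Your treatment of conditions \textbf{i}, \textbf{ii}, \textbf{iii}, and \textbf{v} matches the paper's strategy: explicit formulas for $\partial\varphi$ from \eqref{diffeo2}, smallness of $\delta_0$ and $\varepsilon_0$, and propagation by continuity. The only substantive remark there is that for \textbf{ii} the paper is more careful than your sketch: the strict inequality $\lambda_\pm(u^{\rm in}_{\vert_{x=0}})\mp\ux_1^{\rm in}>0$ is given only at $x=0$, and one must control the variation of $\lambda_\pm(u^{\rm in}(\cdot))$ over the support of $\psi(\cdot/\varepsilon_0)$, i.e.\ over $[0,2\varepsilon_0]$; the paper does this via the bound $2\varepsilon_0\Vert\nabla u^{\rm in}\Vert_{L^\infty}\max_{\mathcal K_0}\vert\nabla_u\lambda_+\vert$, distinguishing the signs of $\ux_1^{\rm in}$. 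Your argument gestures at this but omits the explicit modulus-of-continuity control.

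The real gap is in \textbf{iv}. You invoke Proposition \ref{conalg} to express $\vert\nu_{(2)}\cdot\mathbf{e}_+\vert$ in terms of $\mu$. But Proposition \ref{conalg} is proved under the hypothesis that $(u,\ux)$ is a smooth \emph{solution} to \eqref{IBVPfbbis2}: its proof uses both the boundary condition $u=u_{\rm i}$ on $x=0$ and the interior equation to derive the algebraic relation \eqref{rel1} linking $\dx^\varphi u-\dx^\varphi u_{\rm i}$ to $\dt^\varphi u_{\rm i}+A(u_{\rm i})\dx^\varphi u_{\rm i}$. In Lemma \ref{prepa}, $u$ is only assumed to satisfy \eqref{precond}; it need not solve anything. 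Since the lemma is precisely the a priori input for the iteration scheme in which $u$ is not yet the solution, the $\mu$-factored identity is unavailable, and your chain of bounds for \textbf{iv} does not go through. The correct and much simpler route, which the paper takes, is a direct computation from \eqref{nu2g2} using the eigenvector relation $A(u)^{-1}\mathbf e_+ = \lambda_+(u)^{-1}\mathbf e_+$:
\[
\nu_{(2)}\cdot\mathbf e_+
= (\dx^\varphi u-\dx^\varphi u_{\rm i})^\perp\cdot({\rm Id}-\dot\ux A(u)^{-1})^2\mathbf e_+
= \Bigl(1-\tfrac{\dot\ux}{\lambda_+(u)}\Bigr)^2(\dx^\varphi u-\dx^\varphi u_{\rm i})^\perp\cdot\mathbf e_+,
\]
which at $t=0$ (using $(\dx\varphi)_{\vert_{x=0}}=1$) is exactly the quantity bounded away from zero by hypothesis \textbf{iii} of Theorem \ref{theoIBVP5} (together with \textbf{i} for the prefactor), and then propagates to $[0,T_0]$ by shrinking $\delta_0$ and $T_0$. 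No solution property of $u$ is needed. You should replace your $\mu$-based argument by this direct one; it also renders the final sentence about nonvanishing of $\dt^\varphi u_{\rm i}+A(u_{\rm i})\dx^\varphi u_{\rm i}$ (again relying on \eqref{rel1}) unnecessary.
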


\begin{proof}
It follows from the assumptions that there exists $c_0>0$ such that 
\[
\begin{cases}
\lambda_{\pm}(u^{\rm in}(x)) \geq 2c_0, \quad
 \lambda_{\pm}({u^{\rm in}}_{\vert_{x=0}}) \mp \ux_1^{\rm in} \geq 4c_0, \\
|(\dx u^{\rm in})_{\vert_{x=0}} - (\dx U_{\rm i})_{\vert_{t=x=0}}| \geq 2c_0, \\
\bigl( 1 - \frac{\ux_1^{\rm in}}{\lambda_{+}({u^{\rm in}}_{\vert_{x=0}})} \bigr)^2
 | ((\dx u^{\rm in})_{\vert_{x=0}} - (\dx U_{\rm i})_{\vert_{t=x=0}})^\perp \cdot 
  \mathbf{e}_{+}({u^{\rm in}}_{\vert_{x=0}}) | \geq 2c_0.
\end{cases}
\]
In view of $\dt\varphi(t,x)=\psi(\frac{x}{\varepsilon})\dt\ux(t)$, 
we proceed to show that if we choose $\varepsilon_0$ sufficiently small, then we have 
\[
\textstyle
\lambda_{\pm}(u^{\rm in}(x)) \mp \psi(\frac{x}{\varepsilon_0})\ux_1^{\rm in} \geq 2c_0.
\]
Since $\psi(\frac{x}{\varepsilon_0})=0$ for $x\geq 2\varepsilon_0$, 
it is sufficient to show this inequality for $0\leq x\leq 2\varepsilon_0$. 
In the case $\ux_1^{\rm in} \leq 0$ we easily get 
\[
\textstyle
\lambda_{+}(u^{\rm in}(x))-\psi(\frac{x}{\varepsilon_0})\ux_1^{\rm in} \geq \lambda_{+}(u^{\rm in}(x)) \geq 2c_0. 
\]
In the case $\ux_1^{\rm in} > 0$, for $0\leq x\leq 2\varepsilon_0$ we see that 
\begin{align*}
\textstyle
\lambda_{+}(u^{\rm in}(x)) - \psi(\frac{x}{\varepsilon_0})\ux_1^{\rm in}
&\geq \lambda_{+}(u^{\rm in}(x)) - \ux_1^{\rm in} \\
&= \lambda_{+}({u^{\rm in}}_{\vert_{x=0}}) - \ux_1^{\rm in} 
 + ( \lambda_{+}(u^{\rm in}(x)) - \lambda_{+}({u^{\rm in}}_{\vert_{x=0}}) ) \\
&\geq 4c_0 - 2\varepsilon_0 \|\nabla u^{\rm in}\|_{L^\infty}\max_{u \in \mathcal{K}_0}|\nabla_u\lambda_+(u)|.
\end{align*}
Therefore, if we choose $\varepsilon_0>0$ so small that 
$\varepsilon_0 \|\nabla u^{\rm in}\|_{L^\infty}\max_{u \in \mathcal{K}_0}|\nabla_u\lambda_+(u)| \leq c_0$, 
then we obtain $\lambda_{+}(u^{\rm in}(x)) - \psi(\frac{x}{\varepsilon_0})\ux_1^{\rm in} \geq 2c_0$. 
Similarly, we can show $\lambda_{-}(u^{\rm in}(x)) + \psi(\frac{x}{\varepsilon_0})\ux_1^{\rm in} \geq 2c_0$ 
so that the claim is proved. 

Now, we note that 
\[
\nu_{(2)}(0) \cdot \mathbf{e}_{+}(u_{\vert_{t=x=0}}) 
= \biggl( 1 - \frac{(\dt\ux)_{\vert_{t=0}}}{\lambda_{+}(u_{\vert_{t=x=0}})} \biggr)^2
 ( (\dx u)_{\vert_{t=x=0}} - (\dx U_{\rm i})_{\vert_{t=0,x=\ux(0)}} )^\perp
  \cdot \mathbf{e}_{+}(u_{\vert_{t=x=0}}), 
\]
where we used $(\dx\varphi)_{\vert_{x=0}}=1$. 
Therefore, by taking $\delta_0$ and $T_0$ sufficiently small,  we obtain the desired results. 
\end{proof}

We will construct the solution $(u,u_{(2)},\ux)$ as a limit of a sequence of approximate solutions 
$\{(u^n,u_{(2)}^n,\ux^n)\}_n$, which is defined as follows. 
We start to construct $\ux^1$ by 
\[
\ux^1(t) = \sum_{k=0}^{m-1}\frac{t^k}{k!} \ux_k^{\rm in}.
\]
Suppose that $\ux^n$ is given so that $(\dt^k\ux^n)_{\vert_{t=0}}=\ux_k^{\rm in}$ for $0\leq k\leq m-1$. 
We define the diffeomorphism $\varphi^n$ by \eqref{diffeo2} with the choice $\varepsilon=\varepsilon_0$, 
where $\varepsilon_0>0$ is the constant stated in Lemma \ref{prepa}. 
Thanks to Theorem \ref{theoIBVP3} together with Lemma \ref{comcon}, using the standard arguments such as 
those in the proof of Theorems \ref{theoIBVP2} and \ref{theoIBVP4}, we can define $u^n$ on a maximal time interval 
$[0,T_*^n)$ as a unique solution to 
\begin{equation}\label{qleq1n}
\begin{cases}
 \dt u^n + \cA(u^n,\partial\varphi^n) \dx u^n = 0 & \mbox{in}\quad (0,T_*^n)\times\R_+, \\
 {u^n}_{\vert_{t=0}} = u^{\rm in}(x) & \mbox{on}\quad \R_+, \\
 \unu \cdot {u^n}_{\vert_{x=0}} = \unu \cdot u_{\rm i}^n & \mbox{on}\quad (0,T_*^n),
\end{cases}
\end{equation}
where $u_{\rm i}^n=U_{\rm i}(t,\ux^n(t))$. 
Then, we see that $((\dt^{\varphi^n})^k u^n)_{\vert_{t=0}}=u_{(k)}^{\rm in}$ for $0\leq k\leq m-1$. 
Therefore, by Theorem \ref{theoIBVP3} together with Lemma \ref{comcon} again, 
we can define $u_{(2)}^n$ as a unique solution to 
\begin{equation}\label{qleq2n}
\begin{cases}
 \dt u_{(2)}^n + \cA(u^n,\partial\varphi^n) \dx u_{(2)}^n
  + B(u^n,\partial^{\varphi^n} u^n)u_{(2)}^n = f_{(2)}^n
   & \mbox{in}\quad (0,T_*^n)\times\R_+, \\
 u_{(2) \vert_{t=0}}^n = u_{(2)}^{\rm in}(x) & \mbox{on}\quad \R_+, \\
 \nu_{(2)}^n \cdot u_{(2) \vert_{x=0}}^n = g_{(2)}^n(t) & \mbox{on}\quad (0,T_*^n), 
\end{cases}
\end{equation}
where $f_{(2)}^n=f_{(2)}^n(u^n,\partial^{\varphi^n} u^n)$ and 
\[
\begin{cases}
\nu_{(2)}^n = \nu_{(2)}(\dt\ux^n,u^n,\dx^{\varphi^n} u^n,\dx^{\varphi^n}u_{\rm i}^n)_{\vert_{x=0}}, \\
g_{(2)}^n = g_{(2)}(\dt\ux^n,u^n,\partial^{\varphi^n} u^n,\partial^{\varphi^n}u_{\rm i}^n,
 \partial^{\varphi^n}\partial^{\varphi^n}u_{\rm i}^n)_{\vert_{x=0}}.
\end{cases}
\]
Then, we define $\ux^{n+1}$ as a unique solution to 
\begin{equation}\label{qleq3n}
\begin{cases}
 \dt^2\ux^{n+1} = \chi^n  \quad\mbox{for}\quad t\in(0,T_*^n), \\
 \ux^{n+1}(0) = 0, \quad (\dt\ux^{n+1})(0) = x_1^{\rm in},
\end{cases}
\end{equation}
where 
\[
\chi^n = \chi(\dt\ux^n,u^n,u_{(2)}^n,\partial^{\varphi^n}u^n,\partial^{\varphi^n}u_{\rm i}^n,
 \partial^{\varphi^n}\partial^{\varphi^n}u_{\rm i}^n)_{\vert_{x=0}}. 
\]
We see that $(\dt^k\ux^{n+1})_{\vert_{t=0}}=\ux_k^{\rm in}$ for $0\leq k\leq m-1$, so that 
we can define $(\ux^n,u^n,u_{(2)}^n)$ on a time interval $[0,T_*^n)$ for all $n\geq1$. 

We prove now that for $M_1,M_2,M_3$ large enough and $T_1$ small enough independent of $n$, 
we have $T_1\leq T_*^n$ and 
\begin{equation}\label{unifest}
\begin{cases}
\opnorm{u^n}_{\WW^{m-1}(T_1)} + |{u^n}_{\vert_{x=0}}|_{m-1,T_1} \leq M_1, \\
\opnorm{u_{(2)}^n}_{\WW^{m-2}(T_1)} + |u_{(2) \vert_{x=0}}|_{m-2,T_1} \leq M_2, \\
|\ux^{n}|_{H^m(0,T_1)} \leq M_3.
\end{cases}
\end{equation}
Here, by taking $T_1=T_1(M_1,M_2,M_3)$ small enough again we see that $u^n(t,x)$ and $\ux^n(t)$ 
satisfy \eqref{precond} so that we can apply Lemma \ref{prepa}. 
In the following, we denote inessential constants independent of $M_1,M_2,M_3$, and $n$ 
by the same symbol $C$, which may change from line to line. 
By \eqref{unifest}, without loss of generality we have also 
\begin{equation}\label{unifest2}
\|u^n\|_{W^{m-2,\infty}(\Omega_{T_1})}, \|u_{(2)}^n\|_{W^{m-3,\infty}(\Omega_{T_1})}, 
 \|\widetilde{\varphi}^n\|_{W^{m-1,\infty}(\Omega_{T_1})} \leq C,
\end{equation}
where $\widetilde{\varphi}^n(t,x)=\varphi^n(t,x)-x=\psi(\frac{x}{\varepsilon_0})\ux^n(t)$, 
so that 
\[
\begin{cases}
\|B(u^n,\partial^{\varphi^n}u^n)\|_{\WW^{m-2}(T_1)}, |\dt^{m-2}\nu_{(2)}^n|_{L^2(0,T_1)} \leq CM_1, \\
|\nu_{(2)}^n|_{W^{m-3,\infty}(0,T_1)} \leq C.
\end{cases}
\]
Therefore, it follows from Lemmas \ref{lemdiffeo2}, \ref{prepa}, and Theorem \ref{theoIBVP3} that 
\begin{align*}
\opnorm{u^n(t)}_{m-1} + |{u^n}_{\vert_{x=0}}|_{m-1,t}
 &\leq Ce^{C(M_1,M_3)t}( 1 + |u_{\rm i}^n|_{H^{m-1}(0,t)} ), \\
\opnorm{u_{(2)}^n(t)}_{m-2} + |u_{(2) \vert_{x=0}}^n|_{m-2,t}
 &\leq Ce^{C(M_1,M_3)t}\biggl( 1 + |\dt^{m-2}\nu_{(2)}^n|_{L^2(0,t)} \\
 &\qquad
  + |g_{(2)}^n|_{H^{m-2}(0,t)} + |f_{(2) \vert_{x=0}}^n|_{m-3,t}
  + \int_0^t \opnorm{f_{(2)}^n(t')}_{m-2}{\rm d}t' \biggr).
\end{align*}
It is easy to see that 
\[
|x^{n+1}|_{H^m(0,T_1)} \leq C(1+|\chi^n|_{H^{m-2}(0,T_1)}).
\]
Here, by \eqref{unifest}--\eqref{unifest2} we have 
\[
\begin{cases}
 |u_{\rm i}^n|_{H^{m-1}(0,T_1)}, |f_{(2) \vert_{x=0}}^n|_{m-3,T_1} \leq C, \\
 |g_{(2)}^n|_{H^{m-2}(0,T_1)},\|f_{(2)}^n\|_{\WW^{m-2}(T_1)} \leq C(1+M_1), \\
 |\chi^n|_{H^{m-2}(0,T_1)} \leq C(1+M_1+M_2).
\end{cases}
\]
Therefore, we obtain 
\[
\begin{cases}
\opnorm{u^n}_{\WW^{m-1}(T_1)} + |{u^n}_{\vert_{x=0}}|_{m-1,T_1} \leq Ce^{C(M_1,M_3)T_1}, \\
\opnorm{u_{(2)}^n}_{\WW^{m-2}(T_1)} + |u_{(2) \vert_{x=0}}^n|_{m-2,T_1} \leq Ce^{C(M_1,M_3)T_1}(1+M_1), \\
|\ux^{n}|_{H^m(0,T_1)} \leq C(1+M_1+M_2).
\end{cases}
\]
Putting $M_1=2C$, $M_2=2C(1+M_1)$, and $M_3=C(1+M_1+M_2)$, and taking $T_1$ sufficiently small, 
we see that \eqref{unifest} holds for all $n$. 

Once we have such uniform bounds for the approximate solutions, by considering the equations for 
$(u^{n+1}-u^n,u_{(2)}^{n+1}-u_{(2)}^n,\ux^{n+1}-\ux^n)$ as in the proof of Theorem \ref{theoIBVP4} 
and by taking $T_1$ sufficiently small, we can show that $\{(u^n,u_{(2)}^n,\ux^n)\}_n$ converges to 
$(u,u_{(2)},\ux)$ in $(\WW^{m-2}(T_1)\cap W^{1,\infty}(\Omega_{T_1})) \times \WW^{m-3}(T_1) \times H^m(0,T_1)$ 
and that the limit is a solution to \eqref{qleq1}--\eqref{qleq3}. 
Moreover, by the standard compactness and regularity arguments we see that the solution satisfies 
$(u,u_{(2)}) \in \WW^{m-1}(T_1) \cap \WW^{m-2}(T_1)$.

\medskip
\noindent
{\bf Step 2.}
We will show that the solution $(u,u_{(2)},\ux)$ to \eqref{qleq1}--\eqref{qleq3} constructed in Step 1 
is in fact a solution to \eqref{IBVPfbbis2}--\eqref{IC} and satisfies $\dt^\varphi\dt^\varphi u=u_{(2)}$. 
Putting $\widetilde{u}_{(2)}=\dt^\varphi\dt^\varphi u$, it is sufficient to show that 
$\widetilde{u}_{(2)}=u_{(2)}$ and the boundary condition $u=u_{\rm i}$ on $x=0$. 

Clearly, $u$ satisfies \eqref{d2u} with $u_{(2)}$ replaced by $\widetilde{u}_{(2)}$ so that 
$\widetilde{u}_{(2)}$ satisfies the same interior equation in \eqref{qleq2} as $u_{(2)}$. 
The boundary condition in \eqref{qleq2} for $u_{(2)}$ and the equation in \eqref{qleq3} for $\ux$ 
are equivalent to 
\begin{equation}\label{equiBC}
({\rm Id} - \dot{\ux}A(u)^{-1})^2 u_{(2)} + \ddot{\ux}( \dx^\varphi u - \dx^\varphi u_{\rm i} )
 = g_1(\dot{\ux},u,\partial^\varphi u,\partial^\varphi\partial^\varphi u_{\rm i})
 \quad\mbox{on}\quad x=0.
\end{equation}
On the other hand, by differentiating the boundary condition in \eqref{qleq1} for $u$ twice 
with respect to $t$ we see that 
\begin{align*}
0 &= \unu \cdot \dt^2(u-u_{\rm i})_{\vert_{x=0}} \\
&= \unu \cdot \bigl( ({\rm Id} - \dot{\ux}A(u)^{-1})^2 \widetilde{u}_{(2)}
 + \ddot{\ux}( \dx^\varphi u - \dx^\varphi u_{\rm i} )
 - g_1(\dot{\ux},u,\partial^\varphi u,\partial^\varphi\partial^\varphi u_{\rm i}) \bigr)_{\vert_{x=0}}.
\end{align*}
Eliminating $\ddot{\ux}$ from these two equations, we obtain 
\[
\unu \cdot ({\rm Id} - \dot{\ux}A(u)^{-1})^2 ( \widetilde{u}_{(2)} - u_{(2)})_{\vert_{x=0}} = 0.
\]
Therefore, $v_{(2)} = \widetilde{u}_{(2)} - u_{(2)}$ is a solution to the initial boundary value problem 
\[
\begin{cases}
 \dt v_{(2)} + \cA(u,\partial\varphi) \dx v_{(2)} + B(u,\partial^\varphi u)v_{(2)} = 0
   & \mbox{in}\quad \Omega_{T_1}), \\
 v_{(2) \vert_{t=0}} = 0 & \mbox{on}\quad \R_+, \\
 \widetilde{\nu}_{(2)} \cdot v_{(2) \vert_{x=0}} = 0 & \mbox{on}\quad (0,T_1), 
\end{cases}
\]
where $\widetilde{\nu}_{(2)}= (({\rm Id} - \dot{\ux}A(u_{\vert_{x=0}})^{-1})^2)^{\rm T}\unu$. 
Here, we have 
\[
\widetilde{\nu}_{(2)} \cdot \mathbf{e}_{+}(u_{\vert_{x=0}})
= \Bigl( 1 - \frac{\dot{\ux}}{\lambda_{+}(u_{\vert_{x=0}})} \Bigr)
 \mathbf{e}_{+}({u^{\rm in}}_{\vert_{x=0}}) \cdot \mathbf{e}_{+}(u_{\vert_{x=0}}),
\]
which is not zero. 
Therefore, we can apply Theorem \ref{theoIBVP3} to the above problem and the uniqueness of the solution 
gives $v_{(2)}=0$, that is, $\widetilde{u}_{(2)} = u_{(2)}$. 
Particularly, \eqref{equiBC} holds with $u_{(2)}$ replaced by $\widetilde{u}_{(2)}$. 

We proceed to show the boundary condition in \eqref{IBVPfbbis2}. 
Putting $w(t)=(u-u_{\rm i})_{\vert_{x=0}}$ we have
\[
\ddot{w} = \bigl( ({\rm Id} - \dot{\ux}A(u)^{-1})^2 \widetilde{u}_{(2)}
 + \ddot{\ux}( \dx^\varphi u - \dx^\varphi u_{\rm i} )
 - g_1(\dot{\ux},u,\partial^\varphi u,\partial^\varphi\partial^\varphi u_{\rm i}) \bigr)_{\vert_{x=0}}
 = 0.
\]
The compatibility conditions implies $w_{\vert_{t=0}}=\dot{w}_{\vert_{t=0}}=0$. 
Therefore, we obtain $w=0$, that is, $u=u_{\rm i}$ on $x=0$, so that 
$(u,\ux)$ is in fact the solution to \eqref{IBVPfbbis2}--\eqref{IC}. 
Uniqueness of the solution follows from that of the reduced problem \eqref{qleq1}--\eqref{qleq3}.

\medskip
\noindent
{\bf Step 3.}
In order to reduce the condition $m\geq4$ to $m\geq2$, we will derive an a priori estimate for 
the solution $(u,\ux)$ under this weaker assumption. 
Although we will again use the reduced system \eqref{qleq1}--\eqref{qleq3}, 
we can now use the relation $\dt^\varphi\dt^\varphi u = u_{(2)}$ to obtain an additional regularity of $u$. 
We will prove again that for $M_1,M_2,M_3$ large enough and $T_1$ small enough, we have 
\begin{equation}\label{unifest3}
\begin{cases}
\opnorm{u}_{\WW^{m-1}(T_1)} + |\vu|_{m-1,T_1} \leq M_1, \\
\opnorm{u_{(2)}}_{\WW^{m-2}(T_1)} + |u_{(2) \vert_{x=0}}|_{m-2,T_1} \leq M_2, \\
|\ux|_{H^m(0,T_1)} \leq M_3.
\end{cases}
\end{equation}
Let $c_0$ and $C_0$ be the constants in Lemma \ref{prepa}. 
By Lemma \ref{lemdiffeo2}, there exists $K_0$ independent of $M_1,M_2,M_3$ such that 
\[
\frac{1}{c_0}, C_0, \opnorm{\partial\widetilde{\varphi}(0)}_{m-2},  |\unu|, 
 \opnorm{u(0)}_{m-1}, \opnorm{u_{(2)}(0)}_{m-2}, \sum_{j=0}^{m-1}|\ux_j^{\rm in}| \leq K_0.
\]
Moreover, by taking $T_1=T_1(M_1,M_2,M_3)$ sufficiently small if necessary, we have 
\begin{equation}\label{estbyK0}
|\nu_{(2)}|_{L^\infty(0,T_1)}, |\ux|_{W^{m-1,\infty}(0,T_1)}, 
 \|\widetilde{\varphi}\|_{W^{m-1,\infty}(\Omega_{T_1})}, \|\dx\widetilde{\varphi}\|_{W^{m-1,\infty}(\Omega_{T_1})}
 \leq C(K_0).
\end{equation}
Let $K$ be a constant such that $K_0,M_1,M_2,M_3 \leq K$.

\begin{lemma}\label{addreg}
For a smooth solution $(u,\ux)$ to \eqref{IBVPfbbis2} with $\varphi$ given by \eqref{diffeo2}
satisfying \eqref{unifest3} and \eqref{estbyK0}, we have 
\[
 \|\dx u\|_{\WW^{m-1}(T_1)}, \|u\|_{W^{m-1,\infty}(\Omega_{T_1})}, |\vu|_{m,T_1} \leq C(K). 
\]
\end{lemma}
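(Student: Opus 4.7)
\textbf{Proof plan for Lemma \ref{addreg}.}
The plan is to exploit the additional information $\dt^\varphi\dt^\varphi u = u_{(2)} \in \WW^{m-2}(T_1)$ (which comes from \eqref{unifest3} together with the fact that $(u,\ux)$ solves \eqref{IBVPfbbis2}), combined with the a priori bound $u \in \WW^{m-1}(T_1)$, to recover the missing top-order spatial regularity of $u$.

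First, I would use the identities in \eqref{d2u} to express $(\dx^\varphi)^2 u$ and $\dt^\varphi\dx^\varphi u$ as smooth functions of $(u,\partial^\varphi u,u_{(2)})$; this is possible because $A(u)$ is invertible by Assumption \ref{asshypNLFB}. Converting back from $\partial^\varphi$ to $\partial$ by means of \eqref{dtphi} and the bounds on $\partial\varphi$ supplied by Lemma \ref{prepa} and \eqref{estbyK0}, and applying the product and Moser inequalities of Lemmas \ref{ineq1}--\ref{ineq2} together with the uniform bounds \eqref{unifest3}, I would obtain $\dx^2 u,\,\dt\dx u\in\WW^{m-2}(T_1)$ with a $C(K)$ bound.

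Second, I would combine these bounds with $u\in\WW^{m-1}(T_1)$. Controlling $\dx u\in\WW^{m-1}(T_1)$ amounts to bounding $\dx^k\dt^j u$ in $L^2$ for every pair $(k,j)$ with $k\ge 1$ and $k+j\le m$. The multi-indices with $k+j\le m-1$ are handled directly by $u\in\WW^{m-1}(T_1)$; the ones with $k+j=m$ and $k\ge 2$ are obtained by applying $\partial$-derivatives of total order $m-2$ to $\dx^2 u\in\WW^{m-2}(T_1)$; and the single remaining case $(k,j)=(1,m-1)$ is provided by $\dt\dx u\in\WW^{m-2}(T_1)$. These pieces together yield $\dx u\in\WW^{m-1}(T_1)$ (and indeed $u\in\WW^m(T_1)$) with a $C(K)$ bound, and the $W^{m-1,\infty}(\Omega_{T_1})$ bound on $u$ then follows by the Sobolev embedding $H^{m-j-k}(\R_+)\hookrightarrow L^\infty(\R_+)$ applied to $\dt^j\dx^k u$ whenever $j+k\le m-1$.

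Finally, for the trace estimate $|\vu|_{m,T_1}\le C(K)$ I would proceed term by term. For $j=0$ the boundary condition $u_{\vert x=0}=U_{\rm i}(\cdot,\ux(\cdot))$, combined with $\ux\in H^m(0,T_1)$, $U_{\rm i}\in W^{m,\infty}$, and the Moser-type inequality of Lemma \ref{ineq3}, yields $u_{\vert x=0}\in H^m(0,T_1)$. For $j=1$, evaluating the interior equation at $x=0$ gives $(\dx u)_{\vert x=0}=-\cA(u,\partial\varphi)^{-1}_{\vert x=0}(\dt u)_{\vert x=0}$, which transfers the $H^{m-1}(0,T_1)$-regularity of $(\dt u)_{\vert x=0}$ into the required bound. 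For $j\ge 2$, iterating the identities \eqref{d2u} at $x=0$ expresses $(\dx^j u)_{\vert x=0}$ as a smooth function of traces of $u$, of $u_{(2)}$, and of their $\partial$-derivatives of order at most $j-2$, each of which is controlled by the trace parts of \eqref{unifest3}. The main technical obstacle will be the careful bookkeeping of derivative orders in the nonlinear substitutions coming from \eqref{d2u}: one must verify that the factors of $u$, $\partial u$ and $\partial\varphi$ multiplying $u_{(2)}$ can be estimated at the level $\WW^{m-2}$ without any loss, which ultimately rests on the $W^{[m/2],\infty}$-bounds on $\partial\varphi$ and $u$ supplied by \eqref{estbyK0} and the Sobolev embedding $\WW^{m-1}(T_1)\hookrightarrow W^{[m/2],\infty}(\Omega_{T_1})$.
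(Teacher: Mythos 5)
Your overall strategy is the paper's: use the relation $\dt^\varphi\dt^\varphi u=u_{(2)}$ together with \eqref{d2u} to express $\dx^\varphi\dx^\varphi u$ and $\dt^\varphi\dx^\varphi u$ in terms of $(u,\partial^\varphi u,u_{(2)})$, convert back to $\partial$ via \eqref{dtphi} and the bounds on $\partial\varphi$, then estimate with the product/Moser inequalities. The indexing bookkeeping you do in your second paragraph is fine and reproduces in a slightly different packaging the paper's inequality \eqref{estdxu} and the recombination that follows. Your trace argument ($j=0$ from the boundary condition, $j=1$ from the interior equation, $j\ge 2$ by iterating \eqref{d2u}) is a legitimate alternative to the paper's, which instead decomposes $|\vu|_{m,t}$ through $(\dt^2 u)_{\vert_{x=0}}$, $(\dt\dx u)_{\vert_{x=0}}$, $(\dx^2 u)_{\vert_{x=0}}$, $|\vu|_{m-1,t}$ and uses the identity $\dt^2 u = u_{(2)}+(\dt^2\varphi)\dx^\varphi u+2(\dt\varphi)\dt^\varphi\dx^\varphi u+(\dt\varphi)^2\dx^\varphi\dx^\varphi u$ together with $|(\dt^2\varphi)_{\vert_{x=0}}|_{m-2,t}\le C|\ux|_{H^m(0,t)}$; both routes are essentially equivalent in difficulty.

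There is, however, a genuine gap at the boundary case $m=2$, which the lemma must cover since the theorem is stated for $m\ge2$. Your argument for bounding $\dx^\varphi\dx^\varphi u$ and $\dt^\varphi\dx^\varphi u$ in $\WW^{m-2}(T_1)$ relies on the embedding $\WW^{m-1}(T_1)\hookrightarrow W^{[m/2],\infty}(\Omega_{T_1})$, which is only available via $\WW^{m-1}(T_1)\hookrightarrow W^{m-2,\infty}(\Omega_{T_1})$ and requires $m-2\ge[m/2]$, i.e.\ $m\ge3$. When $m=2$ one only has $u\in\WW^1(T_1)\hookrightarrow L^\infty(\Omega_{T_1})$, and the quadratic terms $A'(u)[\dt^\varphi u]\dx^\varphi u$ and $A'(u)[\dx^\varphi u]\dx^\varphi u$ in \eqref{d2u} cannot be closed in $L^2$ because $\|\dx u\|_{L^\infty}$ is not controlled by $\|u\|_{\WW^1(T_1)}$. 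The paper treats this case separately by invoking the interpolation $\|\dx u\|_{L^\infty}\le\sqrt{2}\,\|\dx u\|_{L^2}^{1/2}\|\dx^2 u\|_{L^2}^{1/2}$, which reintroduces on the right-hand side a sublinear power of the quantity $\opnorm{\dx u(t)}_1$ being estimated and is then absorbed by Young's inequality, yielding $\opnorm{\dx u(t)}_1\le C(K_0)(\|u_{(2)}(t)\|_{L^2}+\opnorm{u(t)}_1+\opnorm{u(t)}_1^3)$. Without this self-improving absorption step your estimate does not close for $m=2$. (Minor aside: your parenthetical ``and indeed $u\in\WW^m(T_1)$'' is not established by your argument, since $\dt^m u$ is never addressed, but the lemma does not require it either.)
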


\begin{proof}
We begin to evaluate $\opnorm{\dx u(t)}_{m-1}$. 
In view of the identities 
\begin{equation}\label{iden}
\begin{cases}
\dx^2u = (\dx\varphi)^2\dx^\varphi\dx^\varphi u + (\dx^2\varphi)\dx^\varphi u, \\
\dt\dx u = (\dx\varphi)\{ \dt^\varphi\dx^\varphi u + (\dt\varphi)\dx^\varphi\dx^\varphi u
 + (\dx^\varphi\dt\varphi)\dx^\varphi u\},
\end{cases}
\end{equation}
we see that 
\begin{align}\label{estdxu}
\opnorm{\dx u(t)}_{m-1}
&\leq \opnorm{\dx^2u(t)}_{m-2} + \opnorm{\dt\dx u(t)}_{m-2} + \opnorm{\dx u(t)}_{m-2} \\
&\leq C(K_0)( \opnorm{\dx^\varphi\dx^\varphi u(t)}_{m-2} + \opnorm{\dt^\varphi\dx^\varphi u(t)}_{m-2}
 + \opnorm{u(t)}_{m-1}). \nonumber
\end{align}
We note that $u$ satisfies \eqref{d2u}. 
In the case $m\geq3$, by Lemmas \ref{ineq1}--\ref{ineq2} we have 
\[
\opnorm{\dx^\varphi\dx^\varphi u(t)}_{m-2} + \opnorm{\dt^\varphi\dx^\varphi u(t)}_{m-2}
\leq C(\opnorm{u(t)}_{m-2})( \opnorm{u_{(2)}(t)}_{m-2} + \opnorm{\partial^\varphi u(t)}_{m-2}^2 ),
\]
which together with \eqref{estdxu} implies $\opnorm{\dx u(t)}_{m-1} \leq C(K)$. 
In the case $m=2$, by using the Sobolev imbedding theorem 
$\|u\|_{L^\infty} \leq \sqrt{2}\|u\|_{L^2}^{1/2}\|\dx u\|_{L^2}^{1/2}$ we have 
\begin{align*}
\|\dx^\varphi\dx^\varphi u(t)\|_{L^2} + \|\dt^\varphi\dx^\varphi u(t)\|_{L^2}
&\leq C(K_0)( \|u_{(2)}(t)\|_{L^2} + \|\partial u(t)\|_{L^2}\|\dx u(t)\|_{L^\infty} ) \\
&\leq C(K_0)( \|u_{(2)}(t)\|_{L^2} + \opnorm{u(t)}_1^{3/2}\opnorm{\dx u(t)}_1^{1/2} ),
\end{align*}
which together with \eqref{estdxu} implies 
\[
\opnorm{\dx u(t)}_1 \leq C(K_0)( \|u_{(2)}(t)\|_{L^2} + \opnorm{u(t)}_1 + \opnorm{u(t)}_1^3 ) \leq C(K).
\]
Therefore, in any case we have $\opnorm{\dx u(t)}_{m-1} \leq C(K)$, 
which together with the Sobolev imbedding theorem yields 
\[
\|u\|_{W^{m-1,\infty}(\Omega_{T_1})}
\leq C \|u\|_{\WW^{m-1}(T_1)}^{1/2}\|\dx u\|_{\WW^{m-1}(T_1)}^{1/2}
\leq C(K).
\]

We proceed to evaluate $|\vu|_{m,t}$. 
In view of \eqref{iden} and the identity 
\[
\dt^2 u = u_{(2)} + (\dt^2\varphi)\dx^\varphi u + 2(\dt\varphi)\dt^\varphi\dx^\varphi u
 + (\dt\varphi)^2\dx^\varphi\dx^\varphi u,
\]
we see that 
\begin{align*}
|\vu|_{m,t} 
&\leq |(\dt^2 u)_{\vert_{x=0}}|_{m-2,t} + |(\dt\dx u)_{\vert_{x=0}}|_{m-2,t} + |(\dx^2 u)_{\vert_{x=0}}|_{m-2,t} + |\vu|_{m-1,t} \\
&\leq C(K_0)\bigl( |u_{(2) \vert_{x=0}}|_{m-2,t} + |\vu|_{m-1,t} \\
&\quad
  + |(\dt^2\varphi)_{\vert_{x=0}}|_{m-2,t}\|\dx u\|_{L^\infty(\Omega_t)}
  + |(\dx^\varphi\dx^\varphi u)_{\vert_{x=0}}|_{m-2,t} + |(\dt^\varphi\dx^\varphi u)_{\vert_{x=0}}|_{m-2,t} \bigr). 
\end{align*}
Here, we have $|(\dt^2\varphi)_{\vert_{x=0}}|_{m-2,t} \leq C|\ux|_{H^m(0,t)}$. 
Noting again that $u$ satisfies \eqref{d2u} and using Lemma \ref{ineq2} we have 
\[
|(\dx^\varphi\dx^\varphi u)_{\vert_{x=0}}|_{m-2,t} + |(\dt^\varphi\dx^\varphi u)_{\vert_{x=0}}|_{m-2,t}
\leq C(K)( |u_{(2) \vert_{x=0}}|_{m-2,t} + 1 ) \leq C(K).
\]
Therefore, we obtain $|\vu|_{m,T_1} \leq C(K)$. 
\end{proof}

Thanks of this lemma, by taking $T_1$ sufficiently small we have \eqref{precond} and 
\[
\|u\|_{W^{m-2,\infty}(\Omega_{T_1})} \leq C(K_0).
\]
Without loss of generality we can also assume $\|U_{\rm i}\|_{W^{m,\infty}((0,T)\times(-\delta,\delta))} \leq K_0$. 
Since $u$ is a solution to \eqref{qleq1}, we can apply Theorem \ref{theoIBVP3} with $m$ replaced by 
$m-1$ to $u$ and obtain 
\begin{align*}
\opnorm{u(t)}_{m-1} + |\vu|_{m-1,t}
&\leq C(K_0)e^{C(K)t}( \opnorm{u(0)}_{m-1} + |u_{\rm i}|_{H^{m-1}(0,t)} ) \\
&\leq C(K_0)e^{C(K)t}( \opnorm{u(0)}_{m-1} + 1 ).
\end{align*}
We note that $u_{(2)}$ is a solution to \eqref{qleq2} and that in the case of $m\geq3$ we have 
\[
\|B(u,\partial^\varphi u)\|_{\WW^{m-2}(T_1)}, |\nu_{(2)}|_{W^{1,\infty} \cap W^{m-3,\infty}(0,T_1)}, 
 |\dt^{m-2}\nu_{(2)}|_{L^2(0,T_1)} \leq C(K).
\]
Therefore, thanks of Lemma \ref{prepa} we can apply Theorem \ref{theoIBVP3} with $m$ replaced 
by $m-2$ in the case $m\geq3$ and Proposition \ref{propNRJ1} together with Lemma \ref{lemsymmetrizerbis} 
in the case $m=2$ to $u_{(2)}$ and obtain 
\begin{align*}
\opnorm{u(t)}_{m-2} + |\vu|_{m-2,t}
&\leq C(K_0) e^{C(K)t} \biggl( (1+|\dt^{m-2}\nu_{(2)}|_{L^2(0,t)})\opnorm{u_{(2)}(0)}_{m-2} \\
&\quad
 + |g_{(2)}|_{H^{m-2}(0,t)} + |f_{(2) \vert_{x=0}}|_{m-3,t} + \int_0^t \opnorm{f_{(2)}(t')}_{m-2}{\rm d}t' \biggr),
\end{align*}
where the term $|f_{(2) \vert_{x=0}}|_{m-3,t}$ is dropped in the case $m=2$. 
Here, we have 
\[
|\nu_{(2)}|_{W^{m-2,\infty}(0,T_1)}, |g_{(2)}|_{W^{m-2,\infty}(0,T_1)},
 \|f_{(2)}\|_{W^{m-2,\infty}(\Omega_{T_1}) \cap \WW^{m-2}(T_1)} \leq C(K),
\]
so that 
\[
\opnorm{u(t)}_{m-2} + |\vu|_{m-2,t} \leq C(K_0)e^{C(K)t}( 1+C(K)\sqrt{t} ) ( \opnorm{u_{(2)}(0)}_{m-2} + 1 ).
\]
Since $\ux$ is a solution to \eqref{qleq3}, we see that 
\[
|\ux|_{H^m(0,T_1)} \leq C(K_0)( 1 + |u_{(2) \vert_{x=0}}|_{m-2,t} + |\vu|_{m-1,t} ).
\]
Therefore, if we define the constants $M_1,M_2,M_3$ by 
\[
\begin{cases}
M_1 = 2C(K_0)( \opnorm{u(0)}_{m-1} + 1 ), \\
M_2 = 2C(K_0)( \opnorm{u_{(2)}(0)}_{m-2} + 1 ), \\
M_3 = C(K_0)( 1 + M_1 + M_2 ),
\end{cases}
\]
and if we take $T_1=T_1(K)$ sufficiently small, then \eqref{unifest3} holds. 
The proof of Theorem \ref{theoIBVP5} is complete.

\subsubsection{An extension to a system coupled with ODEs}\label{sectext}
In application to physical and engineering problems, the free boundary problem 
\eqref{IBVPfb2}--\eqref{fbBC} appears coupled with a system of ordinary differential equations for the
unknown $W=W(t)$, which takes its value in $\R^N$. 
We will extend Theorem \ref{theoIBVP5} to such a problem. 
More precisely, we consider \eqref{IBVPfb2}--\eqref{fbBC} with the boundary data $U_{\rm i}$ of the form 
$U_{\rm i}(t,x)=G_{\rm i}(W(t),x)$, where $G_{\rm i}(W,x)$ is a given function whereas $W(t)$ satisfies 
\begin{equation}\label{ODE}
\begin{cases}
\dot{W} = F(W,\ux) & \mbox{in}\quad (0,T), \\
W = W^{\rm in} & \mbox{on}\quad \{t=0\}.
\end{cases}
\end{equation}
As before, we will use the diffeomorphism $\varphi(t,\cdot) : \R_{+} \to (\ux(t),\infty)$ given by 
Lemma \ref{lemdiffeo2} and put $u=U\circ\varphi$. 
Then, the problem is recast as 
\begin{equation}\label{nlfbp}
\begin{cases}
 \dt^\varphi u + A(u)\dx^\varphi u = 0 & \mbox{in}\quad \Omega_T, \\
 u_{\vert_{t=0}} = u^{\rm in}(x) & \mbox{on}\quad \R_{+}, \\
 u_{\vert_{x=0}} = u_{\rm i}(t) & \mbox{on}\quad (0,T)
\end{cases}
\end{equation}
with $\ux(0)=0$, 
where $u_{\rm i}(t) = G_{\rm i}(W(t),\ux(t))$.

\begin{assumption}\label{assFG}
Let $\mathcal{W}$ be an open set in $\R^N$, which represents a phase space of $W$. 
We have $G_{\rm i}, F \in W^{m,\infty}(\mathcal{W}\times(-\delta,\delta))$. 
\end{assumption}

\begin{theorem}\label{theoIBVP6}
Let $m\geq 2$ be an integer. 
Suppose that Assumptions \ref{asshypNLFB}--\ref{assFG} are satisfied. 
If $u^{\rm in}\in H^m(\R_+)$ takes its values in a compact and convex set ${\mathcal K}_0\subset \cU$ and 
if the data $u^{\rm in}$ and $W^{\rm in} \in \mathcal{W}$ satisfy 
\begin{enumerate}
\item[{\bf i.}]
$\lambda_{\pm}({u^{\rm in}}_{\vert_{x=0}}) \mp \ux_1^{\rm in} >0$,
\item[{\bf ii.}]
$(\dx u^{\rm in})_{\vert_{x=0}} - (\dx G_{\rm i})_{\vert_{W=W^{\rm in},x=0}} \ne0$,
\item[{\bf iii.}]
$((\dx u^{\rm in})_{\vert_{x=0}} - (\dx G_{\rm i})_{\vert_{W=W^{\rm in},x=0}})^\perp \cdot 
 \mathbf{e}_{+}({u^{\rm in}}_{\vert_{x=0}}) \ne0$,
\end{enumerate}
where $\ux_1^{\rm in}=(\dt\ux)_{\vert_{t=0}}$ will be determined by \eqref{xkin2} below, 
and the compatibility conditions up to order $m-1$ in the sense of Definition \ref{defCC2} below, 
then there exist $T_1 \in (0,T]$ and a unique solution 
$(u,\ux)$ to \eqref{ODE}--\eqref{nlfbp} with 
$u, \dx u \in \WW^{m-1}(T_1)$, $\ux\in H^m(0,T_1)$, $W\in H^{m+1}(0,T_1)$, 
and $\varphi$ given by Lemma \ref{lemdiffeo2}. 
\end{theorem}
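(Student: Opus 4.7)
The plan is to adapt the proof of Theorem \ref{theoIBVP5} by coupling the ODE \eqref{ODE} with the iterative scheme used there. The key observation is that the ODE $\dot W = F(W,\ux)$ has a one-derivative smoothing effect in time: given $\ux\in H^m(0,T_1)$, standard ODE theory yields $W\in H^{m+1}(0,T_1)$, so that by Sobolev embedding $U_{\rm i}(t,x) = G_{\rm i}(W(t),x)$ belongs to $W^{m,\infty}((0,T_1)\times(-\delta,\delta))$, which is precisely the regularity required to invoke Theorem \ref{theoIBVP5}. The hypotheses (i)--(iii) of the theorem are transcriptions of those of Theorem \ref{theoIBVP5} via the identification $U_{\rm i}(0,x) = G_{\rm i}(W^{\rm in},x)$.

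First I would extend the compatibility conditions of Definition \ref{defCC} to the coupled setting. The traces $W_k^{\rm in}:=(\dt^k W)_{\vert t=0}$ are determined inductively by differentiating the ODE: $W_1^{\rm in}=F(W^{\rm in},0)$, and more generally $W_{k+1}^{\rm in}$ is a smooth function of $W^{\rm in}$ and $\{\ux_j^{\rm in}\}_{0\leq j\leq k}$. The traces $(\dt^k U_{\rm i})_{\vert t=0}$ that enter \eqref{xkin} and Definition \ref{defCC} are then expressed in terms of $\{W_j^{\rm in}\}_{j\leq k}$ via the chain rule, and Definition \ref{defCC2} together with the recursion \eqref{xkin2} is the natural transcription of \eqref{xkin} and Definition \ref{defCC}. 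With essentially the same argument, Proposition \ref{appdata} extends to provide approximation of $(u^{\rm in},W^{\rm in})$ by smoother data satisfying higher order compatibility.

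The construction then follows the three steps of the proof of Theorem \ref{theoIBVP5}. In Step 1, I would set up the enlarged iterative sequence $\{(W^n, u^n, u_{(2)}^n, \ux^n)\}_n$ where, at each stage, one solves successively: the ODE $\dot W^{n+1} = F(W^{n+1},\ux^n)$, with $W^{n+1}(0)=W^{\rm in}$, to obtain $W^{n+1}\in H^{m+1}(0,T_1)$ with a bound depending only on $|\ux^n|_{H^m(0,T_1)}$ and $|F|_{W^{m,\infty}}$; the linear system \eqref{qleq1n} but with boundary data $\unu\cdot G_{\rm i}(W^{n+1}(t),\ux^n(t))$ to obtain $u^{n+1}$ via Theorem \ref{theoIBVP3}; the linear system \eqref{qleq2n} for $u_{(2)}^{n+1}$; and finally the ODE \eqref{qleq3n} to produce $\ux^{n+1}$. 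The uniform estimates \eqref{unifest} extend to this scheme with the addition of $|W^n|_{H^{m+1}(0,T_1)}\leq M_4$ inherited from the ODE. The short-time setup of Lemma \ref{prepa} and the algebraic identity of Proposition \ref{conalg} (ensuring the maximal dissipativity condition $|\nu_{(2)}\cdot\mathbf{e}_+|\geq c_0$) carry over thanks to hypothesis (iii) of the theorem.

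The main obstacle is to close the convergence argument for this enlarged sequence, which requires a Lipschitz control of $W^{n+1}-W^n$ in terms of $\ux^n-\ux^{n-1}$. This is obtained by a Gronwall estimate on the ODE in the form $\|W^{n+1}-W^n\|_{H^{m-1}(0,T_1)} \leq C T_1 \|\ux^n-\ux^{n-1}\|_{W^{m-2,\infty}(0,T_1)}$, which for $T_1$ small is easily absorbed into the contraction argument already established in the proof of Theorem \ref{theoIBVP4}--\ref{theoIBVP5} for the differences of $(u^n, u_{(2)}^n, \ux^n)$. Step 2 (identifying the solution of the reduced system with a solution of \eqref{ODE}--\eqref{nlfbp} and verifying $\dt^\varphi\dt^\varphi u = u_{(2)}$) and Step 3 (the a priori estimate relaxing the requirement from $m\geq 4$ to $m\geq 2$) then go through without essential modification, using at each occurrence the ODE smoothing to upgrade the regularity of $W$ by one derivative relative to that of $\ux$.
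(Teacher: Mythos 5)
Your observation that the ODE $\dot W = F(W,\ux)$ gains one derivative, so $\ux\in H^m$ implies $W\in H^{m+1}\hookrightarrow W^{m,\infty}$ and hence $U_{\rm i}(t,x)=G_{\rm i}(W(t),x)\in W^{m,\infty}$, is exactly the key point, and your transcription of the compatibility conditions matches the paper's Definition~\ref{defCC2}. However, you take a different route from the paper at the level of the iterative scheme. The paper keeps Theorem~\ref{theoIBVP5} as a black box: the iteration lives at the level of $W$ alone, namely one constructs $\{(u^n,\ux^n,W^n)\}_n$ where, given $W^n$, the pair $(u^n,\ux^n)$ is obtained by applying Theorem~\ref{theoIBVP5} to the nonlinear free boundary problem with boundary data $u_{\rm i}^n(t)=G_{\rm i}(W^n(t),\ux^n(t))$, and then $W^{n+1}$ is updated by the \emph{linear} ODE $\dot W^{n+1}=F(W^n,\ux^n)$, $W^{n+1}(0)=W^{\rm in}$; the regularity bootstrap $|W^{n+1}|_{H^{m+1}}\leq C(K_0)(|W^n|_{H^m}+|\ux^n|_{H^m}+1)$ then closes the scheme. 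You instead propose to re-open the internal three-step iteration of Theorem~\ref{theoIBVP5}'s proof and enlarge it with the extra iterate $W^n$, re-verifying the uniform bounds of \eqref{unifest}, Lemma~\ref{prepa}, Proposition~\ref{conalg}, and the contraction estimates with the additional coupling. Both are correct: the paper's modular approach is shorter and avoids re-deriving Lemma~\ref{prepa} and the $m\geq 4\to m\geq 2$ reduction (those are hidden inside the black box), at the cost of having to check that the compatibility conditions of Definition~\ref{defCC} are preserved along the iteration as $W^n$ changes (they are, since the Taylor jet $W^{n}_k|_{t=0}=W_k^{\rm in}$ propagates for $k\leq m-1$); your approach avoids any worry about compatibility conditions of the black box at each step but is substantially heavier since every estimate in Theorem~\ref{theoIBVP5}'s proof must carry the extra dependence on $W^n$. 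Also note the paper's ODE update is explicit ($F(W^n,\ux^n)$) rather than your semi-implicit $F(W^{n+1},\ux^n)$; both are fine for short time, but the explicit one needs no Cauchy--Lipschitz argument for the approximate $W^{n+1}$.
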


\begin{remark}\label{remarkIC2}
As stated in Remark \ref{remarkIC}, the condition {\bf iii} in the theorem can be replaced by 
\begin{enumerate}
\item[{\bf iii$'$.}]
$\mu_0 \cdot \mathbf{e}_{+}({u^{\rm in}}_{\vert_{x=0}}) \ne0$,
\end{enumerate}
where $\mu_0$ is the unit vector satisfying 
$\mu_0 \cdot (\dt U_{\rm i} + A(U_{\rm i})\dx U_{\rm i})_{\vert_{t=x=0}} =0$ with 
$U_{\rm i}(t,x)=G_{\rm i}(W(t),x)$. 
This unit vector $\mu_0$ is uniquely determined up to the sign under the other assumptions 
of the theorem. 
\end{remark}

\begin{proof}[Outline of the proof of Theorem \ref{theoIBVP6}]
The solution $(u,\ux,W)$ can be constructed as a limit of a sequence of approximate solutions 
$\{(u^n,\ux^n,W^n)\}_n$, which are defined by 
\[
\begin{cases}
 \dt u^n + \cA(u^n,\partial\varphi^n)\dx u^n = 0 & \mbox{in}\quad \Omega_T, \\
 {u^n}_{\vert_{t=0}} = u^{\rm in}(x) & \mbox{on}\quad\R_{+}, \\
 {u^n}_{\vert_{x=0}} = u_{\rm i}^n(t) & \mbox{on}\quad (0,T)
\end{cases}
\]
with $\ux^n(0)=0$, 
where $u_{\rm i}^n(t)=G_{\rm i}(W^n(t),\ux^n(t))$ and $\varphi^n$ is given by \eqref{diffeo2} 
with $\varepsilon=\varepsilon_0$ and $\ux$ replaced by $\ux^n$, and 
\[
\begin{cases}
\dot{W}^{n+1} = F(W^n,\ux^n) & \mbox{for}\quad t\in(0,T), \\
W^{n+1}(0) = W^{\rm in}.
\end{cases}
\]
Under the condition $|W^n|_{W^{m-1,\infty}(0,T)}, |\ux^n|_{W^{m-1,\infty}(0,T)} \leq C(K_0)$ we have 
\[
|W^{n+1}|_{H^{m+1}(0,T)} \leq C(K_0)(|W^n|_{H^m(0,T)} + |\ux^n|_{H^m(0,T)} + 1).
\]
Therefore, we can apply Theorem \ref{theoIBVP5} for the existence of the solution $(u^n,\ux^n)$ with 
uniform bounds in appropriate function spaces, so that we can pass to the limit $n\to\infty$ to 
obtain the desired solution. 
\end{proof}

\subsubsection{Compatibility conditions}\label{sectCC}
Suppose that $(u,\ux,W)$ be a smooth solution to \eqref{ODE}--\eqref{nlfbp}. 
As in \S \ref{sscomp}, we define $u_{(k)}^{\rm in}=((\dt^\varphi)^ku)_{\vert_{t=0}}$ by \eqref{ukin2}. 
We denote $W_k^{\rm in}=(\dt^k W)_{\vert_{t=0}}$ and $\ux_k^{\rm in}=(\dt^k\ux)_{\vert_{t=0}}$ as before. 
It follows from $\dot{W}=F(W,\ux)$ that 
\begin{equation}\label{Wkin}
W_{k+1}^{\rm in} = c_{3,k}(W_0^{\rm in},W_1^{\rm in},\ldots,W_k^{\rm in}, 
 \ux_0^{\rm in},\ux_1^{\rm in},\ldots,\ux_k^{\rm in})
\end{equation}
Using the relation $U_{\rm i}(t,x)=G_{\rm i}(W(t),x)$, we have 
\[
(\dt^k\dx^l U_{\rm i})_{\vert_{t=x=0}} = c_{2,k,l}(W_0^{\rm in},W_1^{\rm in},\ldots,W_k^{\rm in}). 
\]
This together with \eqref{xkin} yields 
\begin{align}\label{xkin2}
\ux_{k}^{\rm in} 
&= -\frac{\dx u^{\rm in} - (\dx G_{\rm i})_{\vert_{W=W^{\rm in}}}}{
 |\dx u^{\rm in} - (\dx U_{\rm i})_{\vert_{W=W^{\rm in}}}|^2}
 \cdot \biggl\{ u_{(k)} ^{\rm in} - c_{2,k,0}(W_0^{\rm in},W_1^{\rm in},\ldots,W_k^{\rm in}) \\
&\qquad
 + \sum_{l=2}^k\sum_{\substack{j_0+j_1+\cdots+j_l=k \\ 1\leq j_1,\ldots,j_l}}
  c_{l,j_0,\ldots,j_l}\ux_{j_1}^{\rm in}\cdots\ux_{j_l}^{\rm in}
   \bigl( \dx^l u_{(j_0)}^{\rm in} - c_{2,j_0,l}(W_0^{\rm in},W_1^{\rm in},\ldots,W_{j_0}^{\rm in}) \bigr)
    \biggr\}_{\vert_{x=0}}. \nonumber
\end{align}
Now, we can calculate $\ux_k^{\rm in}$ and $W_k^{\rm in}$ inductively by 
$\ux_0^{\rm in}=0$, $W_0^{\rm in}=W^{\rm in}$, and \eqref{Wkin}--\eqref{xkin2} 
in terms of the data $u^{\rm in}$ and $W^{\rm in}$.

\begin{definition}\label{defCC2}
Let $m\geq1$ be an integer. 
We say that the data $u^{\rm in} \in H^m(\R_+)$ and $W^{\rm in}$ for the problem 
\eqref{ODE}--\eqref{nlfbp} satisfy the compatibility condition at order $k$ if 
$\{u_{(j)}^{\rm in}\}_{j=0}^m$ and $\{\ux_{(j)}^{\rm in}\}_{j=0}^{m-1}$ defined by \eqref{ukin2} 
and \eqref{xkin2} satisfy $u^{\rm in}(0) = G_{\rm i}(W^{\rm in},0)$ in the case $k=0$ and 
\begin{align*}
& (\dx u^{\rm in} - (\dx G_{\rm i})_{\vert_{W=W^{\rm in}}})^\perp \cdot \biggl\{
 u_{(k)}^{\rm in} - c_{2,k,0}(W_0^{\rm in},W_1^{\rm in},\ldots,W_k^{\rm in}) \\
& + \sum_{l=2}^k\sum_{\substack{j_0+j_1+\cdots+j_l=k \\ 1\leq j_1,\ldots,j_l}}
  c_{l,j_0,\ldots,j_l}\ux_{(j_1)}^{\rm in}\cdots\ux_{(j_l)}^{\rm in}
   \bigl( \dx^l u_{(j_0)}^{\rm in} - c_{2,j_0,l}(W_0^{\rm in},W_1^{\rm in},\ldots,W_{j_0}^{\rm in}) \bigr)
  \biggr\}_{\vert_{x=0}}
 = 0
\end{align*}
in the case $k\geq 1$. 
We say also that the data $u^{\rm in}$ and $W_k^{\rm in}$ for the problem \eqref{ODE}--\eqref{nlfbp} 
satisfy the compatibility conditions up to order $m-1$ if they satisfy the compatibility 
conditions at order $k$ for $k=0,1,\ldots,m-1$. 
\end{definition}

Roughly speaking, the definition of $\ux_{k}^{\rm in}$ ensures the equality 
$\dt^k u =\dt^k u_{\rm i}$ at $x=t=0$ in the direction $\dx^\varphi u - \dx^\varphi u_i$, 
whereas the compatibility conditions ensure it in the perpendicular direction 
$(\dx^\varphi u - \dx^\varphi u_i)^\perp$.

\section{Transmission problems}\label{secttransmission}
We proposed in Section \ref{sect2} a general approach to study initial boundary value problems with a possibly 
free boundary for $2\times 2$ hyperbolic systems. 
Our results can easily be extended to systems involving more equations, provided that the diaganalizability 
properties used in Proposition \ref{propBC} to construct the Kreiss symmetrizer are still valid. 
This is for instance the case for transmission problems involving the coupling of two $2\times2$ hyperbolic 
systems across an interface. 
Such problems can be transformed into a $4\times 4$ initial boundary value problems that have the required 
diagonalizability properties. 
Transmission problems being relevant for many applications, we devote this section to their study.

\subsection{Variable coefficients linear $2\times 2$ transmission problems}\label{sectVCtransm}
We consider here a linear transmission problem, where we seek a solution $u$ solving a linear hyperbolic 
system on $\Omega_T^- = (0,T)\times \R_-$, another one (possibly the same) for $\Omega_T^+ = (0,T)\times \R_+$, 
assuming that a transmission condition is provided at the interface $\{x=0\}$ 
\begin{equation}\label{transmVC}
\begin{cases}
 \dt u + \widetilde A(t,x)\dx u + \widetilde B(t,x) u = \widetilde f(t,x) &\mbox{in}\quad \Omega_T^-, \\
 \dt u +  A(t,x)\dx u +  B(t,x) u =  f(t,x) &\mbox{in}\quad \Omega_T^+, \\
 u_{\vert_{t=0}} = u^{\rm in}(x) & \mbox{on}\quad \R_-\cup \R_+, \\
 N_p^{\rm r}(t)u_{\vert_{x=+0}} - N_p^{\rm l}(t) u_{\vert_{x=-0}} = \mybf{g}(t)& \mbox{on}\quad (0,T), 
\end{cases}
\end{equation}
where $u$, $u^{\rm in}$, $f$, and $\widetilde{f}$ are $\R^2$-valued functions, 
$\mybf{g}$ is a $\R^p$-valued function, while $A$, $\widetilde A$, $B$, and $\widetilde B$ take their values 
in the space of $2\times2$ real-valued matrices. 
The matrices $N_p^{\rm l}$ and $N_p^{\rm l}$ that appear in the transmission condition are of size $p\times 2$, 
where $p$ (the number of scalar transmission conditions) depends on the sign of the eigenvalues of 
$\widetilde{A}$ and $A$.

\begin{notation}\label{notanumberp}
We shall consider three possibilities corresponding to the following cases, 
where $\widetilde{\lambda}_{\pm,j}(t,-x)$ and ${\lambda}_{\pm,j}(t,x)$ ($j=1,2,\emptyset$) 
are assumed to be strictly positive for all $(t,x)\in \Omega_T$: 
\begin{itemize}
\item
{\bf Case $p=1$.} There is one outgoing characteristic, that is, one of the following two situations holds: 
\begin{itemize}
\item
The matrices $\widetilde{A}(t,-x)$ and $A(t,x)$ have eigenvalues $\pm\widetilde{\lambda}_\pm(t,-x)$ and 
$-\lambda_{-,j}(t,x)$ ($j=1,2$), respectively.
\item
The matrices $\widetilde{A}(t,-x)$ and $A(t,x)$ have eigenvalues $\widetilde{\lambda}_{+,j}(t,-x)$ ($j=1,2$) 
and $\pm\lambda_{\pm}(t,x)$, respectively.
\end{itemize}
\item
{\bf Case $p=2$.} There are two outgoing characteristics, that is, the matrices $\widetilde{A}(t,-x)$ and 
$A(t,x)$ have eigenvalues $\pm\widetilde{\lambda}_\pm(t,-x)$ and $\pm\lambda_{\pm}(t,x)$, respectively. 
\item
{\bf Case $p=3$.} There are three outgoing characteristics, that is, one of the following two situations holds:
\begin{itemize}
\item
The matrices $\widetilde{A}(t,-x)$ and $A(t,x)$ have eigenvalues $\pm\widetilde{\lambda}_\pm(t,-x)$ and 
$\lambda_{+,j}(t,x)$ ($j=1,2$), respectively. 
\item
The matrices $\widetilde{A}(t,-x)$ and $A(t,x)$ have eigenvalues $-\widetilde{\lambda}_{-,j}(t,-x)$ ($j=1,2$) 
and $\pm\lambda_{\pm}(t,x)$, respectively. 
\end{itemize}
\end{itemize}
Denoting by $\widetilde{\bf e}_{\pm,j}(t,-x)$ and ${\bf e}_{\pm,j}(t,x)$ unit eigenvectors associated to 
the eigenvalues $\widetilde{\lambda}_{\pm,j}(t,-x)$ and ${\lambda}_{\pm,j}(t,x)$ ($j=1,2,\emptyset$), 
we define a $4\times p$ matrix $\mybf{E}_p(t)$ by 
\[
\mybf{E}_p(t) = \left(\begin{array}{cc} \widetilde{\mybf{E}}_-(t) & 0_{2\times p^{\rm r}} \\
0_{2\times p^{\rm l}} & \mybf{E}_+(t)\end{array}\right),
\]
where $0\leq p^{\rm l}\leq 2$ (resp. $0\leq p^{\rm r}\leq 2$) denotes the number of negative eigenvalues of 
$\widetilde{A}(t,0)$ (resp. positive eigenvalues of $A(t,0)$), and $\widetilde{\mybf{E}}_-(t)$ and 
$\mybf{E}_+(t)$ the matrix formed by the corresponding eigenvectors. 
\end{notation}

\begin{remark}
Here we did not list any possible cases, that is, the cases $p=0,4$ are omitted. 
Moreover, even in the case $p=2$ there are two other posibilities. 
Such cases can be treated in the same way so we omit them. 
\end{remark}

It is convenient to recast \eqref{transmVC} as a $4\times 4$ initial boundary value problem by setting 
\begin{equation}\label{leftright}
\begin{array}{llll}
A^{\rm r}(t,x)=A(t,x), & B^{\rm r}(t,x)=B(t,x), & f^{\rm r}(t,x)=f(t,x), & u^{\rm r}(t,x)=u(t,x), \\
A^{\rm l}(t,x)=\widetilde A(t,-x), & B^{\rm l}(t,x)=\widetilde B(t,-x), & f^{\rm l}(t,x)=\widetilde f(t,-x), 
 & u^{\rm l}(t,x)=u(t,-x),
\end{array}
\end{equation}
and 
\begin{equation}\label{notaAB}
\mybf{A} = \left(\begin{array}{cc} -A^{\rm l} & 0_{2\times 2} \\ 0_{2\times 2} & A^{\rm r} \end{array}\right), \qquad
\mybf{B} = \left(\begin{array}{cc} B^{\rm l} & 0_{2\times 2} \\ 0_{2\times 2} & B^{\rm r} \end{array}\right), \qquad
\mybf{u} = \left( \begin{array}{c} u^{\rm l} \\ u^{\rm r} \end{array}\right), \qquad
\mybf{f} = \left( \begin{array}{c} f^{\rm l} \\ f^{\rm r} \end{array}\right).
\end{equation}
The transmission problem \eqref{transmVC} is equivalent to the following initial boundary value problem 
\begin{equation}\label{transmref}
\begin{cases}
\dt \mybf{u} + \mybf{A}(t,x)\dx \mybf{u} + \mybf{B}(t,x)\mybf{u} = \mybf{f}(t,x) &\mbox{in}\quad \Omega_T, \\
\mybf{u}_{\vert_{t=0}} = \mybf{u}^{\rm in}(x) & \mbox{on}\quad  \R_+, \\
\mybf{N}_p(t)\mybf{u}_{\vert_{x=0}} = \mybf{g}(t) & \mbox{on}\quad (0,T), 
\end{cases}
\end{equation}
where $\mybf{u}^{\rm in}(x) = (u^{\rm in}(-x),u^{\rm in}(x))^{\rm T}$ and $\mybf{N}_p$ is the $p\times 4$ matrix 
\begin{equation}\label{notaN}
\mybf{N}_p(t) = \left( \begin{array}{cc} -N_p^{\rm l}(t) & N_p^{\rm r}(t) \end{array} \right).
\end{equation}
This initial boundary value problem has a block structure. 
In order to ensure its well-posedness, we shall make the following assumption, 
which ensures that the sytem of equations is strictly hyperbolic.
Note that the condition on the invertibility of $\mybf{N}_p(t)\mybf{N}_p(t)^{\rm T}$ in the first point is here 
to ensure that $\mybf{N}_p$ is uniformly of rank $p$.

\begin{assumption}\label{asshyptransm}
There exists $c_0>0$ such that the following assertions hold. 
\begin{enumerate}
\item[{\bf i.}]
$A^{\rm l}, A^{\rm r}\in W^{1,\infty}(\Omega_T)$ and $ B^{\rm l}, B^{\rm r}\in L^\infty(\Omega_T)$. 
Moreover, $\mybf{N}_p \in C([0,T])$ and for any $t\in[0,T]$ we have 
\[
\det(\mybf{N}_p(t)\mybf{N}_p(t)^{\rm T}) \geq c_0.
\]
\item[{\bf ii.}]
One of the three cases stated in Notation \ref{notanumberp} holds. Moreover, 
\begin{align*}
& \widetilde{\lambda}_{\pm,j}(t,-x),\lambda_{\pm,j}(t,x)\geq c_0 \quad(j=1,2,\emptyset), \\
& |\widetilde{\lambda}_{\pm,1}(t,-x) - \widetilde{\lambda}_{\pm,2}(t,-x)|, 
 |\lambda_{\pm,1}(t,x) - \lambda_{\pm,2}(t,x)| \geq c_0. 
\end{align*}
\item[{\bf iii.}]
With $\mybf{E}_p(t)$ in Notation \ref{notanumberp}, the $p\times p$ Lopatinski\u{\i} matrix 
$\mybf{L}_p(t) = \mybf{N}_p(t)\mybf{E}_p(t)$ is invertible and for any $t\in [0,T]$ we have 
\[
\Vert \mybf{L}_p(t)^{-1}\Vert_{\R^p\to\R^p} \leq \frac{1}{c_0}.
\]
\end{enumerate}
\end{assumption}

%

We can then derive sharp estimates similar to those derived in Theorem \ref{theoIBVP1} 
for initial boundary value problems. 
The compatibility conditions are not made explicit because they can be obtained as for Definition \ref{defcompVC}.

\begin{theorem}\label{theoIBVP1transm}
Let $m\geq1$ be an integer, $T>0$, and assume that Assumption \ref{asshyptransm} is satisfied for some $c_0>0$. 
Assume moreover that there are constants $0<K_0\leq K$ such that 
\[
\begin{cases}
\frac{1}{c_0}, \| \mybf{A} \|_{L^\infty(\Omega_T)}, |\mybf{N}_p|_{L^\infty(0,T)} \leq K_0, \\
\| \mybf{A} \|_{W^{1,\infty}(\Omega_T)}, \| \mybf{B} \|_{L^\infty(\Omega_T)}, 
 \|(\partial \mybf{A},\partial \mybf{B})\|_{ \WW^{m-1}(T)}, |\mybf{N}_p|_{W^{m,\infty}(0,T)} \leq K. 
\end{cases}
\]
Then, for any data $\mybf{u}^{\rm in} \in H^m(\R_+)$, $\mybf{g}\in H^m(0,T)$, and $\mybf{f}\in H^m(\Omega_T)$ 
satisfying the compatibility conditions up to order $m-1$, 
there exists a unique solution $\mybf{u} \in \WW^m(T)$ to the transmission problem \eqref{transmref}. 
Moreover, the following estimate holds for any $t \in[0,T]$ and any $\gamma \geq C(K)$: 
\begin{align*}
& \opnorm{ \mybf{u}(t) }_{m,\gamma}
 + \biggl( \gamma\int_0^t\opnorm{ \mybf{u}(t') }_{m,\gamma}^2{\rm d}t' \biggr)^\frac12
 + | \mybf{u}_{\vert_{x=0}} |_{m,\gamma,t} \\
&\leq C(K_0)\bigl( \opnorm{ \mybf{u}(0) }_{m} + | \mybf{g} |_{H_\gamma^m(0,t)} 
 + |\mybf{f}_{\vert_{x=0}}|_{m-1,\gamma,t} + S_{\gamma,t}^*(\opnorm{ \dt \mybf{f}(\cdot) }_{m-1}) \bigr). 
\end{align*}
Particularly, we have 
\begin{align*}
& \opnorm{ \mybf{u}(t) }_{m} + | \mybf{u}_{\vert_{x=0}} |_{m,t} \\
&\leq C(K_0)e^{C(K)t} \biggl( \opnorm{ \mybf{u}(0) }_{m} + | \mybf{g} |_{H^m(0,t)} 
 + |\mybf{f}_{\vert_{x=0}}|_{m-1,t} + \int_0^t \opnorm{ \dt \mybf{f}(t') }_{m-1}{\rm d}t' \biggr). 
\end{align*}
\end{theorem}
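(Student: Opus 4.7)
The plan is to adapt the proof of Theorem \ref{theoIBVP1} to the reformulated $4\times 4$ initial boundary value problem \eqref{transmref}, exploiting the block structure of $\mybf{A} = \mathrm{diag}(-A^{\rm l},A^{\rm r})$ and generalizing the Kreiss symmetrizer construction of Proposition \ref{propBC} to accommodate the $p$-dimensional vector-valued boundary condition. First I would observe that Assumption \ref{asshyptransm} guarantees that $\mybf{A}(t,x)$ has four distinct real eigenvalues (the separation of $\lambda_{\pm,1}$ and $\lambda_{\pm,2}$ in the blocks with double signature prevents coalescence), so the reformulated system is strictly hyperbolic. I would order the eigenvalues $\{\mu_i\}_{i=1}^4$ so that $\mu_1,\ldots,\mu_p > 0$ (these are the incoming characteristics at $x=0$) and $\mu_{p+1},\ldots,\mu_4<0$, with corresponding unit eigenvectors $\mybf{e}_i$ and eigenprojectors $\mybf{\pi}_i$. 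By construction the columns of $\mybf{E}_p$ at $x=0$ span the eigenspace associated with positive eigenvalues.

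The main step is to construct a symmetrizer in the spirit of the ``$\mathbf{iii}\Rightarrow\mathbf{ii}$'' part of Proposition \ref{propBC}, namely
\[
\mybf{S}(t,x) = \sum_{i=1}^p \mybf{\pi}_i^{\rm T}\mybf{\pi}_i + M\sum_{i=p+1}^4 \mybf{\pi}_i^{\rm T}\mybf{\pi}_i,
\]
with $M\gg 1$ to be fixed. Then $\mybf{S}$ is positive definite and $\mybf{S}\mybf{A}$ is symmetric. To get the boundary dissipativity, I would decompose any $\mybf{v}\in\R^4$ as $\mybf{v}=\mybf{v}_+ + \mybf{v}_-$ along the spectral splitting, so that $\mybf{v}_+ = \mybf{E}_p \mybf{w}$ for some $\mybf{w}\in\R^p$, and then use the Lopatinski\u{\i} condition to invert $\mybf{L}_p=\mybf{N}_p\mybf{E}_p$ in $\mybf{w}=\mybf{L}_p^{-1}(\mybf{N}_p\mybf{v}-\mybf{N}_p\mybf{v}_-)$. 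This produces the pointwise bound $|\mybf{v}_+|\leq C(|\mybf{N}_p \mybf{v}| + |\mybf{v}_-|)$, from which, on expanding $\mybf{v}^{\rm T}\mybf{S}\mybf{A}\mybf{v} = \sum_i \eta_i\mu_i|\mybf{\pi}_i\mybf{v}|^2$ and choosing $M$ sufficiently large relative to the uniform bounds on the spectral data, one obtains the dissipativity inequality $\mybf{v}^{\rm T}\mybf{S}\mybf{A}\mybf{v}\leq -\alpha_1|\mybf{v}|^2 + \beta_1|\mybf{N}_p\mybf{v}|^2$ required by point \textbf{ii} of Assumption \ref{assVC}. The regularity of $\mybf{S}$ in $W^{1,\infty}(\Omega_T)$ follows from the smoothness of the eigenprojectors, which in turn follows from strict hyperbolicity and the assumed $W^{1,\infty}$ regularity of $\mybf{A}$.

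Once this symmetrizer is in place, the $L^2$ a priori estimate of Proposition \ref{propNRJ1} and the higher-order estimate of Proposition \ref{propVC2} carry over \emph{verbatim} to the vector-valued setting: the multiplication by $\mybf{S}$ and integration by parts on $\Omega_t$ produces exactly the same chain of inequalities, the commutator and Moser-type Lemmas \ref{ineq1}--\ref{ineq4} are insensitive to the vectorial nature of $\mybf{u}$, and the inductive control of $\dt^k\dx^l\mybf{u}$ proceeds unchanged since $\mybf{A}$ is uniformly invertible at $x=0$ (none of its eigenvalues vanish). This produces the announced estimate on $\opnorm{\mybf{u}(t)}_{m,\gamma}$, its weighted $L^2$ norm in time, and the boundary trace $|\mybf{u}_{\vert_{x=0}}|_{m,\gamma,t}$; the form of the right-hand side is identical because $\mybf{N}_p$ plays the role of $\nu$ while $\mybf{g}$ plays the role of $g$. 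Existence and uniqueness of $\mybf{u}\in\WW^m(T)$ then follow from the compatibility conditions and the a priori estimates along the classical scheme recalled in \S\ref{secttheoIBVP1}.

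I expect the main obstacle to be the explicit verification of the dissipativity estimate for the symmetrizer in the presence of a $p\times 4$ (rather than $1\times 2$) boundary matrix, particularly the quantitative control of the constants in $|\mybf{v}_+|\leq C(|\mybf{N}_p\mybf{v}|+|\mybf{v}_-|)$ in terms of $1/c_0$ alone, so that the thresholds $C(K_0)$ and $C(K)$ in the final estimate depend only on the stated quantities. A subtle point is that the Lopatinski\u{\i} condition here is formulated in terms of $\det(\mybf{N}_p\mybf{N}_p^{\rm T})$ and the invertibility of $\mybf{L}_p$; one must check that these together give a uniform lower bound on $|\mybf{N}_p\mybf{v}|$ for $\mybf{v}$ in the positive eigenspace, and in all three cases $p=1,2,3$ of Notation \ref{notanumberp}, noting that the three cases differ only in the choice of the spectral splitting and not in the algebraic structure of the argument.
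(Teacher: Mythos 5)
Your proposal follows the paper's strategy exactly: generalize Assumption \ref{assVC} to the $4\times 4$ block system, carry over the $L^2$ and higher-order energy estimates of Propositions \ref{propNRJ1}--\ref{propVC2} (which the paper states as Propositions \ref{propNRJ1transm}--\ref{propVC2transm}), and reduce everything to the explicit construction of a Kreiss symmetrizer (the paper's Lemma \ref{lemsymmetrizertransm}). Your symmetrizer $\mybf{S}=\sum_{i\le p}\mybf{\pi}_i^{\rm T}\mybf{\pi}_i+M\sum_{i>p}\mybf{\pi}_i^{\rm T}\mybf{\pi}_i$ is precisely the one the paper writes out case by case in the proof of that lemma, and your verification of strict hyperbolicity and $W^{1,\infty}$ regularity of $\mybf{S}$ is the same.

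The one place where you diverge, and in fact streamline the argument, is the dissipativity step. You derive the pointwise bound $|\mybf{v}_+|\le C(|\mybf{N}_p\mybf{v}|+|\mybf{v}_-|)$ for \emph{arbitrary} $\mybf{v}\in\R^4$ by inverting $\mybf{L}_p$ in $\mybf{N}_p\mybf{v}=\mybf{L}_p\mybf{w}+\mybf{N}_p\mybf{v}_-$, then plug it directly into the expansion $\mybf{v}^{\rm T}\mybf{S}\mybf{A}\mybf{v}=\sum_i\eta_i\mu_i|\mybf{\pi}_i\mybf{v}|^2$. The paper instead first proves the estimate for $\mybf{v}\in\ker\mybf{N}_p$ and then passes to general $\mybf{v}$ via the orthogonal decomposition $\mybf{v}=\mybf{v}_1+\mybf{N}_p^{\rm T}(\mybf{N}_p\mybf{N}_p^{\rm T})^{-1}\mybf{N}_p\mybf{v}$ with $\mybf{v}_1\in\ker\mybf{N}_p$. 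This second step is where the paper actually \emph{uses} the hypothesis $\det(\mybf{N}_p\mybf{N}_p^{\rm T})\ge c_0$; your route bypasses that decomposition and hence does not need a quantitative lower bound on $\det(\mybf{N}_p\mybf{N}_p^{\rm T})$ beyond what the invertibility of $\mybf{L}_p$ already forces. This is a legitimate and slightly more economical variant, though you should still note (as you anticipated) that the constant in $|\mybf{v}_+|\le C(\cdots)$ involves both $\|\mybf{L}_p^{-1}\|$ and $|\mybf{N}_p|_{L^\infty}$, the latter of which is part of $K_0$. One cosmetic point: you describe the positive eigenvalues as ``incoming characteristics at $x=0$'', whereas the paper's Notation \ref{notanumberp} calls these ``outgoing''; both conventions appear in the literature and the mathematics is unaffected, but it is worth matching the paper's terminology to avoid confusion.
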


\subsubsection{A priori estimates}
We prove here an $L^2$ a priori estimate using the following assumption, 
which is the natural generalization of Assumption \ref{assVC} to $4\times 4$ systems.

\begin{assumption}\label{assVCtransm}
There exists a symmetric matrix $\mybf{S}(t,x) \in {\mathcal M}_4(\R)$ such that 
for any $(t,x)\in\Omega_T$ $\mybf{S}(t,x)\mybf{A}(t,x)$ is symmetric and
the following conditions hold. 
\begin{enumerate}
\item[{\bf i.}]
There exist constants $\alpha_0,\beta_0>0$ such that for any 
$(\mybf{v},t,x)\in \R^4\times \Omega_T$ we have 
\[
\alpha_0 |\mybf{v}|^2 \leq \mybf{v}^{\rm T} \mybf{S}(t,x) \mybf{v} \leq \beta_0 |\mybf{v}|^2.
\]
\item[{\bf ii.}]
There exist constants $\alpha_1,\beta_1>0$ such that for any 
$(\mybf{v},t)\in \R^2\times (0,T)$ we have 
\[
\mybf{v}^{\rm T} \mybf{S}(t,0)\mybf{A}(t,0) \mybf{v} \leq -\alpha_1 |\mybf{v}|^2 + \beta_1 |\mybf{N}_p(t) \mybf{v}|^2.
\]
\item[{\bf iii.}]
There exists a constant $\beta_2$ such that 
\[
\| \dt \mybf{S} + \dx (\mybf{SA}) - 2\mybf{SB} \|_{L^2\to L^2} \leq \beta_2. 
\]
\end{enumerate}
\end{assumption}

Under this assumption, 
the $L^2$ a priori estimates of Proposition \ref{propNRJ1} can be straightforwardly generalized.

\begin{proposition}\label{propNRJ1transm}
Under Assumption \ref{assVCtransm}, there are constants  
\[
\mathfrak{c}_0 = C\Bigl( \frac{\beta_0^{\rm in}}{\alpha_0},\frac{\beta_0^{\rm in}}{\alpha_1} \Bigr)
 \quad\mbox{ and }\quad
\mathfrak{c}_1 = C\Big( \frac{\beta_0}{\alpha_0},\frac{\beta_1}{\alpha_0},\frac{\alpha_0}{\alpha_1} \Big)
\]
such that for any $\mybf{u} \in H^1(\Omega_T)$ solving \eqref{transmref}, any $t\in [0,T]$, and any 
$\gamma\geq\frac{\beta_2}{\alpha_0}$, the following inequality holds. 
\begin{align*}
&\opnorm{\mybf{u}(t)}_{0,\gamma} + \biggl(\gamma \int_0^t \opnorm{\mybf{u}(t')}_{0,\gamma}^2 {\rm d}t' \biggr)^\frac12
 + |\mybf{u}_{\vert_{x=0}}|_{L_\gamma^2(0,t)} \\
&\leq \mathfrak{c}_0\|\mybf{u}^{\rm in}\|_{L^2}
 + \mathfrak{c}_1\bigl( |\mybf{g}|_{L_\gamma^2(0,t)} + S_{\gamma,t}^*( \|\mybf{f}(\cdot)\|_{L^2} ) \bigr).
\end{align*}
\end{proposition}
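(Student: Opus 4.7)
The plan is to follow verbatim the proof of Proposition \ref{propNRJ1}, since the only structural difference is that the scalar quantity $\nu(t)\cdot u$ is replaced by the vector-valued quantity $\mybf{N}_p(t)\mybf{u}\in\R^p$, a replacement that is already encoded in point \textbf{ii} of Assumption \ref{assVCtransm}. First, I would multiply the interior equation in \eqref{transmref} by $\mybf{S}$, take the $L^2(\Omega_t)$ scalar product with $e^{-2\gamma t}\mybf{u}$, and integrate by parts in both $t$ and $x$. Since $\mybf{S}\mybf{A}$ is symmetric, the integration by parts in $x$ produces only a single boundary contribution at $x=0$ of the form $-\int_0^t e^{-2\gamma t'}(\mybf{S}\mybf{A}\mybf{u}\cdot\mybf{u})_{\vert_{x=0}}{\rm d}t'$, with no contribution at $x=+\infty$.

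Next, I would apply the three points of Assumption \ref{assVCtransm} together with the boundary condition $\mybf{N}_p(t)\mybf{u}_{\vert_{x=0}}=\mybf{g}(t)$ to get, with Notation \ref{notin},
\begin{align*}
& \alpha_0 \opnorm{\mybf{u}(t)}_{0,\gamma}^2 + (2\alpha_0\gamma-\beta_2)\int_0^t \opnorm{\mybf{u}(t')}_{0,\gamma}^2{\rm d}t' + \alpha_1 |\mybf{u}_{\vert_{x=0}}|^2_{L_\gamma^2(0,t)} \\
&\leq \beta_0^{\rm in}\|\mybf{u}^{\rm in}\|_{L^2}^2 + \beta_1|\mybf{g}|^2_{L_\gamma^2(0,t)} + 2\beta_0\int_0^t e^{-2\gamma t'}\|\mybf{f}(t')\|_{L^2}\|\mybf{u}(t')\|_{L^2}{\rm d}t'.
\end{align*}
The last term would be estimated by introducing the dual norm $S_{\gamma,t}^*$ via the very definition given in Notation \ref{dualnorm}, namely
\[
\int_0^t e^{-2\gamma t'}\|\mybf{f}(t')\|_{L^2}\|\mybf{u}(t')\|_{L^2}{\rm d}t' \leq S_{\gamma,t}^*(\|\mybf{f}(\cdot)\|_{L^2})\Bigl(\|\mybf{u}\|_{\WW^0_\gamma(t)}+\Bigl(\gamma\int_0^t\opnorm{\mybf{u}(t')}_{0,\gamma}^2{\rm d}t'\Bigr)^{\!1/2}\Bigr),
\]
followed by Young's inequality to absorb a fraction of $\gamma\int_0^t\opnorm{\mybf{u}(t')}_{0,\gamma}^2{\rm d}t'$ into the left-hand side.

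Choosing $\gamma\geq \beta_2/\alpha_0$ so that $2\alpha_0\gamma-\beta_2\geq \alpha_0\gamma$, and taking the supremum in $t$ on the left-hand side to control $\|\mybf{u}\|_{\WW^0_\gamma(t)}$, a second application of Young's inequality allows one to absorb also a fraction of $\|\mybf{u}\|_{\WW^0_\gamma(t)}^2$ into the left-hand side. Taking square roots yields the announced inequality, with $\mathfrak{c}_0$ depending only on $\beta_0^{\rm in}/\alpha_0$ and $\beta_0^{\rm in}/\alpha_1$, and $\mathfrak{c}_1$ on $\beta_0/\alpha_0$, $\beta_1/\alpha_0$, and $\alpha_0/\alpha_1$. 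There is no real obstacle here: the only point worth checking is that the $p$-dimensional boundary term $|\mybf{N}_p(t)\mybf{v}|^2$ in \textbf{ii} of Assumption \ref{assVCtransm} plays in every place the role previously played by the scalar $|\nu(t)\cdot v|^2$, so the argument carries over with no change.
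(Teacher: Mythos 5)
Your proposal is correct and coincides with the paper's intended argument: the paper itself states that Proposition \ref{propNRJ1transm} follows from Assumption \ref{assVCtransm} by a straightforward generalization of the proof of Proposition \ref{propNRJ1}, which is exactly what you have carried out, with the $p$-dimensional boundary term $|\mybf{N}_p(t)\mybf{v}|^2$ playing the role of the scalar $|\nu(t)\cdot v|^2$ through point \textbf{ii} of the assumption. The energy identity, the use of $S_{\gamma,t}^*$ to treat the source, the absorption by Young's inequality, and the choice $\gamma\geq\beta_2/\alpha_0$ all mirror the scalar case verbatim, and the constants $\mathfrak{c}_0,\mathfrak{c}_1$ come out with the same dependence.
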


Similarly, the following generalization of Proposition \ref{propVC2} does not raise any difficulty, 
and we therefore omit the proof.

\begin{proposition}\label{propVC2transm}
Let $m\geq1$ be an integer, $T>0$, and assume that Assumption \ref{assVCtransm} is satisfied. 
Assume moreover that there are two constants $0<K_0\leq K$ such that 
\[
\begin{cases}
\mathfrak{c}_0, \mathfrak{c}_1, \|\mybf{A}\|_{L^\infty(\Omega_T)}, 
 \|\mybf{A}^{-1}\|_{L^\infty(\Omega_T)}, |\mybf{N}_p|_{L^\infty(0,T)}\leq K_0, \\
\frac{\beta_2}{\alpha_0}, \|\mybf{A}\|_{W^{1,\infty}(\Omega_T)}, \|\mybf{B}\|_{L^\infty(\Omega_T)}, 
 \|(\partial \mybf{A},\partial \mybf{B})\|_{\WW^{m-1}(T)}, |\mybf{N}_p|_{W^{m,\infty}(0,T)}\leq K,
\end{cases}
\]
where $\mathfrak{c}_0$ and $\mathfrak{c}_1$ are as in Proposition \ref{propNRJ1transm}. 
Then, every solution $\mybf{u}\in H^{m+1}(\Omega_{T})$ to the initial boundary value problem 
\eqref{transmref} satisfies, for any $t \in [0,T]$ and any $\gamma \geq C(K)$, 
\begin{align*}
&\opnorm{ \mybf{u}(t) }_{m,\gamma}
 + \biggl( \gamma\int_0^t \opnorm{ \mybf{u}(t') }_{m,\gamma}^2{\rm d}t' \biggr)^\frac12
 + |\mybf{u}_{\vert_{x=0}}|_{m,\gamma,t} \\
&\leq C(K_0)\bigl( \opnorm{ \mybf{u}(0) }_{m} + |\mybf{g}|_{H_\gamma^m(0,t)} 
 + |\mybf{f}_{\vert_{x=0}}|_{m-1,\gamma,t} + S_{\gamma,t}^*(\opnorm{ \dt \mybf{f}(t') }_{m-1}) \bigr).
\end{align*}
\end{proposition}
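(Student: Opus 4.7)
The plan is to transpose the proof of Proposition \ref{propVC2} to the $4\times 4$ transmission setting, since Assumption \ref{assVCtransm} and Proposition \ref{propNRJ1transm} have been set up so that the $2\times 2$ argument goes through essentially verbatim. The only structural ingredient that changes is that the boundary operator $\nu(t)\cdot$ becomes the $p\times 4$ matrix $\mathbf{N}_p(t)$, and $A$, $B$ are replaced by block-diagonal matrices $\mathbf{A}$, $\mathbf{B}$; since the calculus inequalities of Lemmas \ref{ineq1}--\ref{ineq4} are applied componentwise, they are insensitive to the dimension of the system.

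First, differentiate the interior equation of \eqref{transmref} $m$ times in $t$ to see that $\mathbf{u}_m:=\dt^m\mathbf{u}$ solves
\[
\dt\mathbf{u}_m+\mathbf{A}\dx\mathbf{u}_m+\mathbf{B}\mathbf{u}_m=\mathbf{f}_m,\qquad \mathbf{N}_p(t)\mathbf{u}_{m\,\vert_{x=0}}=\mathbf{g}_m,
\]
with $\mathbf{f}_m=\dt^m(\mathbf{f}-\mathbf{B}\mathbf{u})-[\dt^m,\mathbf{A}]\dx\mathbf{u}$ and $\mathbf{g}_m=\dt^m\mathbf{g}-[\dt^m,\mathbf{N}_p]\mathbf{u}_{\vert_{x=0}}$. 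Apply Proposition \ref{propNRJ1transm} to this initial boundary value problem and control the source and boundary terms using Lemma \ref{ineq1}: this yields $\|\mathbf{f}_m(t)\|_{L^2}\leq \opnorm{\dt\mathbf{f}(t)}_{m-1}+C(K)\opnorm{\mathbf{u}(t)}_m$ and $|\mathbf{g}_m|_{L_\gamma^2(0,t)}\leq|\mathbf{g}|_{H_\gamma^m(0,t)}+C(K)|\mathbf{u}_{\vert_{x=0}}|_{m-1,\gamma,t}$, which, together with the $L^2$ estimate, bounds $\opnorm{\mathbf{u}_m(t)}_{0,\gamma}$, its weighted $L^2(0,t)$ norm, and $|\mathbf{u}_{m\,\vert_{x=0}}|_{L_\gamma^2(0,t)}$ by the right-hand side of the proposition plus the error $C(K)\bigl(|\mathbf{u}_{\vert_{x=0}}|_{m-1,\gamma,t}+S_{\gamma,t}^*(\opnorm{\mathbf{u}(\cdot)}_m)\bigr)$.

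Next, recover all the mixed derivatives through the trading procedure of the original proof. Using the invertibility of $\mathbf{A}$ (which is block-diagonal with uniformly invertible blocks by the assumption $\|\mathbf{A}^{-1}\|_{L^\infty}\leq K_0$), the relation $\dx\mathbf{u}=\mathbf{A}^{-1}(\mathbf{f}-\mathbf{B}\mathbf{u}-\dt\mathbf{u})$ lets each spatial derivative be traded for a time derivative modulo an $\mathbf{f}_{k,l}$ source whose $L^2$ and trace norms satisfy the same bounds (with $|\mathbf{f}_{\vert_{x=0}}|_{m-1,\gamma,t}$ now appearing in the trace estimate). An induction on $(k,l)$ with $k+l\leq m-1$ combined with the zero-order bound on $\mathbf{u}_m$ yields
\[
\opnorm{\mathbf{u}(t)}_{m,\gamma}+\biggl(\gamma\int_0^t\opnorm{\mathbf{u}(t')}_{m,\gamma}^2{\rm d}t'\biggr)^{\!\frac12}+|\mathbf{u}_{\vert_{x=0}}|_{m,\gamma,t}\leq\text{(RHS of the proposition)}+C(K)\,\mathcal{R}_\gamma(t),
\]
where $\mathcal{R}_\gamma(t)$ collects the error terms $|\mathbf{u}_{\vert_{x=0}}|_{m-1,\gamma,t}$ and $S_{\gamma,t}^*(\opnorm{\mathbf{u}(\cdot)}_m)$.

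Finally, apply Lemma \ref{ineq4} to bound $\mathcal{R}_\gamma(t)$ by $\gamma^{-1/2}\opnorm{\mathbf{u}(0)}_m+\gamma^{-1}\bigl(\gamma\int_0^t\opnorm{\mathbf{u}(t')}_{m,\gamma}^2{\rm d}t'\bigr)^{1/2}+\gamma^{-1}|\mathbf{u}_{\vert_{x=0}}|_{m,\gamma,t}$, and choose $\gamma\geq C(K)$ large enough to absorb the $C(K)\gamma^{-1}$-weighted terms into the left-hand side. The only piece of bookkeeping that differs from the $2\times 2$ case is verifying that the commutator $[\dt^m,\mathbf{N}_p]\mathbf{u}_{\vert_{x=0}}$ is controlled by $|\mathbf{N}_p|_{W^{m,\infty}(0,T)}\,|\mathbf{u}_{\vert_{x=0}}|_{m-1,\gamma,t}$, which is immediate from the hypothesis $|\mathbf{N}_p|_{W^{m,\infty}(0,T)}\leq K$ and a componentwise application of Lemma \ref{ineq1}; no genuine new difficulty appears, which is why the proof can legitimately be omitted in the text.
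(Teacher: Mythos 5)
Your proposal is correct and follows essentially the same route as the paper's proof of Proposition \ref{propVC2}, which the paper explicitly states transfers without new difficulty to the $4\times 4$ transmission setting. Every step — the $m$-fold time differentiation, the application of the $L^2$ estimate of Proposition \ref{propNRJ1transm}, the commutator bounds via Lemma \ref{ineq1}, the trading of spatial for temporal derivatives through $\mathbf{A}^{-1}$, and the absorption of the error terms via Lemma \ref{ineq4} and $\gamma\geq C(K)$ — mirrors the original argument, and your check that $[\dt^m,\mathbf{N}_p]\mathbf{u}_{\vert_{x=0}}$ is controlled by $|\mathbf{N}_p|_{W^{m,\infty}(0,T)}$ is precisely the only bookkeeping point that changes.
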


\subsubsection{Proof of Theorem \ref{theoIBVP1transm}}
As for the proof of Theorem \ref{theoIBVP1transm}, we just have to prove that the assumptions made 
in the statement of Theorem \ref{theoIBVP1transm} imply that Assumption \ref{assVCtransm} is satisfied. 
This is what the following lemma claims; 
its proof requires the construction of a Kreiss symmetrizer yielding maximal dissipativity on the boundary.

\begin{lemma}\label{lemsymmetrizertransm}
Let $c_0>0$ be such that Assumption \ref{asshyptransm} is satisfied. 
There exist a symmetrizer $\mybf{S}\in W^{1,\infty}(\Omega_T)$ and constants 
$\alpha_0,\alpha_1$ and $\beta_0,\beta_1,\beta_2$ such that Assumption \ref{assVCtransm} is satisfied.
Moreover, we have 
\[
\mathfrak{c}_0 \leq C\Bigl( \frac{1}{c_0}, \| \mybf{A}_{\vert_{t=0}} \|_{L^\infty(\R_+)} \Bigr)
\quad\mbox{and}\quad 
\mathfrak{c}_1 \leq C\Bigl( \frac{1}{c_0},\|\mybf{A}\|_{L^{\infty}(\Omega_T)}, |\mybf{N}_p|_{L^\infty(0,T)}\Bigr),
\]
where $\mathfrak{c}_0$ and $\mathfrak{c}_1$ are as defined in Proposition \ref{propNRJ1transm}, and we also have 
\[
\frac{\beta_2}{\beta_0} \leq C\Bigl( \frac{1}{c_0}, \|\mybf{A}\|_{W^{1,\infty}(\Omega_T)},
 \|\mybf{B}\|_{L^\infty(\Omega_T)}\Bigr).
\]
\end{lemma}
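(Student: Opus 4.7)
The proof proceeds by adapting the construction from Proposition \ref{propBC} to the $4\times 4$ block-diagonal setting; the essential new ingredient is to cope with the fact that the boundary operator $\mybf{N}_p$ couples the two $2\times 2$ blocks. I would first remark that $\mybf{A}(t,x) = \mathrm{diag}(-A^{\rm l}(t,x), A^{\rm r}(t,x))$ is block diagonal, and by {\bf ii} of Assumption \ref{asshyptransm} the eigenvalues of each block are simple and pairwise separated by at least $c_0$. Consequently, the rank-one spectral projectors $\{\Pi_j\}_{j=1}^4$ of $\mybf{A}$ can be chosen block-diagonal and belong to $W^{1,\infty}(\Omega_T)$ with norms controlled by $1/c_0$ and $\|\mybf{A}\|_{L^\infty}$ (with the understanding that, when the eigenvalues of the two blocks happen to coincide, the splitting is taken along the block decomposition so that smoothness is preserved).

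I would then build the Kreiss symmetrizer in the form
\[
\mybf{S}(t,x) = \sum_{j:\,\mu_j(t,x)>0}\Pi_j^{\rm T}\Pi_j + M\sum_{k:\,\mu_k(t,x)<0}\Pi_k^{\rm T}\Pi_k,
\]
with $M\geq 1$ to be chosen later and $\mu_j$ the eigenvalue associated with $\Pi_j$. Symmetry of $\mybf{S}\mybf{A} = \sum_j c_j \mu_j \Pi_j^{\rm T}\Pi_j$ (with $c_j\in\{1,M\}$) is immediate by construction. Positivity, i.e. condition {\bf i} of Assumption \ref{assVCtransm}, follows from the completeness relation $\sum_j\Pi_j=\mathrm{Id}$, yielding $\alpha_0$ depending only on $c_0$ and $\|\mybf{A}_{\vert_{t=0}}\|_{L^\infty}$, while $\beta_0 = C(c_0,\|\mybf{A}\|_{L^\infty})\cdot M$. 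Condition {\bf iii} of Assumption \ref{assVCtransm} is then obtained by direct differentiation of $\mybf{S}$ and $\mybf{S}\mybf{A}$ using the $W^{1,\infty}$ regularity of the $\Pi_j$'s together with $\mybf{A}\in W^{1,\infty}(\Omega_T)$ and $\mybf{B}\in L^\infty(\Omega_T)$; this produces the announced bound on $\beta_2/\beta_0$.

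The heart of the argument is condition {\bf ii} of Assumption \ref{assVCtransm}, namely maximal dissipativity at the boundary. Splitting $\mybf{v} = \mybf{v}_+ + \mybf{v}_-$ according to the sums of positive/negative spectral projectors, one gets the pointwise estimate
\[
\mybf{v}^{\rm T}\mybf{S}\mybf{A}\mybf{v} \leq C_1 |\mybf{v}_+|^2 - M c_0 \widetilde c\,|\mybf{v}_-|^2,
\]
with $C_1=C(c_0,\|\mybf{A}\|_{L^\infty})$ and $\widetilde c>0$ depending on the conditioning of the eigenbasis. The uniform Lopatinski\u\i{} condition {\bf iii} of Assumption \ref{asshyptransm} is then translated, by writing $\mybf{v}_+ = \mybf{E}_p\mybf{a}$ (uniquely, since $\mybf{E}_p$ has full column rank) and inverting $\mybf{a} = \mybf{L}_p^{-1}\mybf{N}_p\mybf{v}_+$, into the coercivity estimate $|\mybf{N}_p\mybf{v}_+|\geq (c_0/\|\mybf{E}_p\|)|\mybf{v}_+|$. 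Combined with the trivial bound $|\mybf{N}_p\mybf{v}_-|\leq|\mybf{N}_p||\mybf{v}_-|$ and the triangle inequality, this gives
\[
|\mybf{v}_+|^2 \leq C\bigl(\tfrac{1}{c_0},|\mybf{N}_p|_{L^\infty(0,T)}\bigr)\bigl(|\mybf{N}_p\mybf{v}|^2 + |\mybf{v}_-|^2\bigr).
\]
Injecting this into the previous display and choosing $M$ large enough depending on $c_0$, $\|\mybf{A}\|_{L^\infty}$, and $|\mybf{N}_p|_{L^\infty(0,T)}$ absorbs the $|\mybf{v}_+|^2$ term into the negative $|\mybf{v}_-|^2$ contribution and establishes {\bf ii} of Assumption \ref{assVCtransm} with $\alpha_1$ of order one and $\beta_1\lesssim M$, with the dependencies on $c_0^{-1}$, $\|\mybf{A}\|_{L^\infty}$, and $|\mybf{N}_p|_{L^\infty(0,T)}$ matching those announced for $\mathfrak{c}_1$. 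The bound on $\mathfrak{c}_0$ is obtained analogously by tracking $\beta_0^{\rm in}/\alpha_0$ and $\beta_0^{\rm in}/\alpha_1$ at $t=0$.

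The main obstacle, and the only genuinely new point compared to Lemma \ref{lemsymmetrizer}, is precisely this coercivity step: one must translate the statement that the $p\times p$ Lopatinski\u\i{} matrix $\mybf{L}_p$ is well-conditioned into uniform injectivity of $\mybf{N}_p$ on the outgoing subspace $\mathrm{span}(\mybf{E}_p)\subset\R^4$, and this must be done uniformly in the three cases $p\in\{1,2,3\}$ of Notation \ref{notanumberp}; the remaining steps then lift Proposition \ref{propBC} verbatim to the block-diagonal situation.
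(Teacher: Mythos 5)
Your proof is correct and relies on the same essential ingredients as the paper's: the block-diagonal spectral projectors, the weighted symmetrizer $\mybf{S}$ putting weight $1$ on outgoing projectors and a large weight $M$ on incoming ones (your formula is exactly what the paper's case-by-case expressions reduce to), and the uniform bound $\Vert\mybf{L}_p^{-1}\Vert\leq 1/c_0$ to dominate the outgoing trace. The genuine point of divergence is how the Lopatinski\u{\i} bound is turned into maximal dissipativity. The paper argues in two steps: it first proves $|\mybf{v}|^2\leq -C\,\mybf{v}^{\rm T}\mybf{S}\mybf{A}\mybf{v}$ on $\ker\mybf{N}_p$, then extends to all $\mybf{v}\in\R^4$ via the decomposition $\mybf{v}=\mybf{v}_1+\mybf{N}_p^{\rm T}(\mybf{N}_p\mybf{N}_p^{\rm T})^{-1}\mybf{N}_p\mybf{v}$ with $\mybf{v}_1\in\ker\mybf{N}_p$ -- and this second step is precisely where the rank condition $\det(\mybf{N}_p\mybf{N}_p^{\rm T})\geq c_0$ of {\bf i} in Assumption \ref{asshyptransm} is used. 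You instead write $\mybf{v}_+=\mybf{E}_p\mybf{a}$ with $\mybf{a}=\mybf{L}_p^{-1}\mybf{N}_p\mybf{v}_+$, obtaining $|\mybf{v}_+|\leq C(|\mybf{N}_p\mybf{v}|+|\mybf{v}_-|)$ directly for every $\mybf{v}\in\R^4$ and bypassing the rank condition altogether (indeed {\bf iii} already implies it: $\mybf{v}=\mybf{E}_p\mybf{L}_p^{-1}\mybf{b}$ exhibits $\mybf{N}_p$ as uniformly surjective). Your route is slightly more economical; the paper's has the virtue of isolating the kernel estimate, which is the usual form in which the Lopatinski\u{\i} condition is stated. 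One small step you glide over: after absorbing $|\mybf{v}_+|^2$ by taking $M$ large you are left with $-c|\mybf{v}_-|^2+\beta_1|\mybf{N}_p\mybf{v}|^2$, and to reach the required $-\alpha_1|\mybf{v}|^2+\beta_1|\mybf{N}_p\mybf{v}|^2$ you must reuse the bound $|\mybf{v}_+|^2\leq C(|\mybf{N}_p\mybf{v}|^2+|\mybf{v}_-|^2)$ once more to trade $|\mybf{v}_-|^2$ for $|\mybf{v}|^2$ at the cost of an additional boundary term; this is routine but should be stated.
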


\begin{proof}
Most of the proof is similar to the proof of Lemma \ref{lemsymmetrizer} and Proposition \ref{propBC} 
and we therefore omit the details. 
The only new point is to show that it is possible to construct a symmetrizer $\mybf{S}$ satisfying 
${\bf ii}$ in Assumption \ref{asshyptransm}. 
We show here how to prove this point, namely, that there exist constants $\alpha_1,\beta_1>0$ such that for any 
$(\mybf{v},t)\in \R^4\times (0,T)$ we have 
\[
\mybf{v}^{\rm T} \mybf{S}(t,0)\mybf{A}(t,0) \mybf{v}
 \leq -\alpha_1 |\mybf{v}|^2 + \beta_1 |\mybf{N}_p(t) \mybf{v}|^2.
\]
Let us denote by $\widetilde{\boldsymbol{\pi}}_{\pm,j}(t,x)$ and $\boldsymbol{\pi}_{\pm,j}(t,x)$ 
the eigenprojectors associated to the eigenvalues $\widetilde{\lambda}_{\pm,j}$ and $\lambda_{\pm,j}$ 
(with $j=1,2,\emptyset$); they are of the form 
\[
\widetilde{\boldsymbol{\pi}}_{\pm,j} = \left(
 \begin{array}{cc}
  \widetilde{\pi}_{\pm,j} & 0_{2\times 2} \\
  0_{2\times 2} & {0}_{2\times 2}
 \end{array}\right)
\quad\mbox{ and }\quad
\boldsymbol{\pi}_{\pm,j} = \left(
 \begin{array}{cc}
  0_{2\times 2} & 0_{2\times 2} \\
  0_{2\times 2} & \pi_{\pm,j}
 \end{array}\right),
\]
where $\widetilde{\pi}_{\pm,j}(t,x)$ and $\pi_{\pm,j}(t,x)$ are the corresponding eigenprojectors of 
$\widetilde{A}(t,x)$ and $A(t,x)$. 
Distinguishing the three cases stated in Notation \ref{notanumberp} and writing as in \eqref{leftright} 

\begin{align*}
& \lambda_{\pm,j}^{\rm l}(t,x) = \widetilde{\lambda}_{\pm,j}(t,-x), \quad
 \lambda_{\pm,j}^{\rm r}(t,x) = \lambda_{\pm,j}(t,x), \\
& \boldsymbol{\pi}_{\pm,j}^{\rm l}(t,x) = \widetilde{\boldsymbol{\pi}}_{\pm,j}(t,-x), \quad
 \boldsymbol{\pi}_{\pm,j}^{\rm r}(t,x) = \boldsymbol{\pi}_{\pm,j}(t,x),
\end{align*}
the spectral decomposition of the matrix $\mybf{A}$ is given by 
\[
\mybf{A} = 
\begin{cases}
 \lambda_-^{\rm l}\bpi_-^{\rm l} - \lambda_+^{\rm l}\bpi_+^{\rm l}
  - \lambda_{-,1}^{\rm r}\bpi_{-,1}^{\rm r} - \lambda_{-,2}^{\rm r}\bpi_{-,2}^{\rm r}
   & (\mbox{frist case of $p=1$}), \\
 \lambda_+^{\rm r}\bpi_+^{\rm r} - \lambda_{+,1}^{\rm l}\bpi_{+,1}^{\rm l}
  - \lambda_{+,2}^{\rm l}\bpi_{+,2}^{\rm l} - \lambda_{-}^{\rm r}\bpi_{-}^{\rm r}
   & (\mbox{second case of $p=1$}), \\
 \lambda_-^{\rm l}\bpi_-^{\rm l} + \lambda_{+}^{\rm r}\bpi_{+}^{\rm r}
  - \lambda_+^{\rm l}\bpi_+^{\rm l} - \lambda_{-}^{\rm r}\bpi_{-}^{\rm r} & (p=2), \\
 \lambda_{-}^{\rm l}\bpi_{-}^{\rm l} + \lambda_{+,1}^{\rm r}\bpi_{+,1}^{\rm r}
  + \lambda_{+,2}^{\rm r}\bpi_{+,2}^{\rm r} - \lambda_+^{\rm l}\bpi_+^{\rm l}
   & (\mbox{first case of $p=3$}), \\
 \lambda_{-,1}^{\rm l}\bpi_{-,1}^{\rm l} + \lambda_{-,2}^{\rm l}\bpi_{-,2}^{\rm l}
  + \lambda_{+}^{\rm r}\bpi_{+}^{\rm r} - \lambda_-^{\rm r}\bpi_-^{\rm r}
   & (\mbox{second case of $p=3$}).
\end{cases}
\]
We construct the symmetrizer $\mybf{S}$ in the form 
\[
\mybf{S} = 
\begin{cases}
 (\bpi_-^{\rm l})^{\rm T}\bpi_-^{\rm l} + M\bigl\{ (\bpi_+^{\rm l})^{\rm T}\bpi_+^{\rm l}
  + (\bpi_{-,1}^{\rm r})^{\rm T}\bpi_{-,1}^{\rm r} + (\bpi_{-,2}^{\rm r})^{\rm T}\bpi_{-,2}^{\rm r} \bigr\}
   & (\mbox{frist case of $p=1$}), \\
 (\bpi_+^{\rm r})^{\rm T}\bpi_+^{\rm r} + M\bigl\{ (\bpi_{+,1}^{\rm l})^{\rm T}\bpi_{+,1}^{\rm l}
  + (\bpi_{+,2}^{\rm l})^{\rm T}\bpi_{+,2}^{\rm l} + (\bpi_{-}^{\rm r})^{\rm T}\bpi_{-}^{\rm r} \bigr\}
   & (\mbox{second case of $p=1$}), \\
 (\bpi_-^{\rm l})^{\rm T}\bpi_-^{\rm l} + (\bpi_+^{\rm r})^{\rm T}\bpi_+^{\rm r}
  + M\bigl\{ (\bpi_+^{\rm l})^{\rm T}\bpi_+^{\rm l} + (\bpi_-^{\rm r})^{\rm T}\bpi_-^{\rm r} \bigr\} & (p=2), \\
 (\bpi_{-}^{\rm l})^{\rm T}\bpi_{-}^{\rm l} + (\bpi_{+,1}^{\rm r})^{\rm T}\bpi_{+,1}^{\rm r}
  + (\bpi_{+,2}^{\rm r})^{\rm T}\bpi_{+,2}^{\rm r} + M (\bpi_+^{\rm l})^{\rm T}\bpi_+^{\rm l}
   & (\mbox{first case of $p=3$}), \\
 (\bpi_{-,1}^{\rm l})^{\rm T}\bpi_{-,1}^{\rm l} + (\bpi_{-,2}^{\rm l})^{\rm T}\bpi_{-,2}^{\rm l}
  + (\bpi_{+}^{\rm r})^{\rm T}\bpi_{+}^{\rm r} + M (\bpi_-^{\rm r})^{\rm T}\bpi_-^{\rm r}
   & (\mbox{second case of $p=3$}),
\end{cases}
\]
where $M>0$ will be determined later.

{\it From now on, we focus on the case $p=2$, the adaptations to the cases $p=1$ and $p=3$ being straightforward.} 
Then, we have 
\[
\mybf{SA} = \lambda_-^{\rm l}(\bpi_-^{\rm l})^{\rm T}\bpi_-^{\rm l}
  + \lambda_+^{\rm r}(\bpi_+^{\rm r})^{\rm T}(\bpi_+^{\rm r})
 - M\bigl\{ \lambda_+^{\rm l}(\bpi_+^{\rm l})^{\rm T}\bpi_+^{\rm l}
  + \lambda_-^{\rm r}(\bpi_-^{\rm r})^{\rm T}\bpi_-^{\rm r} \bigr\}.
\]
We begin to show that for $\mybf{v} \in \ker \mybf{N}_p(t)$ we have 
\[
|\mybf{v}|^2 \leq -C\mybf{v}^{\rm T}(\mybf{SA})(t,0)\mybf{v}.
\]
For any $\mybf{v} = \begin{pmatrix} v^{\rm l} \\ v^{\rm r} \end{pmatrix} \in \R^4$, we have 
\begin{align*}
-\mybf{v}^{\rm T} \mybf{SA} \mybf{v}
&= - \lambda_-^{\rm l}(\bpi_-^{\rm l}\mybf{v})^{\rm T}\bpi_-^{\rm l}\mybf{v}
 - \lambda_+^{\rm r}(\bpi_+^{\rm r}\mybf{v})^{\rm T}\bpi_+^{\rm r}\mybf{v}
 + M\bigl\{ \lambda_+^{\rm l}(\bpi_+^{\rm l}\mybf{v})^{\rm T}\bpi_+^{\rm l}\mybf{v}
  + \lambda_-^{\rm r}(\bpi_-^{\rm r}\mybf{v})^{\rm T}\bpi_-^{\rm r}\mybf{v} \bigr\} \\
&= - \lambda_-^{\rm l}|\pi_-^{\rm l}v^{\rm l}|^2 - \lambda_+^{\rm r}|\pi_+^{\rm r}v^{\rm r}|^2
 + M\bigl\{ \lambda_+^{\rm l}|\pi_+^{\rm l}v^{\rm l}|^2
  + \lambda_-^{\rm r}|\pi_-v^{\rm r}|^2 \bigr\}.
\end{align*}
We decompose $v^{\rm l}$ and $v^{\rm r}$ as 
\begin{equation}\label{decom}
\begin{cases}
 v^{\rm l} = c_+^{\rm l}{\bf e}_+^{\rm l} + c_-^{\rm l}{\bf e}_-^{\rm l}, \\
 v^{\rm r} = c_+^{\rm r}{\bf e}_+^{\rm r} + c_-^{\rm r}{\bf e}_-^{\rm r}, 
\end{cases}
\end{equation}
where $\pi_{\pm}^{\rm l}v^{\rm l} = c_{\pm}^{\rm l}{\bf e}_{\pm}^{\rm l}$ and 
$\pi_{\pm}^{\rm r}v^{\rm r} = c_{\pm}^{\rm r}{\bf e}_{\pm}^{\rm r}$. 
Particularly, we have $|\pi_{\pm}^{\rm l}v^{\rm l}| = |c_{\pm}^{\rm l}|$ and 
$|\pi_{\pm}^{\rm r}v^{\rm r}| = |c_{\pm}^{\rm r}|$, so that 
\[
-\mybf{v}^{\rm T} \mybf{SA}\mybf{v}
= - \lambda_-^{\rm l}|c_-^{\rm l}|^2 - \lambda_+^{\rm r}|c_+^{\rm r}|^2
 + M\bigl\{ \lambda_+^{\rm l}|c_+^{\rm l}|^2  + \lambda_-^{\rm r}|c_-^{\rm r}|^2 \bigr\}.
\]
Now, suppose that $\mybf{v} \in \ker \mybf{N}_p(t)$. 
Then, we have 
\[
\mybf{N}_p\mybf{v} = - N_p^{\rm l}v^{\rm l} + N_p^{\rm r}v^{\rm r} = 0.
\]
Plugging \eqref{decom} into the above relation, we have 
\[
- c_+^{\rm l}N_p^{\rm l}{\bf e}_+^{\rm l} - c_-^{\rm l}N_p^{\rm l}{\bf e}_-^{\rm l}
 + c_+^{\rm r}N_p^{\rm r}{\bf e}_+^{\rm r} + c_-^{\rm r}N_p^{\rm r}{\bf e}_-^{\rm r} = 0,
\]
which we can rewrite, using the Lopatinski\u{\i} matrix, 
\[
\mybf{L}_p(t)
 \begin{pmatrix} c_-^{\rm l} \\ c_+^{\rm r} \end{pmatrix}
= \begin{pmatrix} N_p^{\rm l}{\bf e}_+^{\rm l} & -N_p^{\rm r}{\bf e}_-^{\rm r} \end{pmatrix}
 \begin{pmatrix} c_+^{\rm l} \\ c_-^{\rm r} \end{pmatrix}. 
\]
Under the uniform Kreiss--Lopatinski\u{\i} condition made in Assumption \ref{asshyptransm}, we deduce
\[
|c_-^{\rm l}|^2 + |c_+^{\rm r}|^2
 \leq C( |c_+^{\rm l}|^2 + |c_-^{\rm r}|^2 ),
\]
where $C$ depends only on $| \mybf{N}_p |_{L^\infty(0,T)}$ and $1/c_0$, or equivalently, 
\[
|\pi_-^{\rm l}v^{\rm l}|^2 + |\pi_+^{\rm r}v^{\rm r}|^2
 \leq C( |\pi_+^{\rm l}v^{\rm l}|^2 + |\pi_-^{\rm r}v^{\rm r}|^2 ).
\]
Therefore, if we take $M$ sufficiently large, then for any $\mybf{v} \in \ker\mybf{N}_p(t)$ we have 
\[
|\mybf{v}|^2 \leq -C\mybf{v}^{\rm T}(\mybf{SA})(t,0)\mybf{v}.
\]

Next, we will show that for any $\mybf{v} \in \R^4$ we have 
\[
\mybf{v}^{\rm T}(\mybf{SA})(t,0)\mybf{v} \leq -\alpha_1|\mybf{v}|^2 + \beta_1|\mybf{N}_p(t)\mybf{v}|^2.
\]
To this end, we use the assumption that 
\begin{equation}
| \det ( \mybf{N}_p(t) \mybf{N}_p(t)^{\rm T} ) | \geq c_0.
\end{equation}
This condition means that the $2\times4$ matrix $\mybf{N}_p(t)$ has rank 2 uniformly in time. 
For any $\mybf{v} \in \R^4$, we decompose it as 
\[
\mybf{v} = \mybf{v}_1 + \mybf{v}_2 \quad\mbox{with}\quad
 \mybf{v}_2 = \mybf{N}_p^{\rm T}( \mybf{N}_p\mybf{N}_p^{\rm T} )^{-1} \mybf{N}_p\mybf{v}.
\]
Then, we have 
\[
\mybf{v}_1 \in \ker \mybf{N}_p, \qquad \mybf{N}_p\mybf{v} = \mybf{N}_p\mybf{v}_2,
\]
so that 
\begin{align*}
|\mybf{v}|^2 &\leq C(|\mybf{v}_1|^2 + |\mybf{v}_2|^2) \\
&\leq -C\mybf{v}_1^{\rm T}\mybf{SA}\mybf{v}_1 + C|\mybf{v}_2|^2 \\
&= -C(\mybf{v}-\mybf{v}_2)^{\rm T}\mybf{SA}(\mybf{v}-\mybf{v}_2) + C|\mybf{v}_2|^2 \\
&\leq -C\mybf{v}^{\rm T}\mybf{SA}\mybf{v} + \frac12|\mybf{v}|^2 + C|\mybf{v}_2|^2.
\end{align*}
Since $|\mybf{v}_2| \leq C|\mybf{N}_p\mybf{v}|$, we obtain the desired estimate. 
\end{proof}

\subsection{Application to quasilinear $2\times 2$ transmission problems}\label{sectappltransmQL}
As done in \S \ref{sectapplQL} in the case of initial boundary value problems, 
we can use the linear estimates of Theorem \ref{theoIBVP1transm} to solve quasilinear problems. 
More precisely, after reduction to a $4\times 4$ initial boundary value problem as indicated 
in \S \ref{sectVCtransm}, let us consider 
\begin{equation}\label{systQLtransm}
\begin{cases}
\dt \mybf{u} + \mybf{A}(\mybf{u})\dx \mybf{u} + \mybf{B}(t,x)\mybf{u} = \mybf{f}(t,x) & \mbox{in}\quad \Omega_T, \\
\mybf{u}_{\vert_{t=0}} = \mybf{u}^{\rm in}(x) & \mbox{on}\quad \R_+,\\
\mybf{N}_{p}(t) \mybf{u}_{\vert_{x=0}} = \mybf{g}(t) & \mbox{on}\quad (0,T),
\end{cases}
\end{equation}
where $\mybf{u} = (u^{\rm l},u^{\rm r})^{\rm T}$, $\mybf{u}^{\rm in}$, and $\mybf{f}$ are $\R^4$-valued functions, 
and $\mybf{g}$ is a $\R^p$-valued function, 
while $\mybf{A}(\mybf{u}) = \mbox{diag}(-\widetilde{A}(\rm u^l), A(u^{\rm r}))$ and 
$\mybf{B} = \mbox{diag}(B^{\rm l},B^{\rm r})$ take their values in the space of $4\times4$ real-valued matrices 
and $\mybf{N}_p$ is a $p\times 4$ matrix, where $p$ is the number of outgoing characteristics 
(i.e., the number of positive eigenvalues of $\mybf{A}(\mybf{u})$).

\begin{notation}\label{notanumberp2}
Adaptating Notation \ref{notanumberp} in a straightforward way, 
we consider three different possibilities ($p=1,2,3$) depending on the sign of the eigenvalues of 
$\widetilde{A}(u^{\rm l})$ and $A(u^{\rm r})$. 
Correspondingly, a $4\times p$ matrix $\mybf{E}_p(\mybf{u}_{\vert_{x=0}})$ is formed as in 
Notation \ref{notanumberp} with the eigenvectors associated to the eigenvalues defining outgoing characteristics, 
and we define the Lopatinski\u{\i} matrix by 
$\mybf{L}_p(t,\mybf{u}_{\vert_{x=0}}) = \mybf{N}_p(t)\mybf{E}_p(\mybf{u}_{\vert_{x=0}})$. 
\end{notation}


We also make the following assumption on the hyperbolicity of the system and on the boundary condition.

\begin{assumption}\label{asshypQLtransm}
Let $\widetilde{\cU}$ and $\mathcal{U}$ be open sets in $\R^2$ and $p\in \{1,2,3\}$ such that the following 
conditions hold with $\boldsymbol{\cU} = \widetilde{\cU}\times \cU$ representing a phase space of $\mybf{u}$. 
\begin{enumerate}
\setlength{\itemsep}{3pt}
\item[{\bf i.}]
$\mybf{A} \in C^\infty(\boldsymbol{\cU})$. 
\item[{\bf ii.}]
The integer $p$ is such that for any $\mybf{u} = (u^{\rm l},u^{\rm r})^{\rm T} \in \boldsymbol{\cU}$ the matrices 
$\widetilde{A}(u^{\rm l})$ and $A(u^{\rm r})$ satisfy one of the three conditions of Notation \ref{notanumberp}. 
\item[{\bf iii.}]
For any $t\in[0,T]$ and any $\mybf{u} \in \boldsymbol{\cU}$, 
the Lopatinski\u{\i} matrix $\mybf{L}_p(t,\mybf{u})$ is invertible.
\end{enumerate}
\end{assumption}

The main result is the following. 
The compatibility conditions mentioned in the statement of the theorem can be obtained as for 
Definition \ref{defcompQL}. 
It can be deduced from Theorem \ref{theoIBVP1transm} in the same way that 
Theorem \ref{theoIBVP2} was deduced from Thoerem \ref{theoIBVP1} and we therefore omit the proof.

\begin{theorem}\label{theoIBVP2transm}
Let $m\geq 2$ be an integer and assume that Assumption \ref{asshypQLtransm} is satisfied with some $p\in\{1,2,3\}$. 
Assume moreover that $\mybf{B}\in L^\infty(\Omega_T)$, $\partial \mybf{B}\in \WW^{m-1}(T)$, 
and $\mybf{N}_p\in W^{m,\infty}(0,T)$. 
If $\mybf{u}^{\rm in }\in H^m(\R_+)$ takes its values in $\widetilde{\mathcal{K}}_0\times \mathcal{K}_0$ 
with $\widetilde{\mathcal{K}}_0 \subset \widetilde{\cU}$ and ${\mathcal{K}}_0 \subset \cU$ compact and convex sets, 
and if the data $\mybf{u}^{\rm in}$, $\mybf{f}\in H^m(\Omega_T)$, and $\mybf{g}\in H^m(0,T)$ 
satisfy the compatibility conditions up to order $m-1$, 
then there exist $T_1 \in (0,T]$ and a unique solution $\mybf{u}\in \WW^m(T_1)$ 
to the transmission problem \eqref{systQLtransm}. 
Moroever, the trace of $\mybf{u}$ at $x=0$ belongs to $H^m(0,T_1)$ and $|\mybf{u}_{\vert_{x=0}}|_{m,T_1}$ is finite. 
\end{theorem}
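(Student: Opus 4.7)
The plan is to adapt the Picard-type iterative scheme used in the proof of Theorem \ref{theoIBVP2}, with Theorem \ref{theoIBVP1transm} playing the role of Theorem \ref{theoIBVP1}. Since the boundary operator $\mybf{u} \mapsto \mybf{N}_p(t)\mybf{u}_{\vert_{x=0}}$ is already linear, no analogue of the change of unknown $v = \Theta(u)$ is required: we can work directly with $\mybf{u}$. First I would fix compact convex sets $\widetilde{\mathcal{K}}_0 \Subset \widetilde{\mathcal{K}}_1 \Subset \widetilde{\mathcal{U}}$ and $\mathcal{K}_0 \Subset \mathcal{K}_1 \Subset \mathcal{U}$; by continuity of the eigenvalues, eigenvectors, and of $\mybf{L}_p^{-1}$, Assumption \ref{asshyptransm} holds with a uniform constant $c_0>0$ for every $\mybf{u} \in \widetilde{\mathcal{K}}_1 \times \mathcal{K}_1$.

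Next I would pick an initial iterate $\mybf{u}^0 \in H^{m+1/2}(\R \times \R_+)$ whose time traces at $t=0$ reproduce the values $\mybf{u}_k^{\rm in}$ ($k=0,\dots,m$) recursively determined from the interior equation and the initial data, which ensures the compatibility conditions hold for each linear subproblem. The iterates are then defined by
\begin{equation*}
\begin{cases}
\dt \mybf{u}^{n+1} + \mybf{A}(\mybf{u}^n)\dx \mybf{u}^{n+1} + \mybf{B}(t,x)\mybf{u}^{n+1} = \mybf{f}(t,x) & \mbox{in } \Omega_T, \\
\mybf{u}^{n+1}_{\vert_{t=0}} = \mybf{u}^{\rm in}(x), \\
\mybf{N}_p(t) \mybf{u}^{n+1}_{\vert_{x=0}} = \mybf{g}(t) & \mbox{on } (0,T).
\end{cases}
\end{equation*}
As long as $\mybf{u}^n$ takes values in $\widetilde{\mathcal{K}}_1 \times \mathcal{K}_1$, the coefficient matrix $\mybf{A}(\mybf{u}^n)$ inherits Assumption \ref{asshyptransm} with the uniform constant $c_0$, and Moser-type estimates (Lemma \ref{ineq2}) bound $\|\mybf{A}(\mybf{u}^n)\|_{W^{1,\infty}} + \|\partial\mybf{A}(\mybf{u}^n)\|_{\WW^{m-1}(T)}$ by $C(\|\mybf{u}^n\|_{\WW^m})$. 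Hence Theorem \ref{theoIBVP1transm} applies and yields
\[
\|\mybf{u}^{n+1}\|_{\WW^m(T_1)} + |\mybf{u}^{n+1}_{\vert_{x=0}}|_{m,T_1} \leq C(K_0)\,e^{C(M)T_1}\bigl(1 + |\mybf{g}|_{H^m} + |\mybf{f}_{\vert_{x=0}}|_{m-1} + \textstyle\int_0^{T_1}\opnorm{\dt \mybf{f}(t')}_{m-1}{\rm d}t'\bigr).
\]
For $M$ large enough and $T_1$ small enough, a bootstrap on this inequality gives a uniform bound $\|\mybf{u}^n\|_{\WW^m(T_1)} + |\mybf{u}^n_{\vert_{x=0}}|_{m,T_1} \leq M$, while an interpolation and a Sobolev embedding bound $\|\mybf{u}^n(t) - \mybf{u}^{\rm in}\|_{L^\infty} \leq C\|\mybf{u}^n\|_{\WW^2(T_1)}T_1 \leq \delta_0$, keeping $\mybf{u}^n$ inside $\widetilde{\mathcal{K}}_1 \times \mathcal{K}_1$.

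For convergence, I would apply Theorem \ref{theoIBVP1transm} at order $m-1$ to the difference $\mybf{u}^{n+2}-\mybf{u}^{n+1}$, which satisfies a linear transmission problem with homogeneous data and a source term controlled by $\bigl(\mybf{A}(\mybf{u}^{n+1})-\mybf{A}(\mybf{u}^n)\bigr)\dx \mybf{u}^{n+1}$. Shrinking $T_1$ further makes the resulting map a contraction in $\WW^{m-1}(T_1)$, giving convergence of $\{\mybf{u}^n\}$ to a limit $\mybf{u}$; standard compactness and weak-$*$ lower semicontinuity upgrade $\mybf{u}$ to $\WW^m(T_1)$ with the stated trace regularity, and uniqueness follows from the $L^2$ estimate of Proposition \ref{propNRJ1transm} applied to the difference of two solutions. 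The main obstacle I anticipate is the book-keeping to verify point iii of Assumption \ref{asshyptransm} along the iteration scheme, since the Lopatinski\u{\i} matrix $\mybf{L}_p(t,\mybf{u}^n_{\vert_{x=0}})$ depends on $\mybf{u}^n$: this is handled by the choice of $\widetilde{\mathcal{K}}_1 \times \mathcal{K}_1$ together with the $L^\infty$ control $\|\mybf{u}^n - \mybf{u}^{\rm in}\|_{L^\infty} \leq \delta_0$, which guarantees a uniform lower bound on $\|\mybf{L}_p(t,\mybf{u}^n)^{-1}\|^{-1}$ and hence on the constant $c_0$ throughout the iteration.
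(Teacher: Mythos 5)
Your proposal is correct and follows precisely the route the paper indicates (the paper omits the proof, saying only that it is obtained from Theorem \ref{theoIBVP1transm} exactly as Theorem \ref{theoIBVP2} is obtained from Theorem \ref{theoIBVP1}): an iterative scheme with linear transmission problems solved by Theorem \ref{theoIBVP1transm}, uniform $\WW^m$ and trace bounds by choosing $M$ and $T_1$, contraction at one lower order, and a standard upgrade of the limit. You also correctly identified the one simplification relative to Theorem \ref{theoIBVP2}, namely that the change of unknown $v=\Theta(u)$ is unnecessary here because the transmission condition $\mybf{N}_p(t)\mybf{u}_{\vert_{x=0}}=\mybf{g}(t)$ in \eqref{systQLtransm} is already linear.
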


\subsection{Variable coefficients $2\times2$ transmission problems on moving domains}\label{secttransmmov}
As for the initial boundary value problems considered previously, we consider here the case of variable coefficients 
transmission problems on a moving domain as a preliminary step to treat free boundary transmission problems. 
We consider therefore a transmission problem with transmission conditions given at a moving boundary located at 
$x=\ux(t)$ with $\ux(\cdot)$ a given function. 
As in \S \ref{sectVCm}, we consider variable coefficients matrices of the form $A(t,x)=A(\uU(t,x))$, etc. 
Let us consider therefore 
\begin{equation}\label{transmmov}
\begin{cases}
 \dt U + \widetilde{A}({\uU})\dx U + \widetilde{{\mathtt B}}U = \widetilde{F}
  & \mbox{in }\quad  (-\infty,\ux(t)) \quad \mbox{ for } \quad t\in(0,T), \\
 \dt U + {A}(\uU)\dx U + {{\mathtt B}}U = {F}
  & \mbox{in }\quad  (\ux(t),+\infty) \quad \mbox{ for } \quad t\in(0,T), \\
 U_{\vert_{t=0}} = u^{\rm in}(x) & \mbox{on }\quad \mathbb{R}_-\cup\mathbb{R}_+, \\
 N_p^ {\rm r}(t)U_{\vert_{x=\ux(t)+0}}-N_p^{\rm l}(t)U_{\vert_{x=\ux(t)-0}} = \mybf{g}(t)
  & \mbox{on } \quad (0,T),
\end{cases}
\end{equation}
where, without loss of generality, we assumed that $\ux(0)=0$, 
and with notations inherited from the previous sections. 
As in \S \ref{sectVCm}, we use a diffeomorphism $\varphi(t,\cdot): {\mathbb R}\to {\mathbb R}$ such that 
$\varphi(0,\cdot) = \mbox{Id}$ and that for any $t\in[0,T]$ we have 
\[
\varphi(t,0)=\ux(t), \qquad \varphi(t,\cdot): {\mathbb R}_- \to (-\infty,\ux(t)), \quad\mbox{ and }\quad
\varphi(t,\cdot): {\mathbb R}_+ \to (\ux(t),+\infty).
\]
Writing as before $u=U\circ \varphi$, $\dt^\varphi u=(\dt U)\circ \varphi$, etc., and with $\dx^\varphi$ and 
$\dt^\varphi$ as defined in \eqref{dtphi}, we transform \eqref{transmmov} into a transmission problem with 
a fix interface located at $x=0$. 
Using the same procedure as in \S \ref{sectVCtransm} and with the same notations as in \eqref{leftright} 
(writing also $\varphi^{\rm l}(t,x) = \varphi(t,-x)$ and  $\varphi^{\rm r}(t,x) = \varphi(t,x)$ for $x>0$), 
this transmission problem can be recast as a $4\times 4$ initial boundary value problem on $(0,T)\times \R_+$, 
namely 
%
\begin{equation}\label{bigIBVP}
\begin{cases}
 \dt \mybf{u} +{\bm{\mathcal{A}}}(\ubu,\partial \bvarphi)\dx \mybf{u} + \mybf{B}(t,x)\mybf{u} = \mybf{f}(t,x)
  & \mbox{ in }\quad \Omega_T, \\
 \mybf{u}_{\vert_{t=0}} = \mybf{u}^{\rm in}(x)  & \mbox{ on }\quad {\mathbb R}_+, \\
 \mybf{N}_p(t)\mybf{u}_{\vert_{x=0}} = \mybf{g}(t) & \mbox{ on }\quad (0,T),
\end{cases}
\end{equation}
with $\mybf{u} = (u^{\rm l},u^{\rm r})^{\rm T}$, $\bvarphi = (\varphi^{\rm l},\varphi^{\rm r})^{\rm T}$, and 
\[
{\bm{\mathcal{A}}}(\ubu,\partial \bvarphi) = \left(
 \begin{array}{cc}
  -{\mathcal A}^{\rm l}(\uu^{\rm l},\partial\varphi^{\rm l}) & 0_{2\times2} \\
  0_{2\times2} &{\mathcal A}^{\rm r}(\uu^{\rm r},\partial\varphi^{\rm r})
 \end{array}\right)
\]
as well as
\[
{\mathcal A}^{\rm l}(\uu^{\rm l},\partial\varphi^{\rm l})
 = \frac{1}{\abs{\dx \varphi^{\rm l}}}\big( \widetilde{A}({\uu}^{\rm l})-(\dt \varphi^{\rm l})\mbox{Id}\big), \qquad
{\mathcal A}^{\rm r}(\uu^{\rm r},\partial\varphi^{\rm r})
 = \frac{1}{\dx \varphi^{\rm r}}\big( A(\uu^{\rm r})-(\dt \varphi^{\rm r})\mbox{Id}\big),
\]
while $\mybf{B}$ and $\mybf{f}$ as in \S \ref{sectVCtransm}. 
The matrix $\mybf{N}_p$ is as in \eqref{notaN} and still denotes a $p\times 4$ matrix, 
but the difference is that the value of $p$ depends not only on the eigenvalues of $\widetilde{A}(u)$ and $A(u)$, 
but also on the speed $\dot \ux$ of the interface. 
For the sake of simplicity, we shall consider here the case where $\widetilde{A}(u)$ and $A(u)$ have both 
a positive and a negative eigenvalue, and shall consider two cases depending on the speed of the interface.

\begin{definition}\label{defLax}
Denoting by $\pm\widetilde{\lambda}_\pm(\uu^{\rm l})$ and $\pm \lambda_\pm(\uu^{\rm r})$ the eigenvalues of 
$\widetilde{A}(\uu^{\rm l})$ and $A(\uu^{\rm r})$, respectively 
(with $\widetilde{\lambda}_\pm(\uu^{\rm l}),\lambda_\pm(\uu^{\rm r})>0)$, we define two regimes: 
\begin{itemize}
\item
{\bf Subsonic regime.} We say that $\ubu = (\uu^{\rm l},\uu^{\rm r})^{\rm T}$ and ${\chi}\in \R$ are in the 
\emph{subsonic regime} if the following condition holds. 
\[
\widetilde{\lambda}_\pm(\uu^{\rm l})\mp\chi>0
 \quad\mbox{ and }\quad
  {\lambda}_\pm(\uu^{\rm r})\mp\chi>0.
\]
\item
{\bf Lax regime.} We say that $\ubu = (\uu^{\rm l},\uu^{\rm r})^{\rm T}$ and ${\chi}\in \R$ are in the 
\emph{Lax regime} if the following condition holds. 
\[
\widetilde{\lambda}_\pm(\uu^{\rm l})\mp\chi>0
 \quad\mbox{ and }\quad -\lambda_+(\uu^{\rm r})+\chi>0,
\]
or
\[
-\widetilde{\lambda}_-(\uu^{\rm l})-\chi>0
 \quad\mbox{ and }\quad{\lambda}_\pm(\uu^{\rm r})\mp\chi>0.
\]
\end{itemize}
\end{definition}

\begin{remark}
This terminology is of course inherited from the study of shocks \cite{Lax}. 
The linearized equations around a shock can indeed be put under the form \eqref{transmmov}. 
We refer to \S \ref{sectshocks} where we prove the stability of one-dimensional shocks 
for nonlinear $2\times2$ hyperbolic systems.
\end{remark}

Since the eigenvalues of the matrix ${\bm{\mathcal{A}}}(\ubu,\partial \bvarphi)$ are given by 
\[
\frac{1}{|\dx \varphi^{\rm l}|}\bigl( \pm\widetilde{\lambda}_\mp(\uu^{\rm l}) + \dt \varphi^{\rm l} \bigr)
\quad\mbox{ and }\quad
\frac{1}{\dx \varphi^{\rm r}}\bigl( \pm{\lambda}_\pm(\uu^{\rm r})-\dt \varphi^{\rm r} \bigr),
\]
the number $p$ of outgoing characteristics for \eqref{bigIBVP} is equal to $2$ in the subsonic regime, 
and to $1$ in the Lax regime. 
As in Notation \ref{notanumberp}, we form a $4\times p$ matrix $\mybf{E}_p (\ubu_{\vert_{x=0}})$ given by 
\begin{align*}
\mybf{E}_2 (\ubu_{\vert_{x=0}}) = \left(
\begin{array}{cc}
 \widetilde{\bf e}_-({\uu^{\rm l}}_{\vert_{x=0}}) & 0_{2\times1} \\
 0_{2\times1} & {\bf e}_+({\uu^{\rm r}}_{\vert_{x=0}})
\end{array}\right) 
\end{align*}
in the subsonic regime, and 
\begin{align*}
\mybf{E}_1 (\ubu_{\vert_{x=0}}) = \left(\begin{array}{c}
 \widetilde{\bf e}_-({\uu^{\rm l}}_{\vert_{x=0}}) \\ 0_{2\times1} \end{array}\right) 
\quad\mbox{ or }\quad
\mybf{E}_1 (\ubu_{\vert_{x=0}}) = \left(\begin{array}{c}
 0_{2\times1} \\ {\bf e}_+({\uu^{\rm r}}_{\vert_{x=0}}) \end{array}\right) 
\end{align*}
(depending on which of the two conditions in Definition \ref{defLax} is satisfied) in the Lax regime. 
As in Assumption \ref{asshyptransm}, we define a Lopatinski\u{\i} matrix 
$\mybf{L}_p(t,\ubu_{\vert_{x=0}})$ by 
\begin{equation}\label{interfmatr}
\mybf{L}_p(t,\ubu_{\vert_{x=0}}) = \mybf{N}_p(t)\mybf{E}_p (\ubu_{\vert_{x=0}}).
\end{equation}
In order to be able to apply Theorem \ref{theoIBVP1transm} to this initial boundary value problem, 
we make the following assumption. 
It is the natural generalization of Assumption \ref{asshypm} to transmission problems.

\begin{assumption}\label{asstransmred}
We have $\ubu = (\uu^{\rm l},\uu^{\rm r})^{\rm T} \in W^{1,\infty}(\Omega_T)$, $\ux\in C^1([0,T])$, $\ux(0)=0$, 
and the diffeomorphisms $\varphi^{\rm l}$ and $\varphi^{\rm r}$ are in $C^1(\Omega_T)$. 
Moreover, there exists $c_0>0$ such that the following three conditions hold. 
\begin{enumerate}
\setlength{\itemsep}{3pt}
\item[{\bf i.}]
There exist open sets $\widetilde{\cU},\cU \subset \R^2$  such that, with 
$\boldsymbol{\cU} = \widetilde{\cU}\times \cU$, we have $\mybf{A} \in C^\infty(\bcU)$ and for any 
$\mybf{u} = (u^{\rm l},u^{\rm r})^{\rm T} \in \bcU$, the matrices $\widetilde{A}({u^{\rm l}})$ and $A(u^{\rm r})$ 
have eigenvalues $\widetilde{\lambda}_+(u^{\rm l}), -\widetilde{\lambda}_-(u^{\rm l})$ and 
$\lambda_+(u^{\rm r}), -\lambda_-(u^{\rm r})$, respectively. 
Moreover, $\ubu$ takes its values in a compact set $\boldsymbol{\mathcal{K}}_0 \subset \bcU$ and for any 
$(t,x)\in\Omega_T$ we have 
\[
\widetilde{\lambda}_\pm(\uu^{\rm l}(t,x))\geq c_0\quad\mbox{ and }\quad \lambda_\pm(\uu^{\rm r}(t,x))\geq c_0,
\]
and one of the following conditions holds
\begin{align*}
a) \qquad \widetilde{\lambda}_\pm(\uu^{\rm l}(t,x))\mp \dt \varphi^{\rm l} (t,x)\geq c_0
 & \quad\mbox{ and }\quad  \lambda_\pm(\uu^{\rm r}(t,x))\mp \dt \varphi^{\rm r} (t,x)\geq c_0, \\
b) \qquad\widetilde{\lambda}_\pm(\uu^{\rm l}(t,x))\mp \dt \varphi^{\rm l} (t,x)\geq c_0
 & \quad\mbox{ and }\quad -\lambda_+(\uu^{\rm r}(t,x))+\dt \varphi^{\rm r} (t,x)  \geq c_0, \\
c) \,\,\,\,\, -\widetilde{\lambda}_-(\uu^{\rm l}(t,x))- \dt \varphi^{\rm l} (t,x)\geq c_0
 & \quad\mbox{ and }\quad \lambda_\pm(\uu^{\rm r}(t,x))\mp \dt \varphi^{\rm r} (t,x)\geq c_0.
\end{align*}
\item[{\bf ii.}]
The Lopatinski\u{\i} matrix $\mybf{L}_p(t,\ubu_{\vert_{x=0}})$ associated to the condition $a)$, $b)$, or $c)$ 
constructed in \eqref{interfmatr} is invertible and for any $t \in [0,T]$ we have 
\[
\| \mybf{L}_p(t,\ubu_{\vert_{x=0}}(t))^{-1} \|_{\R^p \to \R^p} \leq \frac{1}{c_0}.
\]

\item[{\bf iii.}]
The Jacobian of the diffeomorphism is uniformly bounded from below and from above, that is, 
for any $(t,x)\in\Omega_T$ we have 
\[
c_0 \leq - \dx \varphi^{\rm l} (t,x) \leq \frac{1}{c_0} \quad \mbox{ and }\quad
 c_0 \leq \dx \varphi^{\rm r} (t,x) \leq \frac{1}{c_0}.
\]
\end{enumerate}
\end{assumption}

The equivalent of Theorem \ref{theoIBVP3} for transmission problems is the following. 
We do not make explicit the compatibility condition in the statement of the theorem 
because they are obtained along a procedure similar to the one used for Definition \ref{defcompVC}.

\begin{theorem}\label{theoIBVP3transm}
Let $m\geq1$ be an integer, $T>0$, and assume that Assumption \ref{asstransmred} is satisfied for some $c_0>0$. 
Assume moreover that there are constants $0<K_0\leq K$ such that 
\[
\begin{cases}
\frac{1}{c_0}, \opnorm{\partial \varphi^{\rm l,r}(0) }_{m-1}, \|\partial \varphi^{\rm l,r}\|_{L^\infty(\Omega_T)}, 
 \|\mybf{A}\|_{L^\infty(\bm{{\mathcal K}}_0)}, |\mybf{N}_p|_{L^\infty(0,T)} \leq K_0, \\
\| \partial\widetilde{\varphi}^{\rm l,r} \|_{\WW^{m-1}(T)}, \| \dt\varphi^{\rm l,r} \|_{H^m(\Omega_T)}, 
 | (\partial^m \varphi^{\rm l,r})_{\vert_{x=0}} |_{L^\infty(0,T)}\leq K, \\
\| \ubu \|_{W^{1,\infty}(\Omega_T)\cap \WW^m(T)}, \| \mybf{B} \|_{W^{1,\infty}(\Omega_T)}, 
 \| \partial \mybf{B} \|_{ \WW^{m-1}(T)}, |\mybf{N}_p|_{W^{1,\infty}\cap W^{m-1,\infty}(0,T)}, 
 |\dt^m\mybf{N}_p|_{L^2(0,T)} \leq K, 
\end{cases}
\]
where $\widetilde{\varphi}^{\rm r}(t,x)=\varphi^{\rm r}(t,x)-x$ and 
$\widetilde{\varphi}^{\rm l}(t,x)=\varphi^{\rm l}(t,x)+x$.
Then, for any data $\mybf{u}^{\rm in} \in H^m(\R_+)$, $\mybf{g}\in H^m(0,T)$, and $\mybf{f}\in H^m(\Omega_T)$ 
satisfying the compatibility conditions up to order $m-1$, 
there exists a unique solution $\mybf{u} \in \WW^m(T)$ to the transmission problem \eqref{transmref}. 
Moreover, the following estimate holds for any $t \in [0,T]$ and any $\gamma \geq C(K)$: 
\begin{align*}
& \opnorm{ \mybf{u}(t) }_{m,\gamma}
 + \biggl( \gamma\int_0^t\opnorm{ \mybf{u}(t') }_{m,\gamma}^2{\rm d}t' \biggr)^\frac12
 + | \mybf{u}_{\vert_{x=0}} |_{m,\gamma,t} \\
&\leq C(K_0)\bigl( (1 + |\dt^m \mybf{N}_p|_{L^2(0,t)})\opnorm{ \mybf{u}(0) }_{m}
 + | \mybf{g} |_{H_\gamma^m(0,t)} 
 + | \mybf{f}_{\vert_{x=0}} |_{m-1,\gamma,t} + S_{\gamma,t}^*(\opnorm{  \mybf{f}(\cdot) }_{m}) \bigr). 
\end{align*}
Particularly, we also have 
\begin{align*}
& \opnorm{ \mybf{u}(t) }_{m} + | \mybf{u}_{\vert_{x=0}} |_{m,t} \\
&\leq C(K_0)e^{C(K)t} \biggl( (1 + |\dt^m \mybf{N}_p|_{L^2(0,t)})\opnorm{ \mybf{u}(0) }_{m}
 + | \mybf{g} |_{H^m(0,t)} 
 + | \mybf{f}_{\vert_{x=0}} |_{m-1,t} + \int_0^t \opnorm{  \mybf{f}(t') }_{m}{\rm d}t' \biggr). 
\end{align*}
\end{theorem}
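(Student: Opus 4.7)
The plan is to mimic the proof of Theorem~\ref{theoIBVP3} step by step, substituting Theorem~\ref{theoIBVP1transm} for Theorem~\ref{theoIBVP1} at each stage. The naive route of applying Theorem~\ref{theoIBVP1transm} directly to \eqref{bigIBVP} requires $\partial^2\bvarphi\in\WW^{m-1}(T)$, but the assumptions of Theorem~\ref{theoIBVP3transm} only give $\partial^2\bvarphi\in\WW^{m-2}(T)$; hence the strategy is again to work with a vector-valued Alinhac good unknown. A first pass, following the proof of \eqref{eqmm1}--\eqref{eqmm1.5} in Proposition~\ref{propAl}, yields a $\WW^{m-1}(T)$ estimate for $\mybf{u}$ with only an $L^\infty$ dependence on $\partial\bvarphi$: this requires a transmission version of Lemma~\ref{lemsymmetrizerbis} in which the block-diagonal Kreiss symmetrizer is of the form $\cS=(\dx\bvarphi)\bigl(\bm{\pi}_{\rm out}^{\rm T}\bm{\pi}_{\rm out}+M\,\bm{\pi}_{\rm in}^{\rm T}\bm{\pi}_{\rm in}\bigr)$ built out of the spectral projectors of $\widetilde{\mybf{A}}(\ubu)=\mathrm{diag}(\widetilde A,A)$, with $M$ chosen large enough to dominate the Lopatinski\u{\i} defect, as in the proof of Lemma~\ref{lemsymmetrizertransm}.

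Next, introduce the vector Alinhac good unknown $\dot{\mybf{u}}^{\bm{\varphi}}=\dt\mybf{u}-(\dt\bvarphi)\dx^{\bm{\varphi}}\mybf{u}=\dt^{\bm{\varphi}}\mybf{u}$. Differentiating \eqref{bigIBVP} in time and using the block-wise analogue of Lemma~\ref{lemeq}, $\dot{\mybf{u}}^{\bm{\varphi}}$ satisfies an interior equation of the same form as \eqref{bigIBVP} with modified lower-order matrix and right-hand side $\mybf{f}_{(1)}$ and $\mybf{B}_{(1)}$ given (on each block) by the obvious transmission analogues of \eqref{equx}. For the boundary condition, differentiating $\mybf{N}_p\mybf{u}_{\vert_{x=0}}=\mybf{g}$, using $(\dt\bvarphi)_{\vert_{x=0}}=\dot\ux\,\mathrm{Id}$, and eliminating $\dx^{\bm{\varphi}}\mybf{u}$ via the interior equation $\dx^{\bm{\varphi}}\mybf{u}=-\widetilde{\mybf{A}}(\ubu)^{-1}(\dot{\mybf{u}}^{\bm{\varphi}}+\mybf{B}\mybf{u}-\mybf{f})$ yields a boundary condition
\[
\mybf{N}_p^{(1)}(t)\,(\dot{\mybf{u}}^{\bm{\varphi}})_{\vert_{x=0}}=\mybf{g}_{(1)},\qquad \mybf{N}_p^{(1)}(t)=\mybf{N}_p(t)\bigl(\mathrm{Id}-\dot\ux\,\widetilde{\mybf{A}}(\ubu_{\vert_{x=0}})^{-1}\bigr),
\]
with $\mybf{g}_{(1)}$ built from $\dt\mybf{g}$, $(\dt\mybf{N}_p)\mybf{u}_{\vert_{x=0}}$ and traces of $\mybf{f},\mybf{B}\mybf{u}$.

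The key technical point is the transmission analogue of Lemma~\ref{mdBC}: to apply Theorem~\ref{theoIBVP1transm} to the system for $\dot{\mybf{u}}^{\bm{\varphi}}$, the new matrix $\mybf{N}_p^{(1)}$ must satisfy the uniform Kreiss--Lopatinski\u{\i} condition of Assumption~\ref{asshyptransm}. This amounts to verifying that the new Lopatinski\u{\i} matrix $\mybf{N}_p^{(1)}\mybf{E}_p(\ubu_{\vert_{x=0}})=\mybf{L}_p(t,\ubu_{\vert_{x=0}})\,\mybf{D}$ is uniformly invertible, where $\mybf{D}$ is the diagonal matrix obtained by observing that the columns of $\mybf{E}_p$ are eigenvectors of $\widetilde{\mybf{A}}$: in the subsonic case ($a$, $p=2$) one computes $\mybf{D}=\mathrm{diag}\bigl((\widetilde\lambda_-+\dot\ux)/\widetilde\lambda_-,\;(\lambda_+-\dot\ux)/\lambda_+\bigr)$, whose entries are bounded below by $c_0/\widetilde\lambda_-$ and $c_0/\lambda_+$ precisely by condition~$a)$ in Assumption~\ref{asstransmred}; the Lax cases ($b$, $c$, $p=1$) are treated identically using the corresponding condition. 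I expect this verification (and the bookkeeping with the two distinct matrices $\mybf{A}$ and $\widetilde{\mybf{A}}$ that appear in the fixed and moving formulations) to be the main source of sign errors, but no conceptual difficulty.

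With the new Lopatinski\u{\i} condition established, Theorem~\ref{theoIBVP1transm} applied to $\dot{\mybf{u}}^{\bm{\varphi}}$ delivers a $\WW^{m-1}(T)$ estimate for $\dt^{\bm{\varphi}}\mybf{u}$ of the form \eqref{eqmm2}; the treatment of $|\mybf{g}_{(1)}|_{H_\gamma^{m-1}}$ — in particular of the contribution $(\dt\mybf{N}_p)\mybf{u}_{\vert_{x=0}}$ — uses Lemma~\ref{ineq4} exactly as in Step~2 of the proof of Proposition~\ref{propAl}, which is where the weak $|\dt^m\mybf{N}_p|_{L^2}$ factor on the right-hand side of Theorem~\ref{theoIBVP3transm} originates, and where the sharp source-term norm $S^*_{\gamma,t}$ provided by Theorem~\ref{theoIBVP1transm} is essential. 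Finally, I would recover the full $\WW^m(T)$ control of $\mybf{u}$ and of $\mybf{u}_{\vert_{x=0}}$ in $|\cdot|_{m,\gamma,t}$ from the $\WW^{m-1}(T)$ estimates on $\mybf{u}$ and on $\dt^{\bm{\varphi}}\mybf{u}$ by inverting the interior equation to write $\dx^{\bm{\varphi}}\mybf{u}=\widetilde{\mybf{A}}^{-1}(-\dt^{\bm{\varphi}}\mybf{u}-\mybf{B}\mybf{u}+\mybf{f})$ and commuting $\partial^\alpha$ with $\dx^{\bm{\varphi}},\dt^{\bm{\varphi}}$, i.e., verbatim the block-diagonal transmission analogue of Lemma~\ref{lemequivn}. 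Absorbing the residual $\WW^m$ terms by taking $\gamma$ large yields the claimed estimate and, via the classical iteration/compatibility argument, existence and uniqueness of $\mybf{u}\in\WW^m(T)$.
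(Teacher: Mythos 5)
Your proposal is correct and follows the paper's proof essentially verbatim: the $\WW^{m-1}(T)$ bound on $\mybf{u}$ via the block symmetrizer $\boldsymbol{\mathcal S}$ of \eqref{defbigsym}, the vector Alinhac good unknown $\dot{\mybf{u}}^{\bvarphi}$ solving \eqref{bigIBVP1}, the factorization of the modified Lopatinski\u{\i} matrix as $\mybf{L}_p\cdot\mybf{D}$ with diagonal $\mybf{D}$, and the recovery of the $\WW^m(T)$ estimate via the block-diagonal analogue of Lemma~\ref{lemequivn}. Incidentally, your first diagonal entry $(\widetilde\lambda_-+\dot\ux)/\widetilde\lambda_-=1+\dot\ux/\widetilde\lambda_-$ is the correct one (since $\widetilde{\bf e}_-$ is the eigenvector of $\widetilde{A}$ with eigenvalue $-\widetilde\lambda_-$, one has $N^{\rm l}_{(1)}\widetilde{\bf e}_-=(1+\dot\ux/\widetilde\lambda_-)N_p^{\rm l}\widetilde{\bf e}_-$), and it is bounded below precisely by condition $a)$ of Assumption~\ref{asstransmred}; the sign written in the paper's displayed formula for $\mybf{L}_{(1)}$ appears to be a typo.
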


\subsubsection{Proof of Theorem \ref{theoIBVP3transm}}
As for Theorem \ref{theoIBVP3transm}, we do not seek a direct estimate on $\mybf{u} = (u^{\rm l},u^{\rm r})$ 
in $\WW^m(T)$, but $\WW^{m-1}(T)$ estimates of $\mybf{u}$ and 
$\dot{\mybf{u}}^\bvarphi = (\dt^{\varphi^{\rm l}}u^{\rm l}, \dt^{\varphi^{\rm r}}u^{\rm r})$. 
The $\WW^{m-1}(T)$ estimate of $\mybf{u}$ is obtained exactly as in Step 1 of the proof of Proposition \ref{propAl} 
and requires a variant of Lemma \ref{lemsymmetrizerbis} which is easily obtained by choosing a symmetrizer 
$\boldsymbol{\mathcal S}$ given in the subsonic case $p=2$ 
(with straightforward adadptation in the Lax regime $p=1$) by 
\begin{equation}\label{defbigsym}
\boldsymbol{\mathcal S}
 = (-\dx\varphi^{\rm l})\bigl[ (\bpi_-^{\rm l})^{\rm T}\bpi_-^{\rm l} + M(\bpi_+^{\rm l})^{\rm T}\bpi_+^{\rm l}\bigr]
 + (\dx\varphi^{\rm r})\bigl[(\bpi_+^{\rm r})^{\rm T}\bpi_+^{\rm r}+M(\bpi_-^{\rm r})^{\rm T}\bpi_-^{\rm r}\bigr]
\end{equation}
and by using Theorem \ref{theoIBVP1transm}. 
In order to obtain the $\WW^{m-1}(T)$ estimates of $\dot{\mybf{u}}^\bvarphi$, 
we first remark that $\dot{\mybf{u}}^\bvarphi$ solves 
\begin{equation}\label{bigIBVP1}
\begin{cases}
\dt \dot{\mybf{u}}^\bvarphi + {\bm{\mathcal{A}}}(\ubu,\partial \bvarphi)\dx \dot{\mybf{u}}^\bvarphi
 + \mybf{B}_{(1)}\dot{\mybf{u}}^\bvarphi = \mybf{f}_{(1)} & \mbox{ in }\quad \Omega_T, \\
{ \dot{\mybf{u}}^\bvarphi }_{\ \; \vert_{t=0}} = \mybf{u}^{\rm in}_{(1)} & \mbox{ on }\quad {\mathbb R}_+, \\
\mybf{N}_{(1)}(t){\dot{\mybf{u}}^\bvarphi}_{\ \; \vert_{x=0}} = \mybf{g}_{(1)}(t) & \mbox{ on }\quad (0,T),
\end{cases}
\end{equation}
where $\mybf{B}_{(1)} = \mbox{diag}(B_{(1)}^{\rm l},B_{(1)}^{\rm r})$ and 
$\mybf{f}_{(1)} = (f^{\rm l}_{(1)},f^{\rm r}_{(1)} )$ are straightforwardly deduced from \eqref{equx} 
while $\mybf{g}_{(1)} = (g_{(1)}^{\rm l}, g_{(1)}^{\rm r})$ and 
$\mybf{N}_{(1)} = \bigl(-N^{\rm l}_{(1)}(t) \quad N^{\rm r}_{(1)}(t)\bigr)$ 
are obtained using a procedure similar to the one used to derive \eqref{nu1}. 
In particular 
\[
N^{\rm l}_{(1)}(t) = N_p^{\rm l}\bigl(1-\dot\ux \widetilde{A}({\uu^{\rm l}}_{\vert_{x=0}})^{-1}\bigr), \qquad
N^{\rm r}_{(1)}(t) = N_p^{\rm r}\bigl(1-\dot\ux {A}({\uu^{\rm r}}_{\vert_{x=0}})^{-1}\bigr).
\]
In order to apply Theorem \ref{theoIBVP1transm} to \eqref{bigIBVP1}, it is necessary to show that 
the third point in Assumption \ref{asshyptransm} is satisfied. 
We therefore consider the Lopatinski\u{\i} matrix $\mybf{L}_{(1)}(t,\ubu_{\vert_{x=0}})$ 
associated to \eqref{bigIBVP1}, namely, 
\[
\mybf{L}_{(1)}(t,\ubu_{\vert_{x=0}}) = 
 \begin{pmatrix} -N^{\rm l}_{(1)}(t) & N^{\rm r}_{(1)}(t)\end{pmatrix} \mybf{E}_p(\ubu_{\vert_{x=0}}). 
\]
When $p=2$ (the case $p=1$ is a straightforward adaptation), one has therefore 
\[
\mybf{L}_{(1)}(t,\ubu_{\vert_{x=0}}) = \mybf{L}_{p}(t,\ubu_{\vert_{x=0}})
\begin{pmatrix}
 1 - \frac{\dot{\ux}}{\widetilde{\lambda}_-({\uu^{\rm l}}_{\vert_{x=0}})} & 0 \\
 0 & 1 - \frac{\dot{\ux}}{{\lambda}_+({\uu^{\rm r}}_{\vert_{x=0}})}
\end{pmatrix}
\]
and the required bound on $\mybf{L}_{(1)}(t,\ubu_{\vert_{x=0}})^{-1}$ is therefore a direct consequence of 
Assumption \ref{asstransmred}. 
It is therefore possible to apply Theorem \ref{theoIBVP1transm} and to obtain an $\WW^{m-1}(T)$ bound on 
$\dot{\mybf{u}}^\bvarphi$ by a close adaptation of the proof of Proposition \ref{propAl}. 
Thanks to the block structure of the equations, the end of the proof follows the same lines as the proof of 
Theorem \ref{theoIBVP3}, and we therefore omit the details.

\subsection{Application to free boundary transmission problems with a transmission condition of ``kinematic'' type}
\label{secttransmkin}
We consider here a general class of free boundary quasilinear transmission problem in which two quasilinear 
hyperbolic systems at the left and at the right of a moving interface located at $x=\ux(t)$ on which 
transmission conditions are provided 
\begin{equation}\label{transmmovQL}
\begin{cases}
\dt U + \widetilde{A}({U})\dx U = 0 & \mbox{in }\quad  (-\infty,\ux(t)) \quad \mbox{ for } \quad t\in(0,T), \\
\dt U + {A}(U)\dx U = 0 & \mbox{in }\quad (\ux(t),+\infty) \quad \mbox{ for } \quad t\in(0,T), \\
U_{\vert_{t=0}} = u^{\rm in}(x) & \mbox{on }\quad \mathbb{R}_-\cup\mathbb{R}_+, \\
\underline{N}_p^{\rm r}U_{\vert_{x=\ux(t)+0}} - \underline{N}_p^{\rm l}U_{\vert_{x=\ux(t)-0}}
 = \mybf{g}(t) & \mbox{on } \quad (0,T),
\end{cases}
\end{equation}
where we assumed that $\ux(0)=0$ without loss of generality. 
Moreover, we assume that the position of the interface is given through a nonlinear equation of the form 
\begin{equation}\label{eqinterf}
\dot\ux=\chi(U_{\vert_{x=\ux(t)-0}},U_{\vert_{x=\ux(t)+0}})
\end{equation}
for some smooth function $\chi$ defined on a domain of $\R^2\times \R^2$. 
The same reduction as in \S \ref{secttransmmov}, and using the same notations, leads us to consider the 
$4\times 4$ initial boundary value problem 
\begin{equation}\label{bigIBVPQL}
\begin{cases}
\dt \mybf{u} + {\bm{\mathcal{A}}}(\mybf{u},\partial \bvarphi)\dx \mybf{u} = 0 & \mbox{ in }\quad \Omega_T, \\
\mybf{u}_{\vert_{t=0}} = \mybf{u}^{\rm in}(x)  & \mbox{ on }\quad {\mathbb R}_+, \\
\underline{\mybf{N}}_p\mybf{u}_{\vert_{x=0}} = \mybf{g}(t) & \mbox{ on }\quad (0,T),
\end{cases}
\end{equation}
where $\underline{\mybf{N}}_p = \big(-\underline{N}_p^{\rm l} \ \ \underline{N}_p^{\rm r}\big)$ is here, 
for the sake of simplicity, a {\it constant} $p\times 4$ matrix (the value of $p$ is discussed below). 
These equations are complemented by the evolution equation 
\begin{equation}\label{eqinterf2}
\dot\ux = \chi(\mybf{u}_{\vert_{x=0}}).
\end{equation}
This boundary condition, of ``kinematic'' type, leads us to work with the following generalization of the 
``Lagrangian'' diffeomorphism \eqref{diffeo}, 
\begin{equation}\label{choicediffeo}
\varphi(t,x) = x + \psi\Bigl(\frac{x}{\eps}\Bigr)\int_0^t \chi(\mybf{u}(t',|x|)){\rm d}t',
\end{equation}
where $\psi\in C_0^\infty(\R)$ is an even cut-off function such that $\psi(x)=1$ for $\abs{x}\leq1$ and $=0$ for 
$\abs{x}\geq 2$, while $\eps$ is chosen small enough to have $\mybf{u}$ close enough to its initial boundary 
value when $x$ is in the support of $\psi$ and $t$ small enough. 
Contrary to \eqref{diffeo}, this cut-off is necessary here because $\chi$ might not be defined at the origin 
(this is for instance the case in \S \ref{sectshocks} for the evolution of shocks). 
In particular, we have 
\[
\varphi^{\rm l}(t,x) = -x+\psi\Bigl(\frac{x}{\eps}\Bigr)\int_0^t \chi(\mybf{u}(t',x)){\rm d}t' \quad\mbox{ and }\quad
\varphi^{\rm r}(t,x) = x+\psi\Bigl(\frac{x}{\eps}\Bigr)\int_0^t \chi(\mybf{u}(t',x)){\rm d}t',
\]
and $\varphi^{\rm l,r}$ satisfy the same kind of bounds as those given in Lemma \ref{lemdiffeo} 
(with $\widetilde{\varphi}^{\rm r}(t,x) = \varphi^{\rm r}(t,x)-x$ and 
$\widetilde{\varphi}^{\rm l}(t,x) = \varphi^{\rm l}(t,x)+x$). 
The well-posedness of \eqref{bigIBVPQL}--\eqref{choicediffeo} also requires the following assumption.

\begin{assumption}\label{asshypQLFBtransm}
Let $\widetilde{\cU}$ and $\cU$ be two open sets in $\R^2$ and let $\boldsymbol{\cU}=\widetilde{\cU}\times\cU$ 
representing a phase space of $\mybf{u}$. 
Let $\widetilde{\cU}_I \subset \widetilde{\cU}$ and $\cU_I \subset \cU$ be also open sets and let 
$\boldsymbol{\cU}_I = \widetilde{\cU}_I\times\cU_I$ representing a phase space of $\mybf{u}_{\vert_{x=0}}$.
The following conditions hold: 
\begin{enumerate}
\setlength{\itemsep}{3pt}
\item[{\bf i.}]
$\mybf{A} \in C^\infty(\boldsymbol{\cU})$ and $\chi\in C^\infty(\boldsymbol{\cU}_I)$. 
\item[{\bf ii.}]
For all $\mybf{u} = (u^{\rm l},u^{\rm r})^{\rm T}\in\bcU$, the matrices $\widetilde{A}({u^{\rm l}})$ 
and $A(u^{\rm r})$ have eigenvalues $\widetilde{\lambda}_+(u^{\rm l}), -\widetilde{\lambda}_-(u^{\rm l})$ 
and $\lambda_+(u^{\rm r}), -\lambda_-(u^{\rm r})$, respectively, satisfying 
\[
\widetilde{\lambda}_\pm(u^{\rm l})>0 \quad\mbox{ and }\quad {\lambda}_\pm(u^{\rm r})>0;
\]
moreover, one of the following situations 
for any $\mybf{u} = (u^{\rm l},u^{\rm r})^{\rm T}\in\bcU_I$ holds: 
\begin{align*}
a) \qquad \widetilde{\lambda}_\pm(u^{\rm l})\mp \chi(\mybf{u}) > 0
 & \quad\mbox{ and }\quad 
 \lambda_\pm(u^{\rm r})\mp \chi(\bu) > 0, \\
b)  \qquad\widetilde{\lambda}_\pm(u^{\rm l}) \mp \chi(\mybf{u}) > 0 
 & \quad\mbox{ and }\quad \lambda_+(u^{\rm r}) - \chi(\mybf{u}) < 0, \\
c)\qquad  \widetilde{\lambda}_-(u^{\rm l})+\chi(\mybf{u}) < 0 
 & \quad\mbox{ and }\quad  \lambda_\pm(u^{\rm r}) \mp \chi(\mybf{u}) > 0. 
\end{align*}
\item[{\bf iii.}]
For any $\mybf{u}\in \bcU_I$, the Lopatinski\u{\i} matrix $\mybf{L}_p(\mybf{u})$ associated to 
the condition $a)$, $b)$, or $c)$ constructed in \eqref{interfmatr} is invertible 
(note that $p=2$ under condition $a)$ and $p=1$ under conditions $b)$ and $c)$). 
\end{enumerate}
\end{assumption}

\begin{remark}
With the terminology introduced in the previous section, condition $a)$ corresponds to an interface moving 
at subsonic speed, while conditions $b)$ and $c)$ correspond to interfaces moving at supersonic speed 
(to the right for condition $a)$ and to the left for condition $b)$) and satisfying Lax's conditions. 
\end{remark}

We can now state the following theorem, which can be deduced from Theorem \ref{theoIBVP3transm} in exactly 
the same way as Theorem \ref{theoIBVP4} is deduced from Theorem \ref{theoIBVP3} for free boundary initial 
value problem with an evolution equation of kinematic type for the location of the boundary.

\begin{theorem}\label{theoIBVP4transm}
Let $m\geq 2$ be an integer. 
Suppose that Assumption \ref{asshypQLFBtransm} is satisfied. 
If $\mybf{u}^{\rm in}\in H^m(\R_+)$ takes its values in $\widetilde{\mathcal K}_0\times {\mathcal K}_0$ with 
$\widetilde{\mathcal K}_0\subset\widetilde{\cU}$ and $ {\mathcal K}_0\subset \cU$ compact and convex sets, 
if $\mybf{u}^{\rm in}(0) \in \bcU_I$,
and if the data $\mybf{u}^{\rm in}$ and $\mybf{g} \in H^m(0,T)$ satisfy the compatibility conditions up to 
order $m-1$, then there exist $T_1 \in (0,T]$ and a unique solution 
$(\mybf{u},\ux)$ to \eqref{transmmovQL}--\eqref{eqinterf} with $\mybf{u}\in \WW^m(T_1)$, $\ux\in H^{m+1}(0,T_1)$, 
and $\varphi$ given by \eqref{choicediffeo}. 
\end{theorem}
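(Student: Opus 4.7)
The plan is to adapt the proof of Theorem \ref{theoIBVP4} to the transmission setting, using Theorem \ref{theoIBVP3transm} as the linear building block and the modified Lagrangian-type diffeomorphism \eqref{choicediffeo} suited to the kinematic boundary evolution. Fix a compact convex $\boldsymbol{\mathcal{K}}_1 = \widetilde{\mathcal{K}}_1\times \mathcal{K}_1$ with $\widetilde{\mathcal{K}}_0\times\mathcal{K}_0 \Subset \boldsymbol{\mathcal{K}}_1 \Subset \widetilde{\cU}\times \cU$, together with a compact convex $\boldsymbol{\mathcal{K}}_{1,I}$ analogously inserted between $\mathbf{u}^{\rm in}(0)$ and $\boldsymbol{\cU}_I$, and pick $\delta_0>0$ so that $\|\mathbf{u}-\mathbf{u}^{\rm in}\|_{L^\infty}\leq \delta_0$ keeps $\mathbf{u}(t,x)$ in $\boldsymbol{\mathcal{K}}_1$ and $\mathbf{u}_{\vert_{x=0}}$ in $\boldsymbol{\mathcal{K}}_{1,I}$. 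Set up a Picard iteration as follows. Choose $\mathbf{u}^0\in H^{m+1/2}(\R\times\R_+)$ whose time traces at $t=0$ coincide with $\mathbf{u}_{(k)}^{\rm in}$ computed from the compatibility procedure (which is the natural transmission analogue of \eqref{defu0kter}, treating $\underline{\mathbf{N}}_p$ as a constant matrix and using the evolution equation \eqref{eqinterf2} to obtain $\dt^k\ux_{\vert_{t=0}}$). Given $\mathbf{u}^n$ satisfying the appropriate control, define $\varphi^n$ by formula \eqref{choicediffeo} with $\mathbf{u}$ replaced by $\mathbf{u}^n$ and $\eps$ to be fixed; then define $\mathbf{u}^{n+1}$ as the unique solution of the linear transmission problem
\begin{equation*}
\begin{cases}
\dt\mathbf{u}^{n+1}+\bm{\mathcal{A}}(\mathbf{u}^n,\partial\bvarphi^n)\dx\mathbf{u}^{n+1}=0 & \mbox{in }\Omega_T,\\
\mathbf{u}^{n+1}_{\vert_{t=0}}=\mathbf{u}^{\rm in} & \mbox{on }\R_+,\\
\underline{\mathbf{N}}_p\mathbf{u}^{n+1}_{\vert_{x=0}}=\mathbf{g}(t) & \mbox{on }(0,T),
\end{cases}
\end{equation*}
provided by Theorem \ref{theoIBVP3transm}.

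The next step is to verify Assumption \ref{asstransmred} for the linearized problem, which requires three things. First, the eigenvalue conditions of point \textbf{i} of that assumption follow from point \textbf{ii} of Assumption \ref{asshypQLFBtransm} by continuity, since $\dt\varphi^n(t,0)=\chi(\mathbf{u}^n_{\vert_{x=0}}(t))$ and $\dt\varphi^n$ remains close to the boundary value of $\chi$ in the support of the cut-off $\psi(x/\eps)$; as in the proof of Lemma \ref{prepa} above, one chooses $\eps_0$ small enough so that the strict inequalities in \textbf{ii} of Assumption \ref{asshypQLFBtransm} propagate from $x=0$ to a neighbourhood of size $2\eps_0$, and $\dt\varphi^n$ vanishes identically outside this strip. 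Second, the uniform Lopatinski\u{\i} condition \textbf{ii} of Assumption \ref{asstransmred} follows directly from \textbf{iii} of Assumption \ref{asshypQLFBtransm}, because $\underline{\mathbf{N}}_p$ is constant and $\mathbf{u}^n_{\vert_{x=0}}$ stays in the compact $\boldsymbol{\mathcal{K}}_{1,I}$. Third, the Jacobian bounds \textbf{iii} of Assumption \ref{asstransmred} follow from an analogue of Lemma \ref{lemdiffeo} applied separately to $\varphi^{\rm r,n}$ and $\varphi^{\rm l,n}$, which yields in particular $\opnorm{\partial\widetilde{\varphi}^{\rm l,r,n}(0)}_{m-1}\leq C(\opnorm{\mathbf{u}^n(0)}_m)$ and $\|\widetilde{\varphi}^{\rm l,r,n}\|_{\WW^m(T_1)}+\|\dt\varphi^{\rm l,r,n}\|_{\WW^m(T_1)}\leq C(\|\mathbf{u}^n\|_{\WW^m(T_1)},|\mathbf{u}^n_{\vert_{x=0}}|_{m,T_1})$ for $T_1$ small enough. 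With these bounds in hand, a standard induction using Theorem \ref{theoIBVP3transm} shows that, for $M$ large and $T_1$ small (independent of $n$), one has $\|\mathbf{u}^n\|_{\WW^m(T_1)}+|\mathbf{u}^n_{\vert_{x=0}}|_{m,T_1}\leq M$ and $\|\mathbf{u}^n(t)-\mathbf{u}^{\rm in}\|_{L^\infty}\leq \delta_0$.

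Convergence is then obtained in the lower norm $\WW^{m-1}(T_1)$ by considering the difference $\mathbf{u}^{n+2}-\mathbf{u}^{n+1}$, which satisfies a linear transmission problem with zero initial and boundary data and a source $\mathbf{f}^n=-(\bm{\mathcal{A}}(\mathbf{u}^{n+1},\partial\bvarphi^{n+1})-\bm{\mathcal{A}}(\mathbf{u}^n,\partial\bvarphi^n))\dx\mathbf{u}^{n+1}$. Applying the transmission analogue of estimate \eqref{eqmm1.5} (which one obtains by running the proof of Proposition \ref{propAl} inside the setting of Theorem \ref{theoIBVP3transm}) and using Lemma \ref{ineq3} to control $|\mathbf{f}^n_{\vert_{x=0}}|$, one gets
\begin{equation*}
\|\mathbf{u}^{n+2}-\mathbf{u}^{n+1}\|_{\WW^{m-1}(T_1)}+|(\mathbf{u}^{n+2}-\mathbf{u}^{n+1})_{\vert_{x=0}}|_{m-1,T_1}\leq \tfrac12\bigl(\|\mathbf{u}^{n+1}-\mathbf{u}^n\|_{\WW^{m-1}(T_1)}+|(\mathbf{u}^{n+1}-\mathbf{u}^n)_{\vert_{x=0}}|_{m-1,T_1}\bigr)
\end{equation*}
for $T_1$ small enough, interpolation and standard compactness yielding convergence to a solution $(\mathbf{u},\ux)\in\WW^m(T_1)\times H^{m+1}(0,T_1)$ with $\varphi$ given by \eqref{choicediffeo}. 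Uniqueness is obtained by the same contraction estimate applied to the difference of two solutions. The main obstacle I expect is the analogue of Proposition \ref{propAl} in the transmission framework: one must control $\dt^{\bvarphi}\mathbf{u}$ in $\WW^{m-1}$ via Alinhac's good unknown and the modified boundary operator $\mathbf{N}_{(1)}$ introduced in the proof of Theorem \ref{theoIBVP3transm}, while checking that the corresponding Lopatinski\u{\i} determinant stays uniformly away from zero -- this is where the subsonic/Lax dichotomy in point \textbf{ii} of Assumption \ref{asshypQLFBtransm} plays the same role it did in the one-sided Lemma \ref{mdBC}, and where the fact that the refined estimate of Theorem \ref{theoIBVP3transm} controls $\opnorm{\dt\mathbf{f}}_{m-2}$ in $L^1$ (rather than $\opnorm{\mathbf{f}}_{m-1}$) becomes crucial, since $\dx(\varphi^{n+1}-\varphi^n)$ cannot be controlled in $\WW^{m-1}$ whereas $\dt(\varphi^{n+1}-\varphi^n)$ can.
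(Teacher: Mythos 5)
Your proof is correct and follows precisely the route the paper prescribes: the theorem is deduced from Theorem \ref{theoIBVP3transm} ``in exactly the same way as Theorem \ref{theoIBVP4} is deduced from Theorem \ref{theoIBVP3}'', and you carry out this deduction faithfully, including the choice of $\eps_0$ small enough to propagate the eigenvalue conditions of Assumption \ref{asshypQLFBtransm} from $x=0$ into the support of the cut-off, and the crucial role of the $\opnorm{\dt\mybf{f}}_{m-2}$ control (rather than $\opnorm{\mybf{f}}_{m-1}$) in the $\WW^{m-1}$ contraction, since $\dx(\varphi^{n+1}-\varphi^n)$ admits no $\WW^{m-1}$ bound while $\dt(\varphi^{n+1}-\varphi^n)$ does. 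One minor slip of citation: the construction of $\mybf{u}_{(k)}^{\rm in}$ and $\dt^k\ux_{\vert_{t=0}}$ is the transmission analogue of \eqref{ukin}--\eqref{dtkx} and Definition \ref{defcompfbp} (kinematic free boundary), not of \eqref{defu0kter} (fixed-boundary quasilinear), though the procedure you describe in words is the right one.
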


\section{Waves interacting with a lateral piston}\label{sectlatpis}
We analyze here a particular example of wave-structure interaction in which the fluid occupies 
a semi-infinite canal over a flat bottom which is delimited by a lateral wall that can move horizontally. 
When the wall is in forced motion, this situation corresponds to a wave-maker device 
often used to generate waves in wave-flumes \cite{katell2002accuracy,orszaghova2012paddle}. 
We are more interested here in the case where the lateral wall moves under the action of the hydrodynamic force 
created by the waves and of a spring force that tends to bring it back to its equilibrium position. 
This configuration corresponds to a wave absorption mechanism and can also be seen as a simplified model 
of wave energy convertor, such as the Oyster. 
Such a configuration has been studied numerically in various references \cite{he2009nonlinear, korobkin2009motion, 
khakimzyanov2017numerical}, but there is no mathematical result available yet. 
Note also that this problem is related to the piston problem for isentropic gas dynamics whose linear analysis 
can be found in \cite{gerlach1984two} and weak solutions constructed in \cite{takeno1995free}. 
Our goal in this section is to provide a well-posedness result for this wave-structure interaction under 
the shallow water approximation, i.e., assuming that the evolution of the free surface is governed by 
the nonlinear shallow water equations. 
The configuration under study here is described in Figure \ref{fpiston}.

\medskip
\begin{figure}[h]
\begin{center}
\includegraphics[width=0.7\linewidth]{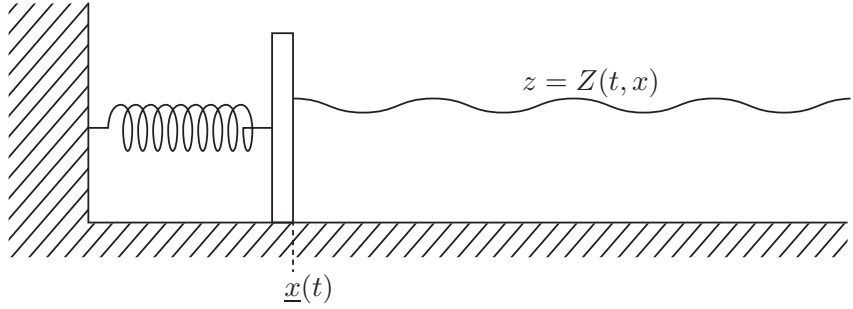}
\end{center}
\setlength{\unitlength}{1pt}
\begin{picture}(0,0)
\put(-55,6){$\ux(t)$}
\put(35,84){$z=Z(t,x)$}
\end{picture}
\caption{Waves interacting with a lateral piston}\label{fpiston}
\end{figure}

\subsection{Presentation of the problem}\label{sectprespb1}
In the canal, of mean depth $h_0$ and delimited on the left by the moving wall located at $x=\ux(t)$, 
the waves are described by the nonlinear shallow water equations. 
It is convenient to write them in $(H,\ovV)$ variables, where $H(t,x)=h_0+Z(t,x)$ is the water depth, 
$Z(t,x)$ is the surface elevation of the water, and $\ovV(t,x)$ is the vertically averaged horizontal velocity 
\begin{equation}\label{SW}
\begin{cases}
\dt H + \dx (H\ovV) = 0 & \mbox{in}\quad (\ux(t),\infty), \\
\dt \ovV + \ovV\dx\ovV + \mathtt{g}\dx H = 0 & \mbox{in}\quad (\ux(t),\infty),
\end{cases}
\end{equation}
where $\mathtt{g}$ is the gravitational constant; 
with this formulation, the boundary condition at the left boundary at the canal will be imposed as the kinematic type: 
the velocity $\ovV$ matches the velocity $\dot{\ux}$, that is, 
\begin{equation}\label{BCSW}
\ovV(t,\ux(t)) = \dot{\ux}(t).
\end{equation}
Since the wall moves under the action of the hydrodynamic force exerted by the fluid and of the spring force, 
its position $\ux(t)$ satisfies Newton's equation 
$$
\mathtt{m}\ddot{\ux} = -\mathtt{k}(\ux-\ux_0) + F_{\rm hyd},
$$
where $\mathtt{m}$ is the mass of the moving wall, $\mathtt{k}$ the stiffness of the spring force, 
$\ux_{0}$ its reference position, and $F_{\rm hyd}$ the hydrodynamic force. 
This force corresponds to the horizontal pressure forces integrated on the vertical wall. 
Assuming, in accordance with the modeling of the flow by the nonlinear shallow water equations, 
that the pressure is hydrostatic, we get 
\begin{align*}
F_{\rm hyd} &= \int_{-h_0}^{Z(t,\ux(t))} \rho\mathtt{g} (Z(t,\ux(t)) - z'){\rm d}z' \\
&= \frac{1}{2}\rho\mathtt{g} (h_0 + Z(t,\ux(t)))^2.
\end{align*}
At rest, we have $H=h_0$ and the equilibrium position $\ux_{\rm eq}$ is therefore given by 
$$
\ux_{\rm eq} - \ux_0 = \frac{1}{2}\frac{\rho\mathtt{g} h_0^2}{\mathtt{k}}
$$
so that Newton's equation can be put under the form 
\begin{equation}\label{NewtonSW}
\mathtt{m}\ddot{\ux} = - \mathtt{k} (\ux-\ux_{\rm eq})
 + \frac{1}{2}\rho\mathtt{g} \bigl( (h_0+Z_{\vert_{x=\ux}})^2 - h_0^2 \bigr).
\end{equation}
The free boundary problem we have to solve consists therefore in the equations \eqref{SW}--\eqref{NewtonSW} 
complemented by the initial conditions 
\begin{equation}\label{ICSW}
(Z,\ovV)_{\vert_{t=0}} = (Z^{\rm in},\ovV^{\rm in}) \quad\mbox{on}\quad \R_+, \qquad
 (\ux,\dot{\ux})_{\vert_{t=0}} = (0,\ux_1^{\rm in}),
\end{equation}
where we assumed without loss of generality that the wall is initially located at $x=0$.

\subsection{Reformulation of the equations}
As in \S \ref{sectVCm}, the first step is to use a diffeomorphism $\varphi(t,\cdot): \R_+\to (\ux(t),\infty)$ 
and to work with the transform variables 
$$
\zeta(t,x) = Z(t,\varphi(t,x)), \qquad \ovv(t,x) = \ovV(t,\varphi(t,x))
$$
with $h=h_0+\zeta$. 
The boundary condition \eqref{BCSW} which can be rewritten as 
$$
\dot{\ux}(t) = \ovv(t,0)
$$
leads us to work with the Lagrangian diffeomorphism 
\begin{equation}\label{diffL}
\varphi(t,x) = x + \int_0^t \ovv(t',x){\rm d}t',
\end{equation}
which satisfies the properties stated in Lemma \ref{lemdiffeo}. 
After composition with $\varphi$, the problem under consideration is reduced to the initial boundary value problem 
\begin{equation}\label{SW2}
\begin{cases}
\dsp \dt \zeta + h \dx^\varphi \ovv = 0 & \mbox{in}\quad \Omega_T, \\
\dsp \dt \ovv + \mathtt{g} \dx^\varphi \zeta = 0 & \mbox{in}\quad \Omega_T, \\
(\zeta,\ovv)_{\vert_{t=0}} = (\zeta^{\rm in},\ovv^{\rm in}) & \mbox{on}\quad \R_{+}, \\
\ovv_{\vert_{x=0}} = \dot{x} & \mbox{on}\quad (0,T),
\end{cases}
\end{equation}
coupled to the ODE
\begin{equation}\label{NewtonSW2}
\begin{cases}
\mathtt{m}\ddot{\ux} = -\mathtt{k}(\ux-\ux_{\rm eq})
 + \frac{1}{2}\rho\mathtt{g} \bigl( (h_0+\zeta_{\vert_{x=0}})^2 - h_0^2 \bigr)
 \quad\mbox{for}\quad t\in(0,T), \\
(\ux,\dot{\ux})_{\vert_{t=0}} = (0,\ux_1^{\rm in}),
\end{cases}
\end{equation}
where we used the same notation as in \eqref{dtphi}, that is, $\dx^\varphi = \frac{1}{\dx\varphi}\dx$. 
The initial boundary value problem \eqref{SW2} is of course of the form \eqref{IBVPmT} with $u=(\zeta,\ovv)^{\rm T}$, $\nu=(0,1)^{\rm T}$, and 
\begin{equation}\label{Au}
A(u) = \begin{pmatrix} \ovv & h \\ \mathtt{g} & \ovv \end{pmatrix},
\end{equation}
whose eigenvalues $\pm\lambda_{\pm}(u)$ and the corresponding unit eigen vectors $\mathbf{e}_{\pm}(u)$ 
are given by 
\[
\lambda_{\pm}(u) = \sqrt{\mathtt{g}h} \pm \ovv, \qquad 
\mathbf{e}_{\pm}(u) =\frac{1}{\sqrt{\mathtt{g}+h}
 \binom{\sqrt{h}}{\pm\sqrt{\mathtt{g}}}. }
\]
Therefore, the positivity of $|\nu\cdot\mathbf{e}_{+}(u_{\vert_{x=0}})|$ stated in Assumption \ref{asshypm} 
is automatically satisfied under the positivity of $h$.

Here, we will show another equivalent formulation to \eqref{SW2}--\eqref{NewtonSW2}. 
The following lemma shows that \eqref{NewtonSW2} provides an expression for $\dot\ux$ 
in terms of $\zeta_{\vert_{x=0}}$.

\begin{lemma}\label{lemmaG}
Let $m\geq1$ be an integer, $\ux_1^{\rm in}\in \R$, and assume that $\zeta_{\rm b}\in H^m(0,T)$. 
Then there exists a unique solution $\ux \in H^{m+2}(0,T)$ to 
\[
\begin{cases}
{\mathtt m}\ddot{\ux}  = -{\mathtt k} (\ux - \ux_{\rm eq})
 + \frac12 \rho {\mathtt g} \bigl( \zeta_{\rm b}^2 + 2h_0\zeta_{\rm b} \bigr), \\
(\ux,\dot \ux)_{\vert_{t=0}} = (0,\ux_1^{\rm in}),
\end{cases}
\]
so that we can define a mapping $\mathcal{G} : H^m(0,T) \ni \zeta_{\rm b} \mapsto \dot{\ux} \in H^{m+1}(0,T)$, 
which satisfies 
\[
|\mathcal{G}(\zeta_{\rm b})|_{H^{m+1}(0,t)}
 \leq C\bigl( \sqrt{t}(|\ux_{\rm eq}|+|\ux_1^{\rm in}|)
  + (1+t)( 1 + |\zeta_{\rm b}|_{W^{[m/2],\infty}(0,t)} ) |\zeta_{\rm b}|_{H^m(0,t)} \bigr)
\]
for any $t\in[0,T]$, where $C>0$ is a constant depending only on $\mathtt{m},\mathtt{k},\rho\mathtt{g},h_0$, 
and $m$. 
\end{lemma}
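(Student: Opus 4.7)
The plan is to recognize the ODE as a linear second order equation with harmonic part and a forcing driven by $\zeta_{\rm b}$, solve it by Duhamel, and then carry out a tame energy estimate with a Moser estimate to handle the quadratic nonlinearity in $\zeta_{\rm b}$.

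First I would rewrite the equation in standard form. Setting $\omega^2 = \mathtt{k}/\mathtt{m}$, $\tilde{F}(t) = \frac{\rho\mathtt{g}}{2}(\zeta_{\rm b}(t)^2 + 2h_0\zeta_{\rm b}(t))$, and $y(t) = \ux(t) - \ux_{\rm eq}$, the problem becomes
\[
\ddot y + \omega^2 y = \frac{1}{\mathtt{m}}\tilde F, \qquad
y(0) = -\ux_{\rm eq}, \quad \dot y(0) = \ux_1^{\rm in}.
\]
Since $\zeta_{\rm b}\in H^m(0,T)\hookrightarrow L^\infty(0,T)$ (as $m\geq 1$), we have $\tilde F\in L^2(0,T)$, and the classical Duhamel representation
\[
y(t) = -\ux_{\rm eq}\cos(\omega t) + \frac{\ux_1^{\rm in}}{\omega}\sin(\omega t)
 + \frac{1}{\mathtt{m}\omega}\int_0^t \sin(\omega(t-s))\tilde F(s)\,{\rm d}s
\]
gives a unique $y\in C^1([0,T])$ with $\ddot y\in L^2(0,T)$, hence $\ux\in H^2(0,T)$ and $\dot\ux\in H^1(0,T)$. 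This justifies the definition of~$\mathcal{G}$, and higher regularity ($\ux\in H^{m+2}$, $\dot\ux\in H^{m+1}$) is a consequence of the a priori estimate established below.

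Next I would derive the base energy estimate. Multiplying the ODE by $\dot y$ and integrating yields, with $E(t) = \frac{\mathtt m}{2}\dot y(t)^2 + \frac{\mathtt k}{2}y(t)^2$,
\[
\sqrt{E(t)} \leq \sqrt{E(0)} + C\int_0^t |\tilde F(s)|\,{\rm d}s
 \leq C\bigl(|\ux_{\rm eq}| + |\ux_1^{\rm in}| + \sqrt{t}\,|\tilde F|_{L^2(0,t)}\bigr),
\]
which gives $|\dot\ux|_{L^\infty(0,t)} + |y|_{L^\infty(0,t)}$ by the same bound. Integrating in time produces
\[
|\dot\ux|_{L^2(0,t)} + |y|_{L^2(0,t)}
\leq C\sqrt{t}\bigl(|\ux_{\rm eq}|+|\ux_1^{\rm in}|\bigr) + Ct\,|\tilde F|_{L^2(0,t)}.
\]
Then, for any $k\in\{0,1,\dots,m\}$, differentiating the ODE gives the key recursion
\[
\dt^{k+2}\ux = -\omega^2\,\dt^k y + \frac{1}{\mathtt{m}}\,\dt^k\tilde F,
\]
so that
\[
|\dt^{k+2}\ux|_{L^2(0,t)} \leq \omega^2 |\dt^k y|_{L^2(0,t)} + \frac{1}{\mathtt{m}}|\dt^k\tilde F|_{L^2(0,t)}.
\]
Plugging the base estimate and using $|\dt^k y|_{L^2(0,t)} = |\dt^k \ux|_{L^2(0,t)}$ for $k\geq 1$, together with an obvious induction on~$k$, I obtain
\[
|\dot\ux|_{H^{m+1}(0,t)} \leq C\sqrt{t}\bigl(|\ux_{\rm eq}|+|\ux_1^{\rm in}|\bigr)
 + C(1+t)|\tilde F|_{H^m(0,t)}.
\]

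Finally, I would apply the Moser-type estimate of Lemma \ref{ineq2} (in its one-dimensional $t$-version) to the nonlinear map $\zeta\mapsto \zeta^2 + 2h_0\zeta$, which vanishes at~$0$, to get
\[
|\tilde F|_{H^m(0,t)} \leq C\bigl(1+|\zeta_{\rm b}|_{W^{[m/2],\infty}(0,t)}\bigr)|\zeta_{\rm b}|_{H^m(0,t)},
\]
with $C$ depending only on $\rho\mathtt g$, $h_0$, and $m$. Combining the two previous inequalities gives the announced estimate.

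The only nontrivial step is keeping the $\sqrt t$ and $(1+t)$ factors sharp in the inductive bound, and correctly inserting the tame Moser bound so that the high-norm $|\zeta_{\rm b}|_{H^m}$ appears only linearly, with the low-norm $W^{[m/2],\infty}$ factor carrying any nonlinearity. Everything else is routine linear ODE theory.
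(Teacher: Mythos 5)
Your proposal is correct and follows essentially the same route as the paper: translate $\ux$ to kill the equilibrium term, use the harmonic-oscillator energy identity to get the $L^\infty$/$L^2$ bound on $(\ux,\dot\ux)$ with the $\sqrt{t}$ and $t$ factors, then propagate regularity through the ODE recursion $\dt^{k+2}\ux = -\omega^2\dt^k\ux + \mathtt{m}^{-1}\dt^k\tilde F$ and finish with the Moser bound on $\zeta\mapsto\zeta^2+2h_0\zeta$. The Duhamel representation you add is a harmless extra step the paper omits (it simply asserts existence is obvious and goes straight to the estimate), and the minor discrepancy in the sign of the translated initial datum $y(0)=\mp\ux_{\rm eq}$ is immaterial since only $|\ux_{\rm eq}|$ enters the final bound.
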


\begin{proof}
The existence and uniqueness of the solution $\ux$ is obvious, so that we focus on the derivation of the estimate. 
Replacing $\ux$ with $\ux+\ux_{\rm eq}$, it is sufficient to consider the problem 
\[
\begin{cases}
{\mathtt m}\ddot{\ux}  = -\mathtt{k}\ux + f, \\
(\ux,\dot \ux)_{\vert_{t=0}} = (\ux_{\rm eq},\ux_1^{\rm in}),
\end{cases}
\]
where $f=\frac12 \rho {\mathtt g} \bigl( \zeta_{\rm b}^2 + 2h_0\zeta_{\rm b} \bigr)$. 
Then, we see that 
\[
\frac12\frac{\rm d}{{\rm d}t}( \mathtt{m} \dot{\ux}(t)^2 + \mathtt{k} \ux(t)^2 ) = f(t)\dot{x}(t),
\]
from which we deduce that
\begin{align*}
|\dot{\ux}(t)| + |\ux(t)| 
&\leq C\Bigl( |\ux_1^{\rm in}| + |\ux_{\rm eq}| + \int_0^t|f(t')|{\rm d}t' \Bigr) \\
&\leq C( |\ux_1^{\rm in}| + |\ux_{\rm eq}| + \sqrt{t}|f|_{L^2(0,t)} ),
\end{align*}
so that 
\[
|\ux|_{H^1(0,t)} \leq C\bigl( \sqrt{t}(|\ux_1^{\rm in}| + |\ux_{\rm eq}|) + t|f|_{L^2(0,t)} \bigr).
\]
On the other hand, it follows from the equation directly that 
\[
|\dt^{k+2}\ux|_{L^2(0,t)} \leq C(|\dt^k\ux|_{L^2(0,t)}+|\dt^kf|_{L^2(0,t)})
\]
for $k=0,1,2,\ldots$. 
Using these inductively, we obtain 
\[
|\ux|_{H^{m+2}(0,t)}
 \leq C\bigl( \sqrt{t}(|\ux_1^{\rm in}| + |\ux_{\rm eq}|) + t|f|_{L^2(0,t)} + |f|_{H^m(0,t)} \bigr),
\]
which together with 
$|f|_{H^m(0,t)} \leq C( 1 + |\zeta_{\rm b}|_{W^{[m/2],\infty}(0,t)} ) |\zeta_{\rm b}|_{H^m(0,t)}$ 
gives the desired estimate. 
\end{proof}

It follows from the lines above that the problem presented in \S \ref{sectprespb1} can be recast 
under the following form 
\begin{equation}\label{pb1recast}
\begin{cases}
\dt u + \cA(u,\partial\varphi)\dx u = 0 & \mbox{in}\quad \Omega_T, \\
u_{\vert_{t=0}} = u^{\rm in} & \mbox{on}\quad \R_{+}, \\
\unu\cdot u_{\vert_{x=0}} = \mathcal{G}(\unu^\perp\cdot u_{\vert_{x=0}}) & \mbox{on}\quad (0,T),
\end{cases}
\end{equation}
where $\unu=(0,1)^{\rm T}$ and $\varphi$ is given by \eqref{diffL}, with a boundary equation given by 
\begin{equation}\label{BCSWt}
\dot\ux=\unu\cdot u_{\vert_{x=0}}, \qquad \ux_{\vert_{t=0}}=0.
\end{equation}
Here, we emphasize that the notation for the matrix $\cA(u,\varphi)$ is the same as in \eqref{IBVPmT} with 
the matrix $A(u)$ defined by \eqref{Au}. 
However, thanks to our choice of the Lagrangian diffeomorphism $\varphi$, 
the term $\dt\varphi$ is cancelled and does not appear in the equation. 
The problem is therefore a small variant of the free boundary problem considered in \S \ref{sectFB1}, 
the difference being that the boundary condition $\unu\cdot u_{\vert_{x=0}}=g(t)$ is replaced by a semi-linear 
and nonlocal boundary condition $\unu\cdot u_{\vert_{x=0}} = {\mathcal G}(\nu^\perp\cdot u_{\vert_{x=0}})$. 
Of course, \eqref{pb1recast}--\eqref{BCSWt} is equivalent to \eqref{SW2}--\eqref{NewtonSW2}.

\subsection{Compatibility condition}
As usual, compatibility conditions are required to have regular solutions. 
However, we can derive the conditions easier than the problem considered in \S \ref{sectFB1} because 
the equation does not contain the term $\dt\varphi$. 
Denoting $u_k=\dt^k u$, we get classically by induction that $u_k$ is a polynomial expression of space derivatives 
of $u$ of order at most $k$, and of space and time derivatives of $(\dx\varphi)^{-1}$ of order at most $k-1$. 
Remarking further that $\dx^{j}\dt^{l+1} \varphi= \dx^j\dt^l\ovv$ and $\dx^{j+1}\varphi_{\vert_{t=0}}=\delta_{j,0}$, 
where $\delta_{j,0}$ is the Kronecker symbol, it follows that at $t=0$, we have an expression for 
$u_k^{\rm in}={u_k}_{\vert_{t=0}}$ as 
\begin{equation}\label{pb1comp}
u_k^{\rm in}=c_{1,k}(u^{\rm in},\dx u^{\rm in},\dots, \dx^k u^{\rm in})
\end{equation}
with $c_{1,k}$ a polynomial expression of its arguments such that the total number of derivatives of $u^{\rm in}$ 
involved in each monomial is at most $k$. 
Using the equation in \eqref{NewtonSW2} we can express $\ux_k^{\rm in}$ for $k\geq2$ in terms of the initial data as 
\begin{equation}\label{xk+2in}
\ux_{k+2}^{\rm in} = c_{2,k}(\ux_1^{\rm in},\zeta^{\rm in},\zeta_1^{\rm in},\ldots,\zeta_k^{\rm in})_{\vert_{x=0}}
\end{equation}
with $c_{2,k}$ a polynomial expression of its arguments. 
The compatibility condition is obtained by differentiating the boundary condition 
$\ovv_{\vert_{x=0}} = \dot{\ux}$ with respect to $t$ and taking its trace at $t=0$.

\begin{definition}\label{defcomppb1}
Let $m\geq1$ be an integer. 
We say that the initial data $u^{\rm in} = (\zeta^{\rm in},\ovv^{\rm in})^{\rm T} \in H^m(\R_+)$ and 
$\ux_1^{\rm in}\in \R$ for the initial boundary value problem \eqref{SW2}--\eqref{NewtonSW2} satisfy 
the compatibility condition at order $k$ if $\{u_j^{\rm in}\}_{j=0}^m$ and $\{\ux_j^{\rm in}\}_{j=1}^{m+1}$ 
defined by \eqref{pb1comp}--\eqref{xk+2in} satisfy 
\[
{\ovv_k^{\rm in}}_{\vert_{x=0}} = \ux_{k+1}^{\rm in}. 
\]
We also say that the initial data $u^{\rm in}$ and $\ux_1^{\rm in}$ satisfy the compatibility conditions 
up to $m-1$ if they satisfy the compatibility conditions at order $k$ for $k=0,1,\ldots,m-1$. 
\end{definition}

\begin{remark}
The local existence theorem given below requires that the compatibility conditions are satisfied at order $m-1$ with $m\geq 2$. In the case $m=2$, the compatibility conditions are
$$
{\ovv^{\rm in}}_{\vert_{x=0}} = \ux_1^{\rm in} \quad\mbox{and}\quad 
-\mathtt{g} (\dx \zeta^{\rm in})_{\vert_{x=0}}
 = \mathtt{k}\ux_{\rm eq} + \frac{\rho\mathtt{g}}{2\mathtt{m}}
  \bigl( (\zeta^{\rm in})^2 + 2h_0\zeta^{\rm in} \bigr)_{\vert_{x=0}}.
$$
\end{remark}

\subsection{Local well-posedness}
We can now state the main result of this section, which shows the local well-posedness of the wave-structure 
interaction problem presented in \S \ref{sectprespb1}.

\begin{theorem}
Let $m\geq 2$ be an integer.
If the initial data $(\zeta^{\rm in},\ovv^{\rm in})^{\rm T}\in H^m(\R_+)$ and  $\ux_1^{\rm in} \in \R$ satisfy 
\[
\inf_{x\in\R_{+}}\bigl( \sqrt{\mathtt{g}(h_0+\zeta^{\rm in}(x))} - |\ovv^{\rm in}(x)| \bigr) > 0
\]
and the compatibility conditions up to order $m-1$ in the sense of Definition \ref{defcomppb1}, 
then there exist $T>0$ and a unique solution $(\zeta,\ovv,\ux)$ to \eqref{SW2}--\eqref{NewtonSW2} with 
$(\zeta,\ovv)\in \WW^m(T)$ and $\ux\in H^{m+2}(0,T)$, and $\varphi$ given by \eqref{diffL}. 
\end{theorem}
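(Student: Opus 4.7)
The plan is to apply the free boundary framework of \S \ref{sectFB1} (Theorem \ref{theoIBVP4}) to the reformulated problem \eqref{pb1recast}--\eqref{BCSWt}, treating the nonlocal boundary condition $\unu\cdot u_{\vert_{x=0}} = \mathcal{G}(\unu^\perp\cdot u_{\vert_{x=0}})$ via a Picard iteration in which $\mathcal{G}$ is evaluated at the previous iterate. The crucial structural fact that makes this work is the regularity-gaining property of $\mathcal{G}$ proved in Lemma \ref{lemmaG}: if $\zeta^n_{\vert_{x=0}}\in H^m(0,T)$, then $g^{n+1}:=\mathcal{G}(\zeta^n_{\vert_{x=0}})\in H^{m+1}(0,T)$, so plugging this into the linear theory of Theorem \ref{theoIBVP3} does not cost derivatives.

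First, we set up the iterative scheme. Choose an initial iterate $u^0\in H^{m+1/2}(\R\times\R_+)$ whose time derivatives at $t=0$ coincide with the $\{u_k^{\rm in}\}_{k=0}^m$ constructed in \eqref{pb1comp}, together with $\ux^0(t)=\sum_{k=0}^{m+1}\frac{t^k}{k!}\ux_k^{\rm in}$, where $\{\ux_k^{\rm in}\}_{k=1}^{m+1}$ is defined by \eqref{xk+2in} and the relation $\ux_1^{\rm in}=\ovv^{\rm in}_{\vert_{x=0}}$ furnished by the order-$0$ compatibility condition. Given $(u^n,\ux^n)$, set $\varphi^n(t,x)=x+\int_0^t\ovv^n(t',x){\rm d}t'$ (which automatically satisfies $\varphi^n(t,0)=\ux^n(t)$ because the ODE $\dot\ux^n=\ovv^n_{\vert_{x=0}}$ will hold at the fixed point), compute $g^{n+1}:=\mathcal{G}(\zeta^n_{\vert_{x=0}})$ by Lemma \ref{lemmaG}, and define $u^{n+1}$ as the solution of the linear initial boundary value problem
\begin{equation*}
\dt u^{n+1}+\cA(u^n,\partial\varphi^n)\dx u^{n+1}=0 \quad\mbox{in } \Omega_T,\qquad
u^{n+1}_{\vert_{t=0}}=u^{\rm in},\qquad
\unu\cdot u^{n+1}_{\vert_{x=0}}=g^{n+1}(t);
\end{equation*}
finally, let $\ux^{n+1}$ be given by integration of $\dot\ux^{n+1}=g^{n+1}$ with $\ux^{n+1}(0)=0$.

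Next, I would prove uniform boundedness: for $M$ large and $T_1\in(0,T]$ small (both independent of $n$),
\begin{equation*}
\|u^n\|_{\WW^m(T_1)}+|u^n_{\vert_{x=0}}|_{m,T_1}\leq M,\qquad
\|u^n(t)-u^{\rm in}\|_{L^\infty}\leq \delta_0 \mbox{ for } 0\leq t\leq T_1.
\end{equation*}
Under this bootstrap, Lemma \ref{lemdiffeo} gives the requisite control of $\varphi^n$, Lemma \ref{lemmaG} yields $|g^{n+1}|_{H^m(0,T_1)}\leq C(M)(1+\sqrt{T_1})$, and Theorem \ref{theoIBVP3} applied to the linear problem for $u^{n+1}$ closes the induction, provided we verify that the compatibility conditions up to order $m-1$ of Definition \ref{defcompVC} hold for the linear problem at each step: this is where we use that the $u_k^{\rm in}$ defined by \eqref{pb1comp} together with the $\ux_k^{\rm in}$ defined by \eqref{xk+2in} satisfy the compatibility conditions of Definition \ref{defcomppb1}, i.e.\ $\unu\cdot u_k^{{\rm in}}_{\vert_{x=0}}=\ux_{k+1}^{\rm in}=(\dt^{k}g^{n+1})_{\vert_{t=0}}$ for all $0\leq k\leq m-1$ (the last equality follows from the fact that $\mathcal{G}$ and the $c_{2,k}$'s in \eqref{xk+2in} come from the same ODE \eqref{NewtonSW2}). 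Strict hyperbolicity and the uniform Kreiss--Lopatinski\u{\i} condition (i.e.\ $|\unu\cdot\mathbf{e}_+(u^n_{\vert_{x=0}})|$ bounded below) follow from the open subcritical condition $\sqrt{\mathtt{g}h^{\rm in}}>|\ovv^{\rm in}|$ provided $T_1$ is sufficiently small.

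Finally, for convergence I would estimate $u^{n+2}-u^{n+1}$ in $\WW^{m-1}(T_1)$ as in the proof of Theorem \ref{theoIBVP4}, using \eqref{eqmm1.5} in Proposition \ref{propAl}; the new element is that the boundary source $g^{n+2}-g^{n+1}=\mathcal{G}(\zeta^{n+1}_{\vert_{x=0}})-\mathcal{G}(\zeta^n_{\vert_{x=0}})$ is controlled by $|(\zeta^{n+1}-\zeta^n)_{\vert_{x=0}}|_{m-1,T_1}$ via a Lipschitz estimate for $\mathcal{G}$ (a direct variant of Lemma \ref{lemmaG} applied to the difference of two solutions of the Newton ODE, which is linear in the quadratic discrepancy and thus of Lipschitz type on any bounded set), and this Lipschitz bound comes with a factor $\sqrt{T_1}$ that makes the map contractive after possibly shrinking $T_1$. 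Thus $(u^n,\widetilde\varphi^n)$ converges in $\WW^{m-1}(T_1)\cap W^{1,\infty}(\Omega_{T_1})$, the limit $(u,\varphi)$ solves \eqref{pb1recast}, standard compactness recovers $u\in\WW^m(T_1)$ and $|u_{\vert_{x=0}}|_{m,T_1}<\infty$, and Lemma \ref{lemmaG} upgrades $\dot\ux=\ovv_{\vert_{x=0}}\in H^{m+1}(0,T_1)$ to $\ux\in H^{m+2}(0,T_1)$. Uniqueness follows by the same difference argument. The main obstacle is the bookkeeping around the compatibility conditions: showing that Definition \ref{defcomppb1} for the nonlocal free boundary problem exactly translates into Definition \ref{defcompVC} for each linear iterate, which requires matching the inductive formula \eqref{xk+2in} coming from Newton's equation with the time-differentiated nonlocal boundary condition $(\dt^k g^{n+1})_{\vert_{t=0}}$, and then carrying this compatibility through the iteration.
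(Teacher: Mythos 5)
Your overall plan---iterating the nonlocal boundary term $\mathcal{G}(\zeta^n_{\vert_{x=0}})$ and exploiting the regularity gain of Lemma \ref{lemmaG}---is the same as the paper's. The structural choice differs: you unwind the free boundary machinery and linearize the interior coefficients at the same time, so each iterate $u^{n+1}$ only requires the linear moving-domain Theorem \ref{theoIBVP3}; the paper instead freezes $g^n=\mathcal{G}(\zeta^n_{\vert_{x=0}})$ and then solves a full kinematic free boundary problem for $(u^{n+1},\ux^{n+1})$ by invoking Theorem \ref{theoIBVP4} as a black box. Both routes are viable, and yours makes the role of Lemma \ref{lemmaG} somewhat more transparent, but the paper's nested iteration is cleaner at the convergence stage because the difference estimates of Theorem \ref{theoIBVP4} are available off the shelf rather than having to be re-derived.

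There is, however, a genuine gap in how you close the uniform bound. Your stated estimate $|g^{n+1}|_{H^m(0,T_1)}\leq C(M)(1+\sqrt{T_1})$ cannot close the bootstrap: the constant produced by Lemma \ref{lemmaG} grows like $M^2$ in the bootstrap parameter $M$ (through the quadratic term $(1+|\zeta_{\rm b}|_{W^{[m/2],\infty}})|\zeta_{\rm b}|_{H^m}$), so feeding your bound into Theorem \ref{theoIBVP3} gives $\|u^{n+1}\|_{\WW^m(T_1)}\leq C(K_0)e^{C(M)T_1}\bigl(\opnorm{u(0)}_m+C(M)(1+\sqrt{T_1})\bigr)$, which has no fixed point $M$ for any choice of $T_1$. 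What is actually needed, and what the paper uses, is that the time derivatives of $g^{n+1}$ at $t=0$ are \emph{fixed along the iteration}: since the initial Taylor jet $(\dt^k u^n)_{\vert_{t=0}}=u_k^{\rm in}$ is preserved, one has $(\dt^k g^{n+1})_{\vert_{t=0}}=\ux_{k+1}^{\rm in}$ for $0\leq k\leq m$, independent of $n$. Combining this with the $H^{m+1}$ gain of Lemma \ref{lemmaG} gives the refined bound $|g^{n+1}|_{H^m(0,T_1)}\leq \sqrt{T_1}\sum_{j=1}^{m+1}|\ux_j^{\rm in}|+T_1|g^{n+1}|_{H^{m+1}(0,T_1)}\leq \sqrt{T_1}\,C_0+T_1\,C(M)$, in which the first term is a data-only constant and the second can be absorbed by shrinking $T_1$. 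One then fixes a data-only $M_0$, sets $M:=C_1(M_0)$ (the constant from Theorem \ref{theoIBVP3}), and picks $T_1=T_1(M)$ small so that $|g^{n+1}|_{H^m(0,T_1)}\leq M_0$. The regularity gain alone does not suffice; it is precisely this interplay between the $H^{m+1}$ bound and the fixed initial jet that renders the boundary data effectively small in $H^m$ as $T_1\to0$ and lets the induction close.
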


\begin{proof}
The proof is a small variant of the proof of Theorem \ref{theoIBVP4}. 
We define the phase space $\cU$ of $u=(\zeta,\ovv)^{\rm T}$ by 
\[
\cU = \{ u=(\zeta,\ovv)^{\rm T} \in \R^2 \,|\, \sqrt{\mathtt{g}(h_0+\zeta)} - |\ovv|>0 \}.
\]
Then, we can readily check that all the conditions in Assumption \ref{asshypQLFB} are satisfied with 
$\chi(u)=\ovv$ and $\unu=(0,1)^{\rm T}$. 
Moreover, once $u^n=(\zeta^n,\ovv^n)^{\rm T} \in \WW^m(T)$ is given so that 
\begin{equation}\label{unifest4}
\begin{cases}
(\dt^k u^n)_{\vert_{t=0}}=u_k^{\rm in} \quad\mbox{for}\quad k=0,1,\ldots,m-1, \\
\|u^n\|_{\WW^m(T)} + |{u^n}_{\vert_{x=0}}|_{m,T} \leq M_1,
\end{cases}
\end{equation}
we can check that the data $u^{\rm in}$ and $g^n(t)=\mathcal{G}(\unu^\perp\cdot {u^n}_{\vert_{x=0}})$ for the problem 
\[
\begin{cases}
\dt u + \cA(u,\partial\varphi)\dx u = 0 & \mbox{in}\quad \Omega_T, \\
u_{\vert_{t=0}} = u^{\rm in}(x) & \mbox{on}\quad \R_{+}, \\
\unu\cdot u_{\vert_{x=0}} = g^n(t) & \mbox{on}\quad (0,T),
\end{cases}
\]
\[
\dot\ux = \unu\cdot u_{\vert_{x=0}}, \qquad \ux_{\vert_{t=0}}=0,
\]
satisfy the compatibility conditions up to order $m-1$ in the sense of Definition \ref{defcompfbp}, 
and we can apply Theorem \ref{theoIBVP4} to show a unique existence of the solution 
$u=(\zeta,\ovv)^{\rm T} \in \WW^m(T_1)$ and $\ux \in H^{m+1}(0,T_1)$ to this problem for some $T_1 \in (0,T]$ 
depending on $M_1$. 
We denote by $u^{n+1}$ this solution $u$. 
Furthermore, we see that $u^{n+1}$ satisfies $(\dt^k u^{n+1})_{\vert_{t=0}}=u_k^{\rm in}$ for $k=0,1,\ldots,m-1$ and 
\[
\|u^{n+1}\|_{\WW^m(T_1)} + |{u^{n+1}}_{\vert_{x=0}}|_{m,T_1} \leq C_1(|\mathcal{G}(\unu^\perp\cdot {u^n}_{\vert_{x=0}})|_{H^m(0,T_1)}).
\]
Here, by Lemma \ref{lemmaG} we have 
\[
|\mathcal{G}(\unu^\perp\cdot {u^n}_{\vert_{x=0}})|_{H^{m+1}(0,T_1)} \leq C(M_1,T_1).
\]
On the other hand, we have 
\begin{align*}
|\mathcal{G}(\unu^\perp\cdot {u^n}_{\vert_{x=0}})|_{H^m(0,T_1)}
&\leq \sqrt{T_1}\sum_{j=1}^{m+1}|\ux_j^{\rm in}|
 + T_1|\mathcal{G}(\unu^\perp\cdot {u^n}_{\vert_{x=0}})|_{H^{m+1}(0,T_1)},
\end{align*}
where we used $(\dt^k \mathcal{G}(\unu^\perp\cdot {u^n}_{\vert_{x=0}}))_{\vert_{t=0}} = \ux_{k+1}^{\rm in}$ 
for $k=0,1,\ldots,m$. 
Therefore, for any fixed $M_0>0$ if we define $M_1>0$ by $M_1=C_1(M_0)$ and choose $T_1=T_1(M_0)$ 
sufficiently small, then we have 
\[
|\mathcal{G}(\unu^\perp\cdot {u^n}_{\vert_{x=0}})|_{H^m(0,T_1)} \leq M_0,
\]
so that $u^{n+1}$ satisfies \eqref{unifest4} with $T$ replaced by $T_1$. 
Now, we can iterate the above procedure to construct a sequence of approximate solutions 
$\{(\zeta^n,\ovv^n,\ux^n)\}_n$, which satisfy the uniform bounds. 
As in the proof of Theorem \ref{theoIBVP4}, we can prove the convergence of these approximate solutions 
to the solution $(\zeta,\ovv,\ux)$ to \eqref{pb1recast}--\eqref{BCSWt}. 
This solution satisfies $\dot{\ux} = \mathcal{G}(\unu^\perp\cdot {u}_{\vert_{x=0}}) \in H^{m+1}(0,T_1)$, 
so that we have the regularity $\ux \in H^{m+2}(0,T_1)$. 
\end{proof}

\section{Shallow water model with a floating body on the water surface} \label{sectfloat}
We turn to analyze other examples of wave-structure interaction in which the fluid occupies an infinite canal 
and a floating rigid body is placed on the water surface. 
We follow the approach proposed in \cite{Lannes2017} where the free surface Euler equations are reformulated 
in terms of the free surface elevation and of the horizontal water flux.
Under this approach, the pressure exerted by the fluid on the floating body can be viewed as the Lagrange 
multiplier associated to the constraint that, under the body, the surface of the fluid coincides 
with the bottom of the body.

As shown in \cite{Lannes2017}, this approach can be used also in the shallow water approximation, 
replacing the free surface Euler equations by the much simpler nonlinear shallow water equations. 
This is the framework that we shall consider here, addressing three cases; the floating body is fixed, 
the motion of the body is prescribed, and the body moves freely according to Newton's laws 
under the action of the gravitational force and the pressure from the air and from the water. 
The case of a floating body moving only vertically and with vertical lateral walls has been considered in 
\cite{Lannes2017} in $1D$, in \cite{Bocchi} for a $2D$ configuration with radial symmetry, and numerical computations have been proposed in \cite{Bosi}.
For such configurations, the horizontal projection of the portion of the solid in contact with the water is 
independent of time. 
We consider here the more complex situation of nonvertical lateral walls: 
even in the case of a fixed object, determining the portion of the solid in contact with the water 
is then a free boundary problem that is difficult to handle; in the numerical study \cite{Godlewski} for instance, the authors use a compressible approximation of the equations in order to remove this issue.
The configuration under study here is described in Figure \ref{ffloating}.

\bigskip
\begin{figure}[h]
\begin{center}
\includegraphics[width=0.7\linewidth]{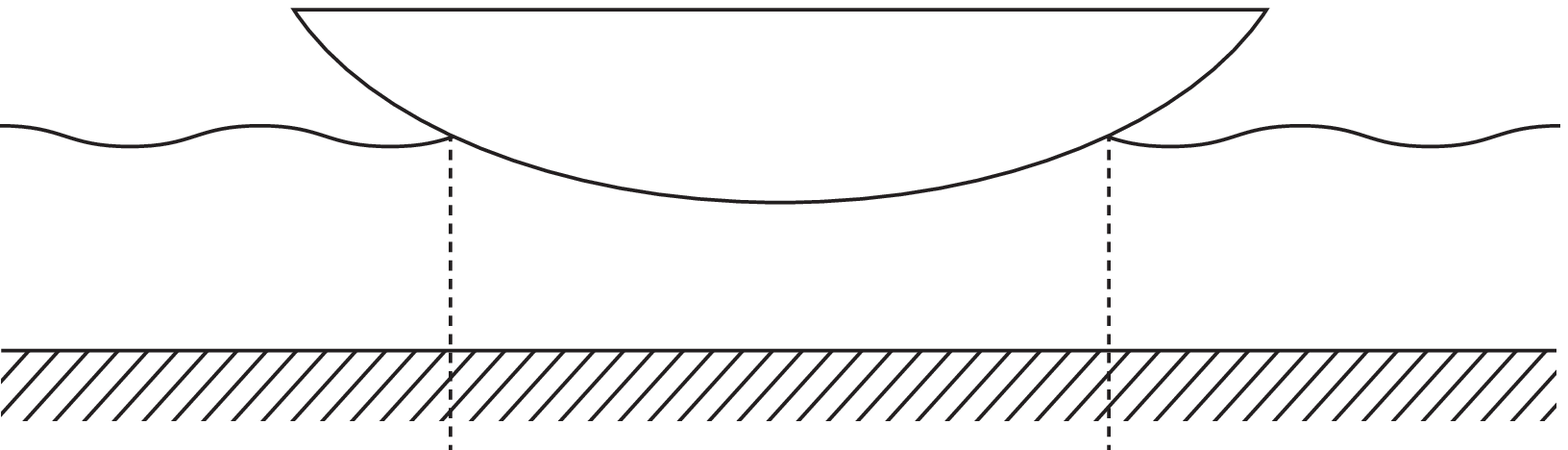}
\end{center}
\setlength{\unitlength}{1pt}
\begin{picture}(0,0)
\put(-75,6){$x_{-}(t)$}
\put(60,6){$x_{+}(t)$}
\put(-5,10){$\mathcal{I}(t)$}
\put(-130,10){$\mathcal{E}_{-}(t)$}
\put(110,10){$\mathcal{E}_{+}(t)$}
\put(-25,54){$z=Z_{\rm i}(t,x)$}
\put(110,85){$z=Z_{\rm e}(t,x)$}
\put(-160,85){$z=Z_{\rm e}(t,x)$}
\end{picture}
\caption{Waves interacting with a floating body}\label{ffloating}
\end{figure}

\subsection{Presentation of the equations for the water}\label{sectpresfloat}
We consider the two-dimensional water waves over a flat bottom with a floating body on the water surface 
under the assumption that there are only two contact points where the water, the air, and the body meet. 
These contact points at time $t$ are denoted by $x_{-}(t)$ and $x_{+}(t)$, which satisfy $x_{-}(t)<x_{+}(t)$. 
Let $\mathcal{I}(t)$ and $\mathcal{E}(t)$ be the projections on the horizontal line of the parts where 
the water surface contacts with the floating structure and the air, respectively, that is, 
\[
\begin{cases}
\mathcal{I}(t) = (x_{-}(t),x_{+}(t)), \\
\mathcal{E}(t) = \mathcal{E}_{-}(t) \cup \mathcal{E}_{+}(t), \quad 
 \mathcal{E}_{-}(t)=(-\infty,x_{-}(t)), \quad \mathcal{E}_{-}(t)=(x_{+}(t),\infty). 
\end{cases}
\]
The corresponding water regions to $\mathcal{I}(t)$ and $\mathcal{E}(t)$ will be called the interior and 
the exterior regions, respectively. 
We consider the case where overhanging waves do not occur and suppose that the surface elevation of the water 
in the exterior region is denoted by $Z_{\rm e}(t,x)$ and that the underside of the floating body is 
parameterized by $Z_{\rm i}(t,x)$, where $x$ is the horizontal coordinate. 
Let $h_0$ be the mean depth of the water, so that the water depth in the interior and exterior regions are 
given by $H_{\rm i}(t,x) = h_0 + Z_{\rm i}(t,x)$ and $H_{\rm e}(t,x) = h_0 + Z_{\rm e}(t,x)$, respectively. 
We denote by $\ovV(t,x)$ the vertically averaged horizontal velocity and put $Q = H\ovV$, which is the 
horizontal flux of the water. 
The restrictions of $Q$ to the interior and the exterior regions will be denoted by 
$Q_{\rm i}$ and $Q_{\rm e}$, respectively. 
Let $\underline{P}_{\rm i}(t,x)$ be the pressure of the water at the underside of the floating body. 
This pressure is an important unknown quantity and should be determined together with the motion of the water. 
In the case where the floating body moves freely, 
the body interacts with the water through the force exerted by this pressure. 
The shallow water model was derived from the full water wave equations by using the assumption that 
$\dx \big( \int_{-h_0}^\zeta V(t,x,z)^2 dz\big) \approx \dx \big( H \ovV^2\big)$, 
where $V(t,x,z)$ denotes the horizontal component of the velocity field in the fluid, 
and that the pressure $P(t,x,z)$ can be approximated by the hydrostatic pressure, that is, 
\[
P(t,x,z) = 
\begin{cases}
 P_{\rm atm} - \rho\mathtt{g}(z-Z_{\rm e}(t,x)) & \mbox{in}\quad \mathcal{E}(t), \\
 \underline{P}_{\rm i}(t,x) - \rho\mathtt{g}(z-Z_{\rm i}(t,x)) & \mbox{in}\quad \mathcal{I}(t),
\end{cases}
\]
where $\rho$ is the density of the water, $\mathtt{g}$ the gravitational constant, and 
$P_{\rm atm}$ the atmospheric pressure (see \cite{Lannes2017}). 
Then, the shallow water model for the water has the form 
\begin{equation}\label{eqext}
\begin{cases}
\dt Z_{\rm e} + \dx Q_{\rm e} = 0 & \mbox{in}\quad \mathcal{E}(t), \\
\dt Q_{\rm e} + \dx \bigl( \frac{Q_{\rm e}^2}{H_{\rm e}} + \frac12\mathtt{g}H_{\rm e}^2 \bigr) = 0 
 &\mbox{in}\quad \mathcal{E}(t),
\end{cases}
\end{equation}
in the exterior region, while under the object we have
\begin{equation}\label{eqint}
\begin{cases}
\dt Z_{\rm i} + \dx Q_{\rm i} = 0 & \mbox{in}\quad \mathcal{I}(t), \\
\dt Q_{\rm i} + \dx \bigl( \frac{Q_{\rm i}^2}{H_{\rm i}} + \frac12\mathtt{g}H_{\rm i}^2 \bigr)
 = -\frac{1}{\rho}H_{\rm i}\dx \underline{P}_{\rm i}
 &\mbox{in}\quad \mathcal{I}(t),
\end{cases}
\end{equation}
with transmission conditions
\begin{equation}\label{BC1}
H_{\rm e} = H_{\rm i}, \quad Q_{\rm e} = Q_{\rm i}, \quad \underline{P}_{\rm i} = P_{\rm atm} 
\quad \mbox{on} \quad \Gamma(t),
\end{equation}
where $\Gamma(t)=\partial\mathcal{I}(t)=\partial\mathcal{E}(t)$ denotes the contact points. 
We also need to prescribe equations of the motion of the floating body. 
Such equations will be given in the following sections according to the cases where the floating body 
is fixed, the motion of the body is prescribed, or the body moves freely.

\subsubsection{Basic structure of the equations}\label{sectbseq}
Once the equations of the motion of the floating body are given, as we will see in the following sections, 
we can solve the equations in the interior region \eqref{eqint} and the problem will be reduced to 
the type considered in \S \ref{sectVCm2} with $U=(Z_{\rm e},Q_{\rm e})^{\rm T}$. 
We note that \eqref{eqext} can be written in the matrix form 
\[
\dt U + A(U) \dx U = 0.
\]
As was explained in Example \ref{ex1}, the eigenvalues $\lambda_{\pm}(U)$ of the coefficient matrix 
$A(U)$ and the corresponding unit eigenvectors $\mathbf{e}_{\pm}(U)$ are given by 
\[
\lambda_{\pm}(U) = \sqrt{\mathtt{g}H_{\rm e}} \pm \frac{Q_{\rm e}}{H_{\rm e}}, \qquad
 \mathbf{e}_{\pm}(U) = \frac{1}{\sqrt{1+\lambda_{\pm}(U)^2}}
  \binom{1}{\pm\lambda_{\pm}(U)}.
\]
Moreover, the unit vector $\mu_0$ defined in Remark \ref{remarkIC2} is in this case given by 
$\mu_0=(1,0)^{\rm T}$, so that the condition $\mu_0 \cdot \mathbf{e}_{+}(U) \ne0$ is automatically satisfied. 
As was explained in \S \ref{sectVCm2}, the discontinuity of $\dx U$ at the contact points plays an 
important role to determine the contact points $x_{\pm}$. 
Concerning this discontinuity condition, we have the following proposition.

\begin{proposition}\label{propdisconti}
Suppose that $U_{\rm e} = (Z_{\rm e},Q_{\rm e})^{\rm T}$, $U_{\rm i} = (Z_{\rm i},Q_{\rm i})^{\rm T}$, 
$\underline{P}_{\rm i}$, and $x_{\pm}$ satisfy \eqref{eqext}--\eqref{BC1}. 
Then, the condition $\dx U_{\rm e} - \dx U_{\rm i} \ne 0$ on $\Gamma(t)$ 
is equivalent to $\dx Z_{\rm e} - \dx Z_{\rm i} \ne 0$ on $\Gamma(t)$. 
\end{proposition}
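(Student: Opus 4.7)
The plan is to prove the equivalence by establishing each direction separately. The implication ``$\dx Z_{\rm e} - \dx Z_{\rm i} \ne 0$ on $\Gamma(t)$ implies $\dx U_{\rm e} - \dx U_{\rm i} \ne 0$ on $\Gamma(t)$'' is immediate because $Z$ is the first component of $U$. The nontrivial content is therefore the converse, and to prove it I intend to show that at a contact point $x = x_\pm(t)$, the jump $\dx Q_{\rm e} - \dx Q_{\rm i}$ is entirely determined by the jump $\dx Z_{\rm e} - \dx Z_{\rm i}$ through a Rankine--Hugoniot-type scalar relation.

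To derive this relation I would first differentiate the transmission condition $Z_{\rm e}(t,x_\pm(t)) = Z_{\rm i}(t,x_\pm(t))$, which follows from $H_{\rm e} = H_{\rm i}$ in \eqref{BC1}, with respect to $t$ along the contact point path. This yields, at $x = x_\pm(t)$, the identity $(\dt Z_{\rm e} - \dt Z_{\rm i}) + \dot{x}_\pm(\dx Z_{\rm e} - \dx Z_{\rm i}) = 0$. The crucial observation is that the mass conservation equation $\dt Z + \dx Q = 0$ holds in identical form in both the exterior region \eqref{eqext} and the interior region \eqref{eqint}: the additional pressure forcing $-\frac{1}{\rho}H_{\rm i}\dx \underline{P}_{\rm i}$ appears only in the momentum equation, not in the mass equation. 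Substituting $\dt Z_{\rm e} = -\dx Q_{\rm e}$ and $\dt Z_{\rm i} = -\dx Q_{\rm i}$ into the above yields the key identity $\dx Q_{\rm e} - \dx Q_{\rm i} = \dot{x}_\pm (\dx Z_{\rm e} - \dx Z_{\rm i})$ on $\Gamma(t)$.

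From this identity the converse implication is immediate: if $\dx Z_{\rm e} - \dx Z_{\rm i} = 0$ at a contact point, then $\dx Q_{\rm e} - \dx Q_{\rm i} = 0$ there as well, so $\dx U_{\rm e} - \dx U_{\rm i} = 0$. Contrapositively, $\dx U_{\rm e} - \dx U_{\rm i} \ne 0$ forces $\dx Z_{\rm e} - \dx Z_{\rm i} \ne 0$, which closes the equivalence.

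The only mild point requiring care is that both sides of the identity must be interpreted as one-sided traces from the respective sides of $\Gamma(t)$, which is legitimate under the regularity implicit in \eqref{eqext}--\eqref{BC1}; there is no genuine obstacle. It is worth noting that differentiating instead the other transmission condition $Q_{\rm e} = Q_{\rm i}$ in time would bring the pressure gradient $\dx\underline{P}_{\rm i}$ into play via the momentum equations, but this is unnecessary for the present statement: the equivalence follows purely from the shared mass conservation law on both sides of the contact point.
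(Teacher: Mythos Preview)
Your proof is correct and follows essentially the same route as the paper: differentiate the transmission condition $Z_{\rm e}=Z_{\rm i}$ along $x=x_\pm(t)$, substitute the continuity equations $\dt Z_{\rm e}=-\dx Q_{\rm e}$ and $\dt Z_{\rm i}=-\dx Q_{\rm i}$, and obtain $\dx Q_{\rm e}-\dx Q_{\rm i}=\dot{x}_\pm(\dx Z_{\rm e}-\dx Z_{\rm i})$, from which the equivalence is immediate. If anything, your write-up is slightly more explicit than the paper's in spelling out both directions of the equivalence.
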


\begin{proof}
Differentiating the boundary condition $Z_{\rm e}(t,x_{\pm}(t)) = Z_{\rm i}(t,x_{\pm}(t))$ with respect to $t$, 
we obtain 
\[
\dt Z_{\rm e} + \dot{x}_{\pm}\dx Z_{\rm e}
 = \dt Z_{\rm i} + \dot{x}_{\pm}\dx Z_{\rm i} \quad\mbox{on}\quad \Gamma(t).
\]
By the continuity equations in the interior and the exterior regions, we have 
$\dt Z_{\rm e} = -\dx Q_{\rm e}$ and $\dt Z_{\rm i} = -\dx Q_{\rm i}$, so that 
\[
\dot{x}_{\pm} ( \dx Z_{\rm e} - \dx Z_{\rm i} ) = \dx Q_{\rm e} - \dx Q_{\rm i} \quad\mbox{on}\quad \Gamma(t).
\]
This gives the desired result. 
\end{proof}

\subsection{The case of a fixed floating body}\label{sectfixfloat}
In the case where the body is fixed, we impose the condition 
\begin{equation}\label{body1}
Z_{\rm i}=Z_{\rm lid} \quad\mbox{on}\quad \mathcal{I}(t),
\end{equation}
where $Z_{\rm lid}=Z_{\rm lid}(x)$ is a given function defined on an open interval $I_{\rm f}$.

\subsubsection{Reformulation of the equations}\label{sectreform}
We begin to solve the equations in the interior region \eqref{eqint}. 
It follows from \eqref{body1} that $H_{\rm i}(t,x)=h_0+Z_{\rm lid}(x)$ does not depend on $t$, so that 
the continuity equation in \eqref{eqint} yields $\dx Q_{\rm i}=0$. 
This means that $Q_{\rm i}$ does not depend on $x$, so that we can write $Q_{\rm i}(t,x)=q_{\rm i}(t)$. 
Plugging this into the momentum equation in \eqref{eqint} we have 
\[
\dot{q}_{\rm i} + \dx\Bigl( \frac{q_{\rm i}^2}{H_{\rm i}} + \frac12\mathtt{g}H_{\rm i}^2 \Bigr)
 = -\frac{1}{\rho}H_{\rm i} \dx\underline{P}_{\rm i},
\]
which is equivalent to 
\[
\frac{\dot{q}_{\rm i}}{H_{\rm i}}
 + \dx\Bigl( \frac12\frac{q_{\rm i}^2}{H_{\rm i}^2} + \mathtt{g}H_{\rm i} \Bigr)
  = -\frac{1}{\rho} \dx\underline{P}_{\rm i}.
\]
Therefore, $\underline{P}_{\rm i}$ satisfies a simple boundary value problem 
\begin{equation}\label{bvpp1}
\begin{cases}
 \dx\underline{P}_{\rm i} = -\rho \bigl( \frac{\dot{q}_{\rm i}}{H_{\rm i}}
 + \dx\bigl( \frac12\frac{q_{\rm i}^2}{H_{\rm i}^2} + \mathtt{g}H_{\rm i} \bigr) \bigr)
  & \mbox{in}\quad \mathcal{I}(t), \\
 \underline{P}_{\rm i} = P_{\rm atm} & \mbox{on}\quad \Gamma(t).
\end{cases}
\end{equation}

\begin{notation}
For a function $F=F(t,x)$, we put $\jump{F}=F(t,x_{-}(t))-F(t,x_{+}(t))$. 
\end{notation}

Integrating the first equation in \eqref{bvpp1} and using the boundary condition, we obtain 
\begin{equation}\label{eqqi}
\dot{q}_{\rm i}\int_{\mathcal{I}(t)}\frac{1}{H_{\rm i}}
 + \jump{ \frac12\frac{q_{\rm i}^2}{H_{\rm i}^2} + \mathtt{g}H_{\rm i} } = 0,
\end{equation}
which is a solvability condition of the boundary value problem \eqref{bvpp1} for $\underline{P}_{\rm i}$. 
Conversely, once $q_{\rm i}$ and $x_{\pm}$ are given so that \eqref{eqqi} holds, 
we can resolve \eqref{bvpp1} for the pressure $\underline{P}_{\rm i}$ explicitly as 
\begin{align*}
\underline{P}_{\rm i}(t,x)
&= P_{\rm atm} -\rho\biggl\{ \dot{q}_{\rm i}(t) \int_{x_{-}(t)}^x\frac{{\rm d}x'}{H_{\rm i}(x')} \\
&\quad
 + \frac12q_{\rm i}(t)^2\biggl( \frac{1}{H_{\rm i}(x)^2} - \frac{1}{H_{\rm i}(x_{-}(t))^2} \biggr)
 + \mathtt{g}( H_{\rm i}(x) - H_{\rm i}(x_{-}(t)) ) \biggr\}.
\end{align*}
Therefore, the equations in the interior region \eqref{eqint} are reduced to a scalar ordinary 
differential equation \eqref{eqqi}.

We turn to reformulate the equations in the exterior region \eqref{eqext}. 
As in \S \ref{sectVCm2}, we will use a coordinate transformation to reduce the equations on 
the unknown region $\mathcal{E}(t)$ to those on a fixed region $\underline{\mathcal{E}}$. 
Let $\ux_{-}^{\rm in}$ and $\ux_{+}^{\rm in}$ be the initial contact points at time $t=0$ such that 
$\ux_{-}^{\rm in} < \ux_{+}^{\rm in}$ and put $\underline{\mathcal{E}}_{-} = (-\infty,\ux_{-}^{\rm in})$, 
$\underline{\mathcal{E}}_{+} = (\ux_{+}^{\rm in},\infty)$, 
and $\underline{\mathcal{E}} = \underline{\mathcal{E}}_{-} \cup \underline{\mathcal{E}}_{+}$. 
We use a diffeomorphism $\varphi(t,\cdot) : \underline{\mathcal{E}} \to \mathcal{E}(t)$ and put 
$\zeta_{\rm e} = Z_{\rm e}\circ\varphi$, $h_{\rm e} = H_{\rm e}\circ\varphi$, 
$q_{\rm e} = Q_{\rm e}\circ\varphi$, and $\zeta_{\rm i} = Z_{\rm i}\circ\varphi$. 
Such a diffeomorphism $\varphi$ can be constructed as in \eqref{diffeo2}, that is, 
\begin{equation}\label{diffeo3}
\varphi(t,x)=
\begin{cases}
x + \psi(\frac{x-\underline{x}_{-}^{\rm in}}{\varepsilon})(x_{-}(t)-\underline{x}_{-}^{\rm in})
 & \mbox{for}\quad x\in \underline{E}_{-}, \\
x + \psi(\frac{x-\underline{x}_{+}^{\rm in}}{\varepsilon})(x_{+}(t)-\underline{x}_{+}^{\rm in})
 & \mbox{for}\quad x\in \underline{E}_{+}, 
\end{cases}
\end{equation}
with an appropriate choice of $\varepsilon=\varepsilon_0$ and a cut-off function $\psi \in C_0^\infty(\R)$ 
satisfying $\psi(x)=1$ for $|x| \leq 1$. 
As before, we will use the notation $\dx^\varphi$ and $\dt^\varphi$ which were defined by \eqref{dtphi}. 
Now, the problem under consideration is reduced to 
\begin{equation}\label{teqext}
\begin{cases}
\dt^\varphi \zeta_{\rm e} + \dx^\varphi q_{\rm e} = 0 & \mbox{in}\quad \underline{\mathcal{E}}, \\
\dt^\varphi q_{\rm e} + 2 \frac{q_{\rm e}}{h_{\rm e}}\dx^\varphi q_{\rm e}
 + \Bigl( \mathtt{g}h_{\rm e} - \frac{q_{\rm e}^2}{h_{\rm e}^2} \Bigr)\dx^\varphi \zeta_{\rm e} = 0
 & \mbox{in}\quad \underline{\mathcal{E}}, \\
\zeta_{\rm e} = \zeta_{\rm i}, \quad q_{\rm e} = q_{\rm i} & \mbox{on}\quad \partial\underline{\mathcal{E}},
\end{cases}
\end{equation}
with the interior value $q_i$ of the horizontal water flux given by
\begin{equation}\label{eqqi2}
\dot{q}_{\rm i} = -\frac{1}{\int_{\mathcal{I}(t)}\frac{1}{H_{\rm i}}}
 \jump{ \frac12\frac{q_{\rm i}^2}{H_{\rm i}^2} + \mathtt{g}H_{\rm i} }.
\end{equation}
We impose the initial conditions of the form 
\begin{equation}\label{ICs1}
(\zeta_{\rm e},q_{\rm e})_{\vert_{t=0}} = (\zeta_{\rm e}^{\rm in},q_{\rm e}^{\rm in})
 \quad\mbox{in}\quad \underline{\mathcal{E}}, \qquad {x_{\pm}}_{\vert_{t=0}} = \ux_{\pm}^{\rm in},
 \qquad {q_{\rm i}}_{\vert_{t=0}} = q_{\rm i}^{\rm in}.
\end{equation}

\subsubsection{Local well-posedness}
The equations in \eqref{teqext} can be written in the matrix form 
\[
\dt^\varphi u + A(u) \dx^\varphi u = 0,
\]
where $u=(\zeta_{\rm e},q_{\rm e})^{\rm T}$, so that \eqref{teqext}--\eqref{ICs1} is almost 
the same type as the problem \eqref{ODE}--\eqref{nlfbp} considered in \S \ref{sectext}. 
Therefore, the compatibility conditions for \eqref{teqext}--\eqref{ICs1} can be defined in the 
same way as Definition \ref{defCC2} in \S \ref{sectCC}. 
Here, we calculate $\ux_{\pm,1}^{\rm in} = (\dt x_{\pm})_{\vert_{t=0}}$ in terms of the initial data. 
Differentiating the boundary condition $\zeta_{\rm e}=\zeta_{\rm i}$ with respect to $t$, we have 
$\dt \zeta_{\rm e} = \dt \zeta_{\rm i}$ on $\partial\underline{\mathcal{E}}$, which is equivalent to 
$\dt^\varphi \zeta_{\rm e} + \dot{x}_{\pm}\dx^\varphi \zeta_{\rm e}
 = \dt^\varphi \zeta_{\rm i} + \dot{x}_{\pm}\dx^\varphi \zeta_{\rm i}$ on $\partial\underline{\mathcal{E}}$. 
By using $\dt^\varphi\zeta_{\rm e}=-\dx^\varphi q_{\rm e}$ and $\dt^\varphi\zeta_{\rm i}=0$, we see that 
$(\dx^\varphi \zeta_{\rm e}-\dx^\varphi \zeta_{\rm i})\dot{x}_{\pm} = \dx^\varphi q_{\rm e}$ on 
$\partial\underline{\mathcal{E}}$. 
Therefore, we obtain 
\begin{equation}\label{dtuxin}
\ux_{\pm,1}^{\rm in} = \biggl( \frac{\dx q_{\rm e}^{\rm in}}{\dx \zeta_{\rm e}^{\rm in}
 - \dx Z_{\rm lid}} \biggr)_{\vert_{\partial\underline{\mathcal{E}}_{\pm}}}.
\end{equation}
In view of this and the consideration in \S \ref{sectbseq}, we impose the following assumption on the data.

\begin{assumption}\label{assondata}
The data $(\zeta_{\rm e}^{\rm in},q_{\rm e}^{\rm in})$, $\ux_{\pm}^{\rm in}$, and $Z_{\rm lid}$ 
satisfy the following conditions. 
\begin{enumerate}
\item[{\bf i.}]
$\ux_{-} < \ux_{+}$, 
\item[{\bf ii.}]
$\inf_{x \in I_{\rm f}}( h_0 + Z_{\rm lid}(x)) > 0$, \;
 $\inf_{x \in \underline{\mathcal{E}}}( h_0 + \zeta_{\rm e}^{\rm in}(x)) > 0$,
\item[{\bf iii.}]
$\inf_{x \in \underline{\mathcal{E}}}\bigl( \sqrt{\mathtt{g}(h_0 + \zeta_{\rm e}^{\rm in}(x))}
 - \frac{|q_{\rm e}^{\rm in}(x)|}{h_0 + \zeta_{\rm e}^{\rm in}(x)} \bigr) > 0$,
\item[{\bf iv.}]
$\bigl( \sqrt{ \mathtt{g}(h_0 + \zeta_{\rm e}^{\rm in}) }
 - \bigl| \frac{q_{\rm e}^{\rm in}}{h_0 + \zeta_{\rm e}^{\rm in}} - \ux_{\pm,1}^{\rm in} \bigr|
  \bigr)_{\vert_{\partial\underline{\mathcal{E}}}} > 0$,
\item[{\bf v.}]
$(\dx Z_{\rm lid} - \dx \zeta_{\rm e}^{\rm in})_{\vert_{\partial\underline{\mathcal{E}}}} \ne 0$
\end{enumerate}
\end{assumption}

We can now state one of our main result in this section, which shows the well-posedness of the shallow water 
model with a fixed floating structure on the water surface.

\begin{theorem}\label{theoIBVP8}
Let $m\geq2$ be an integer and $I_{\rm f}$ an open interval. 
If the initial data $(\zeta_{\rm e}^{\rm in},q_{\rm e}^{\rm in}) \in H^m(\underline{\mathcal{E}})$, 
$\ux_{\pm}^{\rm in} \in I_{\rm f}$, 
$q_{\rm i}^{\rm in} \in \R$, and $Z_{\rm lid} \in W^{m,\infty}(I_{\rm f})$ satisfy the conditions in 
Assumption \ref{assondata}, where $\ux_{\pm,1}^{\rm in}$ is defined by \eqref{dtuxin}, 
and the compatibility conditions up to order $m-1$, 
then there exist $T>0$ and a unique solution $(\zeta_{\rm e},q_{\rm e},x_{\pm},q_{\rm i})$ to 
\eqref{teqext}--\eqref{ICs1} with $\varphi$ given by \eqref{diffeo3} in the class 
$\zeta_{\rm e},q_{\rm e} \in 
\cap_{j=0}^{m-1}C^j([0,T];H^{m-j}(\underline{\mathcal{E}}))$, $x_{\pm} \in H^m(0,T)$, and 
$q_{\rm i} \in H^{m+1}(0,T)$. 
\end{theorem}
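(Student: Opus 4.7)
The plan is to recast \eqref{teqext}--\eqref{ICs1} as two fully nonlinear free boundary problems of the type studied in \S \ref{sectVCm2}--\S \ref{sectext}, one on each connected component $\underline{\mathcal{E}}_\pm$ of $\underline{\mathcal{E}}$, coupled through the scalar ODE \eqref{eqqi2} for the interior flux $q_{\rm i}(t)$. On each side the exterior system is the strictly hyperbolic $2\times 2$ quasilinear shallow water system, and the transmission relations $\zeta_{\rm e}=\zeta_{\rm i}$, $q_{\rm e}=q_{\rm i}$ translate into a fully nonlinear boundary condition of the form $u^\pm_{\vert_{x=0}} = U^\pm_{\rm i}(t,0)$ with $U^\pm_{\rm i}(t,x) = (Z_{\rm lid}(x), q_{\rm i}(t))^{\rm T}$. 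This fits precisely the coupled free boundary/ODE framework of Theorem \ref{theoIBVP6}, with $W=q_{\rm i}$ and with $G_{\rm i}(W,x) = (Z_{\rm lid}(x), W)^{\rm T}$.

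Verifying the hypotheses of Theorem \ref{theoIBVP6} on each side is straightforward from Assumption \ref{assondata}: strict hyperbolicity with $\lambda_\pm(u^{\rm in})\geq c_0$ comes from (ii)--(iii), the subsonic condition $\lambda_\pm \mp \ux_{\pm,1}^{\rm in}>0$ from (iv), and the non-tangency condition $(\dx u^{\rm in} - \dx U^\pm_{\rm i})_{\vert_{x=0}}\ne 0$ reduces by Proposition \ref{propdisconti} to condition (v). The non-degeneracy hypothesis of Remark \ref{remarkIC2}, $\mu_0\cdot \mathbf{e}_+(u^{\rm in}_{\vert_{x=0}})\ne 0$, is automatic here: as noted in \S \ref{sectbseq}, $\mu_0 = (1,0)^{\rm T}$ and $\mathbf{e}_+$ has a strictly positive first component.

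The construction then follows an iterative scheme analogous to the proof of Theorem \ref{theoIBVP5}: given iterates $(u^\pm_n,x_{\pm,n},q_{\rm i}^n)$, one first produces $q_{\rm i}^{n+1}$ by integrating \eqref{eqqi2} with $x_\pm = x_{\pm,n}$ (a well-defined scalar ODE since $H_{\rm i}= h_0 + Z_{\rm lid}\geq c_0$ on the current interior interval, so the denominator $\int_{\mathcal{I}}1/H_{\rm i}$ stays bounded below), and then applies the left- and right-side free boundary problem of Theorem \ref{theoIBVP5} with boundary data built from $q_{\rm i}^{n+1}$ to produce $(u^\pm_{n+1},x_{\pm,n+1})$. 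Uniform bounds on a common time interval $[0,T]$ follow from the estimates of Theorem \ref{theoIBVP5} together with the observation that \eqref{eqqi2} gains one time-derivative ($q_{\rm i}^{n+1}\in H^{m+1}(0,T)$ as soon as $x_{\pm,n}\in H^m(0,T)$), exactly matching the final regularities announced in the theorem. Convergence of the scheme in the lower-order norm $\WW^{m-1}\cap W^{1,\infty}$ is obtained by a contraction argument identical to the one used in Step 1 of the proof of Theorem \ref{theoIBVP5}, and the compatibility conditions in the sense of Definition \ref{defCC2} propagate through the iteration; Proposition \ref{appdata} is invoked if needed to first approximate the data by smoother ones satisfying higher order compatibility.

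The main obstacle is the nonlocal coupling of the two sides through \eqref{eqqi2}, whose right-hand side depends simultaneously on $x_-(t)$, $x_+(t)$, and on traces of $Z_{\rm lid}$ at both contact points, so that the two free boundary problems cannot be decoupled and Theorem \ref{theoIBVP6} cannot be applied naively on one side alone. The point is nevertheless harmless because the nonlocal dependence is of ODE-type (smooth in its arguments with a denominator bounded below by Assumption \ref{assondata} (i)--(ii)), so that the coupling produces no derivative loss and the iteration closes. A subsidiary technical point is ensuring that $x_\pm(t)$ remain in the interval $I_{\rm f}$ where $Z_{\rm lid}$ is defined; this is secured by choosing $T$ small enough together with the parameter $\varepsilon=\varepsilon_0$ in the diffeomorphism \eqref{diffeo3}, exactly as in Lemma \ref{prepa}.
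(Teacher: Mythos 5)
Your proof is correct and takes essentially the same route as the paper's: an iterative scheme alternating between the scalar ODE \eqref{eqqi2} for $q_{\rm i}$ and the two one-sided fully nonlinear free boundary problems handled by Theorem \ref{theoIBVP5}, closing via the one-derivative gain from the ODE. Your explicit caveat that the nonlocal dependence of \eqref{eqqi2} on \emph{both} $x_\pm$ prevents a naive application of Theorem \ref{theoIBVP6} to a single side is a correct reading of what the paper's laconic phrase ``iterating this procedure as in the proof of Theorem \ref{theoIBVP6}'' actually requires, and your verification of Assumption \ref{assondata} against the hypotheses of Theorem \ref{theoIBVP5}/\ref{theoIBVP6} (using Proposition \ref{propdisconti} for (v) and the observation $\mu_0=(1,0)^{\rm T}$ from \S\ref{sectbseq}) is exactly what is needed, though your parenthetical justification that the denominator $\int_{\mathcal I}1/H_{\rm i}$ is bounded below should invoke the \emph{upper} bound on $H_{\rm i}$ (from $Z_{\rm lid}\in W^{m,\infty}$) together with $x_+-x_->0$, not the lower bound on $H_{\rm i}$.
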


\begin{proof}
Given $q_{\rm i} \in W^{m,\infty}(0,T)$, \eqref{teqext} forms the same type problem in each exterior regions 
$\underline{\mathcal{E}}_{-}$ and $\underline{\mathcal{E}}_{+}$ as the problem considered in \S \ref{sectVCm2}, 
so that we can apply Theorem \ref{theoIBVP5} to show the existence of the solution 
$(\zeta_{\rm e},q_{\rm e},x_{\pm})$ to \eqref{teqext} under the initial conditions in \eqref{ICs1} 
satisfying $x_{\pm} \in H^m(0,T_1)$ for some $T_1 \in (0,T]$. 
Conversely, given $x_{\pm} \in H^m(0,T)$, we can easily show the existence of the solution 
$q_{\rm i} \in H^{m+1}(0,T_1)$ to \eqref{eqqi2} under the initial condition in \eqref{ICs1} 
for some $T_1 \in (0,T]$. 
Iterating this procedure as in the proof of Theorem \ref{theoIBVP6} we can construct a sequence of approximate 
solutions, which converges to the desired solution. 
\end{proof}

\subsection{The case of a floating body with a prescribed motion}\label{sectprescfloat}
Since the floating body is allowed only to a solid motion, its motion is completely determined by 
$(x_G(t),z_G(t))$ the coordinates of the center of mass and $\theta(t)$ the rotational angle of the body. 
Without loss of generality, we have $\theta_{\vert_{t=0}}=0$. 
Suppose that the underside of the floating body is initially parameterized by $Z_{\rm lid}(x)$ on an open 
interval $I_{\rm f}$, that is, ${Z_{\rm i}}_{\vert_{t=0}}=Z_{\rm lid}$. 
Consider a point of the underside of the body and denote the coordinates of the point at $t=0$ by $(X,Z)$. 
Let the coordinates of the point at time $t$ be $(x,z)$. 
Then, it holds that 
\[
Z = Z_{\rm lid}(X), \qquad z = Z_{\rm i}(t,x),
\]
and that 
\[
\begin{pmatrix} x - x_G(t) \\ z - z_G(t) \end{pmatrix}
=
\begin{pmatrix}
 \cos\theta(t) & -\sin\theta(t) \\
 \sin\theta(t) &  \cos\theta(t)
\end{pmatrix}
\begin{pmatrix} X - x_G(0) \\ Z - z_G(0) \end{pmatrix}.
\]
Therefore, we obtain 
\begin{align}\label{expression1}
& (Z_{\rm i}(t,x) - z_G(t))\cos\theta(t) - (x - x_G(t))\sin\theta(t) + z_G(0) \\
&= Z_{\rm lid}\bigl( (x - x_G(t))\cos\theta(t) + (Z_{\rm i}(t,x) - z_G(t))\sin\theta(t) + x_G(0) \bigr).
\nonumber
\end{align}
This is the equation for the motion of the body and gives an expression of $Z_{\rm i}$ implicitly 
in terms of $x_G,z_G,\theta$, and $Z_{\rm lid}$.

\subsubsection{Reformulation of the equations}\label{sectreform2}
Proceeding as in \S \ref{sectreform}, it is possible to reformulate the equations in compact form. 
Due to the various degrees of freedom of the solid, 
the computations are a bit technical and are postponed to Appendix \ref{appreform}. 
It is shown there that the surface elevation and the horizontal water flux
in the interior region are given by 
$$
\begin{cases}
Z_{\rm i}(t,x) = \psi_{\rm lid}\bigl( x-x_G(t),\theta(t) \bigr) + z_G(t),\\
Q_{\rm i}(t,x) = \begin{pmatrix} \mathbf{U}_G(t) \\ \omega(t) \end{pmatrix} \cdot 
 \mathbf{T}( \mathbf{r}_G(t,x) ) + \overline{q}_{\rm i}(t),
 \end{cases}
$$
for some smooth enough function $\psi_{\rm lid}$ and some function $\overline{q}_{\rm i}(t)$ of $t$ 
solving an ODE of the form 
\[
\dt \overline{q}_{\rm i}
 = F(\overline{q}_{\rm i},x_G,z_G,\theta,\mathbf{U}_G,\omega,\dt\mathbf{U}_G,\dt\omega,x_{-},x_{+})
\]
with $F$ in the class $W^{m,\infty}$ under the assumption $Z_{\rm lid} \in W^{m,\infty}(I_{\rm f})$. 
As in the previous section, we use the same diffeomorphism 
$\varphi(t,\cdot) : \underline{\mathcal{E}} \to \mathcal{E}(t)$ defined by \eqref{diffeo3} to transform 
the equations in exterior region \eqref{eqext} and put 
$\zeta_{\rm e} = Z_{\rm e}\circ\varphi$, $h_{\rm e} = H_{\rm e}\circ\varphi$, 
$q_{\rm e} = Q_{\rm e}\circ\varphi$, $\zeta_{\rm i} = Z_{\rm i}\circ\varphi$, and 
$q_{\rm i} = Q_{\rm i}\circ\varphi$. 
Now, the problem under consideration is reduced to 
\begin{equation}\label{teqext2}
\begin{cases}
\dt^\varphi \zeta_{\rm e} + \dx^\varphi q_{\rm e} = 0 & \mbox{in}\quad \underline{\mathcal{E}}, \\
\dt^\varphi q_{\rm e} + 2 \frac{q_{\rm e}}{h_{\rm e}}\dx^\varphi q_{\rm e}
 + \Bigl( \mathtt{g}h_{\rm e} - \frac{q_{\rm e}^2}{h_{\rm e}^2} \Bigr)\dx^\varphi \zeta_{\rm e} = 0
 & \mbox{in}\quad \underline{\mathcal{E}}, \\
\zeta_{\rm e} = \zeta_{\rm i}, \quad q_{\rm e} = q_{\rm i} & \mbox{on}\quad \partial\underline{\mathcal{E}},
\end{cases}
\end{equation}
and 
\begin{equation}\label{eqqi3}
\dt \overline{q}_{\rm i}
 = F(\overline{q}_{\rm i},x_G,z_G,\theta,\mathbf{U}_G,\omega,\dt\mathbf{U}_G,\dt\omega,x_{-},x_{+}). 
\end{equation}
We also impose the initial conditions of the form 
\begin{equation}\label{ICs2}
(\zeta_{\rm e},q_{\rm e})_{\vert_{t=0}} = (\zeta_{\rm e}^{\rm in},q_{\rm e}^{\rm in})
 \quad\mbox{in}\quad \underline{\mathcal{E}}, \qquad {x_{\pm}}_{\vert_{t=0}} = \ux_{\pm}^{\rm in},
 \qquad {\overline{q}_{\rm i}}_{\vert_{t=0}} = \overline{q}_{\rm i}^{\rm in}. 
\end{equation}

\subsubsection{Local well-posedness}
\eqref{teqext2}--\eqref{ICs2} is again almost the same type as the problem \eqref{ODE}--\eqref{nlfbp} 
considered in \S \ref{sectext}. 
Therefore, the compatibility conditions for \eqref{teqext2}--\eqref{ICs2} can be defined in the 
same way as Definition \ref{defCC2} in \S \ref{sectCC}. 
Here, we calculate $\ux_{\pm,1}^{\rm in} = (\dt x_{\pm})_{\vert_{t=0}}$ in terms of the initial data. 
Differentiating the boundary condition $Z_{\rm e}(t,x_{\pm}(t))=Z_{\rm i}(t,x_{\pm}(t))$ with respect to 
$t$ and using the equation $\dt Z_{\rm e} + \dx Q_{\rm e}=0$, we obtain 
$(\dx Z_{\rm e} - \dx Z_{\rm i})_{\vert_{\partial\underline{\mathcal{E}}_{\pm}}} \dt x_{\pm}
 = (\dx Q_{\rm e} + \dt Z_{\rm i})_{\vert_{\partial\underline{\mathcal{E}}_{\pm}}}$, 
so that 
\begin{equation}\label{dtuxin2}
\ux_{\pm,1}^{\rm in} = \biggl( 
 \frac{Z_{\rm i,1}^{\rm in} + \dx q_{\rm e}^{\rm in}}{\dx \zeta_{\rm e}^{\rm in} - \dx Z_{\rm lid}}
  \biggr)_{x=x_{\pm}},
\end{equation}
where $Z_{\rm i,1}^{\rm in}=(\dt Z_{\rm i})_{\vert_{t=0}}$ is given by 
\[
Z_{\rm i,1}^{\rm in}(x) = \biggl( \mathbf{U}_{G}^{\rm in} + \omega^{\rm in}
 \begin{pmatrix} Z_{\rm lid}(x) - z_G^{\rm in} \\ -(x-x_G^{\rm in}) \end{pmatrix} \biggr) \cdot
 \begin{pmatrix} -\dx Z_{\rm lid}(x) \\ 1 \end{pmatrix}.
\]
with $(x_G^{\rm in},z_G^{\rm in},\mathbf{U}_{G}^{\rm in},\omega^{\rm in})
 = (x_G,z_G,\mathbf{U}_{G},\omega)_{\vert_{t=0}}$. 
Here, we used \eqref{zi2}. 
We can now state one of our main result in this section, which shows the well-posedness of the shallow water 
model with a floating body on the water surface whose motion is prescribed.

\begin{theorem}\label{theoIBVP9}
Let $m\geq2$ be an integer and $I_{\rm f}$ an open interval. 
If the data $(\zeta_{\rm e}^{\rm in},q_{\rm e}^{\rm in}) \in H^m(\underline{\mathcal{E}})$, 
$\ux_{\pm}^{\rm in} \in I_{\rm f}$, $\overline{q}_{\rm i}^{\rm in} \in \R$, $Z_{\rm lid} \in W^{m,\infty}(I_{\rm f})$, 
and $x_G,z_G,\theta \in H^{m+2}(0,T)$ satisfy the conditions in 
Assumption \ref{assondata}, where $\ux_{\pm,1}^{\rm in}$ is defined by \eqref{dtuxin2}, 
and the compatibility conditions up to order $m-1$, 
then there exist $T_1 \in (0,T]$ and a unique solution $(\zeta_{\rm e},q_{\rm e},x_{\pm},\overline{q}_{\rm i})$ to 
\eqref{teqext2}--\eqref{ICs2} with $\varphi$ given by \eqref{diffeo3} in the class 
$\zeta_{\rm e},q_{\rm e} \in 
\cap_{j=0}^{m-1}C^j([0,T_1];H^{m-j}(\underline{\mathcal{E}}))$, $x_{\pm} \in H^m(0,T_1)$, and 
$\overline{q}_{\rm i} \in H^{m+1}(0,T_1)$. 
\end{theorem}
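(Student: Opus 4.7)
The plan is to adapt the proof strategy of Theorem \ref{theoIBVP8} by invoking the extension Theorem \ref{theoIBVP6} of Theorem \ref{theoIBVP5} that allows coupling to a system of ODEs, applied separately on the two exterior regions $\underline{\mathcal{E}}_-$ and $\underline{\mathcal{E}}_+$. The key observation is that, once the prescribed motion $(x_G, z_G, \theta)$ is fixed, the system \eqref{teqext2}--\eqref{ICs2} fits exactly the framework of \S \ref{sectext}: on each exterior region, $(\zeta_{\rm e}, q_{\rm e})$ solves a $2\times 2$ quasilinear hyperbolic system on a half-line with a free boundary (the contact point $x_\pm(t)$) and boundary data $(\zeta_{\rm i}, q_{\rm i})_{\vert_{\partial \underline{\mathcal{E}}_\pm}}$ prescribed through the formulas derived in Appendix \ref{appreform} in terms of $\overline{q}_{\rm i}$, the prescribed motion, and the position $x_\pm$ itself; the scalar quantity $\overline{q}_{\rm i}$ plays the role of the ODE unknown $W$ in Theorem \ref{theoIBVP6}, with \eqref{eqqi3} furnishing its evolution equation.

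Concretely, I would run the iterative scheme of \S \ref{sectext}: starting from $\overline{q}_{\rm i}^n \in H^{m+1}(0,T_1)$ with $(\dt^k \overline{q}_{\rm i}^n)_{\vert_{t=0}}$ matching the values computed from \eqref{eqqi3} and the initial data, I solve independently on each side $\underline{\mathcal{E}}_\pm$ the free boundary problem by Theorem \ref{theoIBVP5}, producing $(\zeta_{\rm e}^{n+1}, q_{\rm e}^{n+1}, x_\pm^{n+1})$. To apply that theorem I need to verify its three hypotheses: the hyperbolicity and the sign condition $\lambda_\pm({u^{\rm in}}_{\vert_{x=0}}) \mp \ux_{\pm,1}^{\rm in} > 0$ come from points ii, iii, iv of Assumption \ref{assondata} together with formula \eqref{dtuxin2}; the transversality condition $(\dx u^{\rm in} - \dx U_{\rm i})_{\vert_{\partial \underline{\mathcal{E}}_\pm}} \ne 0$ is equivalent, by Proposition \ref{propdisconti}, to condition v of Assumption \ref{assondata}; and the noncharacteristic condition at the contact point is automatic here since, as noted in \S \ref{sectbseq}, $\mu_0 = (1,0)^{\rm T}$ and $\mu_0 \cdot \mathbf{e}_+(U) \ne 0$ everywhere (so we use the equivalent formulation iii$'$ of Remark \ref{remarkIC}). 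Given the updated $x_\pm^{n+1} \in H^m(0,T_1)$, I then define $\overline{q}_{\rm i}^{n+1}$ as the unique solution of the scalar ODE \eqref{eqqi3}; this is a linear ODE in $\overline{q}_{\rm i}$ whose coefficients depend on $(x_G, z_G, \theta)$, their first two time derivatives, and $x_\pm^{n+1}$, so under the assumed regularity $x_G, z_G, \theta \in H^{m+2}(0,T)$ standard ODE estimates yield $\overline{q}_{\rm i}^{n+1} \in H^{m+1}(0,T_1)$ with a bound depending only on the prescribed data and $|x_\pm^{n+1}|_{H^m(0,T_1)}$.

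Uniform bounds for $\{(\zeta_{\rm e}^n, q_{\rm e}^n, x_\pm^n, \overline{q}_{\rm i}^n)\}_n$ on a small common interval $[0,T_1]$ are then closed in the same fashion as in the proof of Theorem \ref{theoIBVP5}: the estimate from Theorem \ref{theoIBVP5} on the exterior unknowns involves only the $H^m$-norm of the boundary data, which is controlled by $|\overline{q}_{\rm i}^n|_{H^m}$ and the $W^{m,\infty}$-norm of the prescribed motion, and the ODE estimate closes the feedback loop provided $T_1$ is chosen small enough. Convergence of the iterates is obtained by considering the differences $(\zeta_{\rm e}^{n+1} - \zeta_{\rm e}^n, q_{\rm e}^{n+1} - q_{\rm e}^n, x_\pm^{n+1} - x_\pm^n, \overline{q}_{\rm i}^{n+1} - \overline{q}_{\rm i}^n)$ and estimating them at order $m-1$ via Proposition \ref{propAl}, combined with the ODE bound at one order lower; the interpolation trick of the proof of Theorem \ref{theoIBVP4} then upgrades the contraction to strong convergence, and compactness yields the full regularity in the statement. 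Uniqueness follows from the same estimate on differences at order $m-1$.

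The main obstacle, and the principal technical work, will be to establish that the nonlinear boundary data $(\zeta_{\rm i}, q_{\rm i})_{\vert_{\partial \underline{\mathcal{E}}_\pm}}$ -- defined implicitly through \eqref{expression1} -- actually enjoy the $W^{m,\infty}$ time regularity demanded by Theorem \ref{theoIBVP5} whenever $Z_{\rm lid} \in W^{m,\infty}(I_{\rm f})$ and $(x_G, z_G, \theta) \in H^{m+2}(0,T)$; this requires the implicit-function analysis and the representation formulas worked out in Appendix \ref{appreform}, and also underlies the fact that the right-hand side $F$ in \eqref{eqqi3} is of Moser type in its arguments. A secondary point is to check, at each step of the iteration, that the choice of $\overline{q}_{\rm i}^{\rm in}$ and of the time derivatives $(\dt^k \overline{q}_{\rm i})_{\vert_{t=0}}$ extracted from \eqref{eqqi3} is compatible with the compatibility conditions up to order $m-1$ required by Theorem \ref{theoIBVP5} for the exterior free boundary problem; this is a direct translation of the hypotheses made in the theorem and does not create new difficulties.
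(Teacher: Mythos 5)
Your proposal is essentially the approach the paper intends: Theorem~\ref{theoIBVP9} is stated without an explicit proof precisely because the argument is a direct analogue of the short proof of Theorem~\ref{theoIBVP8}, namely an alternating iteration between the exterior free-boundary problems (solved via Theorem~\ref{theoIBVP5} on each of $\underline{\mathcal{E}}_\pm$) and the scalar ODE \eqref{eqqi3}, with the regularity of the boundary data $(\zeta_{\rm i},q_{\rm i})_{\vert_{\partial\underline{\mathcal{E}}_\pm}}$ supplied by the implicit-function analysis of Lemma~\ref{lemgrap} and the formulas \eqref{zi}, \eqref{qi} from Appendix~\ref{appreform}. Your verification of the hypotheses of Theorem~\ref{theoIBVP5} (hyperbolicity from points~ii--iv of Assumption~\ref{assondata}, transversality from point~v via Proposition~\ref{propdisconti}, the $\mu_0$ condition being automatic as observed in \S~\ref{sectbseq}) matches what the paper leaves to the reader. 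One small inaccuracy: \eqref{eqqi3} is not linear in $\overline{q}_{\rm i}$ --- the term $\langle F^{\rm I}\rangle$ in \eqref{F^I} contains $\overline{q}_{\rm i}^2$, so the ODE is of Riccati type; this changes nothing in the argument since local existence and Moser-type regularity estimates still hold, but you should not call it linear.
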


\subsection{The case of a freely floating body}\label{sectfreefloat}
Finally, we consider the case where the floating body moves freely according to the Newton's laws 
under the action of the gravitational force and the pressure from the air and from the water. 
Let $\mathfrak{m}$ and $\mathfrak{i}_0$ be the mass and the inertia coefficient of the body. 
Then, Newton's laws for the conservation of linear and angular momentum have the form 
\begin{equation}\label{Newton's law}
\begin{cases}
\mathfrak{m} \dt \mathbf{U}_G
 = -\mathfrak{mg}\mathbf{e}_z + \int_{\mathcal{I}(t)}( \underline{P}_{\rm i} - P_{\rm atm} )N_{\rm lid}, \\
\mathfrak{i}_0 \dt \omega = - \int_{\mathcal{I}(t)}( \underline{P}_{\rm i} - P_{\rm atm} )
 \mathbf{r}_G^{\perp} \cdot N_{\rm lid},
\end{cases}
\end{equation}
which together with \eqref{expression1} constitute the equations of motion for the floating body.

\subsubsection{Reformulation of the equations}
Proceeding as in \S \ref{sectreform} and \S \ref{sectreform2}, and with the same notations, 
the problem under consideration can be reduced to 
\begin{equation}\label{teqext3}
\begin{cases}
\dt^\varphi \zeta_{\rm e} + \dx^\varphi q_{\rm e} = 0 & \mbox{in}\quad \underline{\mathcal{E}}, \\
\dt^\varphi q_{\rm e} + 2 \frac{q_{\rm e}}{h_{\rm e}}\dx^\varphi q_{\rm e}
 + \Bigl( \mathtt{g}h_{\rm e} - \frac{q_{\rm e}^2}{h_{\rm e}^2} \Bigr)\dx^\varphi \zeta_{\rm e} = 0
 & \mbox{in}\quad \underline{\mathcal{E}}, \\
\zeta_{\rm e} = \zeta_{\rm i}, \quad q_{\rm e} = q_{\rm i} & \mbox{on}\quad \partial\underline{\mathcal{E}},
\end{cases}
\end{equation}
and with $W=(\overline{q}_{\rm i},x_G,z_G,\theta,\mathbf{U}_G,\omega)$ solving an ordinary differential equation 
of the form 
\begin{equation}\label{eqdtW}
\dt W = F(W,x_{-},x_{+})
\end{equation}
with $F$ in the class $W^{m,\infty}$ under the assumption $Z_{\rm lid} \in W^{m,\infty}(I_{\rm f})$ 
(see \eqref{eqqi4bis}--\eqref{eqbodybis} for more precisions). 
The details of this technical reduction, which takes advantage of the so-called added mass effect, 
are postponed to Appendix \ref{appreform2}. 
We also impose the initial conditions of the form 
\begin{equation}\label{ICs3}
\begin{cases}
 (\zeta_{\rm e},q_{\rm e})_{\vert_{t=0}} = (\zeta_{\rm e}^{\rm in},q_{\rm e}^{\rm in})
  \quad\mbox{in}\quad \underline{\mathcal{E}}, \qquad {x_{\pm}}_{\vert_{t=0}} = \ux_{\pm}^{\rm in}, \\
 {\overline{q}_{\rm i}}_{\vert_{t=0}} = \overline{q}_{\rm i}^{\rm in},
  \qquad (x_G,z_G,\theta,\mathbf{U}_G,\omega)_{\vert_{t=0}}
   = (x_G^{\rm in},z_G^{\rm in},0,\mathbf{U}_G^{\rm in},\omega^{\rm in}).
\end{cases}
\end{equation}

\subsubsection{Local well-posedness}
%
Therefore, \eqref{teqext3}--\eqref{ICs3} is again almost the same type as the problem 
\eqref{ODE}--\eqref{nlfbp} considered in \S \ref{sectext}. 
Therefore, the compatibility conditions for \eqref{teqext3}--\eqref{ICs3} can be defined in the 
same way as Definition \ref{defCC2} in \S \ref{sectCC}. 
Moreover, $\ux_{\pm,1}^{\rm in} = (\dt x_{\pm})_{\vert_{t=0}}$ can be given by \eqref{dtuxin2}. 
We can now state one of our main result in this section, which shows the well-posedness of the shallow water 
model with a freely floating body on the water surface.

\begin{theorem}\label{theoIBVP10}
Let $m\geq2$ be an integer and $I_{\rm f}$ an open interval. 
If the data $(\zeta_{\rm e}^{\rm in},q_{\rm e}^{\rm in}) \in H^m(\underline{\mathcal{E}})$, 
$\ux_{\pm}^{\rm in} \in I_{\rm f}$, 
$(q_{\rm i}^{\rm in},x_G^{\rm in},z_G^{\rm in},\mathbf{U}_G^{\rm in},\omega^{\rm in}) \in \R^6$, 
and $Z_{\rm lid} \in W^{m,\infty}(I_{\rm f})$ satisfy the conditions in 
Assumption \ref{assondata}, where $\ux_{\pm,1}^{\rm in}$ is defined by \eqref{dtuxin2}, 
and the compatibility conditions up to order $m-1$, 
then there exist $T>0$ and a unique solution 
$(\zeta_{\rm e},q_{\rm e},x_{\pm},\overline{q}_{\rm i},x_G,z_G,\theta)$ to 
\eqref{teqext3}--\eqref{ICs3} with $\varphi$ given by \eqref{diffeo3} in the class 
$\zeta_{\rm e},q_{\rm e} \in 
\cap_{j=0}^{m-1}C^j([0,T];H^{m-j}(\underline{\mathcal{E}}))$, $x_{\pm} \in H^m(0,T)$, 
$\overline{q}_{\rm i} \in H^{m+1}(0,T)$, and 
$x_G,z_G,\theta \in H^{m+2}(0,T)$. 
\end{theorem}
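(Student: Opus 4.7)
My plan is to derive Theorem \ref{theoIBVP10} as an application of the general free boundary / ODE-coupled framework of Theorem \ref{theoIBVP6} (extended to two symmetric free boundaries), together with the estimates from Theorem \ref{theoIBVP5}, mimicking the proof structure of Theorems \ref{theoIBVP8} and \ref{theoIBVP9} but with $W=(\overline{q}_{\rm i},x_G,z_G,\theta,\mathbf{U}_G,\omega)$ now determined by the ODE \eqref{eqdtW} instead of being prescribed. The unknown splits naturally into the exterior hyperbolic part $(\zeta_{\rm e},q_{\rm e})$ on $\underline{\mathcal{E}}_-\cup\underline{\mathcal{E}}_+$, the two moving contact points $x_\pm$, and the finite dimensional unknown $W$; the boundary data at $x=x_\pm$ for the exterior hyperbolic system are $u_{\rm i}^\pm=\bigl(\zeta_{\rm i}(W,x),q_{\rm i}(W,x)\bigr)_{\vert_{x=x_\pm}}$, which depend on $W(t)$ only through smooth functions (by the reformulation in Appendix \ref{appreform} and \ref{appreform2}), so that with $G_{\rm i}^\pm(W,x)$ suitably defined, the system falls into the class \eqref{ODE}--\eqref{nlfbp} of \S\ref{sectext} posed independently on each connected component of $\underline{\mathcal{E}}$.

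First I would verify that the hypotheses of Theorem \ref{theoIBVP6} are met on each component $\underline{\mathcal{E}}_\pm$: Assumption \ref{asshypNLFB} follows from conditions ii, iii, iv of Assumption \ref{assondata} together with the explicit form of $\lambda_\pm(u)$ and $\mathbf{e}_\pm(u)$ recalled in \S\ref{sectbseq}; Assumption \ref{assFG} for the ODE $\dt W=F(W,x_-,x_+)$ and the data function $G_{\rm i}^\pm$ follows from $Z_{\rm lid}\in W^{m,\infty}(I_{\rm f})$ and the explicit computations of Appendix \ref{appreform2}; condition ii of the theorem (namely $(\dx u^{\rm in})_{\vert_{x=x_\pm^{\rm in}}}-(\dx G_{\rm i}^\pm)_{\vert_{W=W^{\rm in},x=x_\pm^{\rm in}}}\neq 0$) is equivalent to condition v of Assumption \ref{assondata} thanks to Proposition \ref{propdisconti}; finally condition iii$'$ of Remark \ref{remarkIC2} holds because $\mu_0=(1,0)^{\rm T}$ as noted in \S\ref{sectbseq}, and $\mathbf{e}_+\cdot\mu_0\neq 0$ is automatic. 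The formula \eqref{dtuxin2} for $\ux_{\pm,1}^{\rm in}$ provides the initial time derivative of the free boundary, and the compatibility conditions up to order $m-1$ in the sense of Definition \ref{defCC2} are assumed.

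Next I would set up the iterative scheme: given $(x_\pm^n, W^n)$ with $x_\pm^n\in H^m(0,T_n)$ and $W^n\in H^{m+1}(0,T_n)$ matching the prescribed initial time derivatives up to order $m-1$, I construct $\varphi^n$ by \eqref{diffeo3} for a fixed small $\varepsilon_0>0$, then apply Theorem \ref{theoIBVP5} separately on each piece $\underline{\mathcal{E}}_\pm$ to obtain $(\zeta_{\rm e}^{n+1},q_{\rm e}^{n+1},x_\pm^{n+1})$ with $x_\pm^{n+1}\in H^m(0,T_n)$, and finally solve the ODE \eqref{eqdtW} with right-hand side evaluated at $x_\pm^{n+1}$ to obtain $W^{n+1}\in H^{m+1}(0,T_n)$ via a standard Picard argument on the ODE (the gain of one derivative between $x_\pm^{n+1}\in H^m$ and $W^{n+1}\in H^{m+1}$ comes from integrating once in time, exactly as in the proof sketch of Theorem \ref{theoIBVP6}). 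Uniform bounds on a common time interval $[0,T_1]$ are obtained by choosing the radii in the norms large enough and $T_1$ small enough, exploiting the $\sqrt{T_1}$ smallness gained through the initial-value prescription of the top derivatives, as done in the proof of Theorem \ref{theoIBVP6}.

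The main obstacle is the bookkeeping required to pass from the general nonlinear free boundary problem of \S\ref{sectVCm2} to the present system in which (a) there are two free boundaries rather than one, (b) the ``boundary data'' $(\zeta_{\rm i},q_{\rm i})$ are themselves partially determined by the ODE for $W$ which couples back to the PDE through $x_\pm$, and (c) the added-mass effect exploited in Appendix \ref{appreform2} must give $F\in W^{m,\infty}$ with the correct regularity in its arguments so that the ODE actually closes on $W$. Once these structural points are checked, the convergence of the scheme in a lower regularity norm and the preservation of the top regularity by standard compactness and weak-$*$ arguments is routine, following exactly the pattern of the final step in the proofs of Theorems \ref{theoIBVP4}, \ref{theoIBVP6}, and \ref{theoIBVP9}; uniqueness follows by subtracting two solutions and estimating the difference in $\WW^{m-1}\cap W^{1,\infty}$ via Proposition \ref{propAl}.
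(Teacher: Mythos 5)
Your proposal is correct and follows essentially the same route that the paper intends: the paper gives no explicit proof of Theorem~\ref{theoIBVP10}, but sets it up so that it follows from the reformulation of Appendix~\ref{appreform2} combined with the general ODE--coupled free boundary framework of Theorem~\ref{theoIBVP6}, using the iteration pattern spelled out for Theorem~\ref{theoIBVP8} (Picard between the PDE solver of Theorem~\ref{theoIBVP5} on each component $\underline{\mathcal{E}}_\pm$ and the ODE for $W$). You correctly identify the role of each item of Assumption~\ref{assondata}, the use of Proposition~\ref{propdisconti} to convert condition~{\bf v} into the discontinuity hypothesis of Theorem~\ref{theoIBVP5}, the automatic validity of condition~{\bf iii$'$} via $\mu_0=(1,0)^{\rm T}$, the added-mass inversion that makes $F\in W^{m,\infty}$ in \eqref{eqdtW}, and the one-derivative gain $x_\pm\in H^m\Rightarrow W\in H^{m+1}$. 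The only point worth flagging is purely presentational: you describe the system as falling into the class of \S\ref{sectext} ``posed independently on each connected component of $\underline{\mathcal{E}}$'', but since $F$ in \eqref{eqdtW} depends on both $x_-$ and $x_+$, the two components are genuinely coupled through the finite-dimensional ODE; you do note this under ``main obstacle,'' and it is indeed only bookkeeping (one joint iterate $(x_-^n,x_+^n,W^n)$ rather than two independent ones), but the earlier phrasing should match. With that caveat the argument is sound and matches the intended proof.
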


\section{Several examples of transmission problems}\label{sectsev}
We present here several applications of the results proved in Section \ref{secttransmission} 
on transmission problems. 
The first one, in \S \ref{sectdiscf}, is a transmission problem with a fixed interface: 
it corresponds to a conservation law with a flux which is discontinuous across the interface. 
A typical example of application is given by the propagation of shallow water waves over a step-like 
discontinuous topography. 
The second application, in \S \ref{sectshocks}, is a very classical free boundary transmission problem: 
we show how the issue of the stability of one-dimensional shocks for $2\times2$ conservations laws 
falls in the general framework of \S \ref{secttransmkin}. 
This provides an elementary proof of these results, with an improved regularity threshold. 
The case of classical (Lax) shock is considered in \S \ref{sectLax}, while nonclassical, undercompressive, 
shocks are dealt with in \S \ref{sectunder}.

\subsection{Systems of conservation laws with discontinuous flux}\label{sectdiscf}
Let us consider here a system of two conservation laws, with a flux depending on the position. 
For instance, let us consider a flux $\widetilde{f}$ on $\R^-$, and $f$ on $\R_+$, that is, 
\begin{equation}\label{discconserv}
\begin{cases}
 \dt u + \dx \widetilde{f}(u) = 0 & \mbox{ in }\quad (0,T)\times\R_-, \\
 \dt u + \dx {f}(u) = 0 & \mbox{ in }\quad (0,T)\times\R_+,
\end{cases}
\end{equation}
where $\widetilde{f}: \widetilde{\cU} \to \R^2$ and ${f}: {\cU} \to \R^2$ are smooth mappings 
defined on open subsets $\widetilde{\cU}$ and $\cU$ of $\R^2$. 
In addition, $p$ transmission conditions are given at $x=0$ ($p=1,2,3$), 
\begin{equation}\label{transmappl}
N_p^{\rm r}(t) u_{\vert_{x=+0}} - N_p^{\rm l}(t)u_{\vert_{x=-0}} = \mybf{g}(t),
\end{equation}
where $N_p^{\rm l}$ and $N_p^{\rm r}$ are $p\times 2$ matrices.

\begin{remark}
A natural condition is to impose the continuity of the fluxes at the interface, 
$\widetilde{f}({u^{\rm l}}_{\vert_{x=0}}) = f({u^{\rm r}}_{\vert_{x=0}})$, 
which is a nonlinear transmission condition. 
One can in general use a nonlinear change of variables as in \S \ref{sectapplQL} or \S \ref{sectshocks} 
to reduce to the case of a linear transmission condition. 
\end{remark}

Denoting $\widetilde{A}(u) = \widetilde{f}'(u)$ and $A(u) = f'(u)$, and using the same notations 
as in \S \ref{sectappltransmQL}, the system takes the form \eqref{systQLtransm}, namely, 
\begin{equation}\label{systQLtransmappl}
\begin{cases}
\dt \mybf{u} + \mybf{A}(\mybf{u})\dx \mybf{u} = 0 & \mbox{in}\quad \Omega_T, \\
\mybf{u}_{\vert_{t=0}} = \mybf{u}^{\rm in}(x) & \mbox{on}\quad \R_+, \\
\mybf{N}_{p}(t) \mybf{u}_{\vert_{x=0}} = \mybf{g}(t) & \mbox{on}\quad (0,T),
\end{cases}
\end{equation}
and Theorem \ref{theoIBVP2transm} can therefore be applied.

\begin{example}[Shallow water equations with a discontinuous topography]
Let us consider the shallow water equations with a depth at rest $\widetilde{h}_0$ for $x<0$ and $h_0$ for $x>0$. 
\medskip
\begin{figure}[h]
\begin{center}
\includegraphics[width=0.7\linewidth]{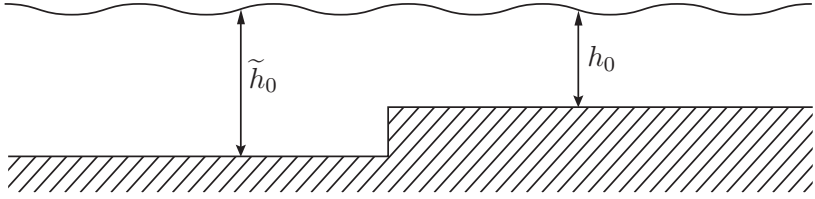}
\end{center}
\setlength{\unitlength}{1pt}
\begin{picture}(0,0)
\put(-65,55){$\widetilde{h}_0$}
\put(63,63){$h_0$}
\end{picture}
\vspace{-5mm}
\caption{Shallow water with a discontinuous topography}\label{FigSW}
\end{figure}
The configuration under study here is described in Figure \ref{FigSW}. 
This is a particular example of \eqref{discconserv} with 
\[
\widetilde{f}(\zeta,q)
 = (q,\frac{1}{\widetilde{h}_0+\zeta}q^2+\frac{1}{2}{\mathtt g}(\widetilde{h}_0+\zeta)^2)^{\rm T}
 \quad\mbox{ and }\quad 
f(\zeta,q) = (q,\frac{1}{{h}_0+\zeta}q^2+\frac{1}{2}{\mathtt g}({h}_0+\zeta)^2)^{\rm T}.
\]
If $\widetilde{\lambda}_\pm(u^{\rm l})
 = \sqrt{\mathtt{g}(\widetilde{h}_0+\zeta^{\rm l})}\pm \frac{q^{\rm l}}{\widetilde{h}_0+\zeta^{\rm l}} >0$ 
and $\lambda_\pm(u^{\rm r}) = \sqrt{\mathtt{g}({h}_0+\zeta^{\rm r})}\pm \frac{q^{\rm r}}{h_0+\zeta^{\rm r}} >0$, 
then one has $p=2$ in Assumption \ref{asshypQLtransm} and two transmission conditions are needed; 
they are naturally given by the continuity of the surface elevation $\zeta$ and of 
the horizontal water flux $q$, that is, 
\[
{u^{\rm l}}_{\vert_{x=0}} = {u^{\rm r}}_{\vert_{x=0}}.
\]
In order to apply Theorem \ref{theoIBVP2transm}, we need to check the invertibility of the Lopatinski\u{\i} matrix 
(third point in Assumption \ref{asshypQLtransm}), which is given here by 
\[
\mybf{L}(\mybf{u}_{\vert_{x=0}}) = \begin{pmatrix} -\widetilde{{\bf e}}_-({u^{\rm l}}_{\vert_{x=0}})
 & {\bf e}_+({u^{\rm r}}_{\vert_{x=0}}) \end{pmatrix},
\]
where $\widetilde{{\bf e}}_-(u)$ denotes a unit eigenvector associated to the eigenvalue $-\widetilde{\lambda}_-(u)$ 
of $\widetilde{A}(u)$ and ${\bf e}_+(u)$ a unit eigenvector associated to the eigenvalue ${\lambda}_+(u)$ 
of ${A}(u)$. 
Using the expression for the eigenvectors provided in Example \ref{ex1}, the invertibility of the Lopatinski\u{\i} 
matrix reduces to the condition 
$| \widetilde{\lambda}_-({u^{\rm l}}_{\vert_{x=0}}) + {\lambda}_+({u^{\rm r}}_{\vert_{x=0}}) | > 0$, 
which is always satisfied. 
One can therefore apply Theorem \ref{theoIBVP2transm}. 
\end{example}

\subsection{Stability of one-dimensional shocks}\label{sectshocks}
Let us consider again a system of two conservation laws 
\begin{equation}\label{conserv}
\dt f_0 (U) + \dx f(U) = 0,
\end{equation}
where $f_0,f: \cU \to \R^2$ are smooth mappings defined on an openset $\cU$ in $\R^2$ and a $2\times2$ matrix 
$f_0'(U)$ is assumed to be invertible. 
The problem of showing the stability  of shocks for \eqref{conserv} consists in finding a 
curve $\ux: [0,T]\to \R$ and $U$ such that $U$ is $C^1$ and solve \eqref{conserv} on 
$\{(t,x)\in (0,T)\times \R\,;\, x< \ux(t)\}$ and $\{(t,x)\in (0,T)\times \R \,;\, x> \ux(t)\}$, 
and satisfy the Rankine--Hugoniot condition 
\begin{equation}\label{RH}
\dot\ux \bigl( f_0(U_{\vert_{x=\ux(t)+0}}) - f_0(U_{\vert_{x=\ux(t)-0}}) \bigr)
 = f(U_{\vert_{x=\ux(t)+0}}) - f(U_{\vert_{x=\ux(t)-0}}).
\end{equation}
This condition can be split into a nonlinear transmission condition 
\[
\Phi( U_{\vert_{x=\ux(t)-0}},U_{\vert_{x=\ux(t)+0}} ) = 0 \quad \mbox{ with } \quad
 \Phi(u^{\rm l},u^{\rm r}) = \bigl[ f(u^{\rm r}) - f(u^{\rm l}) \bigr] \cdot
  \bigl[ f_0(u^{\rm r}) - f_0(u^{\rm l}) \bigr]^\perp
\]
and the evolution equation $\dot\ux = \chi\bigl( U_{\vert_{x=\ux(t)-0}},U_{\vert_{x=\ux(t)+0}} \bigr)$ with 
\begin{equation}\label{defchishock}
\chi(u^{\rm l},u^{\rm r}) = \bigl[ f(u^{\rm r}) - f(u^{\rm l}) \bigr] \cdot
 \frac{f_0(u^{\rm r}) - f_0(u^{\rm l})}{ |f_0(u^{\rm r}) - f_0(u^{\rm l})|^2 }.
\end{equation}
Denoting $A(U) = \bigl( f_0'(U) \bigr)^{-1} f'(U)$, we are therefore led to consider the transmission problem 
\[
\begin{cases}
\dt U + A(U)\dx U = 0 & \mbox{in }\quad  (-\infty,\ux(t)) \quad \mbox{ for } \quad t\in(0,T), \\
\dt U + A(U)\dx U = 0 & \mbox{in }\quad  (\ux(t),+\infty) \quad \mbox{ for } \quad t\in(0,T), \\
U_{\vert_{t=0}} = u^{\rm in}(x) & \mbox{on }\quad  {\mathbb R}, \\
\Phi\bigl(U_{\vert_{x=\ux(t)-0}},U_{\vert_{x=\ux(t)+0}}\bigr) = 0 &\mbox{on } \quad (0,T).
\end{cases}
\]
As for \eqref{bigIBVPQL}, we use the diffeomorphism \eqref{choicediffeo} to recast this transmission problem 
as an initial boundary value problem 
\begin{equation}\label{bigIBVPshock}
\begin{cases}
\dt \mybf{u} + {\bm{\mathcal{A}}}(\mybf{u},\partial \bvarphi)\dx \mybf{u} = 0 & \mbox{ in }\quad \Omega_T, \\
\mybf{u}_{\vert_{t=0}} = \mybf{u}^{\rm in} & \mbox{ on }\quad {\mathbb R}_+, \\
\Phi(\mybf{u}_{\vert_{x=0}}) = 0 & \mbox{ on }\quad (0,T)
\end{cases}
\end{equation}
with $\ux$ given by the resolution of 
\begin{equation}\label{uxeq}
\dot\ux = \chi( \mybf{u}_{\vert_{x=0}} ), \qquad \ux(0) = 0,
\end{equation}
where $\chi$ given by \eqref{defchishock}.

There are several kinds of shock. 
The most famous are the so-called Lax shocks which move at a supersonic speed; 
more precisely, the number of positive eigenvalues for ${\bm{\mathcal{A}}}(\mybf{u},\partial \bvarphi)$ 
in \eqref{bigIBVPshock} is equal to one and one boundary condition is needed; 
it is provided by the condition $\Phi(\mybf{u}_{\vert_{x=0}}) = 0$ in \eqref{bigIBVPshock}. 
There are also undercompressive shocks that travel at a subsonic speed. 
The number of positive eigenvalues for ${\bm{\mathcal{A}}}(\mybf{u},\partial \bvarphi)$ 
in \eqref{bigIBVPshock} is then equal to two and {\it two} boundary conditions are therefore necessary. 
One needs therefore an additional boundary condition to the condition $\Phi(\mybf{u}_{\vert_{x=0}}) = 0$ 
that comes from the Rankine--Hugoniot condition.

\subsubsection{The stability of Lax shocks}\label{sectLax}
As said above, for Lax shocks, the number of positive eigenvalues for 
${\bm{\mathcal{A}}}(\mybf{u},\partial \bvarphi)$ in \eqref{bigIBVPshock} is equal to one; 
this correponds to $p=1$ and condition $b)$ or $c)$ in Assumption \ref{asshypQLFBtransm}. 
The Kreiss--Lopatinski\u{\i} condition in the third point of Assumption \ref{asshypQLFBtransm} is therefore scalar. 
It is explicited in the assumption below for right-going and left-going Lax shocks where for all function $g$ 
defined on $\bcU$, we use the notation 
\[
\jump{g}=g(u^{\rm r})-g(u^{\rm l}). 
\]

\begin{assumption}\label{asshypQLFBshock}
Let $\widetilde{\cU}$ and $\cU$ be open sets in $\R^2$ and put $\boldsymbol{\cU}=\widetilde{\cU}\times\cU$ 
representing a phase space of $\mybf{u}$. 
Let $\widetilde{\cU}_I \subset \widetilde{\cU}$ and $\cU_I \subset \cU$ be also open sets and put 
$\boldsymbol{\cU}_I = \widetilde{\cU}_I\times\cU_I$ representing a phase space of $\mybf{u}_{\vert_{x=0}}$.
The following conditions hold: 
\begin{enumerate}
\setlength{\itemsep}{3pt}
\item[{\bf i.}]
$\mybf{A}(\mybf{u}) = \mbox{\rm diag}(-A(u^{\rm l}),A(u^{\rm r})) \in C^\infty(\boldsymbol{\cU})$ and 
$\Phi,\chi\in C^\infty(\boldsymbol{\cU}_I)$. 
\item[{\bf ii.}]
For any $\mybf{u} = (u^{\rm l},u^{\rm r})^{\rm T}\in\boldsymbol{\cU}$, the matrix $A(u^{\rm l,r})$ 
has eigenvalues $\lambda_+(u^{\rm l,r}) $ and $ -\lambda_-(u^{\rm l,r})$ with $\lambda_\pm(u^{\rm l,r})>0$. 
Moreover, one of the following conditions 
for all $\mybf{u}= (u^{\rm l},u^{\rm r})^{\rm T}\in\boldsymbol{\cU}_I$ holds: 
\begin{enumerate}
\item[-]
Right-going Lax shock
\[
\begin{cases}
\lambda_\pm(u^{\rm l}) \mp \chi(\mybf{u}) >0 \quad\mbox{ and }\quad
 \lambda_+(u^{\rm r}) - \chi(\mybf{u}) < 0, \\
\bigl|\bigl(f_0'(u^{\rm l}){\bf e}_-(u^{\rm l})\bigr) \cdot \jump{f_0}^\perp\bigr| > 0.
 \end{cases}
\]
\item[-] Left-going Lax shock
\[
\begin{cases}
\lambda_-(u^{\rm l}) + \chi(\mybf{u}) < 0 \quad\mbox{ and }\quad 
\lambda_\pm(u^{\rm r}) \mp \chi(\mybf{u}) >0,  \\
\bigl|\bigl(f_0'(u^{\rm r}){\bf e}_+(u^{\rm r})\bigr) \cdot \jump{f_0}^\perp\bigr| > 0.
 \end{cases}
\]
\end{enumerate}
\item[{\bf iii.}]
There exists a $C^\infty$-mapping $\Theta : \boldsymbol{\cU} \to \R^4$ such that it defines a diffeomorphism from 
$\boldsymbol{\cU}$ onto its image and for any $\mybf{u} = (u^{\rm l},u^{\rm r})^{\rm T}\in\boldsymbol{\cU}_I$ 
we have 
\[
\Theta(\mybf{u}) = \bigl( \Phi(\mybf{u}),\chi(\mybf{u}),u^{\rm r}\bigr)^{\rm T}.
\]
\end{enumerate}
\end{assumption}

\begin{remark}\label{remdiffeo}
Up to shrinking $\widetilde{\cU}$ and $\cU$, the third point is always satisfied. 
Indeed, as remarked in \cite{metivier2001}, this follows from the local inversion theorem 
since $\Theta'(\mybf{u})$ is invertible at any point $\mybf{u}$ satisfying $\Phi(\mybf{u})=0$. 
In order to check this point, it is enough to prove that the partial derivative of the mapping 
$\mybf{u} \mapsto (\Phi(\mybf{u}),\chi(\mybf{u}))$ with respect to $u^{\rm l}$ is invertible. 
Denoting by $W(\mybf{u})$ a $2\times2$ matrix defined by 
\[
W(\mybf{u})F = \Bigl(F \cdot \jump{f_0}^\perp, \frac{1}{|\jump{f_0}|^2} F \cdot \jump{f_0} \Bigr)^{\rm T},
\]
this partial derivative is given by the linear mapping 
\begin{align*}
\dot u^{\rm l}\mapsto
& (d_{u^{\rm l}} W(\mybf{u})[\dot u^{\rm l}])\jump{f} - W(\mybf{u})f'(u^{\rm l})\dot u^{\rm l} \\
&= \chi(\bu)(d_{u^{\rm l}} W(\mybf{u})[\dot u^{\rm l}])\jump{f_0}
 - W(\mybf{u})f_0'(u^{\rm l})A(u^{\rm l})\dot u^{\rm l};
\end{align*}
observing by differentiating the identity $W(\mybf{u})\jump{f_0} = (0,1)^{\rm T}$ that 
\[
d_{u^{\rm l}} W(\mybf{u})[\dot u^{\rm l}]\jump{f_0} = W(\mybf{u})f_0'(u^{\rm l})\dot u^{\rm l}, 
\]
the partial derivative can be written as 
\begin{align*}
\dot u^{\rm l}\mapsto & W(\mybf{u})f_0'(u^{\rm l})
 \bigl(\chi(\mybf{u})\mbox{\rm Id} - A(u^{\rm l}) \bigr)\dot u^{\rm l},
\end{align*}
which is invertible by the second point of Assumption \ref{asshypQLFBtransm}. 
\end{remark}

We can now state the following stability result for Lax shocks.

\begin{theorem}\label{theoshock}
Let $m\geq 2$ be an integer. 
Suppose that Assumption \ref{asshypQLFBshock} is satisfied. 
If $\mybf{u}^{\rm in}\in H^m(\R_+)$ takes its values in $\widetilde{{\mathcal K}}_0\times {\mathcal K}_0$ with 
$\widetilde{{\mathcal K}}_0\subset\widetilde{\cU}_0$ and ${\mathcal K}_0\subset\cU_0$ compact and convex sets, 
if $\mybf{u}^{\rm in}(0) \in \boldsymbol{\cU}_I$,
and if it satisfies the compatibility conditions at order $m-1$, 
then there exists $T>0$ and a unique solition $(\mybf{u},\ux)$ to \eqref{bigIBVPshock}--\eqref{uxeq} with 
$\mybf{u} \in \WW^m(T)$ and $\ux\in H^{m+1}(0,T)$, and $\varphi$ given by \eqref{choicediffeo}. 
Moreover, $\mybf{u}_{\vert_{x=0}}\in H^m(0,T)$.
\end{theorem}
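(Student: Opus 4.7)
The plan is to adapt the proof of Theorem \ref{theoIBVP4transm} --- which handles free boundary transmission problems with a kinematic evolution of the interface and \emph{linear} transmission relations --- to the present setting where $\Phi(\mybf{u}_{\vert_{x=0}})=0$ is fully nonlinear. In the spirit of the reduction used in \S\ref{sectapplQL} for quasilinear IBVPs, I would linearize $\Phi$ around the current iterate in a Picard scheme. Assumption \ref{asshypQLFBshock}(iii), together with the nondegeneracy noted in Remark \ref{remdiffeo}, will ensure that the linearized boundary operator $\nabla_{\mybf{u}}\Phi$ never degenerates, while the shock equation $\dot\ux = \chi(\mybf{u}_{\vert_{x=0}})$ is already of kinematic type and fits the framework of \S\ref{secttransmkin} directly.

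Concretely, I would start from an extension $\mybf{u}^0\in\WW^m(T)$ of the initial data satisfying the nonlinear compatibility conditions (constructed recursively as in Definition \ref{defCC2} but adapted to the Rankine--Hugoniot condition). Given $\mybf{u}^n$, I would define $\ux^{n+1}$ by $\dot\ux^{n+1}=\chi(\mybf{u}^n_{\vert_{x=0}})$ with $\ux^{n+1}(0)=0$, construct $\bvarphi^{n+1}$ via \eqref{choicediffeo}, and solve the linear transmission problem on the moving domain
\[
\begin{cases}
\dt\mybf{u}^{n+1} + \bm{\mathcal{A}}(\mybf{u}^n,\partial\bvarphi^{n+1})\dx\mybf{u}^{n+1}=0 & \mbox{in }\Omega_T, \\
\mybf{u}^{n+1}_{\vert_{t=0}} = \mybf{u}^{\rm in} & \mbox{on }\R_+, \\
\nabla_{\mybf{u}}\Phi(\mybf{u}^n_{\vert_{x=0}})\cdot\mybf{u}^{n+1}_{\vert_{x=0}} = g^n(t) & \mbox{on }(0,T),
\end{cases}
\]
with $g^n(t):=\nabla_{\mybf{u}}\Phi(\mybf{u}^n_{\vert_{x=0}})\cdot\mybf{u}^n_{\vert_{x=0}}-\Phi(\mybf{u}^n_{\vert_{x=0}})$, by applying Theorem \ref{theoIBVP3transm}. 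For a Lax shock exactly $p=1$ scalar transmission condition is needed, and exactly one is provided by this linearization.

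The crucial step will be to verify the Kreiss--Lopatinski\u{\i} condition of Assumption \ref{asstransmred}(ii) for each linearized problem. For a right-going Lax shock with $\mybf{E}_1(\mybf{u}) = (\widetilde{\mathbf{e}}_-(u^{\rm l}),0)^{\rm T}$, expanding $\Phi(\mybf{u}) = \jump{f}\cdot\jump{f_0}^\perp$ and using $f'(u^{\rm l})=f_0'(u^{\rm l})A(u^{\rm l})$ together with $\jump{f}=\chi(\mybf{u})\jump{f_0}$ (which holds at any point where $\Phi(\mybf{u})=0$) will yield
\[
\nabla_{\mybf{u}}\Phi(\mybf{u})\cdot\mybf{E}_1(\mybf{u}) = \bigl(\chi(\mybf{u})+\lambda_-(u^{\rm l})\bigr)\bigl(f_0'(u^{\rm l})\widetilde{\mathbf{e}}_-(u^{\rm l})\bigr)\cdot\jump{f_0}^\perp,
\]
which is nonzero: the first factor by the Lax condition $\lambda_-(u^{\rm l})+\chi(\mybf{u})>0$ (the minus case of $\lambda_\pm(u^{\rm l})\mp\chi>0$), the second by Assumption \ref{asshypQLFBshock}(ii); the left-going case is symmetric. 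A continuity argument then propagates this lower bound to a neighborhood of $\mybf{u}^{\rm in}_{\vert_{x=0}}$, and hence to all iterates on a short enough interval $[0,T_1]$.

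With the Lopatinski\u{\i} condition in hand, the estimates of Theorem \ref{theoIBVP3transm} combined with Lemma \ref{lemdiffeo2} will yield uniform bounds $\|\mybf{u}^n\|_{\WW^m(T_1)}+|\mybf{u}^n_{\vert_{x=0}}|_{m,T_1}\leq M$ and $|\ux^n|_{H^{m+1}(0,T_1)}\leq M$ on a short enough interval, exactly as in the proofs of Theorems \ref{theoIBVP2} and \ref{theoIBVP4transm}. Contraction in the weaker norm $\WW^{m-1}(T_1)\cap W^{1,\infty}(\Omega_{T_1})$ for $\mybf{u}^n$ will then give convergence and uniqueness. The hard part will be the bookkeeping of the compatibility conditions: since $g^n$ involves both $\nabla_{\mybf{u}}\Phi$ and $\Phi$ at $\mybf{u}^n_{\vert_{x=0}}$, matching the prescribed traces at $t=0$ through the iteration requires choosing $\mybf{u}^0$ so that $(\dt^k g^n)_{\vert_{t=0}}$ is compatible, at the nonlinear level, with $\Phi=0$ up to order $m-1$; this is standard but requires careful tracking.
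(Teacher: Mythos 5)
Your proposal is correct in substance but takes a genuinely different route from the paper. The paper's proof does \emph{not} iterate with a linearized boundary operator; instead it first applies the global change of variables $\mybf{v}=\Theta(\mybf{u})$ of Assumption~\ref{asshypQLFBshock}(iii) to turn the Rankine--Hugoniot condition into the \emph{linear, constant-coefficient} condition $\mathbf{e}_1^\sharp\cdot\mybf{v}_{\vert_{x=0}}=0$, with the front velocity given by the kinematic relation $\dot\ux=\mathbf{e}_2^\sharp\cdot\mybf{v}_{\vert_{x=0}}$. Because the boundary matrix is constant in the new variables, the resulting system is \emph{exactly} of the form covered by Theorem~\ref{theoIBVP4transm} and no new iteration scheme or compatibility bookkeeping is required; the only remaining task is to verify the Kreiss--Lopatinski\u{\i} condition for $\boldsymbol{\mathcal A}^\sharp$, which the paper does by computing $L^\sharp(\mybf{v})=\nabla_{\mybf{u}}\Phi(\mybf{u})\cdot\mybf{E}_{\rm out}(\mybf{u})=(\chi+\lambda_-(u^{\rm l}))\,(f_0'(u^{\rm l})\mathbf{e}_-(u^{\rm l}))\cdot\jump{f_0}^\perp$. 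Incidentally, the ``reduction used in \S\ref{sectapplQL}'' that you invoke as the model for your scheme is itself the change-of-variables trick, not a Majda-style linearization around the iterate: Theorem~\ref{theoIBVP2} sets $v=\Theta(u)$ \emph{before} iterating.

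What you propose instead is the classical direct-linearization scheme: keep $\mybf{u}$, set $\mybf{N}_p^n(t)=\nabla_{\mybf{u}}\Phi(\mybf{u}^n_{\vert_{x=0}}(t))$, and apply Theorem~\ref{theoIBVP3transm} with a time-dependent boundary matrix at each step. This is legitimate --- Theorem~\ref{theoIBVP3transm} does allow time-dependent $\mybf{N}_p$ with $W^{1,\infty}\cap W^{m-1,\infty}$ and $\dot{}^m\!\mybf{N}_p\in L^2$ bounds, and your uniform bound on $|\mybf{u}^n_{\vert_{x=0}}|_{m,T_1}$ would furnish them --- and the Lopatinski\u{\i} determinant you compute is exactly the quantity $L^\sharp$, so both approaches hinge on the same algebraic nondegeneracy. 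The trade-off is precisely the one you flag at the end: your iterates each generate a different boundary operator and source $g^n$, so the compatibility conditions must be tracked through the iteration (by choosing $\mybf{u}^0$ so all iterates share the Taylor jet at the corner), whereas the paper's change of variables makes the boundary condition autonomous and constant, sidestepping that entirely and letting the already-proved Theorem~\ref{theoIBVP4transm} do all the work. Your route buys a proof that looks more like Majda's original argument and does not presuppose a global straightening $\Theta$; the paper's route is shorter and reuses more of the machinery already in place.
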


\begin{remark}\label{remimp}
The stability of multidimensional shocks was proved in \cite{majda1,majda2,majda3}, 
with improvements in \cite{metivier2001}. 
In space dimension one, this result shows the stability in $\WW^m(T)$ for $m\geq 3$ 
provided that the data is in $H^{m+1/2}(\R_+)$. 
Our proof, which takes advantage of the specificities of the one-dimensional case, is much more elementary 
and provides an improvement of these classical results since we only need $m\geq 2$ 
(and therefore one compatibility condition less) with data in $H^m(\R_+)$ (and therefore no loss of regularity). 
\end{remark}

\begin{proof}
There are two steps in the proof. 
We first transform the problem \eqref{bigIBVPshock} into an initial boundary value problem with a 
{\it linear} boundary condition, and we then prove that Assumption \ref{asshypQLFBtransm} is satisfied 
so that we can conclude with Theorem \ref{theoIBVP4transm}. 
Using the third point of Assumption \ref{asshypQLFBshock}, it is equivalent to solve the initial boundary 
value problem satisfied by $\mybf{v} = \Theta(\mybf{u})$, namely, 
\begin{equation}\label{bigIBVPshocklin}
\begin{cases}
\dt \mybf{v} + {\bm{\mathcal{A}}}^\sharp(\mybf{v},\partial \bvarphi)\dx \mybf{v} = 0
 & \mbox{ in }\quad \Omega_T, \\
\mybf{v}_{\vert_{t=0}} = \mybf{v}^{\rm in} & \mbox{ on }\quad {\mathbb R}_+, \\
{\bf e}_1^\sharp \cdot \mybf{v}_{\vert_{x=0}} = 0 & \mbox{ on }\quad (0,T),
\end{cases}
\end{equation}
with $\ux$ given by the resolution of 
\begin{equation}\label{uxeqb}
\dot\ux = {\bf e}_2^\sharp \cdot \mybf{v}_{\vert_{x=0}}, \qquad \ux(0)=0,
\end{equation}
where $({\bf e}^\sharp_1,{\bf e}^\sharp_2,{\bf e}^\sharp_3,{\bf e}^\sharp_4)$ 
denotes the canonical basis of $\R^4$ and 
\[
\boldsymbol{{\mathcal A}}^\sharp(\mybf{v}, \partial \bvarphi)
 = \bigl( d_{\boldsymbol{v}}\Theta^{-1}(\mybf{v}) \bigr)^{-1}
 {\bm{\mathcal{A}}}(\Theta^{-1}(\mybf{v}),\partial \bvarphi)\bigl( d_{\boldsymbol{v}}\Theta^{-1}(\mybf{v}) \bigr).
\]
In particular, the eigenvalues of $\boldsymbol{{\mathcal A}}^\sharp(\mybf{v}, \partial \bvarphi)$ 
are the same as those of $\boldsymbol{{\mathcal A}}(\mybf{u}, \partial \bvarphi)$ and 
if $\mybf{E}$ is an eigenvector of $\boldsymbol{{\mathcal A}}(\mybf{u}, \partial \bvarphi)$, 
the corresponding eigenvector of $\boldsymbol{{\mathcal A}}^\sharp(\mybf{v}, \partial \bvarphi)$ 
is $\mybf{E}^\sharp = \Theta'(\mybf{u})\mybf{E}$. 
By the second point of Assumption \ref{asshypQLFBshock}, the system \eqref{bigIBVPshocklin} satisfies therefore 
condition $b)$ or $c)$ in Assumption \ref{asshypQLFBtransm} and the Lopatinski\u{\i} matrix reduces to 
a scalar denoted $L^\sharp(\mybf{v}_{\vert_{x=0}})$, 
\[
L^\sharp(\mybf{v}_{\vert_{x=0}}) = {\bf e}^\sharp_1 \cdot \mybf{E}^\sharp_{\rm out}(\mybf{v}_{\vert_{x=0}}),
\]
where $\mybf{E}^\sharp_{\rm out}(\mybf{v})$ is the eigenvector of 
$\boldsymbol{{\mathcal A}}^\sharp(\mybf{v}, \partial \bvarphi)$ associated to its unique positive eigenvalue. 
From the discussion above, one has 
$\mybf{E}^\sharp_{\rm out}(\mybf{v}) = \Theta'(\mybf{u})\mybf{E}_{\rm out}(\mybf{u})$, 
where $\mybf{E}_{\rm out}(\mybf{u})$ is the eigenvector associated to the unique positive eigenvalue of 
$\boldsymbol{{\mathcal A}}(\mybf{u}, \partial \bvarphi)$. 
We have therefore 
\begin{align*}
L^\sharp(\mybf{v}) 
&= \Theta'(\mybf{u})^{\rm T}{\bf e}^\sharp_1 \cdot \mybf{E}_{\rm out}(\mybf{u}), \\
&= \nabla_{\boldsymbol{u}}\Phi(\mybf{u}) \cdot \mybf{E}_{\rm out}(\mybf{u}).
\end{align*}
Let us assume for instance that the first condition holds in the second point of 
Assumption \ref{asshypQLFBshock} (the adaptation if the second condition holds is straightforward). 
One then has $\mybf{E}_{\rm out}(\mybf{u}) = \left(\begin{array}{c}{\bf e}_-(u^{\rm l}) \\ 0\end{array}\right)$ 
(where as usual ${\bf e}_-(u^{\rm l})$ is the eigenvector associate to the eigenvalue $-\lambda_-(u^{\rm l})$ 
of $A(u^{\rm l})$) and, with computations similar to those performed in Remark \ref{remdiffeo}, we obtain 
\begin{align*}
L^\sharp(\mybf{v}) 
&= \jump{f_0}^\perp\cdot f_0'(u^{\rm l})(\chi(\mybf{u})\mbox{Id}-A(u^{\rm l})){\bf e}_-(u^{\rm l}) \\
&=  (\chi(\mybf{u}) + \lambda_-(u^{\rm l})) \jump{f_0}^\perp \cdot f_0'(u^{\rm l}){\bf e}_-(u^{\rm l});
\end{align*}
the second point of the assumption implies that this quantity is nonzero, and we can therefore conclude with Theorem \ref{theoIBVP4transm}.
\end{proof}

\subsubsection{The stability of undercompressive shocks}\label{sectunder}
In some applications, one can encounter shock waves that violate Lax's conditions. 
This is for instance the case for magnetohydrodynamics, or phase transitions in elastodynamics, 
or van der Waals fluids. 
In the particular case of {\it undercompressive shocks}, Lax's conditions are violated but 
condition $a)$ is satisfied in Assumption \ref{asshypQLFBtransm}. 
This means that $p=2$ (the number of positive eigenvalues for ${\bm{\mathcal{A}}}(\boldsymbol{u},\partial \bvarphi)$ 
in \eqref{bigIBVPshock} is equal to two) 
and therefore that the system of equations \eqref{bigIBVPshock}--\eqref{uxeq} is now {\it underdeterminated}. 
An additional boundary condition is therefore necessary.

This additional condition requires some additional modeling and depends on the context: 
it often comes from considerations based on the theory of viscosity-capillarity, 
see for instance \cite{slemrod1983,truskinovsky1994} for isothermal phase transitions 
or \cite{abeyaratne1991} for elastic rods. 
If such an additional boundary condition is provided and if it satisfies an appropriate stability condition as in 
\S \ref{secttransmkin} then the undercompressive shocks are stable. 
This extension of Majda's work on Lax's shock was proposed in \cite{freistuhler1998,freistuhler1998}, 
and studied in \cite{colombo1999} in the one-dimensional case. 
The extension to several dimensions was performed in \cite{benzoni1998} 
(derivation of the Kreiss--Lopatinski\u{\i} condition), \cite{benzoni1999} (linear estimates) and 
\cite{coulombel2003} (nonlinear estimates). 
We show here that the framework developed in \S \ref{secttransmkin} can be used to improve these results 
for the stability of one-dimensional undercompressive shocks.

We shall consider here an general framework where the additional boundary conditions we use to complement 
\eqref{bigIBVPshock}--\eqref{uxeq} is of the form 
\begin{equation}\label{eqPsi}
\Psi(\mybf{u}_{\vert_{x=0}}) = 0,
\end{equation}
where $\Psi$ is a smooth function satisfiying the assumption below. 
Note in particular that for undercompressive shocks, the Lopatinski\u{\i} matrix in the third point of 
Assumption \ref{asshypQLFBtransm} is a $2\times2$ matrix; 
its invertibility corresponds to the condition stated in the second point of the assumption below.

\begin{assumption}\label{asshypQLFBshockunder}
Let $\widetilde{\cU}$ and $\cU$ be open sets in $\R^2$ and put $\boldsymbol{\cU}=\widetilde{\cU}\times\cU$ 
representing a phase space of $\mybf{u}$. 
Let $\widetilde{\cU}_I \subset \widetilde{\cU}$ and $\cU_I \subset \cU$ be also open sets and put 
$\boldsymbol{\cU}_I = \widetilde{\cU}_I\times\cU_I$ representing a phase space of $\mybf{u}_{\vert_{x=0}}$.
The following conditions hold: 
\begin{enumerate}
\setlength{\itemsep}{3pt}
\item[{\bf i.}]
$\mybf{A}(\mybf{u}) = \mbox{\rm diag}(-A(u^{\rm l}),A(u^{\rm r})) \in C^\infty(\boldsymbol{\cU})$ and 
$\Phi,\Psi,\chi\in C^\infty(\boldsymbol{\cU}_I)$. 
\item[{\bf ii.}]
For any $\mybf{u} = (u^{\rm l},u^{\rm r})^{\rm T} \in \boldsymbol{\cU}$, the matrix $A(u^{\rm l,r})$ 
has eigenvalues $\lambda_+(u^{\rm l,r}) $ and $ -\lambda_-(u^{\rm l,r})$ with $\lambda_\pm(u^{\rm l,r})>0$. 
Moreover, for any $\mybf{u} = (u^{\rm l},u^{\rm r})^{\rm T} \in \boldsymbol{\cU}_I$
the following conditions hold:
\[
\lambda_\pm(u^{\rm l})\mp\chi(\boldsymbol{u}) > 0 \quad\mbox{ and }\quad 
 \lambda_\pm(u^{\rm r})\mp\chi(\boldsymbol{u}) > 0
\]
and the Lopatinski\u{\i} matrix 
\[
\left(
\begin{array}{cc}
 \bigl( \chi(\mybf{u})+\lambda_-(u^{\rm l}) \bigr)
  \bigl( f_0'(u^{\rm l}){\bf e}_-(u^{\rm l}) \bigr) \cdot \jump{f_0}^\perp
  & -\bigl( \chi(\mybf{u})-\lambda_+(u^{\rm r}) \bigr)
   \bigl( f_0'(u^{\rm r}){\bf e}_+(u^{\rm r}) \bigr) \cdot \jump{f_0}^\perp \\
 \nabla_{u^{\rm l}}\Psi \cdot {\bf e}_-(u^{\rm l}) & \nabla_{u^{\rm r}}\Psi \cdot {\bf e}_+(u^{\rm r})
\end{array}
\right)
\]
is invertible. 
\item[{\bf iii.}]
There exists a $C^\infty$-mapping $\Theta: \bcU\to \R^4$ such that it defines a diffeomorphism from $\bcU$ onto 
its image and for all $\mybf{u} = (u^{\rm l},u^{\rm r})^{\rm T} \in \boldsymbol{\cU}_I$ we have 
\[
\Theta(\mybf{u}) = \big(\Phi(\mybf{u}),\Psi(\mybf{u}),\theta(\mybf{u}) \big)^{\rm T}
\]
with a mapping $\theta: \bcU\to \R^2$. 
\end{enumerate}
\end{assumption}

\begin{remark}\label{remdiffeo2}
Up to shrinking $\widetilde{\cU}$ and $\cU$, the third point is always satisfied. 
Indeed, the second point of the assumption shows that $d_{\boldsymbol{u}}(\Phi,\Psi)$ has rank $2$ so that 
$\mybf{u}\mapsto (\Phi(\mybf{u}),\Psi(\mybf{u}))$ can  be completed to form a local diffeomorphism. 
\end{remark}

An easy adaptation of the proof of Theorem \ref{theoshock} yields the following stability result 
for undercompressive shocks. 
The same improvements as those described in Remark \ref{remimp} hold with respect the result obtained 
by considering the one-dimensional case in \cite{coulombel2003}.

\begin{theorem}\label{theoshockunder}
Let $m\geq 2$ be an integer. 
Suppose that Assumption \ref{asshypQLFBshockunder} is satisfied. 
If $\mybf{u}^{\rm in} \in H^m(\R_+)$ takes its values in $\widetilde{{\mathcal K}}_0\times {\mathcal K}_0$ 
with  $\widetilde{{\mathcal K}}_0\subset\widetilde{\cU}_0$ and ${\mathcal K}_0\subset\cU_0$ compact and convex sets, 
if $\mybf{u}^{\rm in}(0) \in \bcU_I$, 
and if it satisfies the compatibility conditions at order $m-1$, 
then there exists $T>0$ and a unique solition $(\mybf{u},\ux)$ to \eqref{bigIBVPshock}--\eqref{uxeq} 
complemented by \eqref{eqPsi}, with $\mybf{u} \in \WW^m(T)$ and $\ux\in H^{m+1}(0,T)$, 
and $\varphi$ given by \eqref{choicediffeo}. Moreover, $\boldsymbol{u}_{\vert_{x=0}}\in H^m(0,T)$. 
\end{theorem}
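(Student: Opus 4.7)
The plan is to adapt the proof of Theorem \ref{theoshock} to the undercompressive setting, the main differences being that we now have $p=2$ outgoing characteristics (condition $a)$ of Assumption \ref{asshypQLFBtransm}, the subsonic regime) and two boundary conditions $\Phi(\mybf{u}_{\vert_{x=0}})=0$ and $\Psi(\mybf{u}_{\vert_{x=0}})=0$. As in the Lax case, the first step is to use the diffeomorphism $\Theta$ provided by the third point of Assumption \ref{asshypQLFBshockunder} to pass to the new unknown $\mybf{v} = \Theta(\mybf{u})$, which satisfies a quasilinear transmission problem
\[
\dt \mybf{v} + {\bm{\mathcal{A}}}^\sharp(\mybf{v},\partial \bvarphi)\dx \mybf{v} = 0 \quad\mbox{in }\Omega_T,
\qquad {\bf e}_1^\sharp \cdot \mybf{v}_{\vert_{x=0}} = 0,
\qquad {\bf e}_2^\sharp \cdot \mybf{v}_{\vert_{x=0}} = 0,
\]
coupled to $\dot\ux = \chi(\Theta^{-1}(\mybf{v}_{\vert_{x=0}}))$, with ${\bm{\mathcal{A}}}^\sharp = (d\Theta^{-1})^{-1}{\bm{\mathcal{A}}}(d\Theta^{-1})$ having the same eigenvalues as ${\bm{\mathcal{A}}}$ and eigenvectors $\mybf{E}^\sharp = \Theta'(\mybf{u})\mybf{E}$.

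The second step is to verify Assumption \ref{asshypQLFBtransm} for this reduced problem. Points {\bf i} and {\bf ii} with condition $a)$ follow directly from the corresponding statements in Assumption \ref{asshypQLFBshockunder}. The nontrivial point is the invertibility of the Lopatinski\u{\i} matrix $\mybf{L}_2^\sharp(\mybf{v}_{\vert_{x=0}})$. The two outgoing eigenvectors of ${\bm{\mathcal{A}}}(\mybf{u},\partial\bvarphi)$ are $\mybf{E}_{\rm out,1}(\mybf{u}) = \bigl({\bf e}_-(u^{\rm l}),0\bigr)^{\rm T}$ and $\mybf{E}_{\rm out,2}(\mybf{u}) = \bigl(0,{\bf e}_+(u^{\rm r})\bigr)^{\rm T}$, so the corresponding outgoing eigenvectors for ${\bm{\mathcal{A}}}^\sharp$ are $\Theta'(\mybf{u})\mybf{E}_{\rm out,j}$, and the entries of $\mybf{L}_2^\sharp$ take the form
\[
{\bf e}_k^\sharp \cdot \Theta'(\mybf{u})\mybf{E}_{\rm out,j}
 = \bigl(\Theta'(\mybf{u})^{\rm T}{\bf e}_k^\sharp\bigr)\cdot \mybf{E}_{\rm out,j}
 = \nabla_{\mybf{u}}\varphi_k(\mybf{u}) \cdot \mybf{E}_{\rm out,j}(\mybf{u}),
\]
where $\varphi_1 = \Phi$ and $\varphi_2 = \Psi$.

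The third step is the explicit computation of these entries. Writing $\Phi(\mybf{u}) = \jump{f}\cdot \jump{f_0}^\perp$ and using the Rankine--Hugoniot identity $\jump{f} = \chi(\mybf{u})\jump{f_0}$ valid on $\{\Phi=0\}$, a direct calculation (as in Remark \ref{remdiffeo}) yields, for any $\dot u^{\rm l}\in \R^2$,
\[
\nabla_{u^{\rm l}}\Phi(\mybf{u}) \cdot \dot u^{\rm l}
 = \bigl(f_0'(u^{\rm l})[\chi(\mybf{u})\mbox{Id} - A(u^{\rm l})]\dot u^{\rm l}\bigr) \cdot \jump{f_0}^\perp,
\]
and analogously $\nabla_{u^{\rm r}}\Phi \cdot \dot u^{\rm r} = -\bigl(f_0'(u^{\rm r})[\chi(\mybf{u})\mbox{Id} - A(u^{\rm r})]\dot u^{\rm r}\bigr) \cdot \jump{f_0}^\perp$. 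Specializing $\dot u^{\rm l} = {\bf e}_-(u^{\rm l})$ and $\dot u^{\rm r} = {\bf e}_+(u^{\rm r})$, the first row of $\mybf{L}_2^\sharp$ becomes
\[
\Bigl(\, (\chi+\lambda_-(u^{\rm l}))(f_0'(u^{\rm l}){\bf e}_-(u^{\rm l}))\cdot\jump{f_0}^\perp ,\ -(\chi-\lambda_+(u^{\rm r}))(f_0'(u^{\rm r}){\bf e}_+(u^{\rm r}))\cdot\jump{f_0}^\perp \,\Bigr),
\]
while the second row is simply $\bigl(\nabla_{u^{\rm l}}\Psi\cdot {\bf e}_-(u^{\rm l}),\ \nabla_{u^{\rm r}}\Psi\cdot {\bf e}_+(u^{\rm r})\bigr)$. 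Thus $\mybf{L}_2^\sharp(\mybf{v}_{\vert_{x=0}})$ coincides exactly with the matrix assumed invertible in point {\bf ii} of Assumption \ref{asshypQLFBshockunder}.

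Once Assumption \ref{asshypQLFBtransm} is in force, Theorem \ref{theoIBVP4transm} applies directly to $(\mybf{v},\ux)$ and provides a unique local solution $\mybf{v}\in \WW^m(T)$, $\ux \in H^{m+1}(0,T)$, with $\mybf{v}_{\vert_{x=0}}\in H^m(0,T)$ by the trace estimate of Theorem \ref{theoIBVP3transm}; pulling back through $\mybf{u} = \Theta^{-1}(\mybf{v})$ yields the stated result, the compatibility conditions being preserved by the diffeomorphism $\Theta$. There is no genuine obstacle beyond the Lax case: the only point requiring care is the explicit identification of the $2\times 2$ Lopatinski\u{\i} matrix with the one posited in the assumption, which is achieved by the Rankine--Hugoniot-based computation sketched above.
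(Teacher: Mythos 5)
Your proposal is correct and carries out precisely the adaptation the paper alludes to when it states ``An easy adaptation of the proof of Theorem \ref{theoshock} yields the following stability result for undercompressive shocks.'' Your explicit computation of the $2\times 2$ Lopatinski\u{\i} matrix — writing its $(k,j)$-entry as $\nabla_{\boldsymbol{u}}\varphi_k\cdot \mybf{E}_{\mathrm{out},j}$, differentiating $\Phi = \jump{f}\cdot\jump{f_0}^\perp$ along the manifold $\{\Phi = 0\}$ using the Rankine--Hugoniot identity $\jump{f}=\chi\jump{f_0}$, and evaluating on ${\bf e}_-(u^{\rm l})$ and ${\bf e}_+(u^{\rm r})$ — reproduces exactly the matrix assumed invertible in point {\bf ii} of Assumption \ref{asshypQLFBshockunder}, so the argument closes by applying Theorem \ref{theoIBVP4transm} to $\mybf{v}=\Theta(\mybf{u})$ and pulling back.
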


\appendix
\section{Reformulation of the equations of motion in the case an object with prescribed motion}\label{appreform}
We will begin to show that \eqref{expression1} determines $Z_{\rm i}(t,x)$ under the assumptions 
that the center of mass is close to its initial position, that the rotational angle is small, and 
that $Z_{\rm lid} \in W^{m,\infty}(I_{\rm f})$. 
By extending $Z_{\rm lid}$ outside of the interval $I_{\rm f}$ appropriately, we can assume that 
$Z_{\rm lid} \in W^{m,\infty}(\R)$. 
Then, we have the following lemma.

\begin{lemma}\label{lemgrap}
Let $m\geq1$ be an integer and suppose that $Z_{\rm lid} \in C^1 \cap W^{m,\infty}(\R)$. 
There exist $\theta_0 \in (0,\frac{\pi}{2})$ and 
$\psi_{\rm lid} \in C^1 \cap W_{\rm loc}^{m,\infty}(\R\times[-\delta_0,\delta_0])$ such that 
as long as $|\theta(t)| \leq \theta_0$ we can solve \eqref{expression1} for $Z_{\rm i}(t,x)$ 
uniquely in the form 
\begin{equation}\label{zi}
Z_{\rm i}(t,x) = \psi_{\rm lid}\bigl( x-x_G(t),\theta(t) \bigr) + z_G(t).
\end{equation}
\end{lemma}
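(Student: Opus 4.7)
The plan is to apply the implicit function theorem after a simple change of variables that makes \eqref{expression1} autonomous in time. Setting $X := x - x_G(t)$ and $Z := Z_{\rm i}(t,x) - z_G(t)$, the equation \eqref{expression1} takes the form $F(X, Z, \theta) = 0$ with
\[
F(X,Z,\theta) := Z\cos\theta - X\sin\theta + z_G(0) - Z_{\rm lid}\bigl(X\cos\theta + Z\sin\theta + x_G(0)\bigr).
\]
At $\theta = 0$ one has $F(X,Z,0) = Z + z_G(0) - Z_{\rm lid}(X + x_G(0))$, so the equation is uniquely solved by $Z = Z_{\rm lid}(X + x_G(0)) - z_G(0)$; in particular $\partial_Z F(X,Z,0) = 1$. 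This will be the anchor of the argument, and the existence of $\psi_{\rm lid}$ will follow by perturbing around $\theta = 0$.

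Next I would compute $\partial_Z F(X,Z,\theta) = \cos\theta - Z_{\rm lid}'\bigl(X\cos\theta + Z\sin\theta + x_G(0)\bigr)\sin\theta$ and choose $\theta_0 \in (0,\pi/2)$ so small that $\cos\theta_0 - \|Z_{\rm lid}'\|_{L^\infty(\R)}\sin\theta_0 \geq 1/2$; this uses crucially the global Lipschitz bound provided by $Z_{\rm lid} \in W^{1,\infty}(\R)$. Then $\partial_Z F \geq 1/2$ uniformly in $(X,\theta) \in \R\times[-\theta_0,\theta_0]$ (a.e.\ in $Z$). For each fixed $(X,\theta)$ the map $Z \mapsto F(X,Z,\theta)$ is therefore strictly increasing with slope $\geq 1/2$ and grows linearly at $\pm\infty$, so a unique $Z = \psi_{\rm lid}(X,\theta)$ solves $F = 0$. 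Since $F$ is $C^1$ on $\R\times\R\times[-\theta_0,\theta_0]$ and $\partial_Z F$ is bounded away from zero, the standard implicit function theorem yields $\psi_{\rm lid} \in C^1(\R\times[-\theta_0,\theta_0])$, and reversing the change of variables gives \eqref{zi}.

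The main obstacle, and the only genuinely technical step, is the $W^{m,\infty}_{\rm loc}$ regularity. Here I would differentiate the identity $F(X,\psi_{\rm lid}(X,\theta),\theta) \equiv 0$ inductively with respect to $X$ and $\theta$, solving at each order for the highest-order derivative of $\psi_{\rm lid}$ using the factor $\partial_Z F \geq 1/2$. A Faà di Bruno expansion applied to the composition $Z_{\rm lid}(X\cos\theta + \psi_{\rm lid}(X,\theta)\sin\theta + x_G(0))$ expresses the $k$-th derivative of $\psi_{\rm lid}$ as a polynomial in $\sin\theta,\cos\theta$, in $Z_{\rm lid}^{(j)}$ for $1\le j\le k$ evaluated at the implicit argument, and in lower-order derivatives of $\psi_{\rm lid}$; the $L^\infty_{\rm loc}$ bound on each such expression follows from $Z_{\rm lid} \in W^{m,\infty}(\R)$ by induction on $k \le m$. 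The locality in $X$ enters only through the induction hypothesis that $\psi_{\rm lid}$ is locally bounded (hence the argument of $Z_{\rm lid}^{(j)}$ remains in a compact set for $X$ in a compact set), which is ensured by the $C^0$ regularity already established; no additional difficulty arises from the fact that $Z_{\rm lid}^{(m)}$ is merely an $L^\infty$ function, since each a.e.\ identity propagates to an a.e.\ identity for $\partial^m \psi_{\rm lid}$.
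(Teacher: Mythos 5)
Your proof is correct and follows essentially the same route as the paper: the auxiliary function you call $F$ is exactly the paper's $\Psi(z,x,\theta)$, the uniform lower bound on $\partial_Z F$ via the global $W^{1,\infty}$ bound on $Z_{\rm lid}$ matches the paper's $\dz\Psi \geq (1 - \|\dx Z_{\rm lid}\|_{L^\infty}\tan|\theta|)\cos\theta$, and the conclusion is the implicit function theorem. The paper states the $W^{m,\infty}_{\rm loc}$ regularity of $\psi_{\rm lid}$ without elaboration, whereas you sketch the inductive/Fa\`a di Bruno argument for it; that extra detail is sound.
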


\begin{proof}
We consider an auxiliary function 
\[
\Psi(z,x,\theta) = z\cos\theta - x\sin\theta  + z_G(0) - Z_{\rm lid}(x\cos\theta + z\sin\theta + x_G(0)),
\]
which belongs to the class $C^1 \cap W_{\rm loc}^{m,\infty}(\R^3)$. 
For $\theta \in (-\frac{\pi}{2},\frac{\pi}{2})$, we see that 
\begin{align*}
\dz \Psi(z,x,\theta) &= \cos\theta - (\dx Z_{\rm lid})(x\cos\theta + z\sin\theta + x_G(0)) \sin\theta \\
&\geq (1-\|\dx Z_{\rm lid}\|_{L^{\infty}(\mathbb{R})}\tan|\theta|)\cos\theta.
\end{align*}
In view of this we take $\theta_0 \in (0,\frac{\pi}{2})$ such that 
$\|\dx Z_{\rm lid}\|_{L^{\infty}(\mathbb{R})}\tan\theta_0 < 1$. 
Then, it holds that $\dz \Psi(z,x,\theta)>0$ as long as $|\theta| \leq \theta_0$. 
Therefore, the implicit function theorem gives the desired result. 
\end{proof}

We proceed to solve the equations in the interior region \eqref{eqint}. 
Let $N_{\rm i}$ be a normal vector on the underside of the floating body and $\mathbf{r}_G(t,x)$ 
a position vector of the point on the underside of the body relative to the center of mass, that is, 
\[
N_{\rm i}(t,x) = \begin{pmatrix} -\dx Z_{\rm i}(t,x) \\ 1 \end{pmatrix}, \quad
\mathbf{r}_G(t,x) = \begin{pmatrix} x - x_G(t) \\ Z_{\rm i}(t,x) - z_G(t) \end{pmatrix}.
\]
Here, we have $\dx \mathbf{r}_G^{\perp} = N_{\rm i}$. 
Denoting 
\[
\mathbf{T}( \mathbf{r}_G )
 = \begin{pmatrix} -\mathbf{r}_G^{\perp} \\ \frac12|\mathbf{r}_G|^2 \end{pmatrix},
\]
we have 
\begin{equation}\label{formula1}
\dx \mathbf{T}( \mathbf{r}_G )
 = \begin{pmatrix} -N_{\rm i} \\ \mathbf{r}_G^{\perp} \cdot N_{\rm i} \end{pmatrix}.
\end{equation}
Let $\mathbf{U}_G(t) = (u_G(t),w_G(t))^{\rm T}$ and $\omega(t)$ be the velocity of the center of mass 
and the angular velocity of the body, respectively, that is, 
$u_G=\dt x_G$, $w_G=\dt z_G$, and $\omega=\dt\theta$. 
Differentiating \eqref{expression1} with respect to $t$ and $x$, we see that 
\begin{equation}\label{zi2}
\dt Z_{\rm i} = ( \mathbf{U}_G - \omega \mathbf{r}_G^{\perp} )\cdot N_{\rm i}
 = -\dx \biggl( \begin{pmatrix} \mathbf{U}_G \\ \omega \end{pmatrix} \cdot 
 \mathbf{T}( \mathbf{r}_G ) \biggr).
\end{equation}
which together with the continuity equation in \eqref{eqint} yields that there exists a 
function $\overline{q}_{\rm i}(t)$ of $t$ such that 
\begin{equation}\label{qi}
Q_{\rm i}(t,x) = \begin{pmatrix} \mathbf{U}_G(t) \\ \omega(t) \end{pmatrix} \cdot 
 \mathbf{T}( \mathbf{r}_G(t,x) ) + \overline{q}_{\rm i}(t).
\end{equation}
Plugging this into the momentum equation in \eqref{eqint}, we see that $\underline{P}_{\rm i}$ 
satisfies a simple boundary value problem 
\begin{equation}\label{bvpp2}
\begin{cases}
 \dx\underline{P}_{\rm i} = -\frac{\rho}{H_{\rm i}} ( \dt \overline{q}_{\rm i} + F^{\rm I} + F^{\rm II} + F^{\rm III} )
  & \mbox{in}\quad \mathcal{I}(t), \\
 \underline{P}_{\rm i} = P_{\rm atm} & \mbox{on}\quad \Gamma(t),
\end{cases}
\end{equation}
where 
\[
\begin{cases}
F^{\rm I}(t,x) = \dx \Bigl( \frac{Q_{\rm i}(t,x)^2}{H_{\rm i}(t,x)} + \frac12 \mathtt{g} H_{\rm i}^2 \Bigr), \\
F^{\rm II}(t,x) = \begin{pmatrix} \dt \mathbf{U}_G(t) \\ \dt \omega(t) \end{pmatrix} \cdot 
 \mathbf{T}( \mathbf{r}_G(t,x) ), \\
F^{\rm III}(t,x) = \begin{pmatrix} \mathbf{U}_G(t) \\ \omega(t) \end{pmatrix} \cdot 
 \dt \mathbf{T}( \mathbf{r}_G(t,x) ).
\end{cases}
\]
In view of 
\[
\dt \mathbf{T}( \mathbf{r}_G(t,x) ) = M( \mathbf{r}_G(t,x), N_{\rm lid}(t,x) )
 \begin{pmatrix} \mathbf{U}_G(t) \\ \omega(t) \end{pmatrix},
\]
where 
\[
M( \mathbf{r}_G(t,x), N_{\rm lid}(t,x) ) = 
\begin{pmatrix}
 \mathbf{e}_x \cdot N_{\rm lid} & 0 & -\mathbf{r}_G^{\perp} \cdot N_{\rm lid} \\
 1 & 0 & 0 \\
 -\mathbf{r}_G^{\perp}\cdot N_{\rm lid} & 0 & -(\mathbf{e}_z \cdot \mathbf{r}_G)
  (\mathbf{r}_G^{\perp} \cdot N_{\rm lid})
\end{pmatrix}
\]
with $\mathbf{e}_x=(1,0)^{\rm T}$ and $\mathbf{e}_z=(0,1)^{\rm T}$, 
we can rewrite $F^{\rm I}$ and $F^{\rm III}$ as 
\begin{equation}\label{F^I}
\begin{cases}
\displaystyle
F^{\rm I} = \overline{q}_{\rm i}^2 \dx\biggl( \frac{1}{H_{\rm i}} \biggr)
 + 2\overline{q}_{\rm i}\begin{pmatrix} \mathbf{U}_G \\ \omega \end{pmatrix} \cdot
  \dx \biggl( \frac{\mathbf{T}(\mathbf{r}_G)}{H_{\rm i}} \biggr) \\
\qquad\;
\displaystyle
 + \begin{pmatrix} \mathbf{U}_G \\ \omega \end{pmatrix} \cdot
  \biggl( \dx \biggl( \frac{\mathbf{T}(\mathbf{r}_G) \otimes \mathbf{T}(\mathbf{r}_G)}{H_{\rm i}} \biggr) \biggr)
   \begin{pmatrix} \mathbf{U}_G \\ \omega \end{pmatrix}
   + \frac12 \mathtt{g} \dx( H_{\rm i}^2 ), \\
F^{\rm II} = \begin{pmatrix} \dt \mathbf{U}_G \\ \dt \omega \end{pmatrix} \cdot 
 \mathbf{T}( \mathbf{r}_G ), \quad
F^{\rm III} = \begin{pmatrix} \mathbf{U}_G \\ \omega \end{pmatrix} \cdot 
 M( \mathbf{r}_G, N_{\rm lid} ) \begin{pmatrix} \mathbf{U}_G \\ \omega \end{pmatrix}.
\end{cases}
\end{equation}

\begin{notation}
For a function $F=F(t,x)$, we put 
$\langle F \rangle = \frac{1}{\int_{\mathcal{I}(t)}\frac{1}{H_{\rm i}}}\int_{\mathcal{I}(t)}\frac{F}{H_{\rm i}}$ 
and $F^* = F - \langle F \rangle$. 
\end{notation}

We see easily that the boundary value problem \eqref{bvpp2} for $\underline{P}_{\rm i}$ is solvable 
if and only if $\overline{q}_{\rm i}$ saisfies 
\begin{align*}
\dt \overline{q}_{\rm i}
&= - ( \langle F^{\rm I} \rangle + \langle F^{\rm II} \rangle + \langle F^{\rm III} \rangle ) \\
&= -\overline{q}_{\rm i}^2 \left\langle \dx\biggl( \frac{1}{H_{\rm i}} \biggr) \right\rangle
 - 2\overline{q}_{\rm i} \begin{pmatrix} \mathbf{U}_G \\ \omega \end{pmatrix} \cdot
  \left\langle \dx \biggl( \frac{\mathbf{T}(\mathbf{r}_G)}{H_{\rm i}} \biggr)\right\rangle \\
&\quad\;
 - \begin{pmatrix} \mathbf{U}_G \\ \omega \end{pmatrix} \cdot
  \biggl\langle \dx \biggl( \frac{\mathbf{T}(\mathbf{r}_G) \otimes \mathbf{T}(\mathbf{r}_G)}{H_{\rm i}}
   \biggr) \biggr\rangle
   \begin{pmatrix} \mathbf{U}_G \\ \omega \end{pmatrix}
   - \frac12 \mathtt{g} \langle \dx ( H_{\rm i}^2 ) \rangle \\
&\quad\;
 - \begin{pmatrix} \dt \mathbf{U}_G \\ \dt \omega \end{pmatrix} \cdot
  \langle \mathbf{T}( \mathbf{r}_G ) \rangle
 - \begin{pmatrix} \mathbf{U}_G \\ \omega \end{pmatrix} \cdot 
  \langle M( \mathbf{r}_G, N_{\rm lid} ) \rangle \begin{pmatrix} \mathbf{U}_G \\ \omega \end{pmatrix}.
\end{align*}
Thanks of Lemma \ref{lemgrap}, this can be written in the form 
\[
\dt \overline{q}_{\rm i}
 = F(\overline{q}_{\rm i},x_G,z_G,\theta,\mathbf{U}_G,\omega,\dt\mathbf{U}_G,\dt\omega,x_{-},x_{+})
\]
with $F$ in the class $W^{m,\infty}$ under the assumption $Z_{\rm lid} \in W^{m,\infty}(I_{\rm f})$. 
As in the previous section, we use the same diffeomorphism 
$\varphi(t,\cdot) : \underline{\mathcal{E}} \to \mathcal{E}(t)$ defined by \eqref{diffeo3} to transform 
the equations in exterior region \eqref{eqext} and put 
$\zeta_{\rm e} = Z_{\rm e}\circ\varphi$, $h_{\rm e} = H_{\rm e}\circ\varphi$, 
$q_{\rm e} = Q_{\rm e}\circ\varphi$, $\zeta_{\rm i} = Z_{\rm i}\circ\varphi$, and 
$q_{\rm i} = Q_{\rm i}\circ\varphi$. 
We remind here that $Z_{\rm i}$ and $Q_{\rm i}$ are given by \eqref{zi} and \eqref{qi}, respectively. 
Now, as claimed in \S \ref{sectreform2}, the problem under consideration is reduced to 
$$
\begin{cases}
\dt^\varphi \zeta_{\rm e} + \dx^\varphi q_{\rm e} = 0 & \mbox{in}\quad \underline{\mathcal{E}}, \\
\dt^\varphi q_{\rm e} + 2 \frac{q_{\rm e}}{h_{\rm e}}\dx^\varphi q_{\rm e}
 + \Bigl( \mathtt{g}h_{\rm e} - \frac{q_{\rm e}^2}{h_{\rm e}^2} \Bigr)\dx^\varphi \zeta_{\rm e} = 0
 & \mbox{in}\quad \underline{\mathcal{E}}, \\
\zeta_{\rm e} = \zeta_{\rm i}, \quad q_{\rm e} = q_{\rm i} & \mbox{on}\quad \partial\underline{\mathcal{E}},
\end{cases}
$$
and
$$
\dt \overline{q}_{\rm i} = - ( \langle F^{\rm I} \rangle + \langle F^{\rm II} \rangle + \langle F^{\rm III} \rangle ).
$$
%

\section{Reformulation of the equations of motion in the case of a freely floating object}\label{appreform2}
As before, we can solve the equations in the interior region \eqref{eqint}. 
Thanks of Lemma \ref{lemgrap}, we can express $Z_{\rm i}$ in terms of $x_G,z_G,\theta$, and $Z_{\rm lid}$ 
as \eqref{zi}. 
By the continuity equation in \eqref{eqint}, there exists a function $\overline{q}_{\rm i}(t)$ of $t$ 
such that $Q_{\rm i}$ is expressed as \eqref{qi}. 
Then, by the momentum equation in \eqref{eqint}, the pressure $\underline{P}_{\rm i}$ satisfies the 
boundary value problem \eqref{bvpp2}, whose solvability is guaranteed by \eqref{eqqi3}. 
Then, $\underline{P}_{\rm i}$ satisfies 
\[
\dx\underline{P}_{\rm i} = -\frac{\rho}{H_{\rm i}}( (F^{\rm I})^* + (F^{\rm II})^* + (F^{\rm III})^* ).
\]
On the other hand, by using \eqref{formula1} and integration by parts we can rewrite \eqref{Newton's law} as 
\[
\begin{pmatrix}
 \mathfrak{m} {\rm Id}_{2\times2} & 0 \\
 0 & \mathfrak{i}_0
\end{pmatrix}
\dt \begin{pmatrix}  \mathbf{U}_G \\ \omega \end{pmatrix}
= \begin{pmatrix} -\mathfrak{mg}\mathbf{e}_z \\ 0 \end{pmatrix}
 + \int_{\mathcal{I}(t)} ( \dx \underline{P}_{\rm i} )( \mathbf{T}( \mathbf{r}_G ) )^*,
\]
where we used the boundary condition $\underline{P}_{\rm i} = P_{\rm atm}$ on $\Gamma(t)$. 
Eliminating the pressure $\underline{P}_{\rm i}$ from these two equations, we have 
\[
\begin{pmatrix}
 \mathfrak{m} {\rm Id}_{2\times2} & 0 \\
 0 & \mathfrak{i}_0
\end{pmatrix}
\dt \begin{pmatrix}  \mathbf{U}_G \\ \omega \end{pmatrix}
= \begin{pmatrix} -\mathfrak{mg}\mathbf{e}_z \\ 0 \end{pmatrix}
 - \rho\int_{\mathcal{I}(t)}( (F^{\rm I})^* + (F^{\rm II})^* + (F^{\rm III})^* )
 \frac{(\mathbf{T}( \mathbf{r}_G ))^*}{H_{\rm i}}. 
\]
Here, we see that 
\begin{align*}
\int_{\mathcal{I}(t)} (F^{\rm II})^* \frac{(\mathbf{T}( \mathbf{r}_G ))^*}{H_{\rm i}}
= \int_{\mathcal{I}(t)} \frac{(\mathbf{T}( \mathbf{r}_G ))^* \otimes (\mathbf{T}( \mathbf{r}_G ))^*}{H_{\rm i}}
 \dt \begin{pmatrix} \mathbf{U}_G \\ \omega \end{pmatrix},
\end{align*}
so that 
\[
(\mathcal{M}_0 + \mathcal{M}_{\rm a}(H_{\rm i},\mathbf{r}_G))
\dt \begin{pmatrix}  \mathbf{U}_G \\ \omega \end{pmatrix}
= \begin{pmatrix} -\mathfrak{mg}\mathbf{e}_z \\ 0 \end{pmatrix}
 - \rho\int_{\mathcal{I}(t)}( (F^{\rm I})^* + (F^{\rm III})^* )
 \frac{(\mathbf{T}( \mathbf{r}_G ))^*}{H_{\rm i}},
\]
where 
\begin{equation}\label{mass}
\mathcal{M}_0 = 
\begin{pmatrix}
 \mathfrak{m} I_{2\times2} & 0 \\
 0 & \mathfrak{i}_0
\end{pmatrix}, \qquad
\mathcal{M}_{\rm a}(H_{\rm i},\mathbf{r}_G) = 
\rho\int_{\mathcal{I}(t)} \frac{(\mathbf{T}( \mathbf{r}_G ))^* \otimes (\mathbf{T}( \mathbf{r}_G ))^*}{H_{\rm i}},
\end{equation}
and 
\[
\begin{cases}
\displaystyle
(F^{\rm I})^* = \overline{q}_{\rm i}^2 \biggl( \dx \biggl( \frac{1}{H_{\rm i}} \biggr) \biggr)^*
 + 2\overline{q}_{\rm i} \begin{pmatrix} \mathbf{U}_G \\ \omega \end{pmatrix} \cdot
  \biggl( \dx \biggl( \frac{\mathbf{T}(\mathbf{r}_G)}{H_{\rm i}} \biggr) \biggr)^* \\
\qquad\;
\displaystyle
 + \begin{pmatrix} \mathbf{U}_G \\ \omega \end{pmatrix} \cdot
  \biggl( \dx \biggl( \frac{\mathbf{T}(\mathbf{r}_G) \otimes \mathbf{T}(\mathbf{r}_G)}{H_{\rm i}} \biggr) \biggr)^*
   \begin{pmatrix} \mathbf{U}_G \\ \omega \end{pmatrix}
   + \frac12 \mathtt{g} ( \dx (H_{\rm i}^2) )^*, \\
(F^{\rm III})^* = \begin{pmatrix} \mathbf{U}_G \\ \omega \end{pmatrix} \cdot 
 (M( \mathbf{r}_G, N_{\rm lid} ))^* \begin{pmatrix} \mathbf{U}_G \\ \omega \end{pmatrix}.
\end{cases}
\]

\begin{remark}
We note that the matrix $\mathcal{M}_{\rm a}(H_{\rm i},\mathbf{r}_G)$ is symmetric and nonnegative, 
so that $\mathcal{M}_0 + \mathcal{M}_{\rm a}(H_{\rm i},\mathbf{r}_G)$ is positive definite and invertible. 
Expressing the contribution of the force $F^{\rm II}$ under the form 
$\mathcal{M}_{\rm a}(H_{\rm i},\mathbf{r}_G) \dt \begin{pmatrix} \mathbf{U}_G \\ \omega \end{pmatrix}$ 
plays therefore a stabilizing effect which corresponds to the added-mass effect of paramount importance 
for the study of fluid-structure interactions (see for inctance \cite{Causin20054506,glass2014point}). 
\end{remark}

As before, we use the diffeomorphism 
$\varphi(t,\cdot) : \underline{\mathcal{E}} \to \mathcal{E}(t)$ defined by \eqref{diffeo3} to transform 
the equations in exterior region \eqref{eqext} and put 
$\zeta_{\rm e} = Z_{\rm e}\circ\varphi$, $h_{\rm e} = H_{\rm e}\circ\varphi$, 
$q_{\rm e} = Q_{\rm e}\circ\varphi$, $\zeta_{\rm i} = Z_{\rm i}\circ\varphi$, and 
$q_{\rm i} = Q_{\rm i}\circ\varphi$. 
We remind here that $Z_{\rm i}$ and $Q_{\rm i}$ are given by \eqref{zi2} and \eqref{qi}, respectively. 
Now, the problem under consideration is reduced to 
\begin{equation}\label{teqext3bis}
\begin{cases}
\dt^\varphi \zeta_{\rm e} + \dx^\varphi q_{\rm e} = 0 & \mbox{in}\quad \underline{\mathcal{E}}, \\
\dt^\varphi q_{\rm e} + 2 \frac{q_{\rm e}}{h_{\rm e}}\dx^\varphi q_{\rm e}
 + \Bigl( \mathtt{g}h_{\rm e} - \frac{q_{\rm e}^2}{h_{\rm e}^2} \Bigr)\dx^\varphi \zeta_{\rm e} = 0
 & \mbox{in}\quad \underline{\mathcal{E}}, \\
\zeta_{\rm e} = \zeta_{\rm i}, \quad q_{\rm e} = q_{\rm i} & \mbox{on}\quad \partial\underline{\mathcal{E}},
\end{cases}
\end{equation}
\begin{equation}\label{eqqi4bis}
\dt \overline{q}_{\rm i} = - ( \langle F^{\rm I} \rangle + \langle F^{\rm II} \rangle + \langle F^{\rm III} \rangle ),
\end{equation}
\begin{equation}\label{eqbodybis}
\dt \begin{pmatrix}  \mathbf{U}_G \\ \omega \end{pmatrix}
= (\mathcal{M}_0 + \mathcal{M}_{\rm a}(H_{\rm i},\mathbf{r}_G))^{-1}
 \biggl\{ \begin{pmatrix} -\mathfrak{mg}\mathbf{e}_z \\ 0 \end{pmatrix}
 - \rho\int_{\mathcal{I}(t)}( (F^{\rm I})^* + (F^{\rm III})^* )
 \frac{(\mathbf{T}( \mathbf{r}_G ))^*}{H_{\rm i}} \biggr\}.
\end{equation}
%

\bibliographystyle{alpha}
\bibliography{BIB_IL}

\end{document}